\newtheorem{theorem}{Theorem}[section]
\newtheorem{corollary}{Corollary}[theorem]
\newtheorem{proposition}{Proposition}[theorem]
\newtheorem{lemma}[theorem]{Lemma}
\newcommand{\lozd}{--++(1,1)--++(1,0)--++(-1,-1)--++(-1,0)
}
\renewcommand{\Re}{\mathop{\mathrm{Re}}}
\renewcommand{\Im}{\mathop{\mathrm{Im}}}
\newcommand{\Tr}{\mathop{\mathrm{Tr}}}
\newcommand{\Var}{\mathop{\mathrm{Var}}}
\newcommand{\eps}{\varepsilon}
\newcommand{\R}{\mathbb R}
\newcommand{\dv}{\partial}
\newcommand{\LL}{\mathcal L}
\newcommand{\C}{\mathbb C}
\newcommand{\A}{\mathcal A}
\newcommand{\F}{\mathcal F}
\newcommand{\dbb}{\overline{z}}
\newcommand{\Di}{\mathbb{D}}
\newcommand{\Om}{\Omega}
\newcommand{\unscaledtime}{s}
\title[]{Lozenge tilings of a hexagon and $q$-Racah ensembles}
\author[]{Maurice Duits, Erik Duse and Wenkui Liu}
\address{Department of Mathematics, Royal Institute of Technology, Lindstedtsvägen 25, SE 10044, Stockholm
Sweden.}
\email{duits@kth.se, duse@kth.se, wenkui@kth.se}
\thanks{ MD and WL were supported by the Swedish Research Council (VR), grant no. 2021-06015, and the European Research Council (ERC), Grant Agreement No. 101002013. ED was supported by the Swedish Research Council (VR), grant no. 2019-04152 and the Knut and Alice Wallenberg Foundation grant KAW 2015.0270.
Any correspondence should be addressed to MD}
\date{}
\begin{document}
	\maketitle

\begin{abstract}
We study the limiting behavior of random lozenge tilings of the hexagon with a $q$-Racah weight as the size of the hexagon grows large. Based on the asymptotic behavior of the recurrence coefficients of the $q$-Racah polynomials, we give a new proof for the fact that the height function for a random tiling concentrates near a deterministic limit shape and that the global fluctuations are described by the Gaussian Free Field. These results were recently proved using (dynamic) loop equation techniques. In this paper, we extend the recurrence coefficient approach that was developed for (dynamic) orthogonal polynomial ensembles to the setting of $q$-orthogonal polynomials. An interesting feature is that the complex structure is easily found from the limiting behavior of the (explicitly known) recurrence coefficients. A particular motivation for studying this model is that the variational characterization of the limiting height function has an inhomogeneous term. The study of the regularity properties of the minimizer for general variational problems with such inhomogeneous terms is a challenging open problem. In a general setup, we show that the variational problem gives rise to a natural complex structure associated with the same Beltrami equation as in the homogeneous situation. We also derive a relation between the complex structure and the complex slope.  In the case of the $q$-Racah weighting of lozenge tilings of the hexagon, our representation of the limit shape and their fluctuations in terms of the recurrence coefficients allows us to verify this relation explicitly. 
	\end{abstract}

  \tableofcontents
	
\section{Introduction}


Random tilings of planar domains form a rich class of models that have been studied intensively in the past decades. Different techniques have been developed to study their asymptotic behavior when the domain grows large (or the mesh becomes small). We refer to \cite{Vadim} for a recent treatise on the subject with many references. In certain special examples, remarkable connections with orthogonal polynomials have been found. For example, the correlation function for uniformly distributed lozenge tilings of the hexagon can be expressed in terms of  Hahn polynomials \cite{GorinHex,JohHex}. Uniformly distributed lozenge tilings of the Aztec diamond are related to Krawtchouk polynomials \cite{JohAztec}.   This connection bridges important models from statistical mechanics to classical objects of analysis. It also sets the stage for proving universality conjectures for these stochastic models, as these typically boil down to an asymptotic analysis  (as the degree tends to infinity) for the orthogonal polynomials and their properties. An important example for this paper is the method developed in  \cite{breuer2017central,duits2018global}  for proving Gaussian Free Field fluctuations via the recurrence coefficients of the polynomials. The purpose of this writing is to extend this approach to more general classes of models that include $q$-orthogonal polynomials. The running example will be the probability measure on lozenge tilings of the hexagon introduced in \cite{borodin2010q} related to $q$-Racah polynomials, which we will first discuss.

 
\subsubsection*{Lozenge tilings of a hegaxon and $q$-Racah polynomials}

Let $\mathbf{a},\mathbf{b},\mathbf{c}\in\mathbb{N}$ and consider the hexagon  with corners $(0,0), (\mathbf{b},0), (\mathbf{b}+\mathbf{c},\mathbf{c}), (\mathbf{b}+\mathbf{c},\mathbf{a}+\mathbf{c}), (\mathbf{c},\mathbf{a}+\mathbf{c}), (0,\mathbf{a})$ on the two-dimensional $(\unscaledtime,y)$ plane. We will be interested in tiling the hexagon with the following three types of lozenges: 
	
	\begin{center}
		Type  $I$ \begin{tikzpicture}[scale=0.4]
			\draw (0,0)--++(1,1)--++(0,1)--++(-1,-1)--++(0,-1);
		\end{tikzpicture} \quad Type  $II$ \begin{tikzpicture}[scale=0.4]
			\draw (0,0) --++(1,0)--++(0,1)--++(-1,0)--++(0,-1);
		\end{tikzpicture} \quad Type  $III$ \begin{tikzpicture}[scale=0.4]
			\draw (0,0) --++(1,1)--++(1,0)--++(-1,-1)--++(-1,0);
			
		\end{tikzpicture} .
  \label{fig:lozenges}
	\end{center}
	A tiling means a covering of the hexagon with lozenges so that no two tiles overlap. See Figure~\ref{fig:sample_color} for an example. There are many different ways of tiling the hexagon. We will choose one at random as follows: Let $\mathcal T$ be the space of all possible tilings, then we define a probability measure on $\mathcal T$ by saying that the probability of having a tiling $T\in \mathcal T$ is proportional to the product of the weights of the Type III lozenges in $T$,
	$$
	\mathbb P(T)\sim \prod_{ \tikz[scale=.3] { \draw (0,0) \lozd;} \in T} \mathfrak{w} ( \tikz[scale=.3] { \draw (0,0) \lozd;})
	$$
	where the weight is given by 
	\begin{equation} \label{eq:ellipticloz}
		\mathfrak{w}\left( \tikz[scale=.3] { \draw (0,0) \lozd; \filldraw (1,1) circle (3pt); 
			\draw (1,1) node[anchor=east]{$(\unscaledtime,y)$}; ;}\right)=\kappa \mathbf{q}^{\frac{-\unscaledtime-\mathbf{c}}{2}+y+\frac{1}{2}}-\frac{1}{\kappa \mathbf{q}^{\frac{-\unscaledtime-\mathbf{c}}{2}+y+\frac{1}{2}}},
	\end{equation}
	where $(\unscaledtime,y)$ are the coordinates of the  top left corner of \tikz[scale=.3] { \draw (0,0) \lozd; \filldraw (1,1) circle (3pt); 
			\draw (1,1) node[anchor=east]{$(\unscaledtime,y)$};;}.  This weight function was introduced by Borodin, Gorin and Rains in \cite{borodin2010q} and is a special case of a more general elliptic weight. Note that to obtain a proper probability measure, one needs conditions on the parameters $\kappa$ and ${\bf q}$ that ensure that $\mathbb P(T)\geq 0$ for all tilings $T\in \mathcal T$. This does not necessarily mean that $\mathfrak{w}$ has to take positive values. Since the total number of type III lozenges is fixed and determined by the dimensions of the hexagon, the weights \eqref{eq:ellipticloz} can be multiplied by a universal constant  (which can be factored out and absorbed into the normalizing constant).  We thus need only to require all weights to have the same sign. In \cite{borodin2010q}, the authors discuss the following three options: (a)  $\kappa \in \mathbb{R}$ with some restrictions, (b) both $\kappa$ and $\mathbf q$ are complex with modulus $1$ and (c) $\kappa \in i \mathbb R$.   Note that if ${\mathbf q}=1$, then the weight \eqref{eq:ellipticloz} is independent of the position $x$, and the probability measure is the uniform measure on all possible tilings of the hexagon.

The weight \eqref{eq:ellipticloz} is integrable in the sense that it can be solved in terms of $q$-Racah polynomials, as was shown in \cite{borodin2010q}. We start by drawing points on the part of the lattice $\mathbb Z \times (\mathbb Z +\tfrac12)$ such that there is a black point at all the lattice points that are covered by a type III lozenge and white points otherwise. See Figure~\ref{fig:sample_color} for an example. Then denote the coordinates of the white dots by $(\unscaledtime,y_j(\unscaledtime)+\frac12)$ for $j=1,2, \ldots,\mathbf{a}$ and $\unscaledtime \in \{1,\ldots,\mathbf{b}+\mathbf{c}-1\}$ (note that for each $\unscaledtime \in \{1,\ldots,\mathbf{b}+\mathbf{c}-1\}$ there are exactly $\mathbf{a}$ white points with horizontal coordinate $t$).  It was proved in \cite{borodin2010q} the $y_j(\unscaledtime)$ for $j=1,2,\ldots,\mathbf{a}$ have the joint probability distribution function \begin{equation} \label{eq:qOPErac}
		\sim \prod_{1 \leq i < j \leq \mathbf{a}}(\nu(y_i)-\nu(y_j))^2 w(y_j), 
	\end{equation}
 with $\nu$ and $w$ given by 
 \begin{equation}\label{eq:qRac}
	\nu(y) = \mathbf{q}^{-y}+ \gamma \delta \mathbf{q}^{y+1} \qquad \text{and} \qquad w(y)= \frac{\left(\alpha \mathbf{q},\beta\delta \mathbf{q},\gamma \mathbf{q},\gamma\delta \mathbf{q};\mathbf{q}\right)_y\left(1-\gamma\delta \mathbf{q}^{2y+1}\right)}{\left(\mathbf{q} ,\alpha^{-1}\gamma\delta \mathbf{q},\beta^{-1}\gamma \mathbf{q},\delta \mathbf{q};\mathbf{q}\right)_y\left(\alpha\beta \mathbf{q}\right)^y\left(1-\gamma\delta \mathbf{q}\right)}
\end{equation}
 The precise choice of parameters  on $n,\alpha, \beta,\gamma$ and $\delta$ are expressed in terms of  ${\bf a},{\bf b},{\bf c},t,{\bf q}$ and $\kappa$ and will be discussed in detail in Section~\ref{sec:tiling}.  The weight $w$ in \eqref{eq:qRac} is exactly the orthogonality weight for the $q$-Racah polynomials, and the correlation functions for the point process can be expressed in terms of these polynomials \cite{borodin2010q}.  Also, in this writing, the connection with $q$-Racah polynomials will be central, although we will not use the correlation kernel. In \cite{borodin2010q} the authors also computed the joint probability distribution function  for 
 $\{y_j(\unscaledtime)\}_{j=1,s=1}^{{\bf a},{\bf b+\bf c-}1}$. Since its description is rather technical, we will postpone the details to Section~\ref{sec:tiling}


 \subsubsection*{Large hexagons}

  A tiling of the hexagon by lozenges can be viewed three-dimensional as cubes (or boxes) stacked in the corner of a room. This also allows us to view a random tiling as a random surface. We will be interested in the asymptotic behavior of this random surface when the size of the hexagon grows large. More precisely, for $b,c\in\mathbb{R_+},n\in\mathbb{N}_+$ and $q>0$ or $q$ on the complex unit circle, we set 
 \begin{align*}
     \mathbf{a}=n, \quad \mathbf{b}=nb, \quad \mathbf{c}=nc, \quad  \mathbf{q} = q^{\frac{1}{n}}. 
 \end{align*}	
 and study the limit $n \to \infty$. Note that this means that we take $\mathbf{q}\to 1$ simultaneously as the size of the hexagon goes to infinity.  
 In Figure~\ref{fig:example} (also later in Figure~\ref{fig:morexamples}), we have plotted a sample of a random lozenge tiling for some special choice of parameters. The Figure(s) illustrates a common picture in tiling models: the randomness is contained in a region called the liquid region, but the corners are frozen. Moreover, a limit shape seems to appear for the random surface. 

 To make this more concrete, we define the height function $h_n$ as 
 \begin{equation} \label{eq:heightfunction}
     h_n(t,x)= \#\left\{ j \mid y_j(\lfloor nt\rfloor) \leq  nx \right\},
 \end{equation}
 where $t$ and $x$ are rescaled variables inside the rescaled hexagon.  In other words, $h_n$ counts the number of the white points below and including the point $(\lfloor nt\rfloor,nx)$ in Figure~\ref{fig:sample_color}. Then, as $n \to \infty$, we have 
\begin{equation} \label{eq:limitshapeintro}
    \frac{1}{n} h_n(t,x) \to h(t,x),
\end{equation}
almost surely. The graph of $h(t,x)$ is the limit shape near which the random tilings concentrate.  The limit shape $h$ was rigorously computed using loop equations in \cite{dimitrov2019log}. Moreover, for fixed $t$, they also proved the fluctuations of $h_n(t,\cdot)-\mathbb E h_n(t,\cdot)$ along vertical one-dimensional sections. Recently, the full two-dimensional fluctuation field was proved to be described by the Gaussian Free Field (GFF) \cite{gorin2022dynamical} using time-dependent extensions of loop equation techniques. More precisely, after a change of coordinate, they prove that $h_n-\mathbb E h_n$ converges to the Gaussian Free Field in the upper half-plane.  
  
  One of the main motivations of the current writing is to show that we can derive the limit shape and the Gaussian Free  Field fluctuations by extending established principles for orthogonal polynomials in \cite{breuer2017central,duits2018global,kuijlaars1999asymptotic} to the $q$-orthogonal setting.    More precisely, we will show that the limit shape and its fluctuations can be computed from the recurrence coefficients for the $q$-Racah polynomials. In our opinion, this is a rather direct way of studying the particular tiling model that fully exploits its integrable structure. Moreover, the conformal structure, particularly the diffeomorphism describing the pullback in the Gaussian Free Field for the height fluctuations, admits a very simple expression that can simply be read off from the asymptotic behavior of the recurrence coefficients.


  \subsubsection*{Linear statistics and recurrence coefficients}

The probability measure in \eqref{eq:qOPErac} together with \eqref{eq:qRac} is a special case of a more general class of models, and we will prove various results on their asymptotic behavior based on general principles.  Indeed, we study point processes on $\mathbb Z$ where one considers $n \in \mathbb N$ random points $y_1,\ldots,y_{n}$ with a joint probability distribution function proportional to \begin{equation} \label{eq:qOPE}
		\sim \prod_{1 \leq i < j \leq n}(\nu(y_i)-\nu(y_j))^2 w(y_j), 
	\end{equation}
	where  $\nu: \mathbb Z \to \mathbb R$ is strictly monotone and $w: \mathbb Z \to [0,\infty)$ such that 
	\begin{equation} \label{eq:finitemoments}
		\sum_{y \in \mathbb Z}|\nu(y) |^k w(y)< \infty,
	\end{equation}  
	for all $k \in \mathbb N$.   If $w$ has a finite support, then \eqref{eq:finitemoments} is redundant. But in the case of infinite support, it ensures that all moments of $w$ exist and, in particular, \eqref{eq:qOPE} can be normalized to a probability measure. Also, if $w$ has a finite support, we need at least $n$ points in that support for \eqref{eq:qOPE} to be a well-defined probability measure. We will also study time-dependent extensions of \eqref{eq:qOPE}, but as their definition is a bit more technical we postpone the discussion to Section 2.  
 
 In  case $\nu(y)=y$, \eqref{eq:qOPE} is the discrete orthogonal polynomial ensemble \cite{konig} associated with $w$.  Such ensembles appear naturally in various contexts, and their name is explained by the fact that various quantities, such as the correlation functions, can be expressed in terms of the orthogonal polynomials with respect to the weight $w$. The process defined by \eqref{eq:qOPE} for general $\nu$  can also be analyzed using extensions of these orthogonal polynomials, such as  $q$-analogues.   Let $p_k$ be the unique polynomial of degree $k$ and positive leading coefficient such that 
	\begin{align}\label{eq:orthogonality}
		\sum_{y\in \mathbb Z}p_l(\nu(y))p_k(\nu(y))w(y)=\delta_{l,k},
	\end{align}
	for $l,k=0,1,2\ldots$. We also set  $r_k(y)=p_k(\nu(y))$, which is a polynomial in $\nu$. For $\nu(y)=y$, the polynomials $r_k=p_k$ are the  orthogonal polynomials with respect to $w$. For $\nu$ and $w$ as in \eqref{eq:qRac}, the polynomials are the $q$-Racah polynomials.  From the orthogonality relation one can show that the  polynomials $r_k(y)$ satisfy a three-term recurrence. That is, there exists two sequences $\{a_j\}_{j=0}^\infty$ and $\{b_j\}_{j=0}^\infty$ with $a_j>0$ and $b_j\in \mathbb R$ such that  
	\begin{align}\label{recurrence}
		\nu(y)r_j(y)=a_{j}r_{j+1}(y)+b_{j}r_j(y)+a_{j-1}r_{j-1}(y), 
	\end{align}
	for $j=0,1,2,\ldots$ (where we also set $a_{-1}=0$ for convenience).   For $\nu(y)=y$, this is the classical three-term recurrence for orthogonal polynomials on the real line, and it is not difficult to see (and well-known) that this extends to the case of general $\nu(y)$. The coefficients $a_j$ and $b_j$ are  called the recurrence coefficients and are important objects as they determine the polynomials. These coefficients are explicitly known in the case of classical polynomials, such as the $q$-Racah polynomials. As we will see below, they also play an important central role for the associated point process \eqref{eq:qOPE}.

	\begin{figure}
		\centering
		\begin{tikzpicture}[scale=1]
			\tikzmath{\x=.76;}
			\node[inner sep=0pt] (small) at (-.48,0)
			{\includegraphics[width=.798\textwidth]{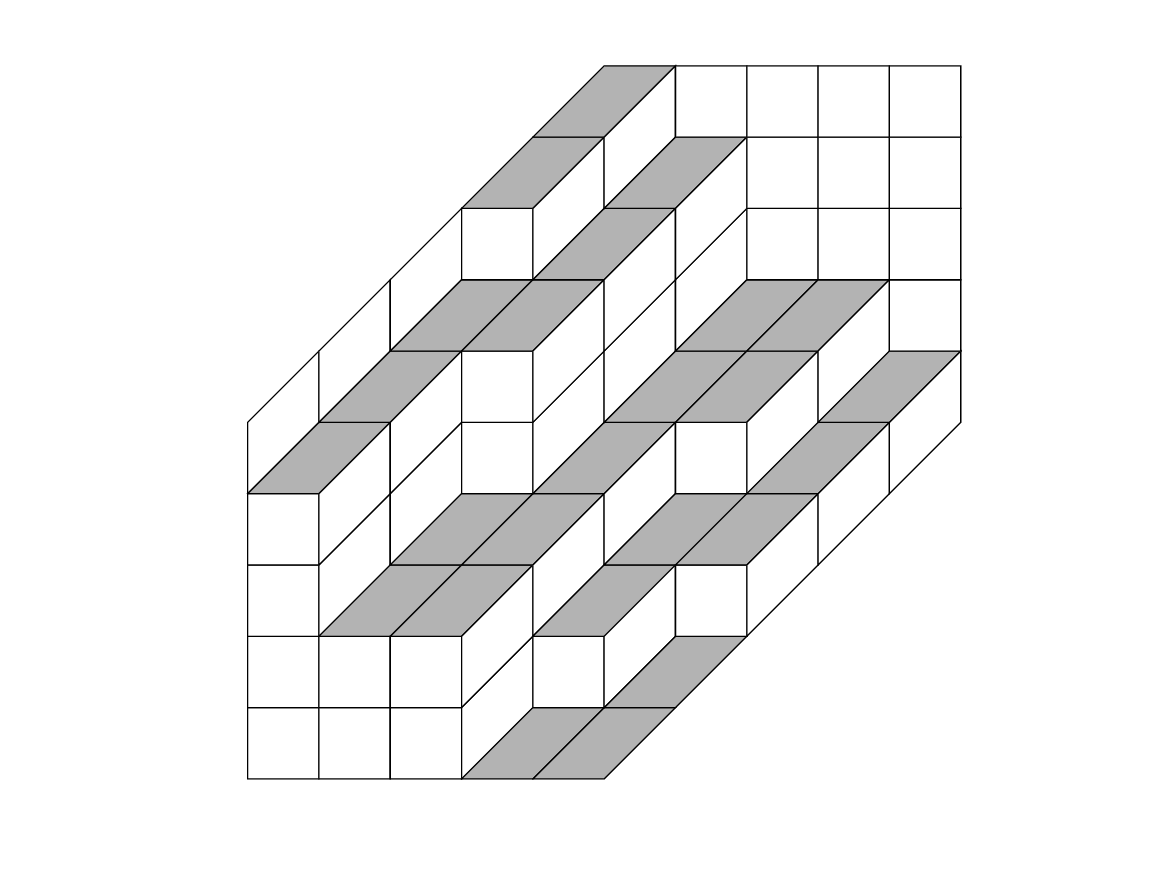}};
			\draw[fill=white] (-1,-3.25+\x) circle (3pt);
			\draw[fill=white]  (-1,-3.25+2*\x) circle (3pt);
			\draw[fill=white]  (-1,-3.25+4*\x) circle (3pt);
			\draw[fill=white]  (-1,-3.25+5*\x) circle (3pt);
			\draw[fill=white]  (-1,-3.25+7*\x) circle (3pt);
			\filldraw (-1,-3.25+0*\x) circle (3pt);
			\filldraw (-1,-3.25+3*\x) circle (3pt);
			\filldraw (-1,-3.25+6*\x) circle (3pt);
			\filldraw (-1,-3.25+8*\x) circle (3pt);

			\draw[fill=white] (-1-\x,-3.25+0*\x) circle (3pt);
			\draw[fill=white]  (-1-\x,-3.25+1*\x) circle (3pt);
			\draw[fill=white]  (-1-\x,-3.25+4*\x) circle (3pt);
			\draw[fill=white]  (-1-\x,-3.25+5*\x) circle (3pt);
			\draw[fill=white]  (-1-\x,-3.25+7*\x) circle (3pt);
			\filldraw (-1-\x,-3.25+2*\x) circle (3pt);
			\filldraw (-1-\x,-3.25+3*\x) circle (3pt);
			\filldraw (-1-\x,-3.25+6*\x) circle (3pt);
			
			\draw[fill=white] (-1-2*\x,-3.25+0*\x) circle (3pt);
			\draw[fill=white]  (-1-2*\x,-3.25+1*\x) circle (3pt);
			\draw[fill=white]  (-1-2*\x,-3.25+3*\x) circle (3pt);
			\draw[fill=white]  (-1-2*\x,-3.25+4*\x) circle (3pt);
			\draw[fill=white]  (-1-2*\x,-3.25+6*\x) circle (3pt);
			\filldraw (-1-2*\x,-3.25+2*\x) circle (3pt);
			\filldraw (-1-2*\x,-3.25+5*\x) circle (3pt);
			
			\draw[fill=white] (-1-3*\x,-3.25+0*\x) circle (3pt);
			\draw[fill=white]  (-1-3*\x,-3.25+1*\x) circle (3pt);
			\draw[fill=white]  (-1-3*\x,-3.25+2*\x) circle (3pt);
			\draw[fill=white]  (-1-3*\x,-3.25+3*\x) circle (3pt);
			\draw[fill=white]  (-1-3*\x,-3.25+5*\x) circle (3pt);
			\filldraw (-1-3*\x,-3.25+4*\x) circle (3pt);
			

			\draw[fill=white] (-1+\x,-3.25+1*\x) circle (3pt);
			\draw[fill=white]  (-1+\x,-3.25+3*\x) circle (3pt);
			\draw[fill=white]  (-1+\x,-3.25+5*\x) circle (3pt);
			\draw[fill=white]  (-1+\x,-3.25+6*\x) circle (3pt);
			\draw[fill=white]  (-1+\x,-3.25+8*\x) circle (3pt);
			\filldraw (-1+\x,-3.25+0*\x) circle (3pt);
			\filldraw (-1+\x,-3.25+2*\x) circle (3pt);
			\filldraw (-1+\x,-3.25+4*\x) circle (3pt);
			\filldraw (-1+\x,-3.25+7*\x) circle (3pt);
			\filldraw (-1+\x,-3.25+9*\x) circle (3pt);

			\draw[fill=white] (-1+2*\x,-3.25+2*\x) circle (3pt);
			\draw[fill=white]  (-1+2*\x,-3.25+4*\x) circle (3pt);
			\draw[fill=white]  (-1+2*\x,-3.25+6*\x) circle (3pt);
			\draw[fill=white]  (-1+2*\x,-3.25+7*\x) circle (3pt);
			\draw[fill=white]  (-1+2*\x,-3.25+9*\x) circle (3pt);
			\filldraw (-1+2*\x,-3.25+1*\x) circle (3pt);
			\filldraw (-1+2*\x,-3.25+3*\x) circle (3pt);
			\filldraw (-1+2*\x,-3.25+5*\x) circle (3pt);
			\filldraw (-1+2*\x,-3.25+8*\x) circle (3pt);
			
			\draw[fill=white] (-1+3*\x,-3.25+2*\x) circle (3pt);
			\draw[fill=white]  (-1+3*\x,-3.25+4*\x) circle (3pt);
			\draw[fill=white]  (-1+3*\x,-3.25+8*\x) circle (3pt);
			\draw[fill=white]  (-1+3*\x,-3.25+7*\x) circle (3pt);
			\draw[fill=white]  (-1+3*\x,-3.25+9*\x) circle (3pt);
			
			\filldraw (-1+3*\x,-3.25+3*\x) circle (3pt);
			\filldraw (-1+3*\x,-3.25+5*\x) circle (3pt);
			\filldraw (-1+3*\x,-3.25+6*\x) circle (3pt);

			\draw[fill=white] (-1+4*\x,-3.25+3*\x) circle (3pt);
			\draw[fill=white]  (-1+4*\x,-3.25+5*\x) circle (3pt);
			\draw[fill=white]  (-1+4*\x,-3.25+8*\x) circle (3pt);
			\draw[fill=white]  (-1+4*\x,-3.25+7*\x) circle (3pt);
			\draw[fill=white]  (-1+4*\x,-3.25+9*\x) circle (3pt);
			
			\filldraw (-1+4*\x,-3.25+4*\x) circle (3pt);
			\filldraw (-1+4*\x,-3.25+6*\x) circle (3pt);
			
			\draw[fill=white] (-1+5*\x,-3.25+4*\x) circle (3pt);
			\draw[fill=white]  (-1+5*\x,-3.25+6*\x) circle (3pt);
			\draw[fill=white]  (-1+5*\x,-3.25+8*\x) circle (3pt);
			\draw[fill=white]  (-1+5*\x,-3.25+7*\x) circle (3pt);
			\draw[fill=white]  (-1+5*\x,-3.25+9*\x) circle (3pt);
			
			\filldraw (-1+5*\x,-3.25+5*\x) circle (3pt);

		\end{tikzpicture}
		\caption{Example of a tiling of the hexagon with $\mathbf{a}=\mathbf{b}=\mathbf{c}=5$.}
		\label{fig:sample_color}
	\end{figure}
	
We will study the asymptotic behavior of orthogonal polynomials ensemble as $n\to \infty$ using   linear statistics with sufficiently smooth test functions for point processes of the form \eqref{eq:qOPE} and time-dependent generalizations. More precisely, for $y_j$ randomly from \eqref{eq:qOPE}  and a function $f$ we will consider the random variable
 $$
 X_n(f)=\sum_{j=1}^{n} f(y_j/n),
 $$
 where we anticipate the necessary rescaling with $n$ as $n \to \infty$ (recall that we need the rescale the mesh for the lozenge tilings of the hexagon above).  While taking this limit, we will also allow $\nu=\nu_n$
 and $w=w_n$ to vary with $n$. Note that this also implies that the recurrence coefficients depend on $n$ and we will indicate this by using the notation $a_{j,n}$ and $b_{j,n}$.
 Following the reasoning in \cite{breuer2017central,duits2018global}, we will show (see Section 2 and 4) that the cumulants for these linear statistics can be expressed in terms of the recurrence coefficients and this will be the starting point of our analysis.

The setup above is too general to prove useful results, and we will need to make further assumptions. For the loop equation approach \cite{dimitrov2019log,gorin2022dynamical}, one imposes certain regularity conditions, and the typical assumption is that $w_n$ converges to a sufficiently smooth function as $n \to \infty$. Here, we will instead make natural assumptions on the recurrence coefficients $a_{j,n}$ and $b_{j,n}$.  Roughly speaking, we will assume there exist continuous functions $a$ and $ b$ such that,
\begin{equation}\label{eq:roughlimits}
	\begin{cases}
		a_{j,n}=a(j/n)+o(1),\\
		b_{j,n}=b(j/n)+o(1)\
	\end{cases}
\end{equation}
as $n \to \infty$, and limiting function $\mu(x)$ such that
\begin{equation} 
\label{eq:roughlimitsmu}
    \nu_n(nx) \to \mu(x).
\end{equation}
The conditions \eqref{eq:roughlimits} and \eqref{eq:roughlimitsmu} are natural. Condition \eqref{eq:roughlimits}  means that, since $n$ is large, the recurrence coefficients are slowly varying. Many examples of orthogonal polynomials satisfy \eqref{eq:roughlimits} and \eqref{eq:roughlimitsmu}, including the classical families of orthogonal polynomials. We refer to \cite{breuer2017central, kuijlaars1999asymptotic} for extensive discussions and examples. In the context of the present paper, it is interesting to mention that the $q$-analogues of the classical orthogonal polynomials (including the ${q}$-Racah) also satisfy \eqref{eq:roughlimits}, but typically only when we take ${\bf q}\to 1$ simultaneously as $n \to \infty$. For fixed $0<{\bf q}<1$, the conditions will not hold and the analysis in this paper does not apply.  Note that from the running example this is not surprising, since for $0<{\bf q}<1$ the weighting \eqref{eq:ellipticloz} is not expected to have Gaussian Free Field fluctuations.

The conditions \eqref{eq:roughlimits} and \eqref{eq:roughlimitsmu}, are known to have important implications for the asymptotic behavior of the polynomials \cite{kuijlaars1999asymptotic}. In this paper, we will derive both the almost sure convergence of the linear statistic $X_n(f)$  and a Central Limit Theorem for the corresponding point process. Section~\ref{sec:generalOPE} states three theorems concerning the almost sure limit of $\frac 1n X_n(f)$ as $n \to \infty$. While postponing the precise details to that section, we mention that, in case the function $a$ is strictly positive, we find  that
 \begin{align} \label{eq:roughresult1}
		\frac 1n X_n(f) \to   \frac{1}{\pi }\iint\frac{f(x)  \ d\xi \  d\mu(x)}{\sqrt{4a(\xi)^2-(\mu(x)-b(\xi))^2}}, 
	\end{align} 
 as $n \to \infty$. Although not relevant in our context, we mention that the measure on the right-hand side is also the limiting zero distribution of the $n$-th orthogonal polynomial as shown in \cite{kuijlaars1999asymptotic} (strictly speaking, the result was only proved for $\nu_n(y)=y$ under condition \eqref{eq:roughlimits}, but the authors also discuss the extension to general $\nu_n$ in an example).  
  For the case $\nu_n(y)=y$, the limiting density of zeros is known to be the same as the almost sure limit of the empirical measure of the random point configuration from \eqref{eq:qOPE}, for example, \cite{hardy2018polynomial}. It is not hard to generalize this to the general case.  In this paper, we will give a direct proof of \eqref{eq:roughresult1} that does not use the zeros of the polynomials. To the best of our knowledge,  this proof has not appeared in the literature before. We will also expand the scope of the test functions $f$ that are allowed.

 In Section~\ref{sec:generalOPE}, we also give Central Limit Theorems for $X_n(f)$ by following the strategy in  \cite{breuer2017central}. The result is that 
 $$
 X_n(f) -\mathbb E X_n(f) \to N(0,\sigma_f^2)
 $$
 where $\sigma_f^2$ is an explicit expression (cf. \eqref{eq:limitingvariance1} and \eqref{eq:limitingvariance2}) involving $a(1), b(1)$ and $\mu$.  Interestingly, the only condition that we need is that \eqref{eq:roughlimits} 
 holds for $j/n \to 1$ as $n \to \infty$. We do, however, need a Lipschitz condition for the convergence in \eqref{eq:roughlimitsmu}. In Theorem~\ref{thm:fluctuation_theorem} we also present the extension of this Central Limit Theorem to dynamic extensions of point processes of the type \eqref{eq:qOPE} following \cite{duits2018global}. 
	
	In Section~\ref{sec:tiling}, we return to the case of lozenge tilings of the hexagon with weight \eqref{eq:ellipticloz}. We show that the conditions for Theorems~\ref{thm:LLNlpfunctions} and \ref{thm:fluctuation_theorem} are easily verified by looking up the recurrence coefficients for $q$-Racah polynomials, which are explicit.  From there, we compute the limit shape and show that the fluctuations of the height function are given by the Gaussian Free Field. We stress that the diffeomorphism that maps the rough disordered region to a strip in the complex plane has a very simple expression in terms of the limits of the recurrence coefficients.


 \subsubsection*{Variational problem}
	
The  weight \eqref{eq:ellipticloz}  
falls outside the class of uniform or periodic weights but still has an elegant integrable structure. This makes it an attractive model to test extensions of established results for the uniform or periodic weights. For instance, for uniformly distributed random tilings, and more generally for tilings with periodic weight structure, there have been important works on characterizing the limit shape in terms of a variational problem, in particular \cite{CKP, CLP,DeS10, KeOk07,KOS,Ku}. Furthermore, the regularity of the minimizers for lozenge tilings, as well as general dimer models with periodic weight structure, has been classified in \cite{ADPZ}. In addition, \cite{ADPZ} shows that the \emph{liquid regions} are endowed with a unique complex structure. It is a conjecture of Kenyon and Okounkov that the global fluctuations around the limit shape in the liquid region are governed by the Gaussian Free Field (which is proved for the uniform distribution on the hexagon in  \cite{duits2018global,petrov2015asymptotics}).  However, to observe the Gaussian Free Field, one must first identify the correct complex structure and make a change of coordinates by solving a Beltrami equation, typically pulling back the liquid region by a diffeomorphism to the unit disc in the case when the liquid region is a simply connected domain. 
	
	By including the parameters $q$ and $\kappa$, one breaks the uniformity, and the complexity increases significantly. As conjectured in \cite{borodin2010q}, the limit shape is still characterized by a variational problem, and this is proven in \cite{MPT22Var}. The difference with the uniform measure is that the energy function now includes an additional linear term. A careful treatment of such a variational problem and a study of the regularity of the minimizer is an open problem that is significantly more complicated than the one for the uniform measure. We will discuss a general setup for this in Section~\ref{sec:variational} and will extend certain results in \cite{ADPZ}. In particular, in a joint work with Astala, we show how the inhomogeneous Euler-Lagrange equation can be reduced to a first-order quasilinear inhomogeneous Beltrami equation. We will also identify the naturally associated complex structure and change of variables associated with the variational problem. In addition, we show how the complex structure can be expressed by the complex slope solving a complex Burger equation, connecting the complex structure to the results in \cite{dimitrov2019log,gorin2022dynamical}. That the complex structure associated with the variational problem is the same as the one giving the Gaussian Free Field for the regular hexagon is then verified in Section~\ref{sec:tiling}, and we conjecture that they are always the same. 
 
 In greater detail, in Section~\ref{sec:tiling}, we return to the variational problem for weight \eqref{eq:ellipticloz} for the hexagon. We recall that the diffeomorphism that gives the correct coordinates for the Gaussian Free Field fluctuation is a very simple function of the limiting behavior of the recurrence coefficients. The limit shape is computed using   \eqref{eq:roughresult1}, but from the principles for the variational problem, it should also be computable directly from the diffeomorphism giving the Gaussian Free Field. We verify that both ways indeed provide the same result (as it should). Note that this is somewhat remarkable. Indeed, the diffeomorphism only depends on the limiting behavior of the recurrence coefficient $a_{j,n}$ with $j/n \to 1$ as $n \to \infty$, where the formula \eqref{eq:roughresult1} depends on all $a_{j,n}$ with $j/n \to \xi \in (0,1)$. We believe that the reason for this is that the recurrence coefficients for the $q$-Racah polynomials themselves satisfy difference equations (in both degree and time). It would be very interesting to see if these difference equations can be used to derive the variational problem.

	\begin{figure}[t]
		\begin{center}
			\includegraphics[scale=0.25]{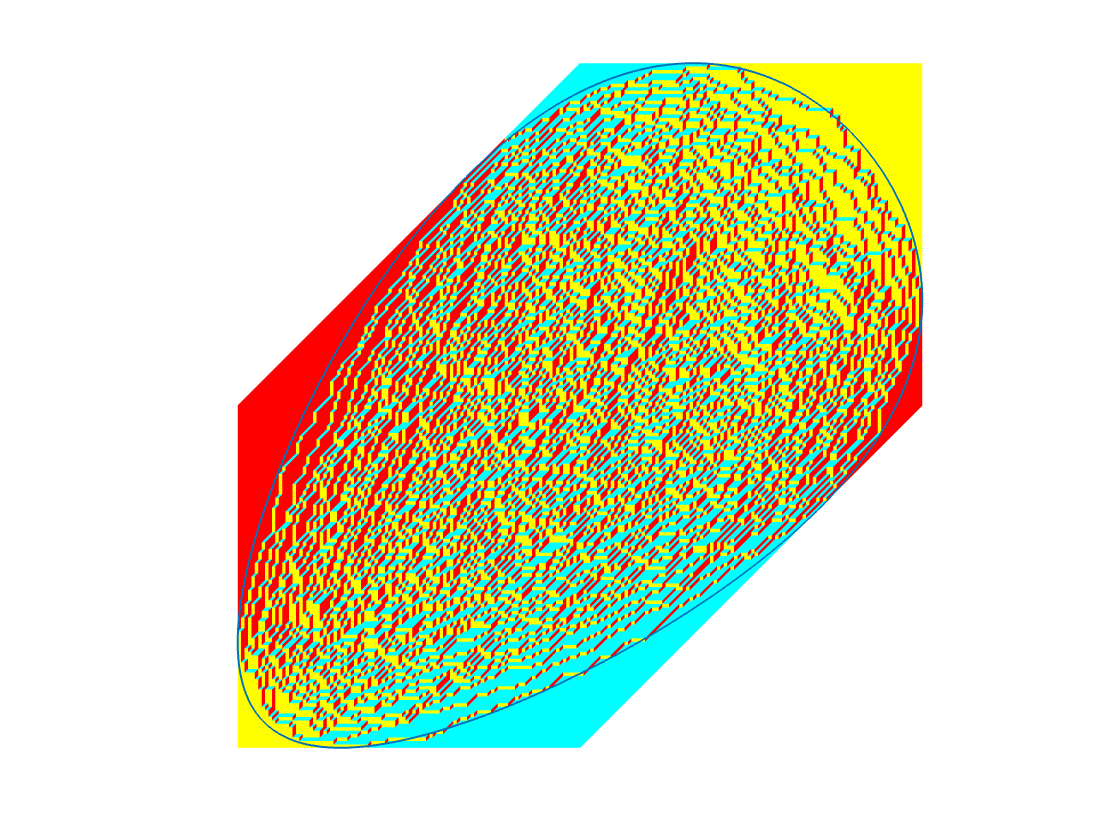}
		\end{center}
		\caption{An example of a lozenge tiling with $q$-Racah weight with parameters $b=1$, $c=1$, $q=0.5, \kappa=2.9i$ and size $n=100$.}
		\label{fig:example}
	\end{figure}


	\subsubsection*{Overview of the paper}


In Section~\ref{sec:generalOPE}, we state our main results for the asymptotic behavior of linear statistics for point processes of the type \eqref{eq:qOPE} and extensions.  A Law of Large Numbers is formulated in Theorems~\ref{limit_density_poly}, \ref{thm:LLNcontinuousfunctions} and \ref{thm:LLNlpfunctions}. Then, we state a CLT for the linear statistics in Theorem~\ref{CLT_q_polynomial} and a time-dependent extension in Theorem~\ref{thm:fluctuation_theorem}. It is imperative to underscore that the asymptotic behavior of the recurrence coefficients inherently drives all these theorems.  The proofs of these theorems are presented in Section~\ref{sec:proofsGen}, but we discuss the example of the $q$-Racah ensembles first in Section~\ref{sec:q-Racah}. Our results include the real, imaginary, and trigonometric cases. The first two cases are dealt with simultaneously in Theorem~\ref{limit_shape_qracah}. The trigonometric cases require some special attention, and the limit density is computed separately in Theorem~\ref{limit_shape_qracah tri}. Then, in Section~\ref{sec:variational}, we discuss the variational problem for the tilings of the hexagon and propose a road map for studying the regularity properties. Finally,  in Section~\ref{sec:tiling}, we compute the limiting behavior of the height function for the lozenge tilings of the hexagon with weight \eqref{eq:ellipticloz}. In particular, we compute the limiting height function and the Gaussian Free Field fluctuations. Then, we verify the relation between the complex slope and complex structure as predicted in Section~\ref{sec:variational}.


	\section{Limit shapes and fluctuations for $q$-Orthogonal Polynomial Ensembles}\label{sec:generalOPE}
	
	In this section, we discuss general  $q$-orthogonal polynomial ensembles and present our main results on the limiting behavior of their linear statistics.

	
	\subsection{Scaled point process and large $n$ behavior}
	
We will be interested in the large $n$ behavior of the point process defined by \eqref{eq:qOPE}. To achieve a meaningful limit, we will allow the parameters in the model to depend on $n$. The assumptions we will pose imply that the point process will essentially have a global scale of order $\sim n$ and it does make sense to rescale the process by $n$. That is, set $y=xn$ and consider the point process on $\frac 1n \mathbb Z$ with the probability distribution function proportional to 
 \begin{align}\label{bi-ope}
		\sim \prod_{1\leq i<j\leq n}\left(\nu_n(nx_i)-\nu_n(nx_j)\right)^2 w_n( n x_1)\dots w_n(n x_n).
	\end{align}
	 Note that we allow that $\nu=\nu_n$ and $w=w_n$ vary with $n$ as $n \to \infty$, which also implies that $a_j=a_{j,n}$ and $b_j=b_{j,n}$ both depend on $n$. In fact, after setting 
	$$
	\mu_n(x)=\nu_n(nx), \qquad x \in \frac{1}{n}\mathbb{Z}
	$$
	we will assume that there exists a function $\mu$ on $\mathbb{R}$ such that
	\begin{equation} \label{eq:limitmun}
		\lim_{n \to \infty} \mu_n(x)=\mu(x).
	\end{equation}
The precise mode of convergence in \eqref{eq:limitmun} will be specified in our main theorems. In a nutshell, for the limiting behavior of the empirical measure, we will require uniform convergence, but for the fluctuations, we will need a stronger Lipschitz condition.
	
 We study the large $n$ limit of the point process \eqref{bi-ope} by analyzing the asymptotic behavior of the linear statistics

 \begin{equation} \label{linearstat}
	X_n(f)=\sum_{j=1}^{n} f(x_j),
	\end{equation}
	for functions $f\colon \mathbb R \to \mathbb R$.


 \subsection{Limiting distribution of points}

	The first result we discuss is the fact that the linear statistic has an almost sure limit, implying that there is a deterministic density describing the limiting distribution of points as $n \to \infty$.

	\begin{theorem}\label{limit_density_poly}
		Consider a discrete point process $\{x_j\}_{j=0}^{n-1}$  with its joint distribution proportional to \eqref{bi-ope}.  
		Suppose there exists real-valued continuous functions,  $a(\xi), b(\xi)$, on $[0,1]$, where $a\geq 0$ and $a\not\equiv 0$, such that the recurrence coefficients \eqref{recurrence} associated with the orthonormal polynomials satisfy 
		\begin{align} \label{ass:slowly_varying_recurrence}
			\lim_{\frac{j}{n}\to\xi}a_{j,n}=a(\xi), \quad \lim_{\frac{j}{n}\to\xi}b_{j,n}=b(\xi), \quad \forall \xi\in[0,1],
		\end{align} 
  where the limit $n \to \infty$ is taken such that such that $j/n \to \xi$ simultaneously. 
		Then, for any polynomial $p$ we have that the linear statistics \eqref{linearstat} satisfies
		\begin{align}  \label{eq:meanpolyGen}
			\frac{1}{n}X_n(p(\mu_n))\to\frac{1}{\pi}\int_0^1\int_{b(\xi)-2a(\xi)}^{b(\xi)+2a(\xi)}\frac{ p (x)}{\sqrt{4a(\xi)^2-(x-b(\xi))^2}}dxd\xi+\int_{a^{-1}(\{0\})}p(b(\xi))d\xi
		\end{align} 
		as $n \to \infty$ almost surely.
	\end{theorem}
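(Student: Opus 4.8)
The plan is to compute the moments $\frac 1n \mathbb E X_n(p(\mu_n))$ and their variance separately, showing the mean converges to the stated expression and the variance is summable (or at least $o(1)$), which by Borel--Cantelli upgrades convergence in probability to almost sure convergence. The key identity is that for a polynomial $p$, the linear statistic $X_n(p(\mu_n)) = \sum_j p(\mu_n(x_j))$ has expectation that can be written as a trace. Indeed, since $r_0,\dots,r_{n-1}$ are orthonormal with respect to $w_n$ and the point process \eqref{bi-ope} is a determinantal (biorthogonal ensemble) process whose correlation kernel is the Christoffel--Darboux kernel $K_n(y,y') = \sum_{k=0}^{n-1} r_k(y) r_k(y') w_n(y')^{1/2} w_n(y)^{1/2}$, we have
\begin{equation*}
\mathbb E X_n(p(\mu_n)) = \sum_{y \in \mathbb Z} p(\nu_n(y)) K_n(y,y) = \sum_{k=0}^{n-1} \sum_{y\in\mathbb Z} p(\nu_n(y)) r_k(y)^2 w_n(y) = \sum_{k=0}^{n-1} \langle p(J_n) \rangle_{kk},
\end{equation*}
where $J_n$ is the (semi-infinite) Jacobi matrix with diagonal entries $b_{j,n}$ and off-diagonal entries $a_{j,n}$ associated to the recurrence \eqref{recurrence}: that is, $\langle p(J_n)\rangle_{kk}$ is the $(k,k)$ entry of $p(J_n)$, and the sum is $\Tr(P_n p(J_n) P_n)$ with $P_n$ the projection onto the first $n$ coordinates. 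So the first step is this reduction to a trace of a polynomial in a Jacobi matrix.

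The second step is the asymptotic evaluation of $\frac 1n \Tr(P_n p(J_n) P_n) = \frac 1n \sum_{k=0}^{n-1} \langle p(J_n)\rangle_{kk}$. For fixed $p$ of degree $d$, the entry $\langle p(J_n)\rangle_{kk}$ depends only on the finitely many recurrence coefficients $a_{j,n}, b_{j,n}$ with $|j-k| \le d$; it is a fixed polynomial expression in them. By assumption \eqref{ass:slowly_varying_recurrence}, for $k/n \to \xi$ with $a(\xi) > 0$ these coefficients all converge to $a(\xi), b(\xi)$, so $\langle p(J_n)\rangle_{kk}$ converges to $\langle p(J_\xi)\rangle_{00}$ where $J_\xi$ is the \emph{constant} (Toeplitz) Jacobi matrix with all diagonal entries $b(\xi)$ and all off-diagonal entries $a(\xi)$. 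That constant matrix has spectral measure (the measure $\mu_{b(\xi),a(\xi)}$ such that $\langle g(J_\xi)\rangle_{00} = \int g \, d\mu$) equal to the arcsine/semicircle-type law
\begin{equation*}
\langle p(J_\xi)\rangle_{00} = \frac{1}{\pi}\int_{b(\xi)-2a(\xi)}^{b(\xi)+2a(\xi)} \frac{p(x)\,dx}{\sqrt{4a(\xi)^2 - (x-b(\xi))^2}},
\end{equation*}
which one checks via the classical Chebyshev computation. When $a(\xi) = 0$ the limiting matrix is diagonal with entry $b(\xi)$, and $\langle p(J_\xi)\rangle_{00} = p(b(\xi))$, accounting for the second term $\int_{a^{-1}(\{0\})} p(b(\xi))\,d\xi$. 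Since the integrand is uniformly bounded (the $a_{j,n}, b_{j,n}$ are uniformly bounded by the convergence assumption on a compact set, so the entries of $p(J_n)$ are uniformly bounded), a Riemann-sum / dominated convergence argument gives
\begin{equation*}
\frac 1n \sum_{k=0}^{n-1} \langle p(J_n)\rangle_{kk} \to \int_0^1 \langle p(J_\xi)\rangle_{00}\, d\xi,
\end{equation*}
which is exactly the right-hand side of \eqref{eq:meanpolyGen}. One subtlety: on the set where $a(\xi)=0$ the convergence of the $k$-th diagonal entry to $p(b(\xi))$ still holds pointwise, and $a^{-1}(\{0\})$ being closed, this part of the sum is handled by the same bounded-convergence argument — no issue with the measure-zero boundary.

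The third step is controlling fluctuations: one shows $\Var(X_n(p(\mu_n))) = O(1)$ (in fact this is where the CLT machinery of \cite{breuer2017central} enters, but for the law of large numbers a crude bound suffices). For a determinantal process, $\Var(X_n(p(\mu_n))) = \frac 12 \sum_{y,y'} (p(\nu_n(y)) - p(\nu_n(y')))^2 |K_n(y,y')|^2 \le \text{const}$, again expressible via $\Tr$ of commutators of $p(J_n)$ with the projection $P_n$, which is bounded because $[p(J_n), P_n]$ is finite-rank (rank $\le 2d$) with bounded entries. Then $\Var(\frac 1n X_n) = O(1/n^2)$ is summable, so Borel--Cantelli along $n \in \mathbb N$ gives almost sure convergence of $\frac 1n X_n(p(\mu_n))$ to its limiting mean. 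The main obstacle I anticipate is not any single step but the bookkeeping of the $a(\xi)=0$ locus — one must make sure the diagonal-entry convergence and the uniform boundedness genuinely hold uniformly enough near that set, and that no spurious contribution arises from the transition region where $a(\xi)$ is small but positive; the saving grace is that $\langle p(J_\xi)\rangle_{00}$ is continuous in $(a,b)$ down to $a=0$ (the arcsine law converges weakly to $\delta_{b}$ as $a\to 0$), so the limit function $\xi \mapsto \langle p(J_\xi)\rangle_{00}$ is automatically the correct single bounded integrand over all of $[0,1]$, and the two displayed terms in \eqref{eq:meanpolyGen} are just its restriction to $\{a>0\}$ and $\{a=0\}$ respectively.
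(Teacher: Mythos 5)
Your proposal is correct and takes essentially the same route as the paper: write $\mathbb{E}X_n(p\circ\mu_n)=\Tr\left(P_n\,p(J_n)P_n\right)$, use the finite-range dependence of the bulk diagonal entries on the slowly varying recurrence coefficients together with dominated convergence to get $\frac1n\Tr\left(P_n\,p(J_n)P_n\right)\to\int_0^1\frac{1}{2\pi}\int_0^{2\pi}p\bigl(b(\xi)+2a(\xi)\cos\theta\bigr)\,d\theta\,d\xi$ (which splits into the arcsine term on $\{a>0\}$ and $p(b(\xi))$ on $a^{-1}(\{0\})$), and then bound the variance by a constant via the finite-rank, uniformly bounded commutator structure so that Chebyshev and Borel--Cantelli yield almost sure convergence. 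One small precision: the limiting reference operator should be the doubly infinite (whole-line) constant Jacobi operator, whose diagonal entries of $p(J_\xi)$ give exactly the arcsine integral you wrote; the $(0,0)$ corner entry of the half-line free Jacobi matrix would instead give the semicircle density, so your formula is the right one but the label $\langle p(J_\xi)\rangle_{00}$ should be understood as the bulk (two-sided) entry.
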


	The proof of this theorem is given in Section~\ref{sec:proofsGen}. Note that for $\mu_n(x)=x$, this result is already known in the literature. Indeed, in \cite{kuijlaars1999asymptotic}, the authors prove the exact same limit result for the asymptotic zero distribution of the orthogonal polynomials. That the empirical measure for the orthogonal polynomial ensembles and the zero distribution of the $n$-orthogonal polynomial have the same limit (if it exists) is a well-known result and is, for example, discussed in \cite{hardy2018polynomial}. The extension to general $\mu_n$ is straightforward, but we will present a direct proof, both for completeness and because, to the best of our knowledge, the simple arguments we provide have not been published before.
  	
	Note that in Theorem~\ref{limit_density_poly}, we have not specified any condition on $\mu_n$.  It may converge as in \eqref{eq:limitmun}, but it does not have to. All that matters is that the recurrence coefficients are slowly varying. It is, however, a bit restrictive to only allow linear statistics with functions $f$ that are polynomials in $\mu_n(x)$. It is desirable to allow for more general functions. Note that if the rescaled point process lives on a compact set (so $w$ has finite support), then it is natural to extend the latter theorem to continuous functions. However, if $w$ has infinite support then we need an additional condition that ensures that the points will accumulate on a compact subset (after rescaling) with high probability.

	\begin{theorem} \label{thm:LLNcontinuousfunctions} 
		Consider a discrete point process $\{x_j\}_{j=0}^{n-1}$  with its joint distribution proportional to \eqref{bi-ope}. Let the assumptions in Theorem~\ref{limit_density_poly} be satisfied.  Assume further that there exists a bounded $K\subset \mathbb{R}$ and $\epsilon>0$ with $\mu(K)$ being compact such that \begin{enumerate}
			\item 
			for all $k\in\mathbb{N}$, 
			\begin{align}  \label{eq:mu_mean_small}
				\mathbb{E}\left[X_n\left( \left|\mu_n\right|^k\chi_{K^c} \right)\right]  & = O_n(n^{-\epsilon}), 
			\end{align} 
			where $\chi_{K^c}(x)=1$ for $x\in K^c$ and zero otherwise.  \item   $\mu_n$  has a uniform limit $\mu$, i.e. 
			\begin{align}
				\lim_{n\to\infty}\sup_{x\in \frac{1}{n}\mathbb{Z}\cap K}\left|\mu_n(x)-\mu(x)\right| = 0 \label{eq:uniform_mu},
			\end{align}
			and $\mu$ is strictly monotone on $K$. 
		\end{enumerate}
		Then, for any $f$ such that $f\circ\mu^{-1}$ is continuous with at most polynomial growth at infinity, we have 
		\begin{align} \label{eq:lln_gen}
			\frac{1}{n}X_n(f)\to\frac{1}{\pi}\int_0^1\int_{\mu^{-1}(b(\xi)-2a(\xi))}^{\mu^{-1}(b(\xi)+2a(\xi))}f(x)\frac{d\mu(x)d\xi}{\sqrt{4a(\xi)^2-(\mu(x)-b(\xi))^2}}+\int_{a^{-1}(\{0\})}f\circ\mu^{-1}(b(\xi))d\xi,
		\end{align} 
		as $n \to \infty$ almost surely.
	\end{theorem}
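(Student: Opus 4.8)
The plan is to bootstrap from Theorem \ref{limit_density_poly}, which already handles linear statistics of polynomials in $\mu_n$, to the larger class of test functions $f$ with $f\circ\mu^{-1}$ continuous of polynomial growth. First I would observe that the measure appearing on the right-hand side of \eqref{eq:lln_gen} is the pushforward under $\mu^{-1}$ of the measure
\begin{align*}
    \lambda := \frac{1}{\pi}\int_0^1 \mathbbm{1}_{[b(\xi)-2a(\xi),\,b(\xi)+2a(\xi)]}(x)\,\frac{dx\,d\xi}{\sqrt{4a(\xi)^2-(x-b(\xi))^2}} + \int_{a^{-1}(\{0\})}\delta_{b(\xi)}\,d\xi
\end{align*}
on the real line, so that \eqref{eq:meanpolyGen} reads $\frac1n X_n(p(\mu_n))\to\int p\,d\lambda$ a.s.\ for every polynomial $p$; the target in \eqref{eq:lln_gen} is then $\int (f\circ\mu^{-1})\,d\lambda = \int g\,d\lambda$ where $g:=f\circ\mu^{-1}$ is continuous with polynomial growth. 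Since each coordinate-recurrence interval $[b(\xi)-2a(\xi),b(\xi)+2a(\xi)]$ is bounded (as $a,b$ are continuous on $[0,1]$) the support of $\lambda$ is a bounded set $L\subset\mathbb R$, and $\mu^{-1}(L)$ is the relevant compact set on the $x$-side.

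The core of the argument is a truncation-plus-approximation step. I would split $X_n(f) = X_n(f\chi_K) + X_n(f\chi_{K^c})$. For the tail, hypothesis (1) with the observation that $|f| = |g\circ\mu| \le C(1+|\mu|^k)$ for some $k$ (polynomial growth of $g$, using that $\mu$ is a genuine strictly monotone map on $K$ so $|\mu_n|$ controls $|f|$ there up to the uniform error) gives $\mathbb E|X_n(f\chi_{K^c})| = O(n^{-\epsilon})$, hence $\frac1n X_n(f\chi_{K^c})\to 0$ a.s.\ after a Borel–Cantelli argument along a subsequence combined with monotonicity, or more simply in $L^1$ which suffices once we also control the polynomial statistics. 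For the bulk term on $K$: given $\delta>0$, use uniform convergence \eqref{eq:uniform_mu} and strict monotonicity of $\mu$ to choose a polynomial $p$ (in the variable $\mu$) with $\sup_{x\in L}|g(x)-p(x)|<\delta$ — this is Weierstrass approximation on the compact set $L$, applied to $g$. Then $f\chi_K = (g\circ\mu)\chi_K$ is uniformly within $\delta$ of $(p\circ\mu)\chi_K = p(\mu_n)\chi_K + (\text{error from }\mu_n\to\mu)$ on the lattice points in $K$; the error is $o(1)$ uniformly by \eqref{eq:uniform_mu} together with the fact that $p$ is Lipschitz on the bounded range of $\mu_n$. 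Therefore
\begin{align*}
    \Bigl|\tfrac1n X_n(f\chi_K) - \tfrac1n X_n(p(\mu_n))\Bigr| \le \tfrac1n\#\{j: x_j\in K\}\cdot\bigl(\delta + o(1)\bigr) + \tfrac1n X_n(|p(\mu_n)|\chi_{K^c}),
\end{align*}
and $\frac1n\#\{j:x_j\in K\}\le 1$ while the last term $\to 0$ by hypothesis (1) again (applied to the polynomial $p$). Combining with $\frac1n X_n(p(\mu_n))\to\int p\,d\lambda$ from Theorem \ref{limit_density_poly} and $|\int p\,d\lambda - \int g\,d\lambda|\le\delta\,\lambda(\mathbb R)$ (with $\lambda(\mathbb R)=1$, which itself follows from \eqref{eq:meanpolyGen} with $p\equiv1$), I get $\limsup_n|\frac1n X_n(f)-\int g\,d\lambda|\le 2\delta$ a.s.; letting $\delta\to 0$ along a countable sequence finishes the proof.

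The main obstacle I anticipate is the bookkeeping around the tail contribution of the \emph{approximating polynomial} $p$: hypothesis \eqref{eq:mu_mean_small} is stated for the fixed powers $|\mu_n|^k$, and one must make sure the constant implicit in $O_n(n^{-\epsilon})$ does not blow up as $\delta\to 0$ (i.e.\ as the degree and coefficients of $p$ grow). This is handled by choosing $\delta$, hence $p$, \emph{before} taking $n\to\infty$: for each fixed $\delta$ the polynomial $p$ is fixed, so $\frac1n X_n(|p(\mu_n)|\chi_{K^c})\to 0$ is a statement about finitely many fixed powers and hypothesis (1) applies directly; only afterwards does one let $\delta\to0$. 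A secondary technical point is upgrading the $L^1$/in-probability statements to almost-sure convergence — this is where I would invoke the a.s.\ convergence already granted by Theorem \ref{limit_density_poly} for the countable family of polynomials $p$ with rational coefficients, intersect the corresponding full-measure events, and note that the deterministic estimates above then hold on that single full-measure event for every $\delta$ simultaneously.
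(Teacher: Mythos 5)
Your proposal follows essentially the same route as the paper's proof: approximate $f\circ\mu^{-1}$ by a polynomial on a compact set via Stone--Weierstrass, replace $p\circ\mu$ by $p\circ\mu_n$ using the uniform convergence \eqref{eq:uniform_mu}, invoke Theorem \ref{limit_density_poly} for the polynomial statistic, and control the contribution off $K$ through hypothesis \eqref{eq:mu_mean_small}, finally intersecting the full-measure events for a countable family of approximating polynomials. The one place where your write-up, as stated, does not deliver the claimed conclusion is the almost-sure treatment of the tail term $\tfrac1n X_n(f\chi_{K^c})$: convergence in $L^1$ does not imply a.s.\ convergence, and there is no monotonicity in $n$ to exploit (the point process itself changes with $n$), so neither of the two upgrades you sketch works as written. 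The repair is cheap and already contained in your ingredients: Markov applied to the normalized statistic gives $\mathbb{P}\bigl(\tfrac1n X_n(|f|\chi_{K^c})>\delta\bigr)\le \mathbb{E}[X_n(|f|\chi_{K^c})]/(n\delta)=O(n^{-1-\epsilon})$, which is summable, so Borel--Cantelli applies directly; the paper instead isolates a lemma bounding $\Var X_n(|p\circ\mu_n|\chi_{K^c})\le 2n\,\mathbb{E}[X_n(|p\circ\mu_n|^2\chi_{K^c})]=O(n^{1-\epsilon})$ and concludes by Chebyshev and Borel--Cantelli. A secondary, cosmetic point: carry out the Weierstrass approximation on a compact set containing both $\mu(K)$ and the support of the limiting measure (e.g.\ their union), since you use the uniform bound both on the lattice points in $K$ and under the limiting integral.
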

	
	Note that if the supports $S_n$ of $x\mapsto w(xn)$ are finite and contained in a compact interval, then condition (1) of Theorem~\ref{thm:LLNcontinuousfunctions} is redundant. 
	
Although the extension to continuous functions is rather direct, the extension to piecewise continuous functions (note that the height function for the lozenge tilings, as discussed in the introduction, is a linear statistic with respect to a step function) does not hold without further conditions. 
\begin{theorem} \label{thm:LLNlpfunctions}
Consider a discrete point process $\{x_j\}_{j=0}^{n-1}$  with its joint distribution proportional to \eqref{bi-ope}. Let all the assumptions in Theorem~\ref{thm:LLNcontinuousfunctions} be satisfied and $a(\xi)$ and $b(\xi)$ be the limiting continuous functions in \eqref{ass:slowly_varying_recurrence}. Additionally, assume that there exists a $\gamma>2$ such that 
\begin{equation} \label{eq:conda}
\int_0^1 |a(\xi)|^{-\frac 1 \gamma } d \xi <\infty.
\end{equation}
Consider Riemann integrable functions $f$ and $f \circ \mu^{-1}$ on $K$ and $\mu(K)$ respectively, such that $f\circ \mu^{-1}$ has at most polynomial growth at infinity. Then we have 
\begin{align} \label{eq:lln_integr}
			\frac{1}{n}X_n(f)\to\frac{1}{\pi}\int_0^1\int_{\mu^{-1}(b(\xi)-2a(\xi))}^{\mu^{-1}(b(\xi)+2a(\xi))}f(x)\frac{d\mu(x)d\xi}{\sqrt{4a(\xi)^2-(\mu(x)-b(\xi))^2}}
		\end{align} 
		as $n \to \infty$ almost surely.
\end{theorem}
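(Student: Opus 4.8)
### Proof proposal for Theorem~\ref{thm:LLNlpfunctions}

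The plan is to deduce the result from Theorem~\ref{thm:LLNcontinuousfunctions} by a sandwich/approximation argument. Observe first that the claimed limit in \eqref{eq:lln_integr} is exactly the first term on the right-hand side of \eqref{eq:lln_gen}; the contribution $\int_{a^{-1}(\{0\})} f\circ\mu^{-1}(b(\xi))\,d\xi$ that appears in Theorem~\ref{thm:LLNcontinuousfunctions} must drop out, and condition \eqref{eq:conda} (with $\gamma>2$, so in particular $a^{-1}(\{0\})$ has Lebesgue measure zero since $|a|^{-1/\gamma}$ is integrable) is precisely what kills it. So the logical skeleton is: (i) for continuous $f$ the extra term already vanishes under \eqref{eq:conda}, so Theorem~\ref{thm:LLNcontinuousfunctions} gives \eqref{eq:lln_integr} for continuous $f$; (ii) pass from continuous to Riemann-integrable $f$ by upper/lower approximation, controlling the error uniformly in $n$.

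For step (ii), since $g:=f\circ\mu^{-1}$ is Riemann integrable on the compact set $\mu(K)$, for every $\delta>0$ one can find continuous functions $g^-\le g\le g^+$ on $\mu(K)$ with $\int_{\mu(K)}(g^+-g^-)\,d(\text{Leb})$ small; pulling back through $\mu$ (which is a strictly monotone homeomorphism of $K$ onto $\mu(K)$) gives continuous $f^\pm=g^\pm\circ\mu$ on $K$ with $f^-\le f\le f^+$. Off $K$ one uses hypothesis \eqref{eq:mu_mean_small}: the polynomial-growth bound on $g$ together with $\mathbb{E}[X_n(|\mu_n|^k\chi_{K^c})]=O(n^{-\epsilon})$ shows $\frac1n\mathbb{E}|X_n(f\chi_{K^c})|\to 0$, and similarly for the $f^\pm$, so the behaviour outside $K$ is negligible in the almost-sure limit (after the standard Borel--Cantelli argument along a subsequence, or directly via the variance estimates that already underlie Theorem~\ref{thm:LLNcontinuousfunctions}). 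Hence
\begin{align*}
\frac1n X_n(f^-)\ \le\ \frac1n X_n(f)\ \le\ \frac1n X_n(f^+)
\end{align*}
up to $o(1)$ errors, and applying the continuous case to $f^\pm$ sandwiches $\liminf$ and $\limsup$ of $\frac1n X_n(f)$ between the integrals of $f^\pm$ against the density $\frac1\pi\big(4a(\xi)^2-(\mu(x)-b(\xi))^2\big)^{-1/2}\,d\mu(x)\,d\xi$. Letting $\delta\to0$ and using that this measure is absolutely continuous (no atoms on $a^{-1}(\{0\})$, again by \eqref{eq:conda}) forces both to equal $\frac1\pi\iint f(x)\,d\mu(x)\,d\xi/\sqrt{4a(\xi)^2-(\mu(x)-b(\xi))^2}$.

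The one genuinely delicate point — and where condition \eqref{eq:conda} does real work rather than bookkeeping — is verifying that the limiting measure assigns no mass to the set where the density blows up, i.e. that $\iint (4a(\xi)^2-(\mu(x)-b(\xi))^2)^{-1/2}\,d\mu(x)\,d\xi$ restricted to a neighbourhood of $\{\xi: a(\xi)=0\}$ is small, and more importantly that the discrepancy $\frac1n X_n(f^+)-\frac1n X_n(f^-)$ is controlled \emph{uniformly in $n$} near those $\xi$. Performing the inner $x$-integral (arcsine law) shows $\int_{\mu^{-1}(b(\xi)\pm 2a(\xi))} (g^+-g^-)\,d\mu/\sqrt{\cdots}\le \pi\,\|g^+-g^-\|_\infty$ when $a(\xi)>0$, which is harmless, but one also needs that the measure-theoretic weight of the bad $\xi$-set is $o(1)$ — this is where the Hölder-type integrability $\int_0^1|a|^{-1/\gamma}<\infty$ lets one bound the relevant tail contribution. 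I would isolate this as a lemma: for $\eta>0$ let $E_\eta=\{\xi: a(\xi)<\eta\}$; show $\int_{E_\eta}\!\int f\,d\mu(x)\,d\xi/\sqrt{4a^2-(\mu(x)-b)^2}\to0$ as $\eta\to0$, and separately that $\limsup_n \frac1n X_n(|f|\,\chi_{\{j/n\in E_\eta\}})$ can be made small, the latter by comparing with the Chebyshev-type polynomials on $[b(\xi)-2a(\xi),b(\xi)+2a(\xi)]$ exactly as in the proof of Theorem~\ref{limit_density_poly}. Granting that lemma, the sandwich closes and the theorem follows.
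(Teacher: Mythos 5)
Your overall architecture --- sandwich a Riemann-integrable $f$ between continuous $f^\pm$, apply Theorem \ref{thm:LLNcontinuousfunctions} to $f^\pm$ (the extra term over $a^{-1}(\{0\})$ vanishing because \eqref{eq:conda} forces $|a^{-1}(\{0\})|=0$), and then refine the approximation --- is viable and is genuinely different from the paper's proof. The paper does not sandwich: it approximates $f\circ\mu^{-1}$ by a single continuous $p$, chosen close in $L^{\gamma}\left(\mu(K),\,dx+d|\mu^{-1}|\right)$, controls the finite-$n$ error deterministically by the full lattice Riemann sum $\frac1n\sum_{x\in K\cap\frac1n\mathbb Z}|f(x)-p(\mu(x))|$ (there is at most one particle per lattice site), and controls the limit-side error by a Hölder inequality whose $\xi$-integral is finite precisely because of \eqref{eq:conda} (estimate \eqref{alomst sure:limitbound}).

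However, your closing step has a genuine gap. For a Riemann-integrable $g=f\circ\mu^{-1}$ with jumps (the relevant case, e.g.\ the height function), one can choose continuous $g^-\le g\le g^+$ with $\int(g^+-g^-)$ small, but never with $\|g^+-g^-\|_\infty$ small: the sup norm stays of order one near every discontinuity. Hence your bound on the good set $\{a(\xi)\ge\eta\}$, namely that the inner $x$-integral is at most $\pi\|g^+-g^-\|_\infty$ and therefore ``harmless'', does not produce a quantity tending to zero --- it is the main term, and the sandwich does not close as written. What is actually needed is to pair the $L^\gamma$-smallness of $g^+-g^-$ against the $L^{\eta}$-norm ($\tfrac1\gamma+\tfrac1\eta=1$, $\eta<2$) of the arcsine density, which yields the factor $|2a(\xi)|^{-1/\gamma}$ for \emph{every} $\xi$, and then \eqref{eq:conda} to integrate that factor over $[0,1]$; this is exactly the paper's key inequality. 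Your auxiliary lemma is also problematic: $\frac1n X_n\bigl(|f|\,\chi_{\{j/n\in E_\eta\}}\bigr)$ is not a well-defined linear statistic, since the indicator lives in the recurrence-index variable $\xi=j/n$ rather than in the particle position, and the proposed ``Chebyshev-type comparison as in Theorem \ref{limit_density_poly}'' only makes sense for polynomial test functions through the trace formula \eqref{mean:1d}. Fortunately no such uniform-in-$n$ control near small $a(\xi)$ is needed: once $f^\pm$ are fixed, Theorem \ref{thm:LLNcontinuousfunctions} gives the almost-sure limits of $\frac1n X_n(f^\pm)$, and the only remaining task is the deterministic continuity of the limiting functional, i.e.\ the Hölder bound just described. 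With that replacement your sandwich argument does go through and gives a legitimate alternative to the paper's route.
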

 The condition \eqref{eq:conda} implies that $a^{-1}(\{0\})$ is a set of measure $0$. In many examples of interest,  $a(\xi) $ has at most finitely many zeros and their order is low. In such cases, condition \eqref{eq:conda} is satisfied. At first glance, condition \eqref{eq:conda}  may look a bit odd, but it is the natural condition that makes the right-hand side of \eqref{eq:lln_integr} a continuous linear function on the space $L^p(\mu(K),dx)$ (up to composition with $\mu^{-1}$), as we will see in the proof.

Note that by changing the order of integration in \eqref{eq:lln_gen}, one obtains a formula for the limiting mean density of points. To this end,  define 
	\begin{equation} \label{eq:defIxi}
	I(x)=\left\{ \xi \in (0,1) \mid b(\xi)-2 a(\xi) \leq \mu(x) \leq b(\xi)+2 a(\xi)\right\}.
	\end{equation}
	Then, we can rewrite the right-hand side of \eqref{eq:lln_gen} as 
	\begin{align*}
		\int f(x) \rho(x) dx,
	\end{align*}
	where the density $\rho(x)$ is defined as 
	\begin{align}
		\rho(x)dx& =  \frac{1}{\pi }\int_{I(x)}\frac{d\xi  d\mu(x)}{\sqrt{4a(\xi)^2-(\mu(x)-b(\xi))^2}}, \label{eq:limit_shape_density}
	\end{align} 
	on the set
	$$
	x \in \mu^{-1} \left(\min _{\xi \in [0,1]}\left(b(\xi)-2 a(\xi)\right),\max _{\xi \in [0,1]} \left(b(\xi)+2 a(\xi)\right)\right).
	$$
	Remarkably, in various interesting cases, one can compute the integral over $\xi$ and find an explicit expression for the density.

	
 \subsection{Global fluctuations}
	In the next step, we study the fluctuations for the linear statistics $X_n(f)$ around its mean $\mathbb E [X_n(f)]$. In order to control the slope of the test function, we introduce the Lipschitz constant for any given function on a real domain $f\colon I\to \mathbb{R} $ and subset $E\subset \mathbb{R}$, 
	\begin{align}\label{def: lipschitz_seminorm}
		L_E(f)\coloneqq & \sup_{x\neq y \in E \cap I}\left| \frac{f(x)-f(y)}{x-y} \right| .
	\end{align}
	If $E=I$ we drop the subscript $E$, i.e., $L(f)$. The following theorem is an extension of Theorem 2.3 in  \cite{breuer2017central}. The main difference is in the presence of $\mu_n$. In particular, we need a non-trivial condition on the convergence of $\mu_n$ to $\mu$.
 \begin{theorem}\label{CLT_q_polynomial}
		Consider a discrete point process $\{x_j\}_{j=1}^{n}$ with its joint distribution proportional to \eqref{bi-ope}. 
		Assume there exist a function $\mu$ and a subset $K\subset \mathbb{R}$ such that $\mu(K)$ is compact, $\sup_{x\in K}|\mu(x)|<\infty $, $ L_K(\mu)<\infty $ (as defined in \eqref{def: lipschitz_seminorm}), and
		\begin{align}  
			\mathbb{E}\left[X_n\left( \left|\mu_n\right|^k\chi_{K^c} \right)\right] & = o_n(n^{-1}),\label{eq:mu_mean_small1}
		\end{align} 
  as $n \to \infty$, and
		\begin{align} \label{ass: mu smooth}
			\lim_{n\to\infty}\sup_{x\in \frac{1}{n}\mathbb{Z}\cap K}\left|\mu_n(x)-\mu(x)\right| = 0 , \quad \lim_{n\to\infty}L_K(\mu\circ\mu_n^{-1}-id) = 0 ,
		\end{align} 
		where $id(x)=x$. Assume further that there exist two real numbers $a(1), b(1)$ such that, for any fixed $j$,
		\begin{align}
			a_{j+n,n}\to a(1), \quad b_{j+n,n}\to b(1) \quad \text{as } n\to\infty .
		\end{align}
		Consider a  test function $f$ such that $f\circ \mu^{-1}$ is continuously differentiable on $\mu(K)$ and $f\circ \mu^{-1}$ grows at most polynomially at infinity.  Then, as $n\to \infty$, we have \begin{align}\label{eq:limitingvariance1}
			X_n(f)-\mathbb{E}[X_n(f)] & \to\mathcal{N}(0,\sum_{k\geq 1}k\left|\widehat{f\circ\mu^{-1}}_k\right|^2), \end{align}
   in distribution, where 
   \begin{align}
			\widehat{f\circ\mu^{-1}}_k & \coloneqq \frac{1}{2\pi}\int_0^{2\pi}f\circ\mu^{-1}(2a(1) \cos\theta+b(1))e^{-ik\theta}d\theta \label{eq:limitingvariance2}
		\end{align}
		and $\mu^{-1}(x)$ is the inverse function of $\mu(x)$.
	\end{theorem}
	
	Note that the conditions for the Central Limit Theorem are weaker than the conditions from the Law of Large Numbers. We only require \eqref{ass:slowly_varying_recurrence} to hold only for $\xi=1$ for Theorem~\ref{CLT_q_polynomial} and not necessarily for all $0\leq \xi \leq 1$.  However, Theorems~\ref{thm:LLNcontinuousfunctions} and~\ref{thm:LLNlpfunctions} need  \eqref{ass:slowly_varying_recurrence} to be satisfied for all  $0\leq \xi \leq 1$.

	What is different from the case of Orthogonal Polynomial Ensembles is the presence of $\mu$. Note that if $\mu$ is differentiable then  $f \circ \mu^{-1} \in C^1(I)$ means that 
    \begin{align}
        f'(x)=\mu'(x) g'(\mu(x)), \label{eq: singular test fun restriction}
    \end{align}
    for some $g \in C^1(I)$.  In general, if $\mu$ is differentiable with a strictly positive (or negative) derivative, then the above is valid for all $f \in C^1 (I)$. However, if $\mu'$ vanishes at certain points, then so does $f$. It means that we are restricting our class of test functions to $C^1$ functions whose derivative vanishes at the zeros of $\mu'$. This will happen in some degenerate examples.


\subsection{Extended ensembles}\label{subsec:dynamic}
The classical orthogonal polynomial ensembles have natural time-dependent extensions (for a general discussion on extended processes, we refer to \cite{duits2018global,JohLec} and the references therein). For such extensions, the global fluctuations can be analyzed in terms of the recurrence coefficients as above and the spectrum of the underlying generator that drives the dynamics \cite{duits2018global}. As we will see for the tiling model of the hexagon from the Introduction, also the $q$-Racah ensembles have such a natural  extension and this is why we will discuss the generalization of \cite{duits2018global} to a class of  processes that extend \eqref{eq:qOPE}.

Let $N$ be a fixed integer, and $$t_0<t_1<t_2<\dots<t_N$$ be a real-valued sequence. Let $\mu_n(\cdot,\cdot)$ be a given real-valued function on $\{t_0,t_1,\dots, t_N\}\times \frac{1}{n}\mathbb{Z}$ and, for  $m=1,2,\dots, N$,  let $w_{n,t_m}(nx)$ be an orthogonal weight on $\frac{1}{n}\mathbb{Z}$. For $m=1,\ldots, N$ and $k\in\mathbb N$, let $r_{k,t_m}(nx)$ be defined as the polynomial of order $k$ in $\mu_n(t_m,x)$ with positive leading coefficient such that
 \begin{align}
     \sum_{x\in\frac{1}{n}\mathbb{Z}}r_{j,t_m}(nx)r_{k,t_m}(nx)w_{n,t_m}(nx)=\delta_{j,k}.
 \end{align}
To define the extended process we need transition functions	\begin{align}\label{transition}
		U_{t_m}(x_i,x_j) \coloneqq \sum_{k\geq 0}\frac{c_{k,t_{m+1}}}{c_{k,t_m}}r_{k,t_m}(nx_i)r_{k,t_{m+1}}(nx_j), 
	\end{align}	
	where $c_{k,t_m}>0$. The associated extended point  process$\{(t_m,x_j(t_m))\}_{m=1,j=0}^{N,n-1}$  is then induced by the probability  measure  proportional to	\begin{multline}\label{ensemble}
		\sim \det\left(c_{k,t_0}r_{k,t_0}(nx_j(t_0))\right)_{j,k=1}^{n} \prod_{m=0}^{N-1}\det\left(U_{t_m}\left(nx_{i}(t_m),nx_{j}(t_{m+1})\right)\right)_{i,j=1}^{n} \\
  \times \det\left(\frac{r_{k,t_N}(nx_j(t_N))}{c_{k,t_N}}\right)_{j,k=0}^{n-1}\prod_{m=1}^{N}\prod_{j=1}^{n} w_{n,t_m}(nx_j(t_m)).
	\end{multline}
It is an interesting standard exercise, based on a repeated application of the Cauchy-Binet theorem, the orthogonality of the polynomials and the Vandermonde determinant,  that the marginal densities for the point process $\{(t_m,x_j)\}_{j=1}^n$ for fixed $m$ are of the type \eqref{eq:qOPE} with $\nu_m$ and weight $w_{n,t_m}$. 
  
  The linear statistics for this point process are the random variables
	\begin{align}
		X_n(f) = \sum_{m=1}^{N}\sum_{j=1}^{n} f(t_m,x_j(t_m))
	\end{align}
 for a test function $f$. As is the case for the one-dimensional marginals, the limiting behavior of these linear statistics can be studied using the  coefficients in the recurrence relations
\begin{align}\label{recurrence:time}
		\mu_n(t_m,x)r_{j,t_m}(nx)=a_{j,n}^{(t_m)}r_{j+1,t_m}(nx)+b_{j,n}^{(t_m)}r_{j,t_m}(nx)+a_{j-1,n}^{(t_m)}r_{j-1,t_m}(nx),
	\end{align}
 but we will also need a condition on the limiting behavior of the coefficients $c_{k,t_m}$ in the transition functions \eqref{transition}.

	First, note that since the marginal density of \eqref{ensemble} at any fixed $t_m$ is the same as that of \eqref{bi-ope}, Theorem~\ref{thm:LLNcontinuousfunctions}  still holds for any fixed  $t_m$  and thus also gives the limiting distribution of points for the extended point process. The following is an extension of Theorem \ref{CLT_q_polynomial} and describes the two-dimensional fluctuations.
	
 \begin{theorem}\label{thm:fluctuation_theorem}
		Fix $N\in\mathbb{N}$ and consider a probability measure on $\{t_0, t_1,\dots, t_N\}\times \frac{1}{n}\mathbb{Z}$ of the form \eqref{ensemble} with 
	$$t_0<t_1<t_2<\dots<t_N.$$
		Suppose there exist a function $\mu$ on $\{t_0, t_1,\dots, t_N\}\times \mathbb R$ and compact sets $K^{(t_m)}\subset \mathbb{R}$ such that
  \begin{enumerate}
      \item 
  $\mu(t_m,K^{(t_m)})$ is compact for all $m=1,2, \dots, N$ and thus, in particular,  $\sup_{ x\in K^{(t_m)} }\left|  \mu(t_m, x)\right| <  \infty.$
  \item we have 
		\begin{align}  
	\mathbb{E}\left[X_n\left( \left|\mu_n\right|^k\chi_{K^c} \right)\right] & = o(n^{-1}), \label{ass: moment}
\end{align} 
as $n \to \infty$, where $K^c\coloneqq {K^{(t_1)}}^c\times {K^{(t_2)}}^c\times \dots \times{ K^{(t_N)}}^c \subset \mathbb{R}^N$.
\item for   $m=1,\dots,N$, we have 
\begin{align}
	\lim_{n\to\infty}\sup_{x\in \frac{1}{n}\mathbb{Z}\cap K^{(t_m)}}\left|  \mu_n(t_m,x)-\mu(t_m,x)\right|= & 0, \label{ass:mu_sup}
 \end{align}
 \item for  $m=1,\dots,N$, we have 
 \begin{align}
	\lim_{n\to\infty}L_{K^{(t_m)}}\left(  \mu(t_m, \mu_n^{-1}(t_m, \cdot))-id(\cdot)\right)= & 0, \label{ass:mu_lip}
\end{align}
where $id(x)=x$
\item for  $m=1,\dots,N$, we have 
\begin{align}\label{ass:mu}
	L_{K^{(t_m)}}\left(  \mu(t_m, \cdot)\right) <  \infty.
\end{align} 
\end{enumerate}
Suppose the recurrence coefficients \eqref{recurrence:time} are such that, for any fixed $j\in\mathbb{N}$ and $m=1,2,\dots,N$, we have
\begin{align}\label{ass:cltjaccobi}
	a_{j+n,n}^{(t_m)}\to a(1;t_m) \quad b_{j+n,n}^{(t_m)}\to b(1;t_m)\quad \text{as } n\to\infty ,
\end{align}
for some $b(1;t_m)\in\mathbb{R}$ and $a(1;t_m)>0$, and, finally, for fixed $k,l \in \mathbb N$ we have
\begin{align}\label{ass:cltc}
	\lim_{n\to\infty}\frac{c_{n+k,t_m}}{c_{n+l,t_m}}=e^{\tau_m(l-k)}, 
\end{align}
for some $\tau_1<\tau_2<\dots<\tau_N$. Then for any $f\colon\{t_1,\dots,t_N\}\times\mathbb{R}\to\mathbb{R}$ such that, for each $m$, the restriction $x\mapsto f(t_m,\mu^{-1,(t_m)}(x))$ is continuously differentiable on $\mu(t_m,K^{(m)})$ with at most polynomial growth at infinity,   we have
\begin{align}
	X_n(f)-\mathbb{E}[X_n(f)]\to\mathcal{N}\left(0, \sum_{r_1,r_2=1}^N\sum_{k=1}^\infty ke^{-|\tau_{r_1}-\tau_{r_2}|k}\widehat{f\circ \mu^{-1}}^{(r_1)}_k\widehat{f\circ \mu^{-1}}^{(r_2)}_k\right)\label{eq: CLT Gaussian},
\end{align} 
where 
\begin{align}
	\widehat{f\circ \mu^{-1}}^{(r)}_k=\frac{1}{\pi}\int_0^\pi f\left(t_r,\mu^{-1,(t_r)}\left(b(1;t_r)+2a(1;t_r)\cos\theta\right)\right)\cos(k\theta)d\theta . \label{eq: CLT Gaussian Variance}
\end{align}
and  $\mu^{-1,(t_m)}$ is the inverse of $x\mapsto\mu(t_m,x)$. 
	\end{theorem}

The proof of Theorem~\ref{thm:fluctuation_theorem} will be given in Section~\ref{sec:proofsGen}.


 \section{A first example: $q$-Racah ensemble} \label{sec:q-Racah} In this section, we will  discuss the $q$-Racah ensemble and illustrate some results from Section~\ref{sec:generalOPE}. The computations here will also be relevant for our running example of lozenge tilings of the hexagon with the weight \eqref{eq:ellipticloz}.

\subsection{$q$-Racah ensemble} 
	Our running example will be that of the $q$-Racah ensemble. For $M\in \mathbb N$ set  $-\log_{\bf q} \gamma_n=M+1$. The $q$-Racah ensemble of size $n<M$ is then the probability density on $y_1,\ldots,y_n$ proportional to \eqref{eq:qOPE} with 
	\begin{align} \label{eq:nuRacah}
		\nu_n(y) = \mathbf{q}^{-y}+\delta \gamma_n\mathbf{q}^{y+1},
	\end{align}
		and weight function
\begin{align}
	w(y)= \frac{\left(\alpha \mathbf{q} ,\beta\delta \mathbf{q},\gamma_n \mathbf{q},\gamma_n\delta \mathbf{q};\mathbf{q}\right)_y\left(1-\gamma_n\delta \mathbf{q}^{2y+1}\right)}{\left(\mathbf{q} ,\alpha^{-1}\gamma_n\delta \mathbf{q},\beta^{-1}\gamma_n \mathbf{q},\delta \mathbf{q};\mathbf{q}\right)_y\left(\alpha\beta \mathbf{q}\right)^y\left(1-\gamma_n\delta \mathbf{q}\right)}, \qquad y=0,\ldots,M, \label{eq:q-racah weight}
\end{align} 
where $(x_1,\cdots,x_j;\mathbf{q})_k=(x_1;\mathbf{q})_k\cdots(x_j;\mathbf{q})_k$ and $(x;\mathbf{q})_k=(1-x)(1-x\mathbf{q})\cdots (1-x\mathbf{q}^{k-1})$ is the $q$-Pochhammer symbol. One needs to specify the parameters $\alpha,\beta,\delta,\gamma_n$ such that $w(y)$ is non-negative for all $y$ to become a weight. We do not intend to deal with the most general choice of parameters that do this but discuss a few special choices. 
One readily verifies that $w(y)$ is positive in the following cases:

\begin{align*}
	\text{case 1:} & \qquad 0<\mathbf{q}<1,\qquad \alpha, \beta>0, \qquad \delta\geq 0 ,\qquad \beta\delta<1,\qquad \beta\geq \gamma_n, \qquad \alpha\geq \gamma_n\\
	\text{case 2:} & \qquad
	1<\mathbf{q},\qquad \alpha, \beta>0,\qquad \delta\geq 0 ,\qquad \alpha^{-1}\delta<1,\qquad \beta\leq \gamma_n, \qquad \alpha\leq \gamma_n\\
	\text{case 3:} & \qquad
	0<\mathbf{q}<1,\qquad \alpha, \beta>0, \qquad \delta< 0 ,\qquad \beta\geq \gamma_n, \qquad \alpha\geq \gamma_n\\
	\text{case 4:} & \qquad
	1<\mathbf{q},\qquad \alpha, \beta>0,\qquad \delta< 0 ,\qquad  \beta\leq \gamma_n, \qquad \alpha\leq \gamma_n.
\end{align*}
Note that the cases $0<q<1$ and $q>1$ are dual. Important is that in all four cases we have the following:
$$
\log_{\mathbf q} \alpha \beta <2\log_{\mathbf q} \gamma_n=-2(M+1),
$$
and this holds both for $0<q<1$ and $q>1$. 

Another interesting choice of parameters is known in the literature as the trigonometric case. Here $\mathbf{q},\alpha,\beta,\delta,\gamma_n$ are on the unit circle. In this case, there are some natural conditions on the parameters for which the orthogonality weight $w(y)$ becomes non-negative. Details can, for example, be found in Proposition 6.1. in \cite{van1998multivariable}. Also note that in this case $\nu_n(y)=2\sqrt{\gamma_n\delta\mathbf{q}}\cos\left(\arg \left(\sqrt{\gamma_n\delta\mathbf{q}}\right)y\right)$ is a monotone function after factoring out  $\sqrt{\gamma_n\delta\mathbf{q}}$.

We recall that the $q$-Racah ensembles are our main interest, as they are directly related to the tiling model from the Introduction. Another important feature of the $q$-Racah polynomials is that they are on top of the Askey scheme. That means other classical families and their $q$-analogues can be obtained by taking special limits of the $q$-Racah polynomials. For instance, taking $\delta=0$, $w$ becomes the $q$-Hahn polynomial weight. If one further sets $\alpha=\mathbf q^{\tilde{\alpha}}$, $\beta= \mathbf q^{\tilde{\beta}}$, where $\tilde{\alpha},\tilde{\beta}\in\mathbb{N}$, one gets 
$$
\lim\limits_{\mathbf q\to1}w(y) = \binom{\tilde{\alpha}+y}{y}\binom{\tilde{\beta}+M-y}{M-y},
$$ 
to be the weight for Hahn polynomials. Similarly, setting  $\beta=-\alpha^{-1} p \mathbf q^{-1}$ and $\delta=0$, we have 
$$
\lim\limits_{\mathbf q\to1} \lim\limits_{\alpha\to0}w(y) = \binom{M}{y}p^{-y},
$$ 
to be the weight for Krawtchouk polynomials. For more about $q$-analogues of orthogonal polynomials, one may refer to \cite{koekoek2010hypergeometric}.\\

To apply the results of Section~\ref{sec:generalOPE}, we first need to recall the recurrence relation for the $q$-Racah polynomials.

Denote $R_j(\nu_n(y))$ to be the $q$-Racah orthogonal polynomial of order $j$ which is given by the $q$-hypergeometric function ${}_4\phi_3$ via the $q$-Pochhammer symbol $(\cdot;\mathbf{q})_k$ as follows (for more details we refer to \cite{koekoek2010hypergeometric}), 
\begin{align}\label{eq:q-racah poly}
	R_j(\nu_n(y)) & = {}_4\phi_3\bigg(\begin{matrix}\mathbf{q}^{-j}, \alpha\beta \mathbf{q}^{j+1},\mathbf{q}^{-y},\delta \gamma_n \mathbf{q}^{y+1}\\\alpha \mathbf{q},\delta \beta \mathbf{q},\gamma_n \mathbf{q}\end{matrix}; \mathbf{q},\mathbf{q} \bigg).
\end{align}
Instead of $R_j$ we will work with the monic polynomials $p_j$ defined by   $$p_j(\nu_n(y))=R_j(\nu_n(y))\frac{(\alpha \mathbf{q},\delta \beta \mathbf{q}, \gamma_n \mathbf{q};\mathbf{q})_j}{( \alpha\beta \mathbf{q}^{j+1};\mathbf{q})_j}.$$ Then, the three-term recurrence relation can be written as 
\begin{align}
	\nu_n(y)p_j(\nu_n(y)) & =p_{j+1}(\nu_n(y))+(1+\delta\gamma_n \mathbf{q}-A_j-C_j)p_j(\nu_n(y))+A_{j-1}C_jp_{j-1}(\nu_n(y)), 
\end{align}
where 
\begin{align}
	A_j & =\frac{(1-\gamma_n \mathbf{q}^{j+1})(1-\alpha \mathbf{q}^{j+1})(1- \alpha\beta \mathbf{q}^{j+1})(1-\delta \beta \mathbf{q}^{j+1})}{(1-\alpha\beta \mathbf{q}^{2j+1})(1-\alpha\beta \mathbf{q}^{2j+2})},\label{eq:recurrence A}\\
	C_j & =\frac{\mathbf{q}\left(1-\mathbf{q}^j\right) \left(1-\beta  \mathbf{q}^j\right) \left(\delta -\alpha  \mathbf{q}^j\right) \left(\gamma_n -\alpha  \beta  \mathbf{q}^j\right)}{(1-\alpha\beta \mathbf{q}^{2j})(1- \alpha\beta \mathbf{q}^{2j+1})} \label{eq:recurrence C}. 
\end{align}
In the previous sections, we have used recurrences for the normalized polynomials and not the monic polynomials. The recurrence relation  for the normalized polynomials $r_j(y)$ can be easily computed from the recurrence relation for the monic polynomials, giving
\begin{align}\label{eq: q-racah recurrence}
	\nu_n(y)r_j(y) & =\sqrt{A_jC_{j+1}}r_{j+1}(y)+(1+\delta\gamma_n \mathbf{q}-A_j-C_j)r_j(y)+\sqrt{A_{j-1}C_j}r_{j-1}(y).
\end{align}
Now that we have the recurrence relations spelled out explicitly. We are ready to apply the results from Section~\ref{sec:generalOPE}.


\subsection{Limiting density}

We will discuss the limits of the $q$-Racah ensembles as $n \to \infty$ under the following assumptions:
\begin{align*}
    \mathbf{q}&=q^{\frac{1}{n}} \qquad \text{for $0<q<1$ or $q>1$,}\\
    x&=\frac{y}{n}, \qquad \mu_n(x) = \nu_n(y), \qquad \text{for $x\in\frac{1}{n}\mathbb{Z}$.}\\
    &\gamma_n=q^{-\frac{M+1}{n}},
\end{align*}
Then we take the limit $n \to \infty$ and $M\to \infty$ simultaneously such that $$\lim_{n \to \infty}\gamma_n=\gamma, \text{ with }-\log_q \gamma>1.$$
Taking $n\to\infty$ one can easily see the existence of the limits of $A_j$ and $C_j$ : $$a(\xi)=\lim\limits_{j/n\to\xi} \sqrt{A_{j}C_{j+1}}\,\,\, \text{and}\,\,\,b(\xi)=1+\delta\gamma-\lim\limits_{j/n\to\xi} (A_{j}+C_{j}),$$ 
and here 
$$
0 < \xi < -\log_q \gamma.
$$
Note that we are particularly interested in the values $0 < \xi \leq 1$. But in the discussion below, $\xi>1$ will also play a role.

Finally, $$\mu(x):=\lim\limits_{n\to\infty}\mu_n(x) = q^{-x}+\gamma\delta q^{x}.$$ 
Theorem~\ref{thm:LLNlpfunctions} gives the limiting density of the points in the $q$-Racah ensemble.  We also find the limiting shape density $\rho$, defined via \eqref{eq:limit_shape_density}, as an integral. We can further compute that integral explicitly for the scaled $q$-Racah case, which we will discuss now.

  Central to the result is the equation
    \begin{equation}\label{eq: 4a2-mub2}
      4a(\xi)^2-(\mu(x)-b(\xi))^2=0.
  \end{equation}
  Note that for each $\xi$, there are two solutions for $x$. However, we will be interested in finding solutions $\xi$ in terms of $x$. It turns out that there are at most two real solutions $\xi_\pm(x)$ that we order as $\xi_-(x) \leq \xi_+(x)$ (when they exist). This may not be obvious from \eqref{eq: 4a2-mub2} but can be seen  as follows: With 
    \begin{equation}\label{eq:def:y}
        y(\xi)=q^{-\xi}+\alpha\beta q^{\xi},
    \end{equation} we will see in the proof below that  \eqref{eq: 4a2-mub2} can be rewritten as a quadratic equation in $y(\xi)$, i.e.,
  \begin{align} \label{eq:def:y:ext}
      (4a(\xi)^2-(\mu(x)-b(\xi))^2)(y(\xi)^2-4\alpha\beta) = -\left(\frac{\mu'(x)}{\log(q)}\right)^2y(\xi)^2+h(x)y(\xi)+p(x),
  \end{align}
  where $h(x)$ and $p(x)$ can be computed explicitly. As long as $\mu'(x)\neq 0$, there are two solutions for $y(\xi)$. If, furthermore,  $y(\xi)$  is monotone, we will have two solutions for $\xi$. The latter poses conditions on the parameters of the $q$-Racah ensemble, which leads to the following result:
	\begin{theorem} \label{limit_shape_qracah}
		Assume that the parameters $\alpha, \beta$ are either $\alpha\beta\leq0$ or $\log_q(\alpha\beta)< 2\log_q \gamma$.  If $\mu'(x) \neq 0$, then
  there are two real solutions $\xi_\pm(x)$ to \eqref{eq: 4a2-mub2}, ordered as $\xi_-(x)\leq \xi_+(x)$. 
	Then, as $n \to \infty,$ we have that the limiting density for the rescaled $q$-Racah ensemble is given by
        \begin{align}\label{eq:limitshape}
			\rho(x)= \begin{cases} 1 & \quad \xi_-(x)<\xi_+(x)\leq 1\\ \frac{1}{\pi}\arccos\left(\frac{2y(1)-y(\xi_-(x))-y(\xi_+(x))}{\left|y(\xi_-(x))-y(\xi_+(x))\right|}\right)& \quad \xi_-(x)<1<\xi_+(x) \\ 0 &\quad 1\leq \xi_-(x) <\xi_+(x).
			\end{cases}
		\end{align}
  with $y(\xi)$  as defined in \eqref{eq:def:y}.

	\end{theorem}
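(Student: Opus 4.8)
The plan is to apply Theorem \ref{thm:LLNlpfunctions} to compute the density $\rho(x)$ in \eqref{eq:limit_shape_density} by explicitly carrying out the $\xi$-integration over the set $I(x)$ defined in \eqref{eq:defIxi}. First I would record the explicit limits of the recurrence coefficients. From the formulas \eqref{eq:recurrence A} and \eqref{eq:recurrence C} for $A_j$ and $C_j$, substituting $\mathbf q = q^{1/n}$ and $\gamma_n = q^{-(M+1)/n}$ and letting $j/n \to \xi$, one obtains $a(\xi) = \lim \sqrt{A_j C_{j+1}}$ and $b(\xi) = 1 + \delta\gamma - \lim(A_j + C_j)$ as closed-form expressions in $q^\xi$. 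The key algebraic observation is that, after clearing denominators, the quantity $4a(\xi)^2 - (\mu(x) - b(\xi))^2$ becomes a rational function of $q^\xi$ whose numerator, when multiplied by $y(\xi)^2 - 4\alpha\beta$ with $y(\xi) = q^{-\xi} + \alpha\beta q^\xi$, collapses to the quadratic \eqref{eq:def:y:ext} in $y(\xi)$. I would verify this identity and extract the explicit coefficients $h(x)$ and $p(x)$; the leading coefficient $-(\mu'(x)/\log q)^2$ is forced by matching the $y(\xi)^2$ term (using $\mu'(x) = -\log q\,(q^{-x} - \gamma\delta q^x)$).

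Next I would analyze the sign structure. Under the hypothesis $\alpha\beta \le 0$ or $\log_q(\alpha\beta) < 2\log_q\gamma$, the map $\xi \mapsto y(\xi)$ is strictly monotone on the relevant range $0 < \xi < -\log_q\gamma$ (this is where the parameter condition is used: it guarantees $y'(\xi)$ does not change sign, i.e. the critical point $q^{2\xi} = 1/(\alpha\beta)$ either does not exist or lies outside the interval). Hence solving \eqref{eq:def:y:ext} for $y(\xi) = 0$ — i.e. for the two roots of the right-hand-side quadratic, call them $Y_\pm(x)$ — and inverting $y$ yields the two solutions $\xi_\pm(x)$ of \eqref{eq: 4a2-mub2}, ordered as claimed. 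Since $y(\xi)^2 - 4\alpha\beta > 0$ on the interval (again using the parameter restriction, so $y(\xi)$ stays outside $[-2\sqrt{\alpha\beta}, 2\sqrt{\alpha\beta}]$ when $\alpha\beta \ge 0$, and trivially when $\alpha\beta < 0$), the sign of $4a(\xi)^2 - (\mu(x)-b(\xi))^2$ equals the sign of the quadratic $-(\mu'(x)/\log q)^2 y(\xi)^2 + h(x) y(\xi) + p(x)$ in $y(\xi)$. A downward parabola in $y$ is nonnegative exactly between its roots, so $I(x) = \{\xi : y(\xi) \text{ between } Y_-(x) \text{ and } Y_+(x)\}$, which by monotonicity of $y$ is precisely the interval with endpoints $\xi_-(x)$ and $\xi_+(x)$.

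Then the density is $\rho(x)\,dx = \frac1\pi \int_{I(x) \cap (0,1)} \frac{d\mu(x)\,d\xi}{\sqrt{4a(\xi)^2 - (\mu(x)-b(\xi))^2}}$, where the intersection with $(0,1)$ comes from the range $\xi \in (0,1]$ of the original law of large numbers (only recurrence coefficients with $j/n \to \xi \le 1$ enter, since the ensemble has $n$ points). The three cases in \eqref{eq:limitshape} correspond to $I(x) \cap (0,1)$ being all of $(0,1)$ (giving $\rho = 1$, the frozen/maximal region — here one checks $\int_0^1 (\cdots) d\xi = \pi$, consistent with a saturated density), empty (giving $\rho = 0$), or a proper subinterval $(\xi_-(x), 1)$ in the genuinely random case. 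For the middle case I would change variables from $\xi$ to $u = y(\xi)$, turning the integral into $\frac1\pi \int_{Y(1)}^{?} \frac{du}{\sqrt{(\text{const})(Y_+ - u)(u - Y_-)}}$ — a standard arcsine integral — whose evaluation produces exactly $\frac1\pi \arccos\!\big(\frac{2y(1) - y(\xi_-(x)) - y(\xi_+(x))}{|y(\xi_-(x)) - y(\xi_+(x))|}\big)$ after matching the endpoint $y(1)$. The Jacobian $d\mu(x)\,d\xi / \sqrt{\cdots}$ must be reconciled with $du/\sqrt{\cdots}$ using the identity \eqref{eq:def:y:ext}: the factors of $\mu'(x)$ and $y'(\xi)$ cancel cleanly, which is the real reason the answer is so simple.

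\textbf{Main obstacle.} The bulk of the work — and the step most likely to hide sign errors or extraneous factors — is establishing the algebraic identity \eqref{eq:def:y:ext} with explicit $h(x)$, $p(x)$, and then tracking the Jacobian through the substitution $u = y(\xi)$ so that the square-root integrand reduces to the clean arcsine form. Verifying that the parameter hypothesis $\alpha\beta \le 0$ or $\log_q(\alpha\beta) < 2\log_q\gamma$ is exactly what makes both $\xi \mapsto y(\xi)$ monotone and $y(\xi)^2 - 4\alpha\beta$ positive on $(0, -\log_q\gamma)$ is the conceptually delicate point, but it is a finite check. The passage from the integral over $I(x) \cap (0,1)$ to the piecewise formula, including confirming $\rho \equiv 1$ on the saturated region, is then routine once the substitution is set up.
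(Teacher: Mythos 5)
Your proposal follows essentially the same route as the paper's proof: invoke Theorem \ref{thm:LLNlpfunctions} and \eqref{eq:limit_shape_density}, change variables to $y(\xi)=q^{-\xi}+\alpha\beta q^{\xi}$, establish the quadratic identity \eqref{eq:def:y:ext} with the leading coefficient coming from $4\gamma\delta-\mu(x)^2=-\left(\mu'(x)/\log q\right)^2$, use the parameter hypothesis to make $y$ monotone so that the solutions of \eqref{eq: 4a2-mub2} are $\xi_\pm(x)$ and $I(x)=[0,1]\cap[\xi_-(x),\xi_+(x)]$, evaluate the resulting arcsine-type integral as an arccosine with endpoints capped at $\min(\xi_\pm(x),1)$, and simplify via the root relations expressing $h(x)$ and $p(x)$ in terms of $y(\xi_\pm(x))$ — exactly the paper's argument. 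One small slip in your case analysis: the saturated case $\rho\equiv 1$ corresponds to $I(x)\subseteq[0,1]$ (i.e.\ $\xi_+(x)\le 1$, so the integration runs over the \emph{entire} interval between the two roots and the complete arcsine integral yields $1$), not to $I(x)\cap(0,1)$ being all of $(0,1)$ as you wrote; under your stated characterization the check $\int(\cdots)\,d\xi=\pi$ would fail, but this mislabeling does not affect the method, which otherwise matches the paper.
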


 \begin{figure}[t]
		\begin{subfigure}[b]{0.3\textwidth}
			\includegraphics[scale=0.2]{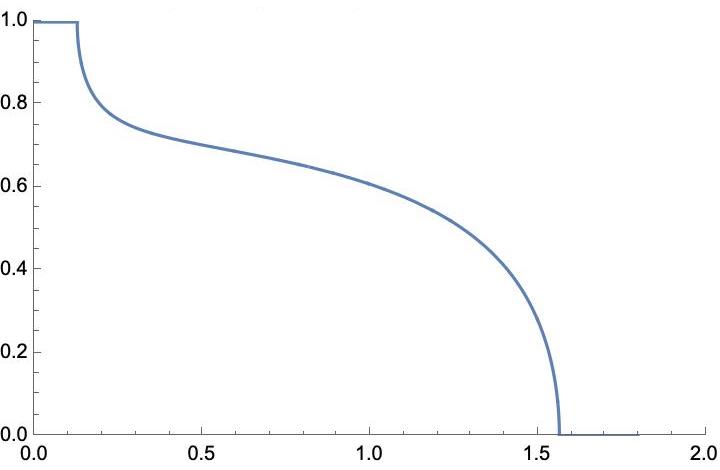}
			\caption{
$\alpha=e^{-2},\beta=e^{-2},\gamma=e^{-1.8},$\\ $\delta=e^2$}
		\end{subfigure}
		\begin{subfigure}[b]{0.3\textwidth}
			\includegraphics[scale=0.2]{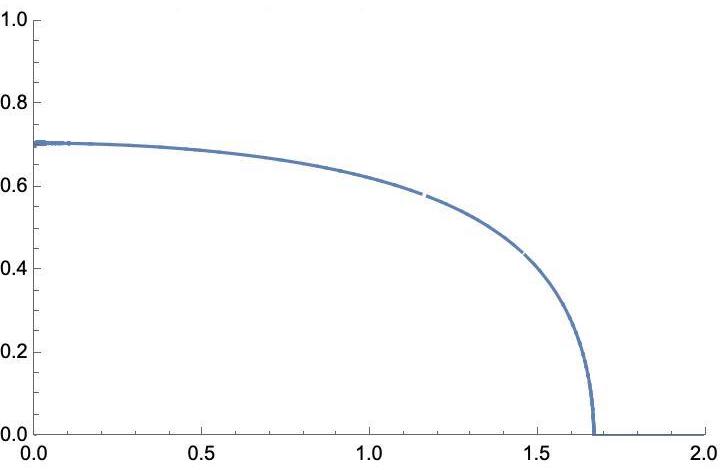}
			\caption{
$\alpha=e^{-2},\beta=e^{-2},\gamma=e^{-2},$\\ $\delta=e^2$}
		\end{subfigure}
  \begin{subfigure}[b]{0.3\textwidth}
			\includegraphics[scale=0.2]{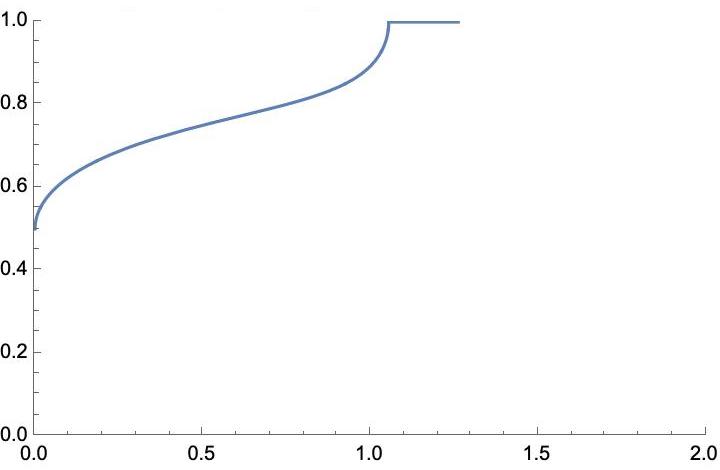}
			\caption{
 $\alpha=4,\beta=4, \gamma=e^{0.18},$\\ $\delta=-841$}
		\end{subfigure}
		\begin{subfigure}[b]{0.3\textwidth}
			\includegraphics[scale=0.2]{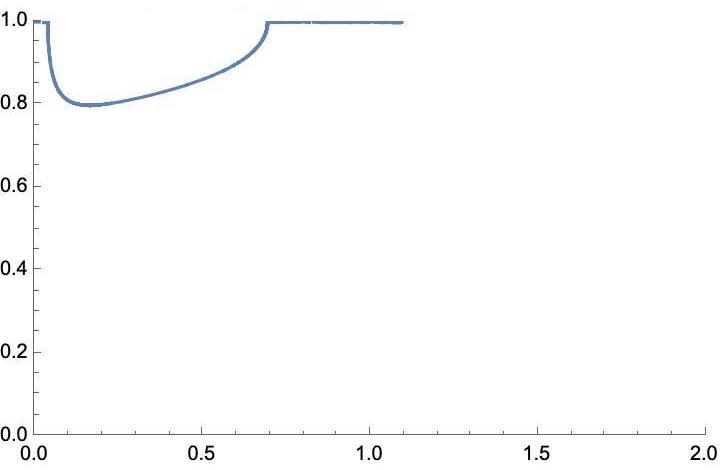}
			\caption{
   $\alpha=4,\beta=4,\gamma=2^{1.1},$\\$\delta=-841$}
		\end{subfigure}
		\begin{subfigure}[b]{0.3\textwidth}
			\includegraphics[scale=0.2]{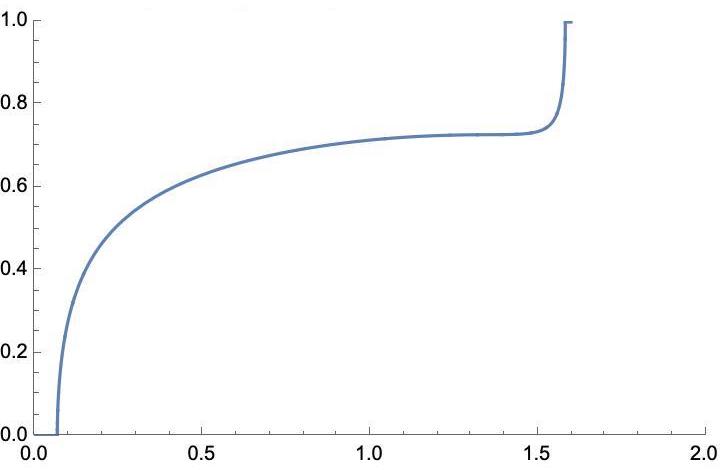}
			\caption{
$\alpha=4,\beta=4,\gamma=2^{1.6},$\\ $\delta=-841$}
		\end{subfigure}
		\begin{subfigure}[b]{0.3\textwidth}
			\includegraphics[scale=0.2]{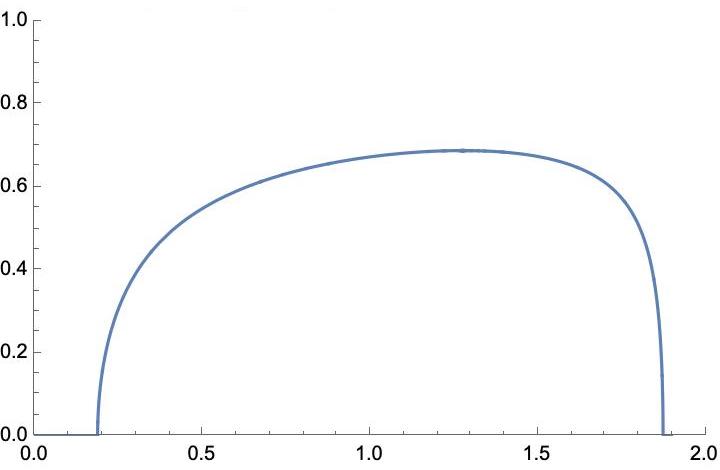}
			\caption{
   $\alpha=4,\beta=4$,$\gamma=2^{1.9}$,\\$\delta=-841$}
		\end{subfigure}
		\caption{Plot of the densities for different choices of parameters.  }
        \label{fig:q-racah density}
	\end{figure}

The proof of this theorem is based on a change of variables, and we postpone this computation to Appendix~\ref{proof:qracahLimitShape}.

We will proceed by making some remarks. First of all, Theorem~\ref{limit_shape_qracah} relies on a change of variables from $\xi$ to $y(\xi)$. For this argument to work, we only need $y(\xi)$ to be monotone for $x \in [0,1]$. Thus $\log_q\alpha \beta<-2\log_q\gamma$ can be replaced by $\log_q\alpha \beta<-2$. However, for simplicity and to ensure that $\xi_\pm(x)$  are well-defined, we work with $\log_q\alpha \beta<-2\log_q\gamma$.

 Figure~\ref{fig:integral} illustrates how to find the $\xi_-(x)$ and $\xi_+(x)$ given the curves by $x=b(\xi)-2a(\xi)$ and $x=b(\xi)+2a(\xi)$. Note that from the recurrence coefficients in \eqref{eq:recurrence A} and \eqref{eq:recurrence C}, it is easy to see that $a(0)=0$ and $a(-\log_q(\gamma))=0$, where $-\log_q \gamma > 1$,  and there are no zeros in between $0$ and $-\log_q(\gamma)$. Hence, the curves enclose a simply connected domain.

 From Theorem~\ref{thm:LLNcontinuousfunctions} we know that the support of the density for $\rho(x)$ is
  \begin{align}
      \mu^{-1} \left(\min\limits_{\xi \in [0,1]}\left(b(\xi)-2 a(\xi)\right),\max \limits_{\xi \in [0,1]} \left(b(\xi)+2 a(\xi)\right)\right). \label{set:continuous}
  \end{align}
For any $x$ in the support, we thus have $\xi_-(x)\leq 1$ (this is also clear from Figure~\ref{fig:integral}).  Then for $x \in [b(1)-2a(1),b(1)+2a(1)]$ we have  $\xi_-(x)<1< \xi_+(1)$. For $x$ in the support but outside the interval $[b(1)-2a(1),b(1)+2a(1)]$ we have $\xi_+(x)<1$.  Note that in the latter region, the limiting density is $1$ meaning the lattice is densely packed with points. Such a region is called saturated in the literature. In the interval $[b(1)-2a(1),b(1)+2a(1)]$, the density is less than one, and one readily verifies that the density is analytic in the interior. At its endpoints $x=b(1)\pm 2 a(1)$, we have  $\xi_-(x)=1$ or $\xi_+(x)=1$ and we will investigate briefly how the density behaves near such points.  

Assume that $q>1$. By a  Taylor approximation around the boundary point $x_0$ such that   $\xi_+(x_0)=1>\xi_-(x_0)$, we find  
\begin{align*}
    \rho(x)=\frac{1}{\pi}\arccos \left(-1-\frac{2y'(1)\xi_+'(x_0)}{y(\xi_-(x_0)-y(1)} (x-x_0) + O(|x-x_0|^2)\right).
\end{align*}
Note that $y'(1)=(-q^{-1}+\alpha\beta q^{1})\log(q)\neq 0$. Also note that $\xi_+'(x_0)=\frac{\mu'(x_0)}{b'(1)+2a'(1)}$, which can be seen by taking the derivative on both sides of $\mu(x)=b(\xi_+(x))+2a(\xi_+(x))$. From here, we see that if $\mu'(x_0)\neq 0$, the density $\rho(x)$ vanishes as a square root at $x_0$. Similar arguments show that if $x_0$ is such that $\xi_+(x_0)>\xi_-(x_0)=1$, then $1-\rho(x)$ has a square root vanishing near $x_0$. This describes the generic behavior near the endpoints $x_0=b(1)\pm 2a(1)$.
For numerical illustrations, we refer to cases (A), (D), (E), and (F), at $x_0=1.67$ in case (B) and at $x_0=1.056$ in case (C) in Figure~\ref{fig:q-racah density}. 
    
\begin{figure}[t]
		\centering
		\includegraphics[scale=0.4]{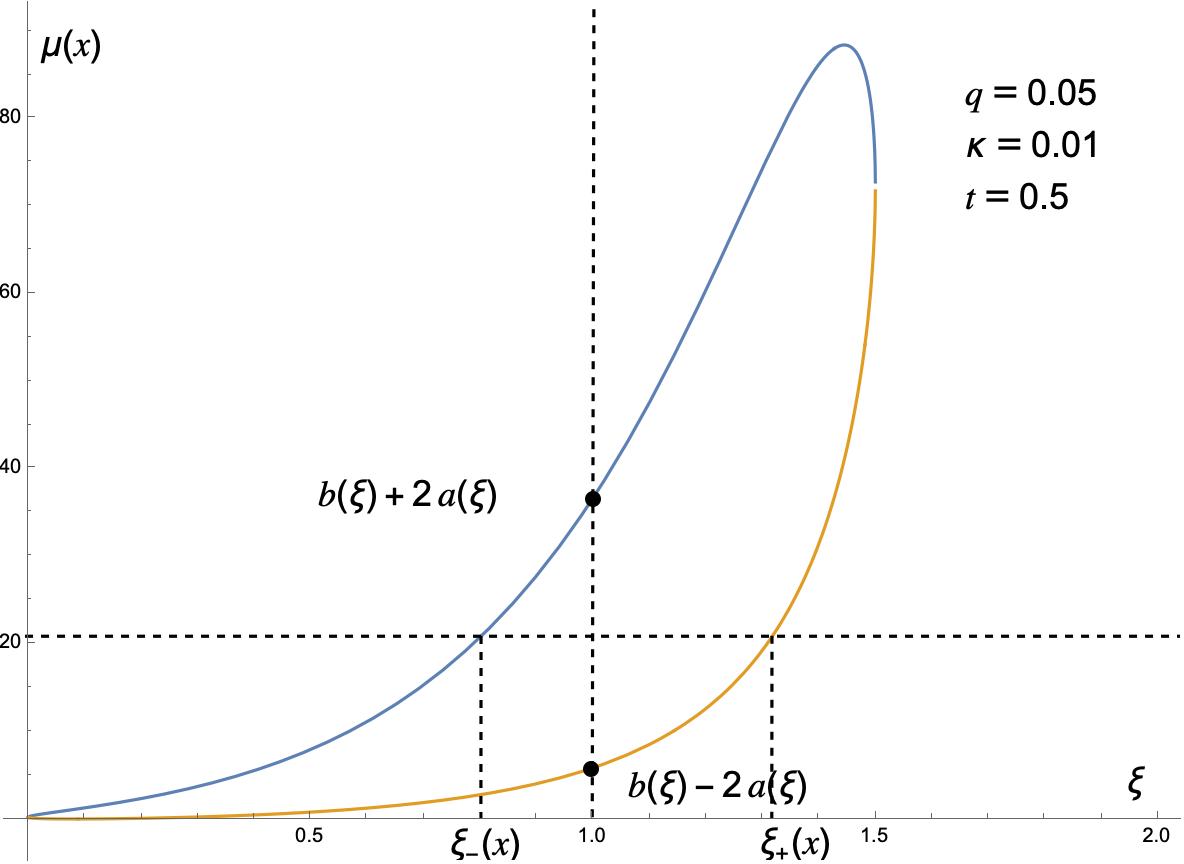}
		\caption{Interval of integration for $q$-Racah with $\alpha,\beta= q^{-2}$, $\gamma= q^{-t-1}$, $\delta=\kappa^2$}
		\label{fig:integral}
\end{figure}
  
Now we turn to special points $x_0$ such that $\xi_+(x_0)=\xi_-(x_0)=1$ and $\mu'(x_0)\neq 0$. At such points, the discriminant of \eqref{eq:def:y:ext} vanishes. The discriminant equals \begin{align}
    16 q^{-4 x} \left(q^x-1\right) \left(\alpha  q^x-1\right) \left(\gamma  q^x-1\right) \left(\delta  q^x-1\right) \left(\gamma  q^x-\beta \right) \left(\beta  \delta  q^x-1\right) \left(\gamma  \delta  q^x-1\right) \left(\gamma  \delta  q^x-\alpha \right), \label{eq: discriminant}
  \end{align}
  and thus, by our choice of parameters,  vanishes only at $x=0$ or $x=-\log_q (\gamma)$.	
  Consider the case $x_0=0$ and the discriminant $\Delta(x)$ as \eqref{eq: discriminant} such that $\lim\limits_{x\to 0}\frac{\Delta(x)}{x}=c>0$. In that case, one has 
\begin{align*}
    \rho(x)=\frac{1}{\pi}\arccos\left(c_1\sqrt{x}+c_2 x^{\frac{3}{2}} + O(x^{\frac{5}{2}})\right), 
\end{align*}
where $c_1=\frac{y'(1)\mu'(0)^32b'(1)}{(\log(q)^2)\sqrt{c}(b'(1)^2-4a'(1)^2)}$ and some $c_2\in\mathbb{R}$. Since we assume $\mu'(0)\neq 0$, we also have $b'(1)^2-4a'(1)^2\neq 0$. 
 If $b'(1)\neq 0$, the limiting density satisfies $\rho(0)=\frac12$ and $\rho(x)-\frac{1}{2}$ vanishes as a square root near $x=0$. See case (C) in Figure~\ref{fig:q-racah density} for an example.
 However, for $b'(1)=0$, and hence $c_1=0$, then one needs to compute $c_2=-\frac{\mu'(0)^4y''(1)}{2\log(q)^2a'(1)\sqrt{c}}\neq 0$ and we see that $\rho(x)-\frac 12$ vanishes with exponent $3/2$.

Finally, we comment on the case $\mu'(x_0)=0$, which we will call the singular case. Since $\mu$ is assumed to be monotone (and $\mu''(x_0)\neq 0$ if $\mu'(x_0)=0$), such points can only be at endpoints of the interval, $x_0=0$ or $x_0=\log_q\gamma$. For this to happen at $x_0=0$, we need $\gamma \delta =1$, and at $x_0=-\log_q\gamma$ we need $\gamma=\delta$.  For example, consider the configuration 
\begin{align*}
    \alpha=\gamma ,\quad \beta=\gamma ,\quad \delta=\frac{1}{\gamma} ,\quad q>1, 
\end{align*}
one can approximate the density around $0$ to be
\begin{align*}
    \rho(x) = \frac{1}{\pi}\arccos\left(\frac{2 \gamma ^2 q^2-(\gamma +1)^2 q+2}{(\gamma -1)^2 q}+\frac{(\gamma +1)^2 (\gamma  q-1)^2 \log ^2(q)}{(\gamma -1)^4 q}x^2+O(x^3)\right).
\end{align*}
Thus the limiting density $\rho(0)$ takes a value possibly different from $0,1$ and $\frac{1}{2}$ and $\rho(x)-\rho(0)$ has a square vanishing near $x=0$. 
It is also important to note that  $\lim\limits_{x\to 0 }\xi_-(x)=0$ and $\lim\limits_{x\to 0 }\xi_+(x)=-\log_q(\gamma)$.  This example is illustrated in the case (B) of Figure~\ref{fig:q-racah density}.

Note for case 1 (listed just below \eqref{eq:q-racah weight}) the limiting density result for $q$-Racah ensemble was first obtained by Dimitrov and Knizel in \cite{dimitrov2019log}. They express the density $$\rho(x)=\frac{1}{\pi}\arccos\left(\frac{R(q^{-x})}{2\sqrt{\Phi^-(q^{-1})\Phi^+(q^{-1})}}\right)$$ in terms of quantities in the $q$-analog of Nekrasov's equation and loops equation (see (7.13) in \cite{dimitrov2019log}). Instead, we characterize the density in terms of recurrence coefficients of the $q$-Racah polynomials. With some explicit computations, one can verify that
 \begin{align*}
     R(q^{-x})= & \frac{q^{-2 x}}{2}\left(\frac{\mu'(x)}{\log(q)}\right)^2\left(2y(1)-y(\xi_-(x))-y(\xi_+(x))\right), \\
     \Phi^-(q^{-1})\Phi^+(q^{-1})= & \frac{q^{-4 x}}{16}\left(\frac{\mu'(x)}{\log(q)}\right)^4\left|y(\xi_-(x))-y(\xi_+(x))\right|.
 \end{align*}
 Thus, our expression agrees with theirs.

 
 \subsection{Trigonometric $q$-Racah ensembles}

 In the following, we will illustrate a trigonometric case. Let $n,M\in\mathbb{N}$ with $n\leq M$. Consider the probability density of $\{y_j\}_{j=1}^n$ follows a $q$-Racah polynomial ensemble defined by \eqref{eq:nuRacah} and \eqref{eq:q-racah weight}. Let $g_{\gamma_n}<0$ such that $-ng_{\gamma_n} =M+1\in\mathbb{N}_+$. Consider the $q$-Racah polynomials defined by \eqref{eq:q-racah poly} with $M$ being the highest order. The recurrence coefficients are given by \eqref{eq:recurrence A}, \eqref{eq:recurrence C} and \eqref{eq: q-racah recurrence}.
In the trigonometric case, the scaling of the parameters are as follows
\begin{align}
	\mathbf{q}= q^{\frac{1}{n}}, \qquad q=e^{ig_q}, \qquad \alpha=q^{g_\alpha}, \qquad \beta=q^{g_\beta}, \qquad \delta=q^{g_\delta}, \qquad \gamma_n=q^{g_{\gamma_n}}.
\end{align}	
 Note that $\alpha,\beta,\delta$ does not depend on $n$. In the rest of this subsection, we only consider the trigonometric case with the following restrictions

\begin{multline}
    0<g_q<\pi, \qquad -\frac{\pi}{g_q}\leq g_{\gamma_n}\leq -1, \qquad \frac{2\pi j}{g_q}\leq g_\alpha\leq g_{\gamma_n} +\frac{2\pi (j+1)}{g_q},\\
    \frac{2\pi j}{g_q}\leq g_\beta \leq g_{\gamma_n} +\frac{2\pi (j+1)}{g_q},\qquad
    \frac{2\pi k}{g_q}\leq g_\alpha+g_\beta \leq 2g_{\gamma_n} +\frac{2\pi (k+1)}{g_q},\\
     \frac{2\pi l}{g_q}  \leq g_\beta+g_\delta \leq \frac{2\pi (l+1)}{g_q}+g_\gamma, \qquad \frac{2\pi m}{g_q}  \leq g_\alpha-g_\delta \leq \frac{2\pi (m+1)}{g_q}+g_\gamma,
\end{multline}
for some $j,k,l,m,\in\mathbb{Z}$ such that $|l-m|$ is an odd integer. One can verify that the $q$-Racah weight $w(y)$ \eqref{eq:q-racah weight} is non-negative for all $y=0,1,\cdots, -ng_{\gamma_n}$. Proposition 6.1. in \cite{van1998multivariable} offers a general criterion for trigonometric $q$-Racah polynomials where $w(y)$ is non-negative.

We make the following scaling assumptions 
 \begin{align*}
      x=\frac{y}{n}, \qquad \text{ for }x\in\frac{1}{n}\mathbb{Z},
 \end{align*}
 and then take the limits $n\to\infty$ and $M\to\infty$ simultaneously such that 
 \begin{align*}
\lim_{n\to\infty}g_{\gamma_n} =g_\gamma, \qquad \text{ for some } g_\gamma\in\left(-\frac{\pi}{g_q},0\right).
 \end{align*} 
 Now, we are ready to state the following result.
 \begin{theorem}[Trigonometric case]\label{limit_shape_qracah tri}
Assume there exist some $k_1,k_2\in\mathbb{Z}$ such that
\begin{align}
 \frac{2k_1\pi}{g_q}< g_\alpha+g_\beta<\frac{2\pi (k_1+1)}{g_q}+2g_\gamma, \qquad \frac{2k_2\pi}{g_q}\leq   g_\gamma+g_\delta\leq\frac{2\pi (k_2+1)}{g_q}+2g_\gamma.
\end{align}
If $\mu'(x) \neq 0$, then
there are two real solutions $\xi_\pm(x)$ to $4a(\xi)^2-(\mu(x)-b(\xi))^2=0$, where $a(\xi),b(\xi)$ are the limits of the recurrence coefficients, ordered as $\xi_-(x)\leq \xi_+(x)$. 
Then, as $n \to \infty,$ we have that the limiting density for the rescaled $q$-Racah ensembles is given by
\begin{align}\label{eq:limitshapetri}
	\rho(x)= \begin{cases} 1 & \quad \xi_-(x)<\xi_+(x)\leq 1\\ \frac{1}{\pi}\arccos\left(\frac{2\tilde{y}(1)-\tilde{y}(\xi_-(x))-\tilde{y}(\xi_+(x))}{\left|\tilde{y}(\xi_-(x))-\tilde{y}(\xi_+(x))\right|}\right)& \quad \xi_-(x)<1<\xi_+(x) \\ 0 &\quad 1\leq \xi_-(x) <\xi_+(x).
	\end{cases}
\end{align}
with $\tilde{y}(\xi) \coloneqq 2\cos\left( g_q\left(\frac{g_\alpha+g_\beta}{2}+\xi\right) \right)$.
\end{theorem}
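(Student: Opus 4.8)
The plan is to mirror the proof of Theorem \ref{limit_shape_qracah} almost verbatim, replacing the algebraic substitution $y(\xi)=q^{-\xi}+\alpha\beta q^{\xi}$ by the trigonometric substitution $\tilde y(\xi)=2\cos\!\left(g_q\!\left(\tfrac{g_\alpha+g_\beta}{2}+\xi\right)\right)$, and then invoking Theorem \ref{thm:LLNlpfunctions} and formula \eqref{eq:limit_shape_density} exactly as before. First I would record the limits of the recurrence coefficients: writing $A(\xi)=\lim_{j/n\to\xi}A_j$ and $C(\xi)=\lim_{j/n\to\xi}C_j$ with $A_j,C_j$ from \eqref{eq:recurrence A}--\eqref{eq:recurrence C}, the key observation is that in the trigonometric regime $\mathbf q=q^{1/n}=e^{ig_q/n}\to 1$, so $\mathbf q^{j+1}\to q^\xi=e^{ig_q\xi}$ and $\alpha\beta\mathbf q^{2j+1}\to q^{g_\alpha+g_\beta+2\xi}$, exactly as in the real case but now all quantities live on the unit circle. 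Thus $A(\xi)-C(\xi)$ and $-A(\xi)-C(\xi)$ are rational functions of $q^\xi$ of precisely the same shape as \eqref{eq:A-C real}--\eqref{eq:-A-C real}; one only needs to track that the resulting $a(\xi)^2=A(\xi)C(\xi)$ and $b(\xi)=1+\gamma\delta-A(\xi)-C(\xi)$ are real (guaranteed by the non-negativity conditions on $w$, which force the spectral data to be real), and that $\mu(x)=\lim\mu_n(x)=q^{-x}+\gamma\delta q^{x}=2\sqrt{\gamma\delta}\,\cos(g_q(\tfrac{g_\gamma+g_\delta}{2}-x)\cdot\!\text{(sign)})$ after factoring $\sqrt{\gamma\delta\mathbf q}$, so that $\mu$ is monotone as noted after \eqref{eq:q-racah weight}.

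Next I would perform the change of variables. In the real case the crucial identity was \eqref{calculation_density_qracah}: $(4a^2-(\mu-b)^2)(y^2-4\alpha\beta)$ equals a quadratic in $y$ with leading coefficient $4\gamma\delta-\mu(x)^2=-(\mu'(x)/\log q)^2$. The trigonometric analogue is obtained by the substitution $q^\xi=e^{ig_q\xi}$, i.e. $\tilde y(\xi)=q^{-\xi}q^{-(g_\alpha+g_\beta)/2}+q^{\xi}q^{(g_\alpha+g_\beta)/2}$ up to the real normalization — concretely $\tilde y(\xi)=2\cos(g_q(\tfrac{g_\alpha+g_\beta}{2}+\xi))$ is the real/imaginary part normalization of $q^{-\xi}+\alpha\beta q^\xi$ divided by $\sqrt{\alpha\beta}=q^{(g_\alpha+g_\beta)/2}$. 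Under this substitution $d\xi = \mp\,(g_q\sqrt{4-\tilde y^2})^{-1}d\tilde y$ (sign depending on monotonicity of $\tilde y$ on $(0,1)$, which is exactly what the hypothesis $\frac{2k_1\pi}{g_q}<g_\alpha+g_\beta<\frac{2\pi(k_1+1)}{g_q}+2g_\gamma$ guarantees, keeping the argument of the cosine in a half-period), and $4a(\xi)^2-(\mu(x)-b(\xi))^2$ becomes $\bigl(-c(x)\tilde y^2+\tilde h(x)\tilde y+\tilde p(x)\bigr)/(4-\tilde y^2)$ for explicit real coefficients, with $c(x)=(\mu'(x)/ (\text{suitable factor}))^2>0$ when $\mu'(x)\neq0$. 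I would then argue, exactly as in Subsection \ref{proof:qracahLimitShape}, that this quadratic has two real roots $\tilde y(\xi_-(x))\le \tilde y(\xi_+(x))$, that $I(x)=[0,1]\cap[\xi_-(x),\xi_+(x)]$, and compute $\rho(x)=\frac1\pi\int_{I(x)}\frac{\mu'(x)\,d\xi}{\sqrt{4a^2-(\mu-b)^2}}$ via the standard arcsine/arccosine integral $\int \frac{dt}{\sqrt{-ct^2+ht+p}}$, arriving at a difference of two $\arccos$ terms which collapses to the three-case formula \eqref{eq:limitshapetri} upon using Vieta's relations $\tilde h/c=\tilde y(\xi_-)+\tilde y(\xi_+)$, $\tilde p/c=-\tilde y(\xi_-)\tilde y(\xi_+)$ just as \eqref{eq:hiny}--\eqref{eq:piny2} were used.

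The main obstacle, and the reason the trigonometric case "requires some special attention", is verifying that the substitution $\xi\mapsto\tilde y(\xi)$ is genuinely a bijection on the relevant interval and that the square-root branch $\sqrt{4-\tilde y^2}$ has a consistent sign there — on the unit circle $q^\xi$ the map $\xi\mapsto q^{-\xi}+\alpha\beta q^\xi$ is only locally injective and can wind around, so one must use the parameter constraints (the inequalities boxing $g_\alpha+g_\beta$ and $g_\gamma+g_\delta$ between consecutive multiples of $2\pi/g_q$, together with $|l-m|$ odd) to confine all the relevant arguments to a single monotonicity branch of $\cos$. A secondary technical point is checking that the zero set $a^{-1}(\{0\})$ still has measure zero and that \eqref{eq:conda} holds (so Theorem \ref{thm:LLNlpfunctions} applies rather than only Theorem \ref{thm:LLNcontinuousfunctions}): one shows $a(0)=0$, $a(-g_\gamma)=0$ with simple zeros and no zeros in between, exactly as in the remark following Theorem \ref{limit_shape_qracah}, using the explicit trigonometric form of $A(\xi)C(\xi)$. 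Everything else is a routine transcription of the real-case computation with $q^\xi$ on the unit circle.
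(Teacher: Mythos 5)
Your proposal is correct and follows essentially the same route as the paper: factor out the unimodular constant $\sqrt{\gamma\delta\mathbf{q}}$ so that the recurrence data and $\mu$ become real and monotone (the paper does this explicitly via $\tilde\nu_n=(\gamma\delta\mathbf{q})^{-1/2}\nu_n$ and $\tilde A_j,\tilde C_j$), apply Theorem \ref{thm:LLNlpfunctions}, make the monotone change of variables $\xi\mapsto\tilde y(\xi)=2\cos\bigl(g_q\bigl(\tfrac{g_\alpha+g_\beta}{2}+\xi\bigr)\bigr)$ justified by the stated parameter constraints, reduce $4a^2-(\mu-b)^2$ to a quadratic in $\tilde y$ with leading coefficient $-(\tilde\mu'/g_q)^2$, and evaluate the resulting arccosine integral, collapsing it via Vieta's relations exactly as in \eqref{eq:hiny}--\eqref{eq:piny2}. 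The points you flag as needing special attention (monotonicity of $\tilde y$ on a single branch and applicability of Theorem \ref{thm:LLNlpfunctions}) are precisely the ones the paper's proof handles.
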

The proof of this result is similar to the proof of Theorem~\ref{limit_shape_qracah} but with an appropriate adjustment of the change of variables. We give the details of this computation in Appendix~\ref{limit_shape_qracah tri}.

	\section{Proof of general limit theorems} \label{sec:proofsGen}
 In this section, we prove the main results of Section~\ref{sec:generalOPE}.

\subsection{Preliminaries}
	 For any random variable $X$, its cumulants $\mathbf{C}_k$ are given by the expansion of its Laplace transform, if it exists,
	\begin{align*}
		\log\mathbb{E}\left[e^{\lambda X}\right]=\sum_{k=1}^\infty\lambda^k\frac{\mathbf{C}_k}{k!}
	\end{align*}
	The $k$-th moment is fully characterized by the first $k$ cumulants, for example, $\mathbf{C}_1=\mathbb{E}[X]$ and  $\mathbf{C}_2=\Var(X)$. In particular, a random variable is Gaussian $\mathcal{N}(\mu,\sigma^2)$ if and only if the first cumulant is $\mu$ and the second $\sigma^2$ and all cumulants of higher order are zero.

The important feature of orthogonal polynomial ensembles is that the cumulants for linear statistics with polynomial test functions can be expressed in terms of the Jacobi matrix associated with the orthogonality weight. This idea was originally introduced in \cite{breuer2017central} and further extended to extended processes in \cite{duits2018global}. It is not hard to see that the same is true for the generalization to \eqref{eq:qOPE}, but now for polynomials in $\mu_n$, as we will now explain. We will first discuss the one-dimensional ensembles of type \eqref{eq:qOPE} and then the extended processes of type \eqref{ensemble} later. We will only state the main observations, as the proofs follow almost verbatim. 

Let $J_n$ be the Jacobi matrix 
\begin{align}\label{Jacob}
		J_n \coloneqq \begin{pmatrix}b_{0,n} & a_{0,n} &  \\a_{0,n} & b_{1,n} & a_{1,n} & \\ & a_{1,n} & b_{2,n} & a_{2,n} & \\ &  & a_{2,n} & b_{3,n} & a_{3,n} & \\& & &\ddots &\ddots & \ddots \end{pmatrix},
	\end{align}
 where $a_{j,n}$ and $b_{j,n}$ are the recurrence coefficients from \eqref{recurrence} for the polynomials satisfying \eqref{eq:orthogonality}. We will also need the projection matrix $P_n$ that is defined by $(P_n)_{ij}=1$ for $1\leq i=j\leq n$ and $(P_n)_{ij}=0$ otherwise. The complementary projection is defined by $Q_n=I-P_n$.

 The heart of the matter is that for $X_n(p\circ \mu_n)$ , where $X_n$ is the linear statistic in \eqref{linearstat} with $f=p\circ \mu_n$ and $p$ polynomial, we have the following expression:
\begin{align}\label{eq:moment_gen_pre}
		\mathbb{E}[e^{\lambda X_n(p\circ \mu_n)}] =  \det \left(Q_n+P_ne^{\lambda p(J_n)}P_n\right).
\end{align}
Indeed, a proof of this claim follows immediately after replacing $x$ by $\mu_n(x)$ everywhere in the proof of \cite[Lemma 4.1]{duits2018global}.
 Then, by taking the log-derivatives with respect to $\lambda$, each cumulant can be expressed in terms of $J_n$ and $P_n$.  To start with, we have 
	\begin{align}\label{mean:1d}
		\mathbb{E}[X_n(p\circ \mu_n)] = & \Tr\left(P_np(J_n)P_n\right) \\
		\Var[X_n(p\circ\mu_n)]= & \frac{1}{2}\Tr\left(P_np(J_n)Q_np(J_n)P_n\right),\label{var:1d}
	\end{align}
and the general expression for the $k$-th cumulant is given by 
\begin{align} \label{cumulant:1d}
		\mathbf{C}_k(X_n(p\circ \mu_n)) =\sum_{j=1}^k\frac{(-1)^{j+1}}{j}
		\sum_{\substack{l_1+\ldots+l_j=k \\ l_i\geq 1}} \frac{1}{l_1!\cdots l_j !}\Tr P_n p(J_n)^{l_1}P_n\cdots P_n p(J_n)^{l_j}P_n .
\end{align}
Here, all indices are integers. Note that from \eqref{mean:1d} we see that the $\mathbb E[X_n(p\circ \mu_n)]$ depends on the recurrence coefficients $a_{j,n}$ and $b_{j,n}$ with $j/n \approx \xi$ with $\xi \in[0,1]$. This explains the conditions in Theorems~\ref{limit_density_poly}, \ref{thm:LLNcontinuousfunctions} and \ref{thm:LLNlpfunctions}. Moreover, the right-hand side of \eqref{var:1d}, and thus $\Var X_n(p\circ \mu_n)$, depends on only finitely many coefficients $a_{j,n}$ and $b_{j,n}$ with $j\approx n$. It is not obvious from \eqref{cumulant:1d}, but the same is true for the higher cumulants: the right-hand side of \eqref{cumulant:1d}, and thus each $\mathbf C_k(p\circ \mu_n)$, depends on only finitely many recurrence coefficients with indices around $n$. Indeed, this is proven even when $J_n$ is replaced by an arbitrary banded matrix in \cite{breuer2016universality}. 

It is important to note that \eqref{cumulant:1d} gives linear statistics with very special test functions. Indeed, the $\mu_n$ depends on $n$  (not only due to rescaling but also through possible parameters), and it is not obvious if one can extend the results to general $f$. Especially for the fluctuations of the linear statistic, we need good control over the convergence of $\mu_n$ to $\mu$, and this is the motivation for conditions \eqref{ass: mu smooth}.

The linear statistics for the extended process \eqref{ensemble} have similar expressions for the cumulants. Note that in that case we have $N$ families of recurrence coefficients $(a_{k,n}^{(m)})_k$ and $(b_{k,n}^{(m)})_k$ for $m=1,\ldots,N$, and we get $N$ Jacobi matrices $J^{(m)}_n$ for $m=1,\ldots,N$.  In addition, we have the coefficients $c_{k,m}$ and those we use to modify the Jacobi matrices as follows $$(\mathcal J_n^{(m)})_{k,l}=\frac{c_{k,m}}{c_{l,m}}({J}_n^{(m)})_{k,l}.$$
Then for any function $p(x,m)$ such that $x \to p(x,m)$ is a polynomial in $x$, we have
\begin{align} \label{cumulant:Nd}
		\mathbf{C}_k(X_n(p\circ \mu_n)) =\sum_{j=1}^k\frac{(-1)^{j+1}}{j}
		\sum_{\substack{l_1+\ldots +l_j=k \\ l_i\geq 1}
  }\sum_{\substack{r_{i}^{(m)} \geq 0\\ \sum_{m=1}^N r_{i}^{(m)}=l_i}} \frac{\Tr\left(\prod_{i=1}^j P_n\left(\prod_{m=1}^Np(m,\mathcal J_n^{(m)})^{r_i^{(m)}}\right)P_n\right)}{\prod_{m=1,\dots N} r_1^{(m)}! r_2^{(m)}! \cdots r_j^{(m)}!}.
\end{align}
Here, all indices are integers. Note that \eqref{cumulant:1d} is indeed a special case of \eqref{cumulant:Nd} after setting $N=1$.  Again, the proof of \eqref{cumulant:Nd} is, almost per verbatim, the same as  in \cite{duits2018global}, and is based on the following expression:
\begin{align}\label{eq:moment_gen}
		\mathbb{E}[e^{\lambda X_n(p\circ \mu_n)}] =  \det \left(Q_n+P_ne^{\lambda p(1,J_n^{(1)})}e^{\lambda p(2,J_n^{(2)})}\dots e^{\lambda p(N,J_n^{(N)})}P_n\right).
\end{align}
Then, \eqref{cumulant:Nd} follows from an expansion in $\lambda$ around $\lambda =0$ as given in \cite[Lemma 5.2]{duits2018global}.


	\subsection{Proof of Theorem~\ref{limit_density_poly}}
	
	  We say that a sequence $\{A^{(n)}\}_n$ of $\mathbb N \times \mathbb N$ matrices is \emph{slowly varying along the diagonals}, if there exists a function $a(\xi,z)$ analytic in $z$ on the unit circle and continuous in $\xi$ such that for any $j,k\in\mathbb{N}$ and any $\xi$, we have 
	\begin{align}\label{slowlivarying}
		\lim\limits_{\frac{j}{n},\frac{k}{n}\to\xi}\left(A^{(n)}\right)_{j,k} = \frac{1}{2\pi i}\oint a(\xi,z)\frac{dz}{z^{j-k+1}}.
	\end{align}
 It is important to note that the three diagonal Jacobi operator $J$, under the assumption \eqref{ass:slowly_varying_recurrence}, is a slowly varying matrix with  $a(\xi,z)=a(\xi)z+b(\xi)+a(\xi)z^{-1}$. The proof of Theorem~\ref{limit_density_poly} is, in fact, a special case of a more general result for matrices that are slowly varying along the diagonals. 
 
 We start with a convenient consequence of the slowly varying assumption, which is the uniform boundedness in the following sense. 
 
 \begin{lemma}\label{lemma:bounded}
     Let $A^{(n)}$ be the slowly varying operator satisfying \eqref{slowlivarying}. Assume its limit $a(\cdot,z)$ is continuous on $[0,1]$. Let $j\in\mathbb{N}$ such that $\lim\limits_{n\to\infty}\frac{j}{n}=\xi\in[0,1]$. Let $\Delta\in\mathbb{Z}$ be independent of $n$. Then we have 
     \begin{align}
    \sup\limits_{n\in\mathbb{N}}\sup\limits_{j\leq n+\sqrt{n}}\left|\left(A^{(n)}\right)_{j+\Delta,j}\right|<\infty .\label{eq:DCT bound}
     \end{align}
 \end{lemma}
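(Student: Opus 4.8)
The plan is to prove the uniform boundedness \eqref{eq:DCT bound} directly from the slowly varying assumption \eqref{slowlivarying}, combined with the continuity of $a(\cdot,z)$ on the compact interval $[0,1]$. The key point is that \eqref{slowlivarying} only gives convergence of individual matrix entries for each fixed $\xi$, so I need to upgrade this to a uniform bound over the range $j\le n+\sqrt n$; the range $j \le n+\sqrt{n}$ corresponds to $j/n \le 1 + 1/\sqrt{n} \to 1$, so asymptotically all relevant indices have $j/n$ landing in a neighborhood of $[0,1]$, on which $a$ is continuous and hence bounded.

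First I would fix $\Delta\in\mathbb Z$ and observe that the target quantity is $m_j^{(n)} := (A^{(n)})_{j+\Delta,j}$, and that by \eqref{slowlivarying} with $k=j$ (so the exponent $j-k=0$ is replaced by the shift $\Delta$; more precisely apply \eqref{slowlivarying} to the pair $(j+\Delta, j)$, both of which have ratio tending to the same $\xi$), we have for every $\xi\in[0,1]$
\begin{align*}
\lim_{\substack{j/n\to\xi}} m_j^{(n)} = \frac{1}{2\pi i}\oint a(\xi,z)\frac{dz}{z^{\Delta+1}} =: c_\Delta(\xi),
\end{align*}
where $c_\Delta$ is the $\Delta$-th Fourier/Laurent coefficient of $z\mapsto a(\xi,z)$. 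Since $a(\cdot,z)$ is continuous on $[0,1]$ and analytic on the unit circle, $c_\Delta$ is continuous on $[0,1]$, hence bounded there, say $|c_\Delta(\xi)|\le C_\Delta$ for all $\xi\in[0,1]$.

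Next I would argue by contradiction to get the uniform bound. Suppose \eqref{eq:DCT bound} fails: then there is a subsequence $n_\ell\to\infty$ and indices $j_\ell\le n_\ell+\sqrt{n_\ell}$ with $|m_{j_\ell}^{(n_\ell)}|\to\infty$. Passing to a further subsequence, $j_\ell/n_\ell\to\xi^*$ for some $\xi^*\in[0,1]$ (the limit lies in $[0,1]$ because $0\le j_\ell/n_\ell\le 1+1/\sqrt{n_\ell}\to 1$, and if $j_\ell/n_\ell\to 1^+$ we simply take $\xi^*=1$; here one uses that the hypothesis \eqref{slowlivarying} with continuity of $a$ up to $\xi=1$ covers indices slightly exceeding $n$ — this is exactly why the statement restricts to $j\le n+\sqrt n$ rather than arbitrary $j$). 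But then along this subsequence \eqref{slowlivarying} forces $m_{j_\ell}^{(n_\ell)}\to c_\Delta(\xi^*)$, which is finite, contradicting $|m_{j_\ell}^{(n_\ell)}|\to\infty$. Hence the supremum in \eqref{eq:DCT bound} is finite.

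The main obstacle — really the only subtle point — is making sure the convergence statement \eqref{slowlivarying} genuinely applies to indices in the slightly enlarged range $j\le n+\sqrt n$ and, in the contradiction argument, that the limiting ratio $\xi^*$ stays inside the interval $[0,1]$ where $a$ is assumed continuous. For the application to the Jacobi operator this is automatic since the recurrence coefficients are defined (and the hypothesis \eqref{ass:slowly_varying_recurrence} is posed) with $a,b$ continuous on all of $[0,1]$ and the relevant indices $j+\Delta,j$ with $j\le n+\sqrt n$ satisfy $(j+\Delta)/n\to 1$; one should simply note that \eqref{slowlivarying} is a statement about all sequences of index pairs whose ratios converge, so the marginal overshoot by $\sqrt n$ (which is $o(n)$) is harmless. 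Once that is granted, everything else is the soft compactness-and-continuity argument sketched above, with no computation required.
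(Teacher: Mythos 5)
Your proposal is correct and follows essentially the same route as the paper: a contradiction argument that extracts a subsequence with blowing-up entries, uses compactness to make the ratios $j/n$ converge to some $\xi\in[0,1]$ (the overshoot $j\le n+\sqrt n$ being harmless since $1+1/\sqrt n\to 1$), and then invokes the slowly varying assumption \eqref{slowlivarying} together with continuity, hence boundedness, of $a(\cdot,z)$ on $[0,1]$ to get a finite limit, a contradiction. The only cosmetic difference is that you name the limiting Laurent coefficient $c_\Delta(\xi)$ explicitly, which the paper leaves implicit.
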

 
	\begin{proof}
	    Suppose, on the contrary, \eqref{eq:DCT bound} does not hold. Then there exist two subsequences of $\mathbb{N}$, say $\{j_m\}_m$ and $\{n_m\}_m$ with $j_m\leq n_m+\sqrt{n_m}$ and $n_m\to\infty$ as $m\to\infty$ such that for all $m\in\mathbb{N}$ 
     \begin{align*}
\left|\left(A^{(n_m)}\right)_{j_m+\Delta,j_m}\right|>m.
     \end{align*}
    Here we also have $0 \leq \frac{j_m}{n_m}\leq 1+\frac{1}{\sqrt{n_m}}\leq 2$. Then, by compactness of the closed interval $[0,2]$, there exist another sub-sequence $\{m_k\}_k$ of $\mathbb{N}$ such that the limit exists as $\lim\limits_{k\to\infty}\frac{j_{m_k}}{n_{m_k}} = \xi\in[0,1]$. Now we have 
    \begin{align*}  
        \lim\limits_{\frac{j_{m_k}}{n_{m_k}}\to\xi} \left|\left(A^{(n_{m_k})}\right)_{j_{m_k}+\Delta,j_{m_k}}\right| = +\infty.
    \end{align*}
    Recall that $a(\cdot,z)$ is continuous on $[0,1]$ and hence bounded. This contradicts assumption \eqref{slowlivarying}. 
	\end{proof}
 
Recall from \eqref{mean:1d} that we have, for any polynomial $p$, that $\mathbb{E}[X_n(p\circ\mu_n)]=\Tr(P_np(J_n)P_n)$, where $p(J_n)$ forms a sequence of slowly varying linear operator. The following two results thus prove the convergence of the mean $\frac 1n \mathbb{E}[X_n(p\circ\mu_n)]$. We first deal with the case when $p(\mu)=\mu$ and then treat the case for general polynomials $p$.

	\begin{lemma}\label{lemma:diagonal}
		Let $\{A^{(n)}\}_n$ be a sequence of slowly varying operator satisfying \eqref{slowlivarying}. Then 
		\begin{align}
			\lim_{n\to\infty}\frac{1}{n}\Tr\left(P_nA^{(n)}P_n\right) = \frac{1}{2\pi i}\int^{1}_{0}\oint a(\xi,z)\frac{dz}{z}d\xi .
		\end{align}
	\end{lemma}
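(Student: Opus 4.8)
The plan is to compute the trace $\Tr(P_n A^{(n)} P_n) = \sum_{j=1}^{n} (A^{(n)})_{j,j}$ directly, using the slowly-varying hypothesis to identify the limit of each diagonal entry and then passing the limit through the (normalized) sum. First I would observe that by \eqref{slowlivarying} with $j=k$, for every fixed $\xi \in [0,1]$ we have
\begin{align*}
\lim_{\frac jn \to \xi} (A^{(n)})_{j,j} = \frac{1}{2\pi i}\oint a(\xi,z)\,\frac{dz}{z} =: \hat a_0(\xi),
\end{align*}
and that $\xi \mapsto \hat a_0(\xi)$ is continuous on $[0,1]$ because $a(\xi,z)$ is continuous in $\xi$ uniformly on the unit circle (it is the zeroth Fourier/Laurent coefficient of an analytic function depending continuously on a parameter).

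The main step is then to recognize $\frac1n \sum_{j=1}^n (A^{(n)})_{j,j}$ as a Riemann-type sum for $\int_0^1 \hat a_0(\xi)\,d\xi$. Concretely, I would fix $\eps>0$, partition $[0,1]$ into finitely many subintervals on which $\hat a_0$ oscillates by less than $\eps$ (possible by uniform continuity), and show that for $j$ with $j/n$ in a given subinterval, $(A^{(n)})_{j,j}$ is within $\eps$ of $\hat a_0(j/n)$ once $n$ is large; this is exactly the content of the limit above, made uniform over the finitely many reference points. Combining gives $\bigl|\frac1n\sum_{j=1}^n (A^{(n)})_{j,j} - \frac1n \sum_{j=1}^n \hat a_0(j/n)\bigr| \le C\eps$ for $n$ large, and the second sum converges to $\int_0^1 \hat a_0(\xi)\,d\xi$ by the definition of the Riemann integral of the continuous function $\hat a_0$. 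Since $\eps$ is arbitrary, the claim follows.

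The one technical point requiring care — and the place I expect the only real friction — is justifying the uniformity in the previous paragraph: the hypothesis \eqref{slowlivarying} is stated as a pointwise (in $\xi$) limit along $j/n \to \xi$, so I need to upgrade it to "for each $\eps$ there is $N$ such that $|(A^{(n)})_{j,j} - \hat a_0(j/n)| < \eps$ for all $n \ge N$ and all $j \le n$". This is where Lemma \ref{lemma:bounded} (uniform boundedness of the entries for $j \le n + \sqrt n$) enters: it rules out escape to infinity, and a standard subsequence/compactness argument — mimicking the proof of Lemma \ref{lemma:bounded} — shows that if uniformity failed, one could extract $j_m/n_m \to \xi_*$ with $(A^{(n_m)})_{j_m,j_m}$ bounded away from $\hat a_0(\xi_*)$, contradicting \eqref{slowlivarying} together with continuity of $\hat a_0$. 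With that in hand the Riemann-sum argument goes through verbatim, and no further estimates are needed.
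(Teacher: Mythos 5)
Your proposal is correct, but it handles the limit-interchange step differently from the paper. The paper rewrites the normalized trace as $\int_0^1 A^{(n)}_{\lfloor n\xi\rfloor,\lfloor n\xi\rfloor}\,d\xi$, gets pointwise convergence of the integrand from \eqref{slowlivarying}, and concludes by the dominated convergence theorem, with Lemma \ref{lemma:bounded} supplying the uniform bound that serves as the dominating function. You instead upgrade the pointwise hypothesis to a uniform statement over all $j\le n$ via a subsequence/compactness argument and then run a Riemann-sum comparison against $\int_0^1 \hat a_0(\xi)\,d\xi$. Your compactness step is valid (and is indeed the same style of argument as the proof of Lemma \ref{lemma:bounded}); note, though, that it makes the invocation of Lemma \ref{lemma:bounded} superfluous in your route: if $(A^{(n_m)})_{j_m,j_m}$ stayed at distance $\eps_0$ from $\hat a_0(j_m/n_m)$ while $j_m/n_m\to\xi_*$, this already contradicts \eqref{slowlivarying} regardless of whether the entries are bounded, and once uniform closeness is in hand the Riemann-sum comparison needs no a priori bound either. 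The trade-off is that your argument needs continuity of $\xi\mapsto \hat a_0(\xi)$, i.e.\ continuity of $a(\cdot,z)$ in $\xi$ uniformly over the unit circle, which is the natural reading of the standing assumption and certainly holds in the intended application $a(\xi,z)=a(\xi)z+b(\xi)+a(\xi)z^{-1}$, whereas the paper's dominated-convergence route only needs the pointwise limit plus the uniform bound. Both proofs are essentially elementary; yours trades measure-theoretic convergence for a uniformity-plus-Riemann-sum argument with one extra (mild) continuity requirement.
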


	\begin{proof}
 Note that by the definition of the floor function, we can rewrite the trace into the following
    \begin{align}
        \frac{1}{n}\Tr\left(P_nA^{(n)}P_n\right) = \int_0^1A^{(n)}_{\lfloor n\xi\rfloor, \lfloor n\xi\rfloor}d\xi
    \end{align}
    For any fixed $\xi\in[0,1]$ we have by the slowly varying assumption \eqref{slowlivarying}, 
    \begin{align*}
        \lim\limits_{n\to\infty} A^{(n)}_{\lfloor n\xi \rfloor, \lfloor n\xi\mathbb \rfloor} =  \frac{1}{2\pi i}\oint a(\xi,z)\frac{dz}{z}
    \end{align*}
    Now Lemma~\ref{lemma:bounded} shows that $\sup\limits_{n\in\mathbb{N}}\sup\limits_{\xi\in[0,1]}\left|A^{(n)}_{\lfloor n\xi\rfloor, \lfloor n\xi\rfloor}\right|<\infty$. Hence, the statement follows by the dominated convergence theorem.
	\end{proof}

	\begin{proposition}
		Let $\{A^{(n)}\}_n$ be a sequence of slowly varying operator satisfying \eqref{slowlivarying}. Suppose $A^{(n)}$ is a band matrix with a bandwidth independent of $n$. We have for any polynomial $p$, 
		\begin{align} \label{eq:tracepolyA}
			\lim_{n\to\infty}\frac{1}{n}\Tr\left(P_np(A^{(n)})P_n\right) = \frac{1}{2\pi i}\int^{1}_{0}\oint p(a(\xi,z))\frac{dz}{z}d\xi.
		\end{align}
	\end{proposition}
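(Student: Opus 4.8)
The plan is to reduce to monomials and then show that slowly varying band matrices are closed under products, the "symbol map" $A^{(n)}\mapsto a(\xi,z)$ being multiplicative. By linearity of the trace it suffices to prove \eqref{eq:tracepolyA} for $p(x)=x^m$. Once we know that $(A^{(n)})^m$ is again slowly varying along the diagonals with symbol $a(\xi,z)^m$, and is still a band matrix (of bandwidth $mw$ if $A^{(n)}$ has bandwidth $w$), the claim follows directly from Lemma~\ref{lemma:diagonal} applied to $(A^{(n)})^m$, giving
\[
\frac1n\Tr\left(P_n(A^{(n)})^mP_n\right)\ \longrightarrow\ \frac{1}{2\pi i}\int_0^1\oint a(\xi,z)^m\frac{dz}{z}\,d\xi ,
\]
after which summing over $m$ against the coefficients of $p$ reproduces \eqref{eq:tracepolyA}.

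The real work is the multiplicativity of the symbol, which I would prove by induction from the two-factor case. Let $A^{(n)},B^{(n)}$ be slowly varying band matrices of bandwidth $\le w$ with continuous symbols $a,b$; writing $\hat a_\xi(d)=\frac{1}{2\pi i}\oint a(\xi,z)z^{-d-1}\,dz$, relation \eqref{slowlivarying} says $(A^{(n)})_{j,k}\to\hat a_\xi(d)$ as $j/n\to\xi$ with $d=j-k$ held fixed, and similarly for $B$. Since both matrices are banded, $C^{(n)}:=A^{(n)}B^{(n)}$ vanishes off its $2w$ central diagonals, and for $d=j-k$ with $|d|\le 2w$,
\[
(C^{(n)})_{j,k}=\sum_{\substack{|e|\le w\\|d-e|\le w}}(A^{(n)})_{j,j-e}\,(B^{(n)})_{j-e,k},
\]
a sum over a finite index set independent of $n$. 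Letting $j/n\to\xi$, each factor converges, $(A^{(n)})_{j,j-e}\to\hat a_\xi(e)$ and $(B^{(n)})_{j-e,k}\to\hat b_\xi(d-e)$ (using $(j-e)/n\to\xi$ as well), and by Lemma~\ref{lemma:bounded}, applied with the finitely many shifts $\Delta\in\{-w,\dots,w\}$, these factors are uniformly bounded in $n$; hence one may pass to the limit termwise. The resulting sum $\sum_e\hat a_\xi(e)\hat b_\xi(d-e)$ is the $d$-th Fourier coefficient of $a(\xi,\cdot)b(\xi,\cdot)$, i.e. $\frac{1}{2\pi i}\oint a(\xi,z)b(\xi,z)z^{-d-1}\,dz$, so $C^{(n)}$ is slowly varying with symbol $ab$, which is still continuous in $\xi$ and still a Laurent polynomial in $z$.

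Iterating this $m-1$ times gives $(A^{(n)})^m$ slowly varying with symbol $a(\xi,z)^m$, and the proof concludes as in the first paragraph. I do not expect a genuine obstacle here: the one point needing care is justifying the interchange of the limit $n\to\infty$ with the finite sum defining the product, and that is exactly the role of the uniform-boundedness Lemma~\ref{lemma:bounded}; the fact that that lemma allows indices up to $n+\sqrt n$ rather than just $n$ is convenient but inessential, since the bandwidth $w$ is a fixed constant. Everything else is the familiar bookkeeping turning a product of band Toeplitz-type matrices into the product of their symbols.
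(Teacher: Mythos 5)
Your proposal is correct and follows essentially the same route as the paper: reduce to monomials, exploit the fixed bandwidth to write the relevant entries of the power as a finite sum of products of entries, pass to the limit termwise using slow variation, identify the limit with the Fourier coefficients of the symbol raised to that power, and finish with Lemma \ref{lemma:diagonal}. The only difference is organizational — the paper expands the $k$-th power directly as a $k$-fold sum matched against the $k$-fold contour-integral expansion of $a(\xi,z)^k$, whereas you prove two-factor multiplicativity of the symbol and induct, which yields the marginally stronger statement that the whole product matrix (not just its diagonal) is slowly varying.
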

	\begin{proof}
		First, consider a monomial $p(x)=x^k$ for any integer $k$.  We have 
		\begin{align*}
			\left(\left(A^{(n)}\right)^k\right)_{m,m} = & \sum_{l_1,l_2,\dots,l_{k-1}}\left(A^{(n)}\right)_{m,l_1}\left(A^{(n)}\right)_{l_1,l_2}\dots \left(A^{(n)}\right)_{l_{k-2},l_{k-1}}\left(A^{(n)}\right)_{l_{k-1},m}.
		\end{align*}
		Since $A^{(n)}$ has fixed bandwidth, say $b$ which is independent of $n$, we have $(A^{(n)})_{j,k}=0$ for $|j-k|>b$. This means the non-zero entries in the sum must have indices $l_j$ such that $|l_j-m|\leq k b$ for all $j=1,2,\dots,k-1$. Hence, the total number of non-zero entries is independent of $n$. Consider $m$ such that $\frac{m}{n}\to\xi\in[0,1]$. Then we have $\frac{l_j}{n}\to\xi$ for those non-zero entries. By the slowly varying assumption \eqref{slowlivarying}, we have the following limits for the non-zero entries,
        \begin{align*}
            \lim_{\frac{m}{n}\to\xi} \left(A^{(n)}\right)_{m,l_1}= & \oint a(\xi,z)\frac{dz}{z^{m-l_1+1}} \\
            \lim_{\frac{m}{n}\to\xi}\left(A^{(n)}\right)_{l_{j},l_{j+1}} = & \oint a(\xi,z)\frac{dz}{z^{l_j-l_{j+1}+1}} \quad \text{ for all } j=1,2 \dots, k-2 \\
            \lim_{\frac{m}{n}\to\xi} \left(A^{(n)}\right)_{l_{k-1},m} = &  \oint a(\xi,z)\frac{dz}{z^{l_{k-1}-m+1}}.
        \end{align*}  
On the other hand, we expand $a(\xi,z)=\sum_{l'}\frac{1}{2\pi i}\oint a(\xi, z_{k})\frac{z^{l'}dz_{k}}{z_{k}^{l'+1}}$ and get
		\begin{align*}
			\frac{1}{2\pi i}\oint a(\xi, z_1)^k\frac{dz_1}{z_1} =  \sum_{l_1',l_2',\dots,l_{k-1}'}\frac{1}{(2\pi i)^k}\oint\dots\oint \frac{a(\xi,z_1)a(\xi,z_2)\dots a(\xi,z_{k-1})a(\xi,z_{k})}{z_1^{1-l_1'-l_2'-\dots - l_{k-1}'}z_2^{l_1'+1}\dots z_{k-1}^{l_{k-2}'+1}z_{k}^{l_{k-1}'+1}}dz_1dz_2\dots dz_k
		\end{align*}
		where we can reorder the indexes in the following way, i.e. setting  $l_{k-1}'=l_{k-1}-m$, and then recursively setting  $l_j'=l_j-l_{j+1}$ for all $j=1,\dots, k-2$. Clearly, $1-l_1'-l_2'-\dots  -l_{k-1}'=m-l_1+1$ and thus we have for all $\xi\in[0,1]$
		\begin{align*}
			\lim_{\frac{m}{n}\to\xi}\left(\left(A^{(n)}\right)^k\right)_{m,m}=\frac{1}{2\pi i}\oint a(\xi, z)^k\frac{dz}{z}
		\end{align*}
		the limit is uniform in $n$. This actually shows that the diagonal of $A^k$ is also slowly varying. Thus we can apply Lemma~\ref{lemma:diagonal} for $\text{diag}((A^{(n)})^k)$ and we have, 
		\begin{align*}
			\lim_{n\to\infty}\frac{1}{n}\Tr\left(P_n\left(A^{(n)}\right)^kP_n\right) = \frac{1}{2\pi i}\int^{1}_{0}\oint a(\xi,z)^k\frac{dz}{z}d\xi
		\end{align*}
		for any $k$. By linearity of trace and integral, the limit above also works for any $p$ being any polynomial. That is 
		\begin{align*}
			\lim_{n\to\infty}\frac{1}{n}\Tr\left(P_np(A^{(n)})P_n\right) = \frac{1}{2\pi i}\int^{1}_{0}\oint p(a(\xi,z))\frac{dz}{z}d\xi,
		\end{align*}
  and this proves the statement.
	\end{proof}

\begin{corollary}\label{mean_convergence}
	With $a(\xi,z)=a(\xi)z+b(\xi)+\frac{a(\xi)}{z}$, we have for any polynomial $p$
		\begin{align}
			\lim_{n\to\infty}\frac{1}{n}\Tr\left(P_np(A^{(n)})P_n\right) 
			= & \frac{1}{\pi }\int^{1}_{0}\int_{0}^{\pi} p(b(\xi)+2a(\xi)cos\theta)d\theta d\xi \label{eq:lln_2nd}\\
			= & \frac{1}{\pi }\int_{[0,1]\setminus a^{-1}(\{0\})}\int_{b(\xi)-2a(\xi)}^{b(\xi)+2a(\xi)} \frac{p(x)}{\sqrt{4a(\xi)^2-(x-b(\xi))^2}}dxd\xi \nonumber\\
            & +\int_{a^{-1}(\{0\})}p(b(\xi))d\xi .
		\end{align}
	\end{corollary}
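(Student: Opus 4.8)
The plan is to specialize the contour–integral formula \eqref{eq:tracepolyA} of the preceding Proposition to the tridiagonal symbol $a(\xi,z)=a(\xi)z+b(\xi)+a(\xi)/z$ and then evaluate the inner $z$–integral by the substitution $z=e^{i\theta}$. Concretely, for each fixed $\xi$ one writes
$\frac{1}{2\pi i}\oint p(a(\xi,z))\frac{dz}{z}=\frac{1}{2\pi}\int_0^{2\pi}p\bigl(a(\xi,e^{i\theta})\bigr)\,d\theta$, and observes that $a(\xi,e^{i\theta})=b(\xi)+2a(\xi)\cos\theta$ is real. Since $\cos\theta$ is invariant under $\theta\mapsto 2\pi-\theta$, the integrand is too, so the integral over $[0,2\pi]$ is twice that over $[0,\pi]$; inserting this into \eqref{eq:tracepolyA} gives the first claimed identity \eqref{eq:lln_2nd}.

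Next I would carry out, on each $\xi$–slice, the change of variables $x=b(\xi)+2a(\xi)\cos\theta$. Split $[0,1]=a^{-1}(\{0\})\cup\bigl([0,1]\setminus a^{-1}(\{0\})\bigr)$ (a disjoint union). On $a^{-1}(\{0\})$ the inner integrand is the constant $p(b(\xi))$ and the $\theta$–integral contributes $\int_{a^{-1}(\{0\})}p(b(\xi))\,d\xi$. On the complement $a(\xi)>0$ (recall $a\ge 0$), so $\theta\mapsto x$ is a diffeomorphism of $(0,\pi)$ onto $\bigl(b(\xi)-2a(\xi),\,b(\xi)+2a(\xi)\bigr)$ with $dx=-2a(\xi)\sin\theta\,d\theta$ and $2a(\xi)\sin\theta=\sqrt{4a(\xi)^2-(x-b(\xi))^2}$; reversing orientation absorbs the minus sign and produces the inner integral $\frac1\pi\int_{b(\xi)-2a(\xi)}^{b(\xi)+2a(\xi)}\frac{p(x)}{\sqrt{4a(\xi)^2-(x-b(\xi))^2}}\,dx$. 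Assembling the two pieces yields the second displayed expression.

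The only point deserving a line of care is that the $\xi$–integral over $[0,1]\setminus a^{-1}(\{0\})$ is genuinely finite despite the singular weight: for every $\xi$ with $a(\xi)>0$ one has $\frac1\pi\int_{b(\xi)-2a(\xi)}^{b(\xi)+2a(\xi)}\frac{dx}{\sqrt{4a(\xi)^2-(x-b(\xi))^2}}=1$, so the inner integral is bounded in absolute value, uniformly in $\xi$, by $\sup\{|p(y)|:y\in[\min_\xi(b(\xi)-2a(\xi)),\ \max_\xi(b(\xi)+2a(\xi))]\}<\infty$. Hence Tonelli's theorem applies and the iterated integrals agree; no issue arises from $a(\xi)$ approaching $0$.

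I do not expect any real obstacle here: the statement is a one–variable change of variables applied slice–by–slice, the boundedness estimate above being the only thing one should record explicitly. Accordingly I would keep the write-up to a few lines rather than spelling out the elementary computation in full.
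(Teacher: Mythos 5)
Your proposal is correct and is essentially the paper's own argument: the paper also specializes the contour formula of the preceding proposition, writes $z=e^{i\theta}$, and performs the change of variable $x=b(\xi)+2a(\xi)\cos\theta$ slice-by-slice (with the set $a^{-1}(\{0\})$ contributing the term $\int_{a^{-1}(\{0\})}p(b(\xi))\,d\xi$). Your additional remarks on the symmetry reduction to $[0,\pi]$ and the uniform bound justifying finiteness of the iterated integral are just a more explicit write-up of the same computation.
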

	\begin{proof} The statement follows by using $z=e^{i\theta}$ and the change of variable $x=b(\xi)+2a(\xi)cos\theta$.  \end{proof}
	
	\begin{proof}[Proof of Theorem~\ref{limit_density_poly}]
		Replace $A^{(n)}$ by the Jacobi operator $J_n$. By Corollary~\ref{mean_convergence} we have
		\begin{align}\label{mean:conv}
			\mathbb{E}\left[\frac{1}{n}X_n(p\circ \mu_n)\right] \to  \frac{1}{\pi }\int^{1}_{0}\int_{b(\xi)-2a(\xi)}^{b(\xi)+2a(\xi)} \frac{p(x)}{\sqrt{4a(\xi)^2-(x-b(\xi))^2}}dxd\xi+\int_{a^{-1}(\{0\})}p(b(\xi))d\xi,
		\end{align}
  as $n \to \infty$. 
  The almost sure convergence can be obtained by estimating the variance  (as in  \cite{breuer2014nevai,hardy2018polynomial}).
		By Markov's inequality, we have for any $\delta>0$ 
		\begin{align*}
			\mathbb{P}\left(\left|\frac{1}{n}X_n(p\circ \mu_n)-\mathbb{E}\left[\frac{1}{n}X_n(p\circ \mu_n)\right]\right|>\delta\right)\leq \frac{1}{\delta^2n^2} \Var X_n(p\circ \mu_n) .
		\end{align*}
		Then, we need to estimate \eqref{var:1d}. Since $p$ is a polynomial and $J_n$ is a tri-diagonal matrix, $p(J_n)$ is a band matrix with bandwidth equal to the degree of $p$, which is finite and independent with $n$. Thus, $P_np(J_n)Q_np(J_n)P_n$ is of rank at most two times the degree of $p$. Also, by the slowly varying assumption, we have entries of $p(J_n)$ around $n$-th row and $n$-th column are uniformly bounded by a constant independent with $n$. That is there exists a constant $C>0$ such that
  \begin{align}
\left|\Tr\left(P_np(J_n)Q_np(J_n)P_n\right)\right| \leq \sum_{l_1=n-k+1}^{n}\sum_{l_2=n+1}^{n+k}\sum_{l_3=n-k+1}^{n}\left|\left(p(J_n)\right)_{l_1,l_2}\left(p(J_n)\right)_{l_2,l_3}\right|<C,
  \end{align}
where $k$ is the degree of the polynomial $p$. Therefore, 
		\begin{align*}
			\mathbb{P}\left(\left|\frac{1}{n}X_n(p\circ \mu_n)-\mathbb{E}\left[\frac{1}{n}X_n(p\circ \mu_n)\right]\right|>\delta\right)\leq \frac{C}{2\delta^2n^2}.
		\end{align*}
		Then combine this with \eqref{mean:conv} and Borel-Cantelli lemma, and we have the almost sure convergence.
  \end{proof}


	\subsection{Proof of Theorem~\ref{thm:LLNcontinuousfunctions}}
  For the proof of Theorem~\ref{thm:LLNcontinuousfunctions}, we need the following lemma. 
 
  \begin{lemma}
   Assume \eqref{eq:mu_mean_small}. Then we have almost surely 
		\begin{equation}
			\lim_{n\to\infty}\frac{1}{n}X_n(\left|p\circ \mu_n\right|\chi_{K^c} )= 0. \label{almost sure:poly_K}
		\end{equation}
 \end{lemma}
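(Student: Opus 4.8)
The plan is to reduce the claim to the moment hypothesis \eqref{eq:mu_mean_small} by a crude polynomial growth bound, and then to conclude almost sure convergence by a Borel--Cantelli argument, exactly as in the proof of Theorem \ref{limit_density_poly} but controlling a mean in place of a variance.

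First I would record the elementary inequality $|p(x)|\le C(1+|x|^{k})$, valid for all $x\in\mathbb R$ with $k=\deg p$ and $C$ depending only on the coefficients of $p$ (split according to whether $|x|\le1$ or $|x|\ge1$). Since $|p\circ\mu_n|\chi_{K^c}\ge0$, this yields the deterministic bound
\begin{align*}
0\le X_n(|p\circ\mu_n|\chi_{K^c})\le C\,X_n(\chi_{K^c})+C\,X_n(|\mu_n|^{k}\chi_{K^c}),
\end{align*}
and, taking expectations, \eqref{eq:mu_mean_small} applied with exponents $0$ and $k$ gives $\mathbb E\big[X_n(|p\circ\mu_n|\chi_{K^c})\big]=O_n(n^{-\epsilon})$, hence $\mathbb E\big[\tfrac1n X_n(|p\circ\mu_n|\chi_{K^c})\big]=O_n(n^{-1-\epsilon})$.

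Next, because $\tfrac1n X_n(|p\circ\mu_n|\chi_{K^c})$ is non-negative, Markov's inequality gives for each fixed $\delta>0$
\begin{align*}
\mathbb P\left(\frac1n X_n(|p\circ\mu_n|\chi_{K^c})>\delta\right)\le\frac1\delta\,\mathbb E\left[\frac1n X_n(|p\circ\mu_n|\chi_{K^c})\right]=O_n(n^{-1-\epsilon}),
\end{align*}
which is summable in $n$. The Borel--Cantelli lemma then shows that, almost surely, $\tfrac1n X_n(|p\circ\mu_n|\chi_{K^c})\le\delta$ for all large $n$; intersecting over a sequence $\delta\downarrow0$ gives \eqref{almost sure:poly_K}.

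The only point requiring a word is the exponent $0$, i.e. the bound on $\mathbb E[X_n(\chi_{K^c})]$ for the expected number of points outside $K$: this is the $k=0$ instance of \eqref{eq:mu_mean_small} (and it holds trivially when the supports of $x\mapsto w(xn)$ are contained in a fixed compact subinterval of $K$, where $X_n(\chi_{K^c})\equiv0$). Beyond this bookkeeping I do not anticipate any obstacle: once the mean is estimated, the almost sure statement is routine.
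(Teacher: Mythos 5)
Your argument is correct, and it takes a genuinely simpler route than the paper. The paper bounds the \emph{second} moment: it estimates $\Var X_n(|p\circ\mu_n|\chi_{K^c})\le 2n\,\mathbb E[X_n(|p\circ\mu_n|^2\chi_{K^c})]=O(n^{1-\epsilon})$ via \eqref{eq:mu_mean_small}, then applies Chebyshev to the deviation $\tfrac1n X_n-\mathbb E[\tfrac1n X_n]$ and Borel--Cantelli, mirroring the variance argument used for Theorem \ref{limit_density_poly}; the convergence of $\tfrac1n\mathbb E[X_n(|p\circ\mu_n|\chi_{K^c})]$ to zero is left implicit but is exactly the first-moment estimate you carry out. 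You instead exploit the non-negativity of the statistic and get summable tail bounds from the \emph{first} moment alone via Markov, so no variance bound is needed at all; this is more elementary and in fact makes explicit the mean estimate that the paper's proof also relies on. Your worry about the exponent $k=0$ (i.e.\ controlling $\mathbb E[X_n(\chi_{K^c})]$) is legitimate but not a defect specific to your proof: the paper's own variance bound involves $|p\circ\mu_n|^2$, whose constant term requires the same estimate, so the hypothesis is clearly intended to include constants (and, as you note, the issue disappears when the supports lie in a fixed compact set, the case relevant for the tiling application). In short: same hypotheses, same Borel--Cantelli conclusion, but a first-moment argument in place of the paper's variance/Chebyshev argument, with the non-negativity of $|p\circ\mu_n|\chi_{K^c}$ doing the work.
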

  \begin{proof} 
  Note that
		\begin{align*}
			\Var X_n(\left|p\circ \mu_n\right|\chi_{K^c} ) 
			\leq & 2n\mathbb{E}[X_n(\left|p\circ \mu_n\right|^2\chi_{K^c} )] - 2\mathbb{E}[X_n(\left|p\circ \mu_n\right|\chi_{K^c} )] ^2
		\end{align*} 
		so by assumption \eqref{eq:mu_mean_small} we have $\Var X_n(\left|p\circ \mu_n\right|\chi_{K^c} ) \leq Cn^{1-\epsilon}$ for some constant $C>0$ and $\epsilon>0$. Then, using the Markov inequality, we have 
		\begin{align*}
			\mathbb{P}\bigg[\bigg\vert\frac{1}{n}X_n(\left|p\circ \mu_n\right|\chi_{K^c} )-\mathbb{E}\bigg[\frac{1}{n}X_n(\left|p\circ \mu_n\right|\chi_{K^c} )\bigg]\bigg\vert>\delta\bigg]\leq \frac{C}{n^{1+\epsilon}}
		\end{align*}
		By the Borel-Contelli lemma, we obtain the statement.
	\end{proof}

	\begin{proof}[Proof of Theorem~\ref{thm:LLNcontinuousfunctions}]
		Note that polynomials are dense in $C(\mu(K))$. By the Stone-Weierstrass theorem for any function $f\circ\mu^{-1}$ continuous on compact subset $\mu(K)$ and any $\epsilon>0$ we can find a $p$ such that 
		\begin{align}\label{almost sure:X_n continuous}
			\sup_{x\in \mu(K)}|f\circ\mu^{-1}(x)-p(x)|<\epsilon.
		\end{align}
		Since $f$ is bounded by polynomial and by \eqref{almost sure:poly_K} we have 
		\begin{align}\label{almost sure:sigma continuous}
			\left|\frac{1}{n}X_n(f-p\circ\mu)\right|\leq \sup_{x\in K}|f(x)-p\circ\mu(x)|+o_n(1)\leq \epsilon+o_n(1) 
		\end{align}
		
		Using the triangle inequality, we obtain
		\begin{align*}
			& \left|\frac{1}{n}X_n(f)-\frac{1}{\pi }\int^{1}_{0}\int_{\mu^{-1}(b(\xi)-2a(\xi))}^{\mu^{-1}(b(\xi)+2a(\xi))} \frac{f(x)}{\sqrt{4a(\xi)^2-(\mu(x)-b(\xi))^2}}d\mu(x)d\xi-\int_{a^{-1}(\{0\})}f\circ\mu^{-1}(b(\xi))d\xi\right|\\
			\leq &\left|\frac{1}{n}X_n(f-p\circ \mu)\right|+\left|\frac{1}{n}X_n(p\circ \mu-p\circ \mu_n)\right| \\
			& +\left|\frac{1}{n}X_n(p\circ \mu_n)-\frac{1}{\pi }\int^{1}_{0}\int_{b(\xi)-2a(\xi)}^{b(\xi)+2a(\xi)} \frac{p(x)}{\sqrt{4a(\xi)^2-(x-b(\xi))^2}}dxd\xi\right|\\
			& +\left|\frac{1}{\pi }\int^{1}_{0}\int_{\mu^{-1}(b(\xi)-2a(\xi))}^{\mu^{-1}(b(\xi)+2a(\xi))} \frac{f(x)-p\circ\mu^{-1}(x)}{\sqrt{4a(\xi)^2-(\mu(x)-b(\xi))^2}}d\mu(x)d\xi\right|+\left|\int_{a^{-1}(\{0\})}f\circ\mu^{-1}(b(\xi))-p(b(\xi))d\xi\right|.
		\end{align*}
		Combining with the bounds \eqref{almost sure:X_n continuous} and \eqref{almost sure:sigma continuous} and by the uniform convergence of $\mu_n$ and almost sure convergence of linear statistics of polynomials  \eqref{eq:meanpolyGen}, we have  for all  $\epsilon>0$
		\begin{align*}
			\limsup_n	\left|\frac{1}{n}X_n(f)-\frac{1}{\pi }\int^{1}_{0}\int_{\mu^{-1}(b(\xi)-2a(\xi))}^{\mu^{-1}(b(\xi)+2a(\xi))} \frac{f(x)d\mu(x)d\xi}{\sqrt{4a(\xi)^2-(\mu(x)-b(\xi))^2}}-\int_{a^{-1}(\{0\})}f\circ\mu^{-1}(b(\xi))d\xi\right|\leq 3\epsilon
		\end{align*}
		almost surely. This shows the almost sure convergence of $\frac{1}{n}X_n(f)$ for any $f$ such that $f \circ \mu^{-1}$ is continuous. 
   \end{proof}


	\subsection{Proof of Theorem~\ref{thm:LLNlpfunctions}}

	\begin{proof}[Proof of Theorem~\ref{thm:LLNlpfunctions}]
	We will approximate the function $f$ by a function $p\circ \mu$ as follows:	First note that  $\mu$ monotone and thus Borel measurable and $d\mu^{-1}$ is a Radon measure on $\mu(I)\subset{\mathbb{R}}$.  Then since $f$ and $f\circ \mu^{-1}$ are Riemann integrable we also find that $f\in L^{\gamma }(\mu(K), d x+|d \mu^{-1}(x)|)$, with $\gamma>2$ as in \eqref{eq:conda}. Then continuous functions are dense in $L^{\gamma }(\mu(K), d x+|d \mu^{-1}(x)|)$, as a consequence of the Urysohn's lemma. For any $\epsilon>0$, we can thus find a continuous function $p$ such that 
\begin{align}\label{asconv:epsilonapprox}
			\int_{\mu(K)} \left| f\circ\mu^{-1}(x)-p(x) \right|^{\gamma} \left( d x+d |\mu^{-1}|(x) \right)<\epsilon .
		\end{align}
		
		Now by  H\"older's inequality with $\gamma>2,\eta<2$ such that  $\frac{1}{\gamma}+\frac{1}{\eta}=1$ we have that 
		\begin{multline*}
			\left|\frac{1}{\pi }\int^{1}_{0}\int_{\mu^{-1}(b(\xi)-2a(\xi))}^{\mu^{-1}(b(\xi)+2a(\xi))} \frac{f(x)-p\circ\mu(x)}{\sqrt{4a(\xi)^2-(\mu(x)-b(\xi))^2}}d\mu(x)d\xi\right|\\
			\leq  \|f\circ \mu^{-1}-p(x)\|_{L^\gamma(\mu(K),dx)}\frac{1}{\pi}\int_0^1 \left(\int_{b(\xi)-2a(\xi)}^{b(\xi)+2a(\xi)} \frac{1}{(4a(\xi)^2-(x-b(\xi))^2)^{\frac{\eta}{2}}}dx\right)^{\frac{1}{\eta}}d\xi .
		\end{multline*}
 Since
		\begin{multline*}
			\frac{1}{\pi}\int_0^1 \left(\int_{b(\xi)-2a(\xi)}^{b(\xi)+2a(\xi)} \frac{1}{(4a(\xi)^2-(x-b(\xi))^2)^{\frac{\eta}{2}}}dx\right)^{\frac{1}{\eta}}d\xi
			=\frac{1}{\pi}\int_0^1 |2a(\xi)|^{-\frac{1}{\gamma}}\left(\int_{-1}^{1} \frac{1}{(1-x^2)^{\frac{\eta}{2}}}dx\right)^{\frac{1}{\eta}}d\xi\\
			 = \frac{1}{\pi}\left(\frac{\sqrt{\pi } \Gamma \left(1-\frac{\eta }{2}\right)}{\Gamma \left(\frac{3}{2}-\frac{\eta }{2}\right)}\right)^{\frac{1}{\eta}}\int_0^1 |2a(\xi)|^{-\frac{1}{\gamma}}d\xi,
		\end{multline*}
		we find by  \eqref{eq:conda} we have the handy bound, 
		\begin{align}\label{alomst sure:limitbound}
			\left|\frac{1}{\pi }\int^{1}_{0}\int_{b(\xi)-2a(\xi)}^{b(\xi)+2a(\xi)} \frac{f\circ \mu^{-1}(x)-p(x)}{\sqrt{4a(\xi)^2-(x-b(\xi))^2}}dxd\xi\right|\leq  C_{\gamma} \|f-p\circ \mu \|_{L^{\gamma}(K,d\mu)},
		\end{align}
  for some constant $C_{\gamma}$.
		Now using the triangle inequality and \eqref{almost sure:poly_K}, we obtain
		\begin{multline*}
			 \left|\frac{1}{n}X_n(f)-\frac{1}{\pi }\int^{1}_{0}\int_{\mu^{-1}(b(\xi)-2a(\xi))}^{\mu^{-1}(b(\xi)+2a(\xi))} \frac{f(x)}{\sqrt{4a(\xi)^2-(\mu(x)-b(\xi))^2}}d\mu(x)d\xi\right|
			\leq \\\left|\frac{1}{n}X_n((f-p\circ \mu)\chi_K)\right|+\left|\frac{1}{n}X_n((p\circ \mu-p\circ \mu_n)\chi_K)\right| \\
			  +\left|\frac{1}{n}X_n(p\circ \mu_n)-\frac{1}{\pi }\int^{1}_{0}\int_{b(\xi)-2a(\xi)}^{b(\xi)+2a(\xi)} \frac{p(x)}{\sqrt{4a(\xi)^2-(x-b(\xi))^2}}dxd\xi\right|\\
			 +\left|\frac{1}{\pi }\int^{1}_{0}\int_{\mu^{-1}(b(\xi)-2a(\xi))}^{\mu^{-1}(b(\xi)+2a(\xi))} \frac{f(x)-p\circ\mu(x)}{\sqrt{4a(\xi)^2-(\mu(x)-b(\xi))^2}}d\mu(x)d\xi\right| +o_n(1) .
		\end{multline*} 
Considering that the points in $\frac{1}{n}\mathbb{Z}$ are equally spaced and $f$ is Riemann integrable on $K$ , we have $$\limsup_{n \to \infty}\frac{1}{n}\left|X_n((f-p\circ\mu)\chi_K)\right|\leq \limsup_{n \to \infty}\frac{1}{n}\sum_{x\in K \cap \frac{1}{n}\mathbb Z}\left|f(x)-p(\mu(x))\right|\leq\int_{\mu(K)}|f\circ\mu^{-1}(x)-p(x)|d|\mu^{-1}|(x)<\epsilon.$$	
  By the uniform convergence of $\mu_n$ we also have $\left|\frac{1}{n}X_n((p\circ \mu-p\circ \mu_n)\chi_K)\right|=o_n(1)$.  Now combine Theorem~\ref{thm:LLNcontinuousfunctions}  and \eqref{alomst sure:limitbound}, and we have for any $ \epsilon>0$, 
	\begin{align*}
	\limsup_n	\left|\frac{1}{n}X_n(f)-\frac{1}{\pi }\int^{1}_{0}\int_{b(\xi)-2a(\xi)}^{b(\xi)+2a(\xi)} \frac{f\circ\mu^{-1}(x)}{\sqrt{4a(\xi)^2-(x-b(\xi))^2}}dxd\xi\right|\leq (1+C_\gamma) \epsilon
	\end{align*}
  almost surely. This shows the almost sure convergence of $\frac{1}{n}X_n(f)$.  \end{proof}

 
\subsection{Proof of Theorems~\ref{CLT_q_polynomial} and \ref{thm:fluctuation_theorem}}
We now prove Theorems~\ref{CLT_q_polynomial}  and \ref{thm:fluctuation_theorem}. Since Theorem~\ref{CLT_q_polynomial} is a special case of Theorem~\ref{thm:fluctuation_theorem} (set $N=1$), it suffices to prove the latter. The proofs will follow the same structure as \cite{duits2018global}. 

In this subsection, we only prove the case for the time sequence $$t_m=m,\qquad \text{ for all } m=0,\dots, N$$ to shorten the notations in the proof. However, by relabelling the time sequence, the general statement of Theorem~\ref{thm:fluctuation_theorem} follows directly from the same arguments. 

We start by introducing some notation (recall \eqref{cumulant:Nd}): For $\mathbb N \times \mathbb N$ matrices  $A_1,\ldots,A_N$ and $k \in \mathbb N$ we define 
\begin{align} \label{cumulant:opA}
		{C}_k(A_1,\ldots,A_N) =\sum_{j=1}^k\frac{(-1)^{j+1}}{j}
		\sum_{\substack{l_1+\ldots +l_j=k \\ l_i\geq 1}
  }\sum_{\substack{r_{i,m} \geq 0\\ \sum_{m=1}^N r_{i}^{(m)}=l_i}} \frac{\Tr\left(\prod_{i=1}^j P_n\left(\prod_{m=1}^NA_m^{r_i^{(m)}}\right)P_n\right)}{\prod_{m=1,\dots N} r_1^{(m)}! r_2^{(m)}! \cdots r_j^{(m)}!}.
\end{align}
Then, the following is the first key observation in \cite{duits2018global}.
	\begin{lemma}[Corollary 5.6 in \cite{duits2018global}]\label{cumulant_comparision}
		For any $A_1, \dots, A_N$ and $T_1, \dots, T_N$ banded semi-infinite matrices with bandwidth $b$ we have the following bound for all $k>1$
		\begin{align}
			& |C_k(A_1, \dots, A_N)-C_k(T_1, \dots, T_N)| \nonumber\\
			& \leq 4c_0b(k+1)\left(2 \max\left(\sum_{m=1}^N\|A_m\|_\infty,\sum_{m=1}^N\|T_m\|_\infty\right)\right)^{k-1}\sum_{m=1}^N\|R_{n,2b(k+1)}(A_m-T_m)R_{n,2b(k+1)}\|_\infty
		\end{align}
		where $c_0=\frac{2e}{(2-\sqrt{e})^2}$, $R_{n,l}=P_{n+l}-P_{n-l}$ and $\|\cdot\|_\infty$ denotes the operator norm. 
	\end{lemma}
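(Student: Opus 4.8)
\emph{Proof proposal.} This is Corollary 5.6 of \cite{duits2018global}; I sketch the argument one would write to reproduce it. There are three ingredients: a multilinear telescoping that isolates a single difference $A_m-T_m$, a finite-range property of the cumulant functional $C_k$ that lets one sandwich that difference by $R_{n,l}$, and some combinatorial bookkeeping of operator norms.

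First I would observe that, after expanding each block $\prod_m A_m^{r_i^{(m)}}$ into a product of individual letters, the right-hand side of \eqref{cumulant:opA} is a finite $\mathbb{Z}$-linear combination of traces of the form $\Tr\bigl(\prod_{i=1}^{j}P_n\,(M^{(i)}_1\cdots M^{(i)}_{l_i})\,P_n\bigr)$ with each $M^{(i)}_s\in\{A_1,\dots,A_N\}$ and $\sum_i l_i=k$, and that each such trace is multilinear in the $k$ letters it contains. Writing $A_m=T_m+(A_m-T_m)$ and telescoping letter by letter then expresses $C_k(A_1,\dots,A_N)-C_k(T_1,\dots,T_N)$ as a sum of traces of exactly the same shape in which one distinguished letter is a difference $D_m:=A_m-T_m$, the letters to its left are $T$'s and those to its right are $A$'s. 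Bounding such a trace by $\|D_m\|_\infty$ times the product of the operator norms of the remaining $k-1$ letters, summing $\sum_{i_1,\dots}\prod\|\cdot\|$ over the $N^{k-1}$ choices of those letters, and summing over the $\le k$ positions for $D_m$, over $j$, and over the compositions with the weights $\tfrac1j\tfrac{1}{l_1!\cdots l_j!}$, produces the factor $4c_0 b(k+1)$ together with $\bigl(2\max(\sum_m\|A_m\|_\infty,\sum_m\|T_m\|_\infty)\bigr)^{k-1}$ (the ``$2$'' and the max coming from the two-sided split of the remaining letters into $T$'s and $A$'s); the constant $c_0=\tfrac{2e}{(2-\sqrt e)^2}$ is what one needs to dominate the resulting geometric-type series in $j$. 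This part is routine.

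The substantive step is to show that each telescoped trace is \emph{unchanged} when $D_m$ is replaced by $R_{n,l}D_m R_{n,l}$ with $l=2b(k+1)$, i.e.\ that $C_k$ depends only on the entries of the matrices with both indices in the window $[n-l,n+l]$. Here the alternating weights $\sum_j\tfrac{(-1)^{j+1}}{j}$ in \eqref{cumulant:opA} are essential: they are precisely the $\Tr\log$ coefficients, and the cancellations they encode allow the $P_n$'s sitting between consecutive blocks to be converted into commutators with $P_n$, equivalently into effective factors $Q_n=I-P_n$ at the block boundaries — already visible in the single-letter identity $\tfrac12\Tr(P_n p^2 P_n)-\tfrac12\Tr(P_n p P_n p P_n)=\tfrac12\Tr(P_n p Q_n p P_n)$ for $k=2$. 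Since each letter is banded with bandwidth $\le b$, a product of $\le k$ letters is banded with bandwidth $\le kb$, so a factor $P_n(\text{banded})Q_n$ or $Q_n(\text{banded})P_n$ is supported in a window of width $O(kb)$ about index $n$; chasing the $\le k$ letters through the trace confines every index that contributes to within $2b(k+1)$ of $n$, which is exactly the claim. Combining this with the norm estimates of the previous paragraph (now with $\|R_{n,l}D_m R_{n,l}\|_\infty$ in place of $\|D_m\|_\infty$) yields the stated inequality. The main obstacle is making this finite-range claim quantitative in the multi-matrix setting — carrying out the rewriting of the nested $P_n$'s into boundary $Q_n$'s while keeping the explicit window $2b(k+1)$ — which is where all the work lies; I would refer to \cite[\S5]{duits2018global} for the details, adapting the argument there verbatim.
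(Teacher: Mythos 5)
You should first note that the paper itself offers no proof of this lemma: it is imported verbatim from \cite{duits2018global} (Corollary 5.6), so the only ``in-paper'' argument to compare with is the citation, and your outline is indeed a faithful roadmap of how that reference proves it — multilinear telescoping in the letters $A_m=T_m+(A_m-T_m)$, locality of the cumulant functional for banded matrices (the same mechanism the present paper alludes to after \eqref{cumulant:1d} and attributes to \cite{breuer2016universality}), and combinatorial bookkeeping of operator norms producing $c_0$ and the factor $\bigl(2\max(\sum_m\|A_m\|_\infty,\sum_m\|T_m\|_\infty)\bigr)^{k-1}$.

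As a proof, however, your write-up has a genuine gap, and one of its assertions is wrong as literally phrased. You claim that ``each telescoped trace is unchanged when $D_m$ is replaced by $R_{n,l}D_mR_{n,l}$''. For an individual trace of the form $\Tr\bigl(P_n\,T\cdots T\,D_m\,A\cdots A\,P_n\bigr)$ this is false: such a trace sees entries of $D_m$ with indices anywhere in $[0,n]$ (up to band corrections), e.g. already $\Tr(P_nD A P_n)$ for $k=2$. The localization is a property of the full alternating sum \eqref{cumulant:opA}: only after the $\Tr\log$ cancellations are exploited — rewriting the sum so that every surviving term carries a boundary factor $P_n(\text{banded})Q_n$ or $Q_n(\text{banded})P_n$, as in the $k=2$ identity $\tfrac12\Tr(P_nAQ_nAP_n)$ that you quote — does bandedness confine the relevant entries to a window of size $O(kb)$ around $n$, and making this reorganization quantitative (the explicit window $2b(k+1)$, uniformly in $j$ and in the compositions $l_1+\dots+l_j=k$, together with the value $c_0=\tfrac{2e}{(2-\sqrt e)^2}$) is precisely the content you defer to \cite[\S 5]{duits2018global}. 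So what you have is an accurate summary of the cited proof's strategy, with its crux asserted rather than carried out; since the paper under review also treats the lemma as a black box, this is acceptable as a citation but should not be presented as a self-contained argument, and the trace-by-trace invariance claim should be corrected to an invariance of the reorganized cumulant expression.
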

	The lemma is a comparison result : if the  matrices $A_j$ and $T_j$ are asymptotically the same around the $nn$ entry as $n \to \infty$, more precisely $\lim_{n\to\infty}\|R_{n,2b(k+1)}(A_m-T_m)R_{n,2b(k+1)}\|_\infty=0$, then $C_k(A_1, \dots, A_N)$ and $C_k(T_1, \dots, T_N)$ have the same limit (if it exists).   So, if one is able to compute the limiting behavior for a favorite set of matrices, then one immediately obtains the same limit for any set of matrices that can be compared to that set. Given \eqref{cumulant:Nd}, we want to apply this principle to $A_m=p(m,J_m)$, which by the assumptions can be compared to Toeplitz matrices. For Toeplitz matrices, we have the following result:
	\begin{lemma}[Corollary 5.8 in \cite{duits2018global}] \label{cumulanttoep} Let 
		\begin{align*}
			(\tilde{T}_{m})_{k,l} = \begin{cases}
				e^{\tau_m(l-k)}b(1;m) \quad & l=k\\
				e^{\tau_m(l-k)}a(1;m) \quad & |l-k|=1\\
				0 \quad & otherwise,
			\end{cases}
		\end{align*} 
  Then
  \begin{align}\label{thm:Toeplitz_clt}
			\lim_{n\to\infty} C_2(T_1,\ldots,T_N)=\sum_{r_1,r_2=1}^N\sum_{k=1}^\infty ke^{-|\tau_{r_1}-\tau_{r_2}|k}\hat{p}^{(r_1)}_k\hat{p}^{(r_2)}_{-k}
		\end{align}
		where 
		\begin{align*}
			\hat{p}^{(r)}_k=\frac{1}{\pi}\int_0^\pi p(r,b(1;r)+2a(1;r)\cos\theta)\cos(k\theta)d\theta
		\end{align*}
  and $\lim_{n \to \infty} C_k(T_1,\ldots,T_N)=0$ for $k\geq 3$.
	\end{lemma}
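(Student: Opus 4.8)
The plan is to observe that the matrices occurring in $C_k(T_1,\dots,T_N)$ — here $T_m:=p(m,\tilde T_m)$ is the matrix whose cumulants we need — are, away from a fixed corner block, diagonal conjugates of \emph{banded symmetric} Toeplitz matrices, after which the statement is exactly \cite[Corollary~5.8]{duits2018global} with the test function replaced by the polynomial $x\mapsto p(m,x)$. Concretely, I would first write $\tilde T_m=D_m^{-1}S_mD_m$ with $D_m=\operatorname{diag}(e^{\tau_m k})_{k\ge1}$ and $S_m$ the symmetric tridiagonal Toeplitz matrix with diagonal $b(1;m)$ and off-diagonal $a(1;m)$; since $D_m$ is diagonal this gives $T_m=D_m^{-1}P^{(m)}D_m$ with $P^{(m)}:=p(m,S_m)$. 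The matrix $P^{(m)}$ is banded with $n$-independent bandwidth and, outside a fixed finite top-left block (the only place where the one-sided Toeplitz operator differs from its Laurent extension, by a finite-rank Hankel correction), equals the Toeplitz matrix with symbol $p\bigl(m,b(1;m)+2a(1;m)\cos\theta\bigr)$; reading off the Fourier coefficients of that symbol identifies them with the $\hat p^{(m)}_j$ of the statement and shows $\hat p^{(m)}_j=\hat p^{(m)}_{-j}$, while $(T_m)_{k,l}=e^{\tau_m(l-k)}\hat p^{(m)}_{k-l}$ for all sufficiently large $k,l$.

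For the variance I would specialize \eqref{cumulant:opA} to $k=2$ and, using $Q_n=I-P_n$ and cyclicity of the trace, rearrange $C_2(T_1,\dots,T_N)$ into a combination of terms $\Tr(P_nT_{m_1}Q_nT_{m_2}P_n)$ and commutator traces $\Tr(P_n[T_{m_1},T_{m_2}]P_n)$. Since each $T_m$ is banded and eventually translation invariant, for $n$ beyond the bandwidths and the corner size every such trace collapses to a finite, $n$-independent sum obtained by summing the band across the cut $k\le n<l$; counting, for each $j\ge1$, the $j$ index pairs with $l-k=j$ straddling $n$ gives $\Tr(P_nT_{m_1}Q_nT_{m_2}P_n)=\sum_{j\ge1}j\,e^{(\tau_{m_1}-\tau_{m_2})j}\hat p^{(m_1)}_j\hat p^{(m_2)}_j$, and the commutator traces are handled the same way. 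The place where the hypothesis $\tau_1<\dots<\tau_N$ enters is that it pins down the sign of $\tau_{m_1}-\tau_{m_2}$, so that the exponentially \emph{growing} contributions cancel between the ordered and the commutator pieces and only the decaying exponentials $e^{-|\tau_{m_1}-\tau_{m_2}|j}$ survive; collecting the surviving terms (and writing $\hat p^{(m_2)}_j=\hat p^{(m_2)}_{-j}$) gives \eqref{thm:Toeplitz_clt}. In fact the resulting expression is already stationary in $n$, so no limit is needed here.

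For $k\ge3$ I would push the same reduction one step further: the banded, eventually translation-invariant structure makes $C_k(T_1,\dots,T_N)$ constant in $n$ once $n$ exceeds a $k$-dependent threshold, and after shifting all indices by $n$ it equals the corresponding cumulant of the exact conjugated Toeplitz operators on $\mathbb Z$, with $P_n$ replaced by the projection onto the negative half-line. Expanding every operator through its Laurent-polynomial symbol turns \eqref{cumulant:opA} into a finite sum of residue-type contour integrals, whose alternating-sign and cyclic combinatorics produce a complete telescoping for $k\ge3$; this is exactly the computation in \cite[Section~5]{duits2018global}, which transfers verbatim because the diagonal factors $D_m$ commute with every $P_n$ and $Q_n$ and therefore do not disturb the cancellation. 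I expect the only genuine work to be this bookkeeping — keeping track of which exponential factors survive the cut in the variance step, and checking that the telescoping of \cite{duits2018global} is really unaffected by the conjugating diagonals — while everything conceptual is contained in the diagonalization of the first step.
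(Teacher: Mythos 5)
The paper gives no proof of this lemma: it is imported verbatim as Corollary~5.8 of \cite{duits2018global} (the $\tilde T_m$ are exactly the exponentially conjugated tridiagonal Toeplitz matrices treated there, with $T_m=p(m,\tilde T_m)$), so what you offer is a reconstruction of the computation behind the citation rather than an alternative to an argument in the text. Your reconstruction is sound in outline: writing $\tilde T_m=D_m^{-1}S_mD_m$ with $D_m=\operatorname{diag}(e^{\tau_m k})$, hence $T_m=D_m^{-1}p(m,S_m)D_m$, and identifying $p(m,S_m)$ with the Toeplitz matrix of symbol $p\bigl(m,b(1;m)+2a(1;m)\cos\theta\bigr)$ up to a finite Hankel corner block, does give $(T_m)_{k,l}=e^{\tau_m(l-k)}\hat p^{(m)}_{l-k}$ away from the corner, and your count of the $j$ band entries straddling the cut correctly yields $\Tr(P_nT_{m_1}Q_nT_{m_2}P_n)=\sum_{j\ge1}j\,e^{(\tau_{m_1}-\tau_{m_2})j}\hat p^{(m_1)}_j\hat p^{(m_2)}_{-j}$ for all large $n$.

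Two cautions. First, at $k=2$ you do not need commutator traces: expanding \eqref{cumulant:opA} literally and using $Q_n=I-P_n$ gives $C_2=\tfrac12\sum_m\Tr(P_nT_mQ_nT_mP_n)+\sum_{m_1<m_2}\Tr(P_nT_{m_1}Q_nT_{m_2}P_n)$, in which only ordered pairs $m_1\le m_2$ occur, so every exponential already decays; the hypothesis $\tau_1<\dots<\tau_N$ enters only through this ordering, not through a cancellation of growing terms against commutators, and the symmetrized-plus-commutator rearrangement you propose is exactly where a growth/sign bookkeeping error can slip in. Moreover, this literal evaluation comes out to one half of the right-hand side of \eqref{thm:Toeplitz_clt}: the lemma is stated in the normalization of \cite{duits2018global}, where the cumulant expansion carries the multinomial weights $k!/(l_1!\cdots l_j!)$ (equivalently, \eqref{thm:Toeplitz_clt} is the limit of the genuine variance $\Tr\bigl(P_n\,p(J_n)\,Q_n\,p(J_n)\,P_n\bigr)$, without the $\tfrac12$ of \eqref{var:1d}); since you assert the match without doing this bookkeeping, a factor of two is left unaccounted for and should be fixed explicitly. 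Second, your justification of the $k\ge3$ vanishing --- that the diagonal factors commute with $P_n$ and $Q_n$ and ``do not disturb the cancellation'' --- is too quick: the $D_m$ differ with $m$, do not cancel out of the mixed products, and are precisely what produces the weights $e^{-|\tau_{r_1}-\tau_{r_2}|k}$ in $C_2$. The accurate remark is simply that Corollary~5.8 of \cite{duits2018global} is proved for exactly these conjugated Toeplitz matrices, so there is nothing to transfer; that is also why the paper can, and does, use it as a black box.
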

	
	\begin{corollary}
		For $\lambda>0$ small,
		\begin{align}\label{clt:polyfourier}
			\lim_{n\to\infty}\mathbb{E}[e^{i \lambda X_n(p\circ \mu_n)}]e^{-i \lambda \mathbb{E}[X_n(p\circ \mu_n)]}=e^{-\frac{\lambda^2}{2}\sigma^2(p)}
		\end{align}
		where 
		\begin{align} \label{eq:sigmap2}
			\sigma^2(p) & = \sum_{r_1,r_2=1}^N\sum_{k=1}^\infty ke^{-|\tau_{r_1}-\tau_{r_2}|k}\widehat{p}^{(r_1)}_k\widehat{p}^{(r_2)}_{-k}
		\end{align}
	\end{corollary}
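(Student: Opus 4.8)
The plan is to establish convergence of every cumulant of the linear statistic $X_n(p\circ\mu_n)$ and then exponentiate, following the scheme of \cite{breuer2017central,duits2018global}. Observe first that, since $x\mapsto p(m,x)$ is a polynomial, the statistic $X_n(p\circ\mu_n)$ carries no cut-off, so the moment hypothesis \eqref{ass: moment} and the convergence $\mu_n\to\mu$ play no role here; only \eqref{ass:cltjaccobi} and \eqref{ass:cltc} are used (the $\mu$-hypotheses enter only later, when a general $f$ is approximated by some $p\circ\mu$). Writing $A_m^{(n)}:=p(m,\mathcal J_n^{(m)})$, formula \eqref{cumulant:Nd} reads $\mathbf C_k\big(X_n(p\circ\mu_n)\big)=C_k\big(A_1^{(n)},\dots,A_N^{(n)}\big)$ in the notation \eqref{cumulant:opA}. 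Since subtracting the mean removes exactly the $k=1$ term, for $\lambda$ small one has
\begin{align*}
\log\Big(\mathbb E\big[e^{i\lambda X_n(p\circ\mu_n)}\big]\,e^{-i\lambda\mathbb E[X_n(p\circ\mu_n)]}\Big)=\sum_{k\ge2}\frac{(i\lambda)^k}{k!}\,C_k\big(A_1^{(n)},\dots,A_N^{(n)}\big),
\end{align*}
and so it suffices to prove that $C_2\big(A_1^{(n)},\dots,A_N^{(n)}\big)\to\sigma^2(p)$, that $C_k\big(A_1^{(n)},\dots,A_N^{(n)}\big)\to0$ for every fixed $k\ge3$, and that the series may be treated term by term as $n\to\infty$.

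For a fixed cumulant I would compare $A_m^{(n)}$ with the Toeplitz matrices $T_m$ built from the $\tilde T_m$ of Lemma \ref{cumulanttoep}. Each $A_m^{(n)}$ is banded with bandwidth $b:=\max_m\deg p(m,\cdot)$, independent of $n$. As recalled after \eqref{cumulant:1d} (and proved in \cite{breuer2016universality} for arbitrary banded matrices), $C_k\big(A_1^{(n)},\dots,A_N^{(n)}\big)$ depends only on the entries of the $A_m^{(n)}$ whose two indices both lie in a window $W_k$ of width $O(bk)$ centred at $n$. Inside such a window the entries $(\mathcal J_n^{(m)})_{j,l}=\tfrac{c_{j,m}}{c_{l,m}}(J_n^{(m)})_{j,l}$ with $j,l=n+O(1)$ converge, by \eqref{ass:cltjaccobi} together with \eqref{ass:cltc}, to the corresponding entries of $\tilde T_m$, hence so do the entries of $A_m^{(n)}=p(m,\mathcal J_n^{(m)})$ in $W_k$. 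Replacing $A_m^{(n)}$ outside $W_k$ by $T_m$ leaves $C_k$ unchanged (by locality) while making $\|A_m^{(n)}\|_\infty$ bounded uniformly in $n$; Lemma \ref{cumulant_comparision} then gives $\big|C_k\big(A_1^{(n)},\dots,A_N^{(n)}\big)-C_k(T_1,\dots,T_N)\big|\to0$. Combined with Lemma \ref{cumulanttoep} this yields $C_2\big(A_1^{(n)},\dots,A_N^{(n)}\big)\to\sigma^2(p)$ (recall \eqref{eq:sigmap2}) and $C_k\big(A_1^{(n)},\dots,A_N^{(n)}\big)\to0$ for $k\ge3$.

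To justify passing to the limit inside the sum, I would extract a uniform cumulant bound from Lemma \ref{cumulant_comparision} with the zero matrix: after the truncation above the norms $\|A_m^{(n)}\|_\infty$ are bounded by a constant independent of $n$, whence $\big|C_k\big(A_1^{(n)},\dots,A_N^{(n)}\big)\big|\le C_1\,k\,C_2^{\,k}$ with $C_1,C_2$ independent of $n$. Therefore $\sum_{k\ge2}\frac{\lambda^k}{k!}\big|C_k\big(A_1^{(n)},\dots,A_N^{(n)}\big)\big|\le C_1\sum_{k\ge2}\frac{k(\lambda C_2)^k}{k!}<\infty$ uniformly in $n$, so dominated convergence in the counting measure on $k$ lets one pass to the limit term by term, giving $\sum_{k\ge2}\frac{(i\lambda)^k}{k!}\lim_nC_k=\tfrac{(i\lambda)^2}{2}\sigma^2(p)=-\tfrac{\lambda^2}{2}\sigma^2(p)$. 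Exponentiating yields \eqref{clt:polyfourier}.

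The entrywise convergence $\mathcal J_n^{(m)}\to\tilde T_m$ on a fixed window and the combinatorial bookkeeping in \eqref{cumulant:opA} are routine. The point that needs care, and the main obstacle, is the truncation: because \eqref{ass:cltjaccobi} controls $J_n^{(m)}$ only near the index $n$, the matrices $A_m^{(n)}$ are a priori not uniformly bounded, so one must localize them to $W_k$ (using that $C_k$ is insensitive to the remaining entries) before Lemma \ref{cumulant_comparision} can be applied, and it is exactly this localization that simultaneously delivers the uniform-in-$n$ cumulant estimate needed for the term-by-term passage to the limit.
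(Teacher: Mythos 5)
Your proposal is correct and follows essentially the same route as the paper: identify the cumulants via \eqref{cumulant:Nd} with $A_m=p(m,\mathcal J_n^{(m)})$, compare them with the Toeplitz cumulants using Lemma \ref{cumulant_comparision} (whose hypothesis $\|R_{n,2b(k+1)}(A_m-T_m)R_{n,2b(k+1)}\|_\infty\to0$ follows from \eqref{ass:cltjaccobi} and \eqref{ass:cltc}), and conclude with Lemma \ref{cumulanttoep}. The extra steps you spell out — localizing to a window around index $n$ before invoking the comparison lemma and extracting a uniform-in-$n$ bound $|C_k|\le C_1 k C_2^{\,k}$ to sum the cumulant series term by term — are left implicit in the paper's proof but are consistent with it and only add rigor.
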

	\begin{proof} 
		Let $A_m= p(m,J_m)$ and recall \eqref{cumulant:Nd}. Then $\lim_{n\to\infty}\|R_{n,2b(k+1)}(A_m-T_m)R_{n,2b(k+1)}\|_\infty=0$, by the assumptions \eqref{ass:cltjaccobi} and \eqref{ass:cltc}. The combination of Lemmas~\ref{cumulant_comparision} and \ref{cumulanttoep} give that $\mathbf C_k(X_n(p\circ \mu_n)) \to 0$ for $k\geq 3$ and $\mathbf C_k(X_n(p\circ \mu_n))=\sigma^2(p)$ as in \eqref{eq:sigmap2}. \end{proof}

	In the rest of this section, we extend $f$ from being a polynomial in $\mu_n $ to a $C^1$-function in $\mu_n$. First, we estimate the variance in the limit. Though we require the function to be $C^1$, the following lemma works for Lipschitz continuous functions. 
	
	\begin{lemma} \label{lemma: Krein}
		For any $x\mapsto f(m,x),  f_1(m,x),  f_2(m,x)$ being real valued $2\pi$ periodic Lipschitz continuous functions. 
		Denote the Fourier coefficient $\hat{f}^{(m)}_k=\frac{1}{2\pi}\int_{-\pi}^{\pi}f(m,x)e^{-ikx}dx$ and the quantity
		\begin{align}\label{pseudovariance}
			\sigma^2(f)  = \sum_{r_1,r_2=1}^N\sum_{k=1}^\infty ke^{-|\tau_{r_1}-\tau_{r_2}|k}\hat{f}^{(r_1)}_k\hat{f}^{(r_2)}_{-k}.
		\end{align}
		Let $c_{Lip}=2N\pi^3$. Then we have the following bounds
		\begin{align}
			& \sigma^2(f) \leq  c_{Lip}\sum_{m=1}^NL(f(m,\cdot))^2, \label{pseudovariabcebound1}\\
			|\sigma^2(f_1)-\sigma^2(f_2)|\leq &   c_{Lip}\sum_{m=1}^N\left(L\left(f_1(m,\cdot)\right)+L\left(f_2(m,\cdot)\right)\right)\sum_{m=1}^NL\left(f_1(m,\cdot)-f_2(m,\cdot)\right).\label{pseudovariabcebound2}
		\end{align}
	\end{lemma}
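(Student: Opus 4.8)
The plan is to reduce both bounds to a single elementary Parseval-type inequality and then argue by Cauchy--Schwarz together with a polarization identity. Throughout I regard each $x\mapsto f(m,x)$ (and $f_1(m,x)$, $f_2(m,x)$) as a $2\pi$-periodic function; this is the natural reading in view of \eqref{pseudovariance}, and it is harmless in the intended applications, where these functions arise as functions of $\cos\theta$, hence are even --- so the periodic extension is again Lipschitz with the \emph{same} constant (the only difference quotients that could grow are those straddling $\pm\pi$, and evenness gives $f(m,-\pi)=f(m,\pi)$), and moreover their Fourier coefficients $\hat f^{(m)}_k$ are real with $\hat f^{(m)}_{-k}=\hat f^{(m)}_k$. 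The single analytic input I will establish first is: for a real $2\pi$-periodic Lipschitz function $g$,
\begin{equation*}
\sum_{k\ge1}k\,|\hat g_k|^2\ \le\ \tfrac12\,L(g)^2 .
\end{equation*}
Indeed, such a $g$ is absolutely continuous with $|g'|\le L(g)$ a.e.\ and $\widehat{g'}_k=ik\,\hat g_k$, so Parseval's identity gives $\sum_{k\in\mathbb Z}k^2|\hat g_k|^2=\sum_{k\in\mathbb Z}|\widehat{g'}_k|^2=\frac1{2\pi}\int_{-\pi}^{\pi}|g'|^2\le L(g)^2$; since $|\hat g_{-k}|=|\hat g_k|$ and $k\le k^2$ for $k\ge1$, the claim follows.

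For \eqref{pseudovariabcebound1} I would start from \eqref{pseudovariance}, bound $e^{-|\tau_{r_1}-\tau_{r_2}|k}\le1$, and collect the sum over $r_1,r_2$ into $\sum_{k\ge1}k\bigl(\sum_{r=1}^N|\hat f^{(r)}_k|\bigr)^2$. The discrete Cauchy--Schwarz inequality $\bigl(\sum_{r=1}^N a_r\bigr)^2\le N\sum_{r=1}^N a_r^2$ turns this into $N\sum_{r=1}^N\sum_{k\ge1}k|\hat f^{(r)}_k|^2$, and the Parseval estimate above bounds this by $\tfrac N2\sum_{r=1}^N L(f(r,\cdot))^2$. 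This is \eqref{pseudovariabcebound1}; the stated constant $c_{Lip}=N\pi^3$ is far from sharp but certainly valid.

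For \eqref{pseudovariabcebound2} I would introduce the symmetric bilinear form $B(g,h)=\sum_{r_1,r_2=1}^N\sum_{k\ge1}k\,e^{-|\tau_{r_1}-\tau_{r_2}|k}\,\hat g^{(r_1)}_k\,\hat h^{(r_2)}_k$ (symmetric because the coefficients are real and $|\tau_{r_1}-\tau_{r_2}|$ is symmetric in $r_1,r_2$), so that $\sigma^2(f)=B(f,f)$ and hence, by bilinearity and symmetry, $\sigma^2(f_1)-\sigma^2(f_2)=B(f_1+f_2,\,f_1-f_2)$. Running the estimate of the previous paragraph in bilinear form --- dropping the exponential, splitting the $r$-sums by Cauchy--Schwarz, and then applying Cauchy--Schwarz in $k$ against the weight $k$ --- yields $|B(g,h)|\le\bigl(\tfrac N2\sum_{r}L(g(r,\cdot))^2\bigr)^{1/2}\bigl(\tfrac N2\sum_{r}L(h(r,\cdot))^2\bigr)^{1/2}$. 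Taking $g=f_1+f_2$ and $h=f_1-f_2$, using $L\bigl((f_1\pm f_2)(m,\cdot)\bigr)\le L(f_1(m,\cdot))+L(f_2(m,\cdot))$ together with $L\bigl((f_1-f_2)(m,\cdot)\bigr)=L\bigl(f_1(m,\cdot)-f_2(m,\cdot)\bigr)$, and finally $\sqrt{\sum_m a_m^2}\le\sum_m a_m$ for $a_m\ge0$, reproduces the right-hand side of \eqref{pseudovariabcebound2} (again with the generous constant $N\pi^3$).

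The only point that needs genuine care --- and the one I would flag explicitly --- is the reduction to the periodic setting in the first paragraph: a function that is merely Lipschitz on $[-\pi,\pi]$ without matching endpoint values has $\hat g_k$ of exact order $1/|k|$ coming from the endpoint jump, so that $\sum_{k\ge1}k|\hat g_k|^2$, and hence $\sigma^2$ itself, diverges. The lemma is thus to be understood for (the periodic extensions of) the functions actually occurring in Theorems \ref{CLT_q_polynomial} and \ref{thm:fluctuation_theorem}, which are even; with that understood, the remainder is routine Cauchy--Schwarz bookkeeping.
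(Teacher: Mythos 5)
Your proof is correct, but it reaches the two bounds by a route that differs from the paper's in all three technical steps. For the key single-function estimate the paper does \emph{not} differentiate: it proves $\sum_{k\geq 1}k|\hat f^{(m)}_k|^2\leq 2\pi^3 L(f(m,\cdot))^2$ by comparing $f$ with its translate $f_\delta(m,x)=f(m,x-\delta)$, applying Parseval to $f-f_\delta$ and summing over dyadic blocks $2^l\leq k<2^{l+1}$ with $\delta=\pi/2^{l+1}$ (a Bernstein-type argument that would also work for H\"older classes), whereas you use absolute continuity of a circle-Lipschitz function, $\widehat{g'}_k=ik\hat g_k$ and Parseval for $g'$, which is shorter and gives the sharper constant $\tfrac12 L(g)^2$ but is genuinely tied to the Lipschitz case. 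For the kernel $e^{-|\tau_{r_1}-\tau_{r_2}|k}$ the paper keeps it, writing $e^{-k|\tau|}=\frac1\pi\int_{\mathbb R}\frac{k}{k^2+\omega^2}e^{-i\omega\tau}d\omega$ so that $\sigma^2(f)=\frac1\pi\sum_k\int\frac{k^2}{\omega^2+k^2}\bigl|\sum_m e^{-i\tau_m\omega}\hat f^{(m)}_k\bigr|^2d\omega$, and then uses Cauchy--Schwarz in $m$ and $\int\frac{k}{k^2+\omega^2}d\omega=\pi$; you simply bound the exponential by $1$ and use discrete Cauchy--Schwarz, which suffices for the stated inequalities (the paper's representation has the side benefit of exhibiting $\sigma^2(f)\geq 0$, which your route does not give, but that is not part of the lemma). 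Finally, for \eqref{pseudovariabcebound2} the paper works with the identity $|a|^2-|b|^2=a(\bar a-\bar b)+\bar b(a-b)$ inside the $\omega$-integral and applies Cauchy--Schwarz twice, while you polarize a symmetric bilinear form, $\sigma^2(f_1)-\sigma^2(f_2)=B(f_1+f_2,f_1-f_2)$; the two computations are equivalent in substance, and your bookkeeping at the end (subadditivity of $L$, $\sqrt{\sum_m a_m^2}\leq\sum_m a_m$) matches the paper's.

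Your caveat about periodicity is well taken and is not a defect of your argument relative to the paper's: as literally stated, for a Lipschitz function on $[-\pi,\pi]$ with $f(-\pi)\neq f(\pi)$ (e.g.\ $f(x)=x$, $N=1$) the diagonal terms give $\sum_k k|\hat f_k|^2=\sum_k k^{-1}=\infty$, so \eqref{pseudovariabcebound1} fails; the paper's own proof has the same implicit requirement, since the translation step uses $\|f-f_\delta\|_2\leq\sqrt{2\pi}\,L(f)\delta$ together with $\widehat{f_\delta}_k=e^{-ik\delta}\hat f_k$, both of which presuppose that the periodic extension is Lipschitz. In the applications (Theorems \ref{CLT_q_polynomial} and \ref{thm:fluctuation_theorem}) the functions are of the form $\theta\mapsto f(m,\mu^{-1}(b+2a\cos\theta))$, hence even in $\theta$, so the periodic extension is Lipschitz with the same constant and, as you note, the coefficients are real with $\hat f_{-k}=\hat f_k$, which is also what makes your bilinear form symmetric (in the general real case one would use the Hermitian form and take real parts in the polarization).
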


	\begin{proof}
		By either inverse Fourier transformation of $e^{-k|\tau|}$ or residue theorem we have the identity
		\begin{align*}
			e^{-k|\tau|}=\frac{1}{\pi}\int_{-\infty}^\infty \frac{k}{k^2+\omega^2}e^{-i\omega \tau} d\omega.
		\end{align*}
		
		Now, \eqref{pseudovariance} can be rewritten as 
		\begin{align*}
			\sigma^2(f) =\frac{1}{\pi}\sum_{k=1}^\infty\int_{\mathbb{R}}\frac{k^2}{\omega^2+k^2}\left|\sum_{m=1}^Ne^{-i\tau_{m}\omega}\hat{f}_k^{(m)}\right|^2d\omega.
		\end{align*}
		By the Cauchy-Schwartz inequality for the sum over $m$ and $\int_{\mathbb{R}}\frac{k}{k^2+\omega^2}d\omega=\pi$, we have 
		\begin{align*}
			\sigma^2(f)  \leq N\sum_{m=1}^N\sum_{k=1}^\infty k\left|\hat{f}^{(m)}_k\right|^2.
		\end{align*}
		For all $m$ consider $f_{\delta}(m,x)\coloneqq f(m,x-\delta)$. Thus $\widehat{f_{\delta}}^{(m)}_k=e^{-ik\delta}\hat{f}^{(m)}_k$. By Parseval's identity, we have
		\begin{align*}
			\|f(m,\cdot)-f_{\delta}(m,\cdot)\|_2^2=\sum_{k=-\infty}^\infty|e^{-ik\delta}-1|^2\left|\hat{f}^{(m)}_k\right|^2.
		\end{align*}
		Now fix an integer $l$ and pick $\delta=\frac{\pi}{2^{l+1}}$ we have 
		\begin{align*}
        \sum_{k=2^l}^{2^{l+1}-1}\frac{k\delta}{\pi}\left|\hat{f}^{(m)}\right|^2\leq\sum_{k=2^l}^{2^{l+1}-1}|e^{-ik\delta}-1|^2\left|\hat{f}^{(m)}\right|^2\leq 	\|f(m,\cdot)-f(m,\cdot)_{\delta}\|_2^2\leq2\pi L\left(f(m,\cdot)\right)^2\delta^{2}, 
		\end{align*}
		where the first inequality is due to the fact that $k\delta\in[\frac{\pi}{2},\pi)$ in the chosen range and hence  the real part of $e^{-ik\delta}$ is non positive.  Consequently, 
		\begin{align*}
			\sum_{k=1}^\infty k\left|\hat{f}_k^{(m)}\right|^2=\sum_{l=0}^\infty\sum_{k=2^l}^{2^{l+1}-1}k\left|\hat{f}^{(m)}\right|^2\leq 2\pi^{3}L\left(f(m,\cdot)\right)^2,
		\end{align*}
		which proves inequality \eqref{pseudovariabcebound1}. To prove inequality \eqref{pseudovariabcebound2} we use the relation $|a|^2-|b|^2=a(\bar{a}-\bar{b})+\bar{b}(a-b)$ where $\bar{a}$ denotes the complex conjugation of $a$. This gives
		\begin{align*}
			\sigma^2(f_1)-\sigma^2(f_2)= & \frac{1}{\pi}\sum_{k=1}^\infty\int_{\mathbb{R}}\frac{k^2}{\omega^2+k^2}\left(\sum_{m=1}^Ne^{-i\tau_{m}\omega}\hat{f}_{1,k}^{(m)}\sum_{m=1}^N(e^{i\tau_{m}\omega}\hat{f}_{1,-k}^{(m)}-e^{i\tau_{m}\omega}\hat{f}_{2,-k}^{(m)})\right)d\omega \\
			& +\frac{1}{\pi}\sum_{k=1}^\infty\int_{\mathbb{R}}\frac{k^2}{\omega^2+k^2}\left(\sum_{m=1}^Ne^{i\tau_{m}\omega}\hat{f}_{2,-k}^{(m)}\sum_{m=1}^N(e^{-i\tau_{m}\omega}\hat{f}_{1,k}^{(m)}-e^{-i\tau_{m}\omega}\hat{f}_{2,k}^{(m)})\right)d\omega .
		\end{align*}
		Then, applying Cauchy-Schwartz inequality twice, following the same argument as above, we obtain
		\begin{multline*}
			 \left|\frac{1}{\pi}\sum_{k=1}^\infty\int_{\mathbb{R}}\frac{k^2}{\omega^2+k^2}\left(\sum_{m=1}^Ne^{-i\tau_{m}\omega}\hat{f}_{1,k}^{(m)}\sum_{m=1}^N e^{i\tau_{m}\omega}(\hat{f}_{1,-k}^{(m)}-\hat{f}_{2,-k}^{(m)})\right)d\omega\right| \\
			\leq  N\sum_{k=1}^\infty \sqrt{\sum_{m=1}^Nk\left|\hat{f}_{1,k}^{(m)}\right|^2\sum_{m=1}^Nk\left|\hat{f}_{1,-k}^{(m)}-\hat{f}_{2,-k}^{(m)})\right|^2}	
			\leq  N \sqrt{\sum_{m=1}^N\sum_{k=1}^\infty k\left|\hat{f}_{1,k}^{(m)}\right|^2}\sqrt{\sum_{m=1}^N\sum_{k=1}^\infty k\left|\hat{f}_{1,-k}^{(m)}-\hat{f}_{2,-k}^{(m)})\right|^2}\\	
			\leq  c_{Lip} \sqrt{\sum_{m=1}^NL\left(f_1(m,\cdot)\right)^2}\sqrt{\sum_{m=1}^NL\left(f_1(m,\cdot)-f_2(m,\cdot)\right)^2}.
		\end{multline*}  
		Similarly, we bound the second sum and apply the fact that for any positive number $x_n$, $\sum_{m=1}^Nx_m^2\leq (\sum_{m=1}^Nx_m)^2 $. This concludes the proof.
	\end{proof}
	
	\begin{lemma}\label{lemma:lipschtiz_poly} Assume that $\mu_n$ converges uniformly to $\mu$ uniformly  as in \eqref{ass:mu_sup}, and also assume that \eqref{ass:mu_lip} and \eqref{ass:mu} hold. Let $K^{(m)}$ be a set as described in Theorem~\ref{thm:fluctuation_theorem}. Then for any polynomial $x\mapsto p(m,x)$ following holds
		\begin{align}
			\lim_{n\to\infty}L_{K^{(m)}}\left(p(m, \mu \circ\mu_n^{-1,(m)})-p(m, \cdot)\right) =0, 
		\end{align}
		where $\mu \circ\mu_n^{-1,(m)}\coloneqq \mu (m,\mu_n^{-1,(m)})$ and $\mu_n^{-1,(m)}$ is the inverse of $x\mapsto \mu_n(m,x)$.
	\end{lemma}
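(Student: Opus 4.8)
The plan is to reduce the claim about the Lipschitz seminorm of $p(m,\mu\circ\mu_n^{-1,(m)})-p(m,\cdot)$ to the hypothesis \eqref{ass:mu_lip} on $\mu(m,\mu_n^{-1,(m)})-\mathrm{id}$, using only elementary algebra of Lipschitz functions on the compact set $\mu(m,K^{(m)})$. First I would record two standard facts: (i) for functions $g,h$ bounded and Lipschitz on a common set $E$, $L_E(gh)\le \|g\|_{\infty,E}L_E(h)+\|h\|_{\infty,E}L_E(g)$, and (ii) a polynomial $x\mapsto p(m,x)$ restricted to a compact interval $E$ is Lipschitz with $L_E(p(m,\cdot))\le C_{p,E}$ and bounded with $\|p(m,\cdot)\|_{\infty,E}\le C'_{p,E}$, where $C_{p,E},C'_{p,E}$ depend only on $p$ and $E$. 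The relevant compact set here is $E=\mu(m,K^{(m)})$, which by the hypotheses of Theorem \ref{thm:fluctuation_theorem} is compact and on which $\mu(m,\cdot)$ is bounded; by the uniform convergence \eqref{ass:mu_sup} together with \eqref{ass:mu_lip}, the map $x\mapsto \mu(m,\mu_n^{-1,(m)}(x))$ also takes values in a fixed compact neighbourhood of $E$ for all large $n$, so $p(m,\cdot)$ is uniformly Lipschitz and uniformly bounded along the relevant range for all large $n$.

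Next I would telescope. Write $\phi_n(x)\coloneqq \mu(m,\mu_n^{-1,(m)}(x))$, so that by \eqref{ass:mu_lip} we have $L_{K^{(m)}}(\phi_n-\mathrm{id})=\varepsilon_n\to 0$. For a monomial $p(m,x)=x^k$ I would use the factorization
\begin{align*}
\phi_n(x)^k-x^k=(\phi_n(x)-x)\sum_{j=0}^{k-1}\phi_n(x)^j x^{k-1-j},
\end{align*}
and bound $L$ of the right-hand side by fact (i): $L_{K^{(m)}}(\phi_n^k-\mathrm{id}^k)\le \|\phi_n-\mathrm{id}\|_{\infty}\,L_{K^{(m)}}(\Sigma_n)+\|\Sigma_n\|_{\infty}\,L_{K^{(m)}}(\phi_n-\mathrm{id})$, where $\Sigma_n=\sum_{j}\phi_n^j x^{k-1-j}$. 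Here $\|\phi_n-\mathrm{id}\|_{\infty}$ on $E$ is controlled since $\mathrm{id}$ is bounded on $E$ and $L_{K^{(m)}}(\phi_n-\mathrm{id})\to0$ forces $\phi_n-\mathrm{id}$ to be small in sup norm up to an additive constant, which I can absorb or alternatively observe directly from \eqref{ass:mu_sup} that $\phi_n-\mathrm{id}\to0$ uniformly on $E$; meanwhile $\|\Sigma_n\|_\infty$ and $L_{K^{(m)}}(\Sigma_n)$ are bounded uniformly in $n$ because $\phi_n$ stays in a fixed compact set with uniformly bounded Lipschitz constant ($L(\phi_n)\le 1+\varepsilon_n\le 2$). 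Hence $L_{K^{(m)}}(\phi_n^k-\mathrm{id}^k)\to0$. By linearity over the (finitely many) monomials making up $p(m,\cdot)$, this gives $L_{K^{(m)}}(p(m,\phi_n)-p(m,\cdot))\to0$, which is the claim.

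The only genuinely delicate point — and the one I would be most careful about — is keeping all the constants genuinely uniform in $n$: one must ensure $\phi_n$ does not escape to infinity, so that the polynomial $p(m,\cdot)$ is being evaluated on a fixed compact set where its Lipschitz constant and sup norm are controlled. This is exactly why the hypotheses combine the uniform convergence \eqref{ass:mu_sup}, the boundedness of $\mu(m,\cdot)$ on $K^{(m)}$ from \eqref{ass:mu}, and the Lipschitz bound \eqref{ass:mu_lip}; the first two pin $\phi_n$ to a compact set and bound $\|\phi_n-\mathrm{id}\|_\infty$, while the third supplies the vanishing factor $\varepsilon_n$. Everything else is the routine product rule for Lipschitz seminorms and a finite telescoping sum, so no further estimates are needed.
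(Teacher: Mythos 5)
Your proposal is correct and follows essentially the same route as the paper's own proof: the telescoping factorization $f_n^k-\mathrm{id}^k=(f_n-\mathrm{id})\sum_j f_n^j\,\mathrm{id}^{k-1-j}$, the product rule for Lipschitz seminorms, uniform bounds on $f_n=\mu\circ\mu_n^{-1,(m)}$ from \eqref{ass:mu} and \eqref{ass:mu_sup}, and the vanishing factor from \eqref{ass:mu_lip}, concluding by linearity over monomials. No gaps.
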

	\begin{proof}
		Recall that $L$ defined by  \eqref{def: lipschitz_seminorm} is a seminorm. Let $f_n\coloneqq\mu \circ\mu_n^{-1,(m)}$  and $f_0(x)\coloneqq x$. It is then sufficient to show that for any $k\in \mathbb{N}$, we have  $\lim\limits_{n\to\infty}L_{K^{(m)}}\big(f_n^k-f_0^k \big)=0$. First note that 
		\begin{align}\label{eq:mu_poly}
			f_n^k-f_0^k= \left(f_n-f_0 \right)\left( \sum_{j=0}^{k-1} 	f_n^jf_0^{k-1-j}\right).
		\end{align}
		Also, it is easy to verify that for any two functions $f,g$, the following general statement about the Lipschitz constant holds 
		\begin{align}
			L(fg) \leq & \sup|f| L(g)+\sup|g|L(f)\label{eq:lipschitz1}.
		\end{align}
		Moreover, by \eqref{eq:lipschitz1} and induction one can also verify that for any $j,k \in \mathbb{N}_{\geq 1}$,
		\begin{align}
			L(f^k)\leq & k \sup|f|^{k-1}L(f) \label{eq:lipschitz2}\\
			L(f^kg^j)\leq & k \sup|f|^{k-1}\sup|g|^jL(f) + j\sup|g|^{j-1}\sup|f|^kL(g) .\label{eq:lipschitz3}
		\end{align}
		By assumption \eqref{ass:mu}, we can assume without loss of generality that there exists a constant $C>0$ with
		\begin{align}
			\sup_{x\in \frac{1}{n}\mathbb{Z} \cap K^{(m)}}|f_n|<C , & \quad L_{K^{(m)}}(f_n)<C .
		\end{align}  
		Then  by \eqref{eq:lipschitz2} and \eqref{eq:lipschitz3} we have
		\begin{align}
			L_{K^{(m)}} \left( \sum_{j=0}^{k-1} 	f_n^jf_0^{k-1-j} \right) \leq k(k-1)C^{k-1}.
		\end{align}
		Use \eqref{eq:lipschitz1} for \eqref{eq:mu_poly} and we get 
		\begin{align*}
			L_{K^{(m)}}\big( f_n^k-f_0^k \big) \leq &  \sup_{x\in \frac{1}{n}\mathbb{Z} \cap K^{(m)}}\left| f_n-f_0 \right|  k(k-1)C^{k-1} + L_{K^{(m)}}\big( f_n-f_0\big) kC^{k-1}.
		\end{align*}
		By assumption \eqref{ass:mu_sup} and \eqref{ass:mu_lip}, we then have $\lim\limits_{n\to\infty}	L_{K^{(m)}}\big( f_n^k-f_0^k \big)=0$.
	\end{proof}

	\begin{proof}[Proof of Theorem~\ref{thm:fluctuation_theorem}]
		
		We use the Stone–Weierstrass theorem to extend $f$ to any $C^1$ continuous function by approximating $f$ uniformly by polynomials. First, we recall some notations. $\mu^{-1,(m)}$ is  the inverse function of $x\mapsto\mu(m,x)$, $f\circ\mu^{-1}(m,x)=f(m,\mu^{-1,(m)}(x))$ and
		\begin{align*}
			\widehat{f\circ\mu^{-1}}_k^{(m)} & \equiv \frac{1}{2\pi}\int_0^{2\pi}f(m,\mu^{-1}(2a(1;m) \cos\theta+b(1;m))e^{-ik\theta}d\theta\,\\
			\sigma^2(f\circ\mu^{-1}) & = \sum_{r_1,r_2=1}^N\sum_{k=1}^\infty ke^{-|\tau_{r_1}-\tau_{r_2}|k}\widehat{f\circ \mu^{-1}}^{(r_1)}_k\widehat{f\circ \mu^{-1}}^{(r_2)}_k.
		\end{align*}
		With the same proof as Lemma 6.2 in \cite{duits2018global}, one can obtain the following bound 		
		\begin{align}
			\Var X_n(f)\leq \frac{N}{2}\sum_{m=1}^N\sup_{x,y\in \frac{1}{n}\mathbb{Z} \cap K^{(m)}}\left|\frac{f(m,x)-f(m,y)}{\mu_n(x)-\mu_n(y)}\right|^2\|[P_n,J_m]\|_2^2+o_n(1).
		\end{align}
		Moreover $\|[P_n,J_m]\|_2\leq |a_{n,n}^m|(|\frac{c_{n,m}}{c_{n+1,m}}|+|\frac{c_{n+1,m}}{c_{n,m}}|)$ which is bounded uniformly with respect to $n$ by the assumptions (\ref{ass:cltjaccobi},\ref{ass:cltc}). Combining \eqref{ass:mu_lip}, 
		
		\begin{align*}
			\sup_{x,y\in \frac{1}{n}\mathbb{Z} \cap K^{(m)}}\left|\frac{f(m,x)-f(m,y)}{\mu_n(x)-\mu_n(y)}\right|\leq & \sup_{x,y\in K^{(m)}}\left|\frac{f(m,x)-f(m,y)}{\mu(x)-\mu(y)}\right|\sup_{x,y\in \frac{1}{n}\mathbb{Z} \cap K^{(m)}}\left|\frac{\mu(x)-\mu(y)}{\mu_n(x)-\mu_n(y)}\right| \\
			\leq & L_{K^{(m)}}\left(f(m,\mu^{-1,(m)})\right)(1+o_n(1)).
		\end{align*}
		Therefore there exists a constant $c(N)$ only depends on $N$ such that 
		\begin{align}\label{varapprox2}
			\Var X_n(f)\leq & c(N)\sum_m L_{K^{(m)}}\left(f\circ\mu^{-1}(m,\cdot)\right)^2
		\end{align}
		by the inequality $|1-e^{ix}|\leq |x|$ for all real $x$, we have $|e^{ix}-e^{iy}|\leq|x-y|$ for any $x,y$ real. Using the Jensen's inequality, we have the following bound for any $f_1,f_2$ real-valued function and $\lambda$ real
		\begin{align}\label{varapprox3}
			\left|\mathbb{E}\left[e^{i \lambda X_n(f_1)}\right]e^{-i \lambda \mathbb{E}[X_n(f_1)]}-\mathbb{E}\left[e^{i \lambda X_n(f_2)}\right]e^{-i \lambda\mathbb{E}[X_n(f_2)]}\right|\leq \Var[X_n(f_1-f_2)]
		\end{align}
		
		Now, for any $f\circ\mu^{-1}(m,\cdot)$ being an $C^1$ function and for any $\epsilon>0$,  by the Stone–Weierstrass theorem, there exists a polynomial $p(m,\mu)$ in $\mu$ such that 
		\begin{align*}
			\max_{m=0,\dots,N}L_{K^{(m)}}\left(f\circ\mu^{-1}(m,\cdot)-p(m,\cdot)\right)^2<\epsilon.
		\end{align*}
		Thus Lemma~\ref{lemma: Krein} implies
		\begin{align}
			|\sigma^2(f\circ\mu^{-1})-\sigma^2(p)|\leq &   c(N)\sqrt{\epsilon}, \label{varapprox1}
		\end{align}
		for some constant $c(N)>0$ depending only on $N$. Then starting with the triangle inequality and applying \eqref{clt:polyfourier},\eqref{varapprox1}, \eqref{varapprox2} and \eqref{varapprox3}, we get
		\begin{align*}
			\left|\mathbb{E}[e^{i \lambda X_n(f)}]e^{-i \lambda \mathbb{E}[X_n(f)]}-e^{-\frac{\lambda^2}{2}\sigma_\mu^2(f)}\right|\leq &  \lambda^2\Var[X_n(f-p(\mu))]+\lambda^2\Var[X_n(p(\mu)-p(\mu_n))] \\
			& +\left|\mathbb{E}[e^{i \lambda X_n(p\circ \mu_n)}]e^{-i \lambda \mathbb{E}[X_n(p\circ \mu_n)]}-e^{-\frac{\lambda^2}{2}\sigma^2(p)}\right|+\left|e^{\frac{-\lambda^2}{2}\sigma^2(p)}-e^{\frac{-\lambda^2}{2}\sigma^2(f\circ\mu^{-1})} \right| \\
			\leq & c'(N)\epsilon + o_n(1),
		\end{align*}
		where $c'(N)>0$ is  a constant only depends on $N$, $f$ and $\mu$ and independent with $\epsilon$ and $n$. This shows that for $f\circ\mu^{-1}(m,\cdot)$ being a $C^1$ function, we have 
		\begin{align*}
			X_n(f)-\mathbb{E}[X_n(f)]\to\mathcal{N}\left(0,\sigma^2(f\circ\mu^{-1})\right),
		\end{align*}
   in distribution as $n\to \infty$.
	\end{proof}


 \section{Variational problem} \label{sec:variational}

It is well-established that random tilings of planar domains concentrate near a deterministic limit shape when the size of the domain grows large. For uniformly distributed random tilings, and more generally for tilings with periodic weight structure, there have been important works on characterizing the limit shape in terms of a variational problem, in particular \cite{CKP, CLP,DeS10, KeOk07,KOS,Ku}. Furthermore, a complete characterization of its regularity properties was achieved in \cite{ADPZ}. Random tilings from probability measures different from the uniform measure or measures arising from a periodic weight structure are much less understood and this is an interesting open problem. As we will discuss below, non-uniform weights can give rise to variational problems whose Euler-Lagrange equation is non-homogeneous, and whose regularity theory is much more complicated than for the homogeneous one.  The $q$-Racah weighting of the hexagon is an interesting case study from this point of view. Furthermore we associate a natural complex structure to the variational problem using a Beltrami equation approach, extending results in \cite{ADPZ}. In section \ref{subsection:ComplexS} we then verify that this complex structure is the same as the one occurring in the proof of the Gaussian free field in Theorem \ref{thm:GFF}. In addition we relate the complex structure to the complex slope solving a complex Burger equation. While the complex structure in the $q$-Racah weighting of the hexagon and similar trapezoid domains has also been studied in the works \cite{dimitrov2019log,gorin2022dynamical}, it has not before been connected to the variational problem.


\subsection{Variational problem for $q$-Racah weighted lozenge tilings of the hexagon}

We start with some notation. The independent variables will be relabeled according to 
\begin{align*}
t\mapsto x, \,\,\, x\mapsto y.
\end{align*}
The reason for this is twofold. Firstly, for general domains $t$ can no longer be interpreted as a time evolution parameter and secondly we want to use the same notation as in \cite{ADPZ} to facilitate comparison.
The height function $h$ defined previously by \eqref{eq:heightfunction} is not the same as the standard height function for the lozenges tiles. They are related through the formula 
\begin{align}
\label{eq:height_function_relation}
h_{tiles}(x,y)=-h(x,y)+y.
\end{align}
In what follows we will only consider the height function for the tiles which is the one used in the variational problem. Furthermore we will omit the subscript $h_{tiles}$. 
In particular we have the relation 
\begin{equation}\label{eq:GradDensity}
\nabla h(x,y)=\rho_{III}(y,x)e_1+\rho_{II}(y,x)e_2.
\end{equation}
Let $N\subset \mathbb C$ the triangle in the complex plane with corners at $0,1$ and $i$ (or any other triangle). Let $ P \subset \R^2\cong \C$ be the regular hexagon (or any  Lipschitz domain).  Define the space of admissible Lipschitz functions as
\begin{align}
	\mathscr{A}_N(P):=\{u\in C^{0,1}(\overline{P}): \nabla u(z)\in N \text{ for a.e. $z\in P$}\}
\end{align}
and 
\begin{align}
	\mathscr{A}_N(P,u_0):=\{u\in \mathscr{A}_N(P): u(z)=u_0(z) \text{ for  $z\in \dv P$ and $u_0\in \mathscr{A}_N(P)$}\}.
\end{align} 
The $u_0$ will be our boundary condition. 

For $u \in  \mathscr A(P,N)$ we define the energy functional by 
\begin{align}\label{Func1}
	I[u]=\int_{P}\sigma(\nabla u(z))dz+\int_{P}w(z)u(z)dz
\end{align}
where
\begin{align} \label{sigmaL}
	\sigma(s,t) =-\frac{1}{\pi^2}(\mathscr{L}(\pi s)+\mathscr{L}(\pi t)+\mathscr{L}(\pi (1-s-t))),  
\end{align}
and where 
\begin{align*} 
	\mathscr{L}(\theta)=-\int_{0}^{\theta}\log \vert 2\sin x\vert dx
\end{align*}
is the Lobachevsky function. Furthermore, the weight function arising from the non-local weight structure of the probability measure is given by
\begin{align}
	w(z)&=\ln(q)\frac{\kappa^2q^{-S+2y-x}+1}{\kappa^2q^{-S+2y-x}-1} \label{eq:Weight}
\end{align}
where $S,q$ and $\kappa$ are parameters. In \cite[Thm. 3.3]{MPT22Var} it is proven that for all $\eps>0$ the random height function associated to a tiling chosen with probability measure \eqref{eq:ellipticloz} and boundary value $u_n$ converging to $u_0$ stays with in $\eps$ of the minimizer $h$ of \eqref{Func1} with probability 1 as $n\to \infty$. Note however that the functional in \cite{MPT22Var} is $-$ \eqref{Func1} up to a constant, as can be seen integration by parts.

Let $\ell$ denote the line
\begin{align*}
\ell:=\{(x,y)\in \R^2:2y-x=S-2\ln(\kappa)(\ln(q))^{-1}\}
\end{align*}
Then $w\in C^\infty(\R^2\setminus \ell)$. Note also that $w\notin L^1(\R^2)$. In particular we will assume that $\ell\cap P=\varnothing$. It may however happen that $\dv P\cap \ell \neq \varnothing$. In this case $\ell$ will intersect either the corner $(1,0)$ or the corner $(1,2)$ of $P$, see for example figures \ref{Fig: density} (c) and (d). These singular cases occur precisely when either $S-2\ln(\kappa)(\ln(q))^{-1}=-1$ or $S-2\ln(\kappa)(\ln(q))^{-1}=3$. Furthermore, while $w\notin L^1(\R^2)$, $w\in L^1(P)$ in the singular cases, so the variational problem is well-defined.

It is important to note that the functional \eqref{Func1} is convex on $\mathscr{A}_N(P,u_0)$ but not \emph{strictly convex}. This is due to the fact that while $\sigma$ is strictly convex in the interior of $N$, $\sigma$ is affine on the boundary of $\dv N$, (in fact 0) and so is not strictly convex on $N$. Furthermore, the functional 
\begin{align*}
	u\mapsto \int_{P}w(z)u(z)dz
\end{align*}
is convex, but not strictly convex. Finally, since $\sigma\notin C^{1}(N)$, the functional \eqref{Func1} is \emph{not differentiable} (in either the Fréchet or Gâteaux sense). In particular 
$
|\nabla\sigma(p)\vert=+\infty,	 
$ for $ p \to \partial N$, 
which   causes significant challenges in studying the variational problem \ref{Func1}. Nonetheless, one can argue as in \cite{ADPZ} or \cite{DeS10} to show that \eqref{Func1} has a unique minimiser on $\mathscr{A}_N(P,u_0)$.

Having the minimizer at hand, the next question is about  its properties. In particular, how it divides the hexagon into disordered and frozen regions, as well as the regularity properties of the boundary of the disordered region.  For the case $w=0$ this has been carried out in full detail in \cite{ADPZ}, but the case $w\neq 0$ is still open. In fact, the case $w\neq 0$ can be expected to be significantly harder, as it is known that new effects will appear, in particular with regard to the geometry of the disordered region with the appearance of inflexion points and corners. In the rest of this section, inspired by the analysis in \cite{ADPZ}, we will make the first steps in the analysis of the variational problem and outline some of the difficulties. In particular, we will prove the equivalence of the Euler-Lagrange equation and a non-homogeneous Beltrami equation which we believe will be important ingredients in the analysis. Using the results on orthogonal polynomial ensembles we will then verify that these properties hold for the limiting behaviour of the height function, thus establishing important evidence in support of our methods.

\subsection{The liquid region}

Of particular importance for these questions are the following extremal functions called the \emph{upper and lower obstacles}
\begin{align*}
	M(z)&:=\sup_{z\in P}\{u(z):u\in \mathscr{A}_N(P,u_0)\},\\
	m(z)&:=\inf_{z\in P}\{u(z):u\in \mathscr{A}_N(P,u_0)\}
\end{align*}
which one can show belong to $\mathscr{A}_N(P,u_0)$.  In case of the hexagon, these obstacles correspond to the empty and fully packed room (when viewing a tiling as configuration of boxes stacked in the corner of a room), see the figure \ref{figPA-bv1}.
 
\begin{figure}[t]
\centering
\begin{tikzpicture}[xscale=2,yscale=2]
\draw[thick] (0,0)--(1,0)--(2,1)--(2,2)--(1,2)--(0,1)--(0,0);
\draw[thick,blue] (1,0)--(1,1);
\draw[thick,blue] (1,1)--(0,1);
\draw[thick,blue] (1,1)--(2,2);
\draw[xshift=4cm] ({sqrt(3)/2-2.7},2) node{$P$};
\draw[xshift=4cm] ({sqrt(3)/2-5.2},0.5) node{$a=1$};
\draw[xshift=4cm] ({sqrt(3)/2-4.4},-0.1) node{$b$};
\draw[xshift=4cm] ({sqrt(3)/2-3.3},-0.1) node{$c$};

\draw[xshift=3cm,thick,blue] (1,1)--(1,2);
\draw[xshift=3cm,thick,blue] (1,1)--(0,0);
\draw[xshift=3cm,thick,blue] (1,1)--(2,1);
\draw[xshift=3cm,thick] (0,0)--(1,0)--(2,1)--(2,2)--(1,2)--(0,1)--(0,0);
\end{tikzpicture}
\caption{Obstacles for the $abc$ hexagon}
\label{figPA-bv1}
\end{figure}
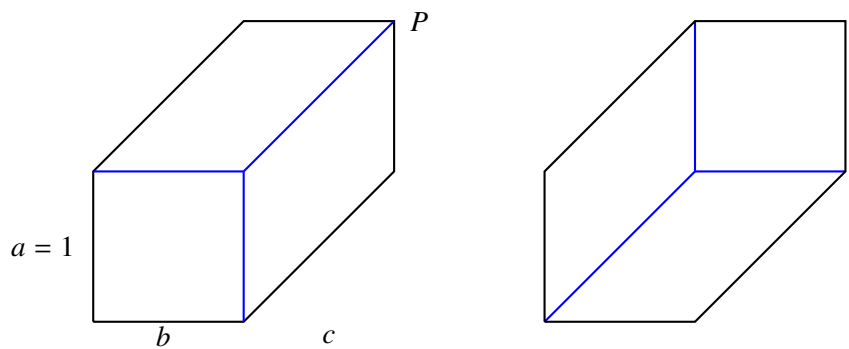

For the special case $w=0$,  the following (simplified version) fundamental initial regularity result is proven in \cite[Theorem 1.3]{DeS10}: 	The unique minimiser $h$ of \eqref{Func1} is $C^1$ on the set
	\begin{align*}
		\{z\in P: m(z)<h(z)<M(z)\}. 
	\end{align*} 
This allows one in particular to define the following set, called the \emph{liquid region} or \emph{disordered region} as
\begin{align}
	\LL:=\{z\in P: \text{ $h$ is $C^1$ in a neighbourhood of $z$ and $\nabla h(z)\in N^\circ$}\}. 
\end{align} 
We emphasize that this definition touches on a highly non-trivial issue. The minimiser is a Lipschitz function but will not be everywhere continuous differentiable (in fact, at the boundary of the disordered region it may fail to be continuously differentiable). Furthermore it is not even clear a priori that $\LL$ is non-empty.

Theorem 1.3 in \cite {DeS10} allowed in \cite{ADPZ} for the use of complex analytic methods to derive a complete classification of regularity of $h$ as well as the boundary of the liquid region (also called \emph{the frozen boundary or arctic curve}) or more precisely $\dv \LL \cap P$. Therefore, to initiate a rigorous study of the variational problem \ref{Func1} with $w\neq 0$, an extension of Theorem 1.3 is required.

On the other hand, in \cite[Thm. 1.2 c)]{ADPZ}, it is proven that frozen boundary is locally convex for any smooth point of the frozen boundary, see also \cite[Thm. 4.2]{DeS10}. Based on the explicit formulas for the liquid region in \cite{dimitrov2019log} that we will rederive in Section \ref{sec:tiling}, this result \emph{does not hold} in general when $w\neq 0$. This should be taken as an indication that the analysis will be much harder and that many useful properties in the case $w=0$ do not carry over to this case.

\subsection{Beltrami equation}

Central to the analysis of the variational problem and the regularity of its minimizer, is the first order non-linear  Beltrami equation of the form 
\begin{align} \label{eq:beltrami}
	f_{\overline{z}}(z)=\mathcal{H}'_\sigma(f(z))f_z(z)+w(z)
\end{align}
where 
\begin{align}\label{eq:Cayley}
	\mathcal{H}_\sigma(z)=(I-\nabla \sigma)\circ (I+\nabla \sigma)^{-1}(\overline{z})
\end{align}

Since  $\sigma$ satisfies a Monge-Ampère equation, we know by \cite[Lem. 3.1]{ADPZ} that
\begin{align*}
	\mathcal{H}_\sigma: \text{Dom}(\mathcal{H}_\sigma)=\{z\in \C: \overline{z}\in (I+\nabla \sigma)(N^\circ)\}\to \C
\end{align*}
is holomorphic and $\mathcal{H}_{\sigma}'(z)=\dv_z\mathcal{H}_\sigma(z): \text{Dom}(\mathcal{H}_\sigma)\to \Di$ is a \emph{proper holomorphic map}! 

\subsubsection{Euler-Lagrange conditions}

The Euler-Lagrange equations for the variational problem amount to:
\begin{align}\label{eq:EL2}
	\text{div}\,\nabla \sigma(\nabla h(z))=w(z),
\end{align}
for $ z \in \mathscr L$.  We prove that this equation is equivalent to the non-homogeneous Beltrami equation \eqref{eq:beltrami}. This theorem was derived jointly with Kari Astala.

\begin{theorem}\label{thm:B}
	Let $\mathcal{U}\subset \C$ be an open domain and assume that $u\in C^\infty(\mathcal{U})$ is a solution to \eqref{eq:EL2}
	where $\nabla u(\mathcal{U})\subset N^\circ$ and $\sigma$ as in  \eqref{sigmaL}. Furthermore let $w$ be as in \eqref{eq:Weight}.
	Then $f$ defined by  
	\begin{equation} \label{eq:relftoh}
		f(z)=\overline{(I+\nabla \sigma)(\nabla u(z))},
	\end{equation} solves the inhomogeneous $\C$-quasilinear Beltrami equation  \eqref{eq:beltrami}.
\end{theorem}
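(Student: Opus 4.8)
The plan is to verify directly that the map $f$ defined by \eqref{eq:relftoh} satisfies \eqref{eq:beltrami}, by computing $f_{\bar z}$ and $f_z$ and comparing with $\mathcal H'_\sigma(f)f_z + w$. First I would set $p(z) = \nabla u(z) \in N^\circ$, so that $f = \overline{(I+\nabla\sigma)(p)}$, and introduce the map $\Phi = I + \nabla\sigma$, which is a diffeomorphism from $N^\circ$ onto its image (this uses that $\sigma$ is a convex solution of the relevant Monge–Ampère equation, as in \cite{ADPZ}). Writing $g(z) = \Phi(\nabla u(z))$ so that $f = \bar g$, the key identities are $f_z = \overline{g_{\bar z}}$ and $f_{\bar z} = \overline{g_z}$. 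So the claim \eqref{eq:beltrami} becomes, after conjugating, an identity of the form $g_z = \overline{\mathcal H'_\sigma(\bar g)}\,g_{\bar z} + \bar w$, and one reduces everything to a computation involving the Hessian of $\sigma$ and the Hessian of $u$.

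The main computational step is to express both $g_z$ and $g_{\bar z}$ in terms of $D^2\sigma(p)$ and $D^2 u(z)$. Since $g = \Phi\circ\nabla u$ with $D\Phi = I + D^2\sigma$, the chain rule gives $Dg = (I + D^2\sigma(p))\,D^2u(z)$ as a real $2\times 2$ matrix; then $g_z$ and $g_{\bar z}$ are the standard complex combinations of the entries of this product. The definition \eqref{eq:Cayley} of $\mathcal H_\sigma$ together with \cite[Lem.~3.1]{ADPZ} identifies $\mathcal H'_\sigma$ at the point $f(z) = \overline{\Phi(p)}$ in terms of $(I-D^2\sigma)(I+D^2\sigma)^{-1}$ evaluated at $p$; this is precisely the Cayley-type transform of the Hessian $D^2\sigma(p)$, which is why the Beltrami coefficient appears. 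Plugging these in, the identity \eqref{eq:beltrami} will reduce to
\begin{align*}
(I+D^2\sigma)D^2u - (I-D^2\sigma)D^2u = 2\,D^2\sigma\,D^2u,
\end{align*}
at the level of the appropriate complex components, and the inhomogeneous term $w$ must be matched against the trace/divergence part. Here is where the Euler–Lagrange equation \eqref{eq:EL2} enters: $\operatorname{div}\nabla\sigma(\nabla u) = \operatorname{tr}\!\big(D^2\sigma(p)\,D^2u(z)\big) = w(z)$, and this is exactly the real part (or $\partial_z$-component) that is left over after the homogeneous terms cancel. One also uses here that $w$ is real-valued, so $\bar w = w$, which keeps the bookkeeping consistent.

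The step I expect to be the main obstacle is the careful bookkeeping of the Wirtinger calculus together with the conjugations built into \eqref{eq:Cayley} and \eqref{eq:relftoh}: the argument of $\mathcal H_\sigma$ is $\bar z$, the map $f$ itself is a conjugate, and $\mathcal H_\sigma$ is holomorphic in its (conjugated) argument, so one must track precisely which quantities are holomorphic/antiholomorphic to make sure $\mathcal H'_\sigma(f(z))$ multiplies $f_z$ and not $f_{\bar z}$. A clean way to organize this is to first prove the purely algebraic lemma that for a symmetric positive-definite $2\times 2$ matrix $H$ and any symmetric $S$, the complex components of $(I+H)S$ and of $(I-H)S$ are related by the single complex number $\mathfrak h = $ (Cayley transform of $H$) exactly as in the definition of $\mathcal H_\sigma$, with the leftover being $\operatorname{tr}(HS)$; once that linear-algebra identity is in place, the theorem follows by setting $H = D^2\sigma(\nabla u(z))$, $S = D^2 u(z)$, invoking \eqref{eq:EL2}, and reading off \eqref{eq:beltrami}. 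I would also remark that smoothness of $u$ on $\mathcal U$ guarantees all second derivatives exist classically and that $\nabla u(\mathcal U)\subset N^\circ$ keeps us in the domain where $\mathcal H_\sigma$ and $D^2\sigma$ are well-behaved, so no regularization is needed.
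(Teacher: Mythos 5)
Your proposal is correct, but it takes a genuinely different route from the paper. The paper's proof is potential-theoretic: it first solves $\operatorname{div}G=w$ for an auxiliary vector field $G$ (citing \cite{Sw}), forms the divergence-free field $B=\nabla\sigma(\nabla u)-G$, defines a local conjugate function $v$ via $\nabla v=\star B$, sets $F=2(u+iv)$, derives the integrated quasilinear relation \eqref{eq:VolB}, $F_{\bar z}=\mathcal H_\sigma(F_z+\overline G)+G$, and only then differentiates in $z$ to obtain \eqref{eq:beltrami} for $f=F_z+\overline G$, checking at the end that $f=\overline{(I+\nabla\sigma)(\nabla u)}$; the term $w$ appears as $G_z+\overline{G_z}=\operatorname{div}G$. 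You instead differentiate \eqref{eq:relftoh} directly: writing $f_{\bar z}=\overline{\partial_z[(I+\nabla\sigma)(\nabla u)]}$ and, by the holomorphic chain rule granted by \cite[Lem.~3.1]{ADPZ} together with $\mathcal H_\sigma(f)=(I-\nabla\sigma)(\nabla u)$ (immediate from \eqref{eq:Cayley}), $\mathcal H_\sigma'(f)f_z=\partial_z[(I-\nabla\sigma)(\nabla u)]$, the difference reduces pointwise to $\operatorname{tr}\bigl(D^2\sigma(\nabla u)\,D^2u\bigr)=\operatorname{div}\nabla\sigma(\nabla u)=w$ by \eqref{eq:EL2}, the antisymmetric (curl) part vanishing since $D^2u$ is symmetric, and $w$ real keeps the conjugations consistent. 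Your route is shorter and purely local: it needs no auxiliary $G$, no restriction to simply connected subsets to define $v$, and it isolates exactly where the Monge--Amp\`ere/holomorphy input and the Euler--Lagrange equation enter. The paper's longer route buys the intermediate object $F=2(u+iv)$ and the relation \eqref{eq:VolB}, i.e.\ the stream-function structure that connects to the complex-structure machinery of \cite{ADPZ} used later in the section, which your pointwise computation does not produce. One small repair when writing yours up: the displayed reduction $(I+D^2\sigma)D^2u-(I-D^2\sigma)D^2u=2D^2\sigma D^2u$ is a triviality and not the identity you actually need; the correct statement is the one in your closing lemma, namely that with $H=D^2\sigma(p)$, $S=D^2u$, one has $\overline{[(I+H)S]_z}-[(I-H)S]_z=\operatorname{tr}(HS)$ for symmetric $S$, where $[M]_z$ denotes the $\C$-linear component $\tfrac12\bigl(\operatorname{tr}M+i(M_{21}-M_{12})\bigr)$ — note the conjugation on the first term, which is exactly the bookkeeping point you flagged as the main obstacle.
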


For $w=0$ this  is Theorem 3.2 in \cite{ADPZ}. Also, in our proof we only need that $\sigma$ is as in \cite[eq. (2.1)]{ADPZ} so that the result applies also to other situations than the context of this paper.

\begin{proof}
	Let $G$ be a smooth vector field in $\mathcal L$ such that $\text{div}\,G(z)=w(z)$. The existence of $G$ in $P$ follows from Lemma 3.5.5. in \cite{Sw}. Set $\A=\nabla \sigma$. Define the divergence free vector field in $\LL$ 
	\begin{align*}
		B(x):=\A(\nabla u(x))-G(x). 
	\end{align*}
	
	We introduce that div-curl couple $\F=(B,E)=(\A (\nabla u)-G,\nabla u)$. We define the function $v$ (locally, i.e. on simply connected subsets of $\mathscr H$) through 
	\begin{align*}
		\nabla v=\star B=\star (\A (\nabla u)-G),
	\end{align*}
where $\star$ is the Hodge star operator, equivalent in the plane to multiplication by the matrix
\begin{align*}
\begin{bmatrix}
0 & -1\\ 1 & 0
\end{bmatrix}.
\end{align*}
	We define
	\begin{align*}
		F(z)&=2(u(z)+iv(z)).
	\end{align*}
	Let 
	\begin{align*}
		\F^+(z)&=\frac{1}{2}(E(z)+B(z)), \quad \F^-(z)=\frac{1}{2}(E(z)-B(z)). 
	\end{align*}
	
	A direct computation gives
	\begin{align*}
		F_{\overline{z}}&=u_x-v_y+i(u_y+v_x),\\
		\overline{F_{z}(z)}&=u_x+v_y+i(u_y-v_x).\\
	\end{align*}
	Using that $\nabla v=(v_x,v_y)=(\A_2(\nabla u)-(G)_2,-\A_1(\nabla u)+(G)_1)$, where $\A=(\A_1,\A_2)$ and $G=(G_1,G_2)$, we get
	\begin{align*}
		2\F^+(z)&=(u_x-v_y,u_y+v_x),\\
		2\F^-(z)&=(u_x+v_y,u_y-v_x).
	\end{align*}

	Thus we have the identifications 
	\begin{align}\label{eq:ComplexGrad}
		\F^+(z)=\frac{1}{2}F_{\overline{z}}(z),\, \, \F^-(z)=\frac{1}{2}\overline{F_{z}(z)},
	\end{align}
	Since 
	\begin{align*}
		2\F^\pm&=E\pm B=E\pm \A(E)\mp G=(I\pm\A)(E)\mp G
	\end{align*}
	and 
	\begin{align*}
		E=(I+\A)^{-1}(2\F^++G)
	\end{align*}
	we get the algebraic relation 
	\begin{align*}
		2\F^--G=(I-\A)\circ (I+\A)^{-1}(2\F^++G) 
	\end{align*}
	which after identifications is equivalent to 
	\begin{align*}
		F_{\overline{z}}=(I-\A)\circ (I+\A)^{-1}(\overline{F_z}+G) +G
	\end{align*}
	Using the structure field $\LL$ this can be written as
	\begin{align}\label{eq:VolB}
		F_{\overline{z}}=\mathcal{H}_\sigma(F_z+\overline{G}) +G.
	\end{align}
	Set
	\begin{align*}
		f(z)=F_z(z)+\overline{G(z)}.
	\end{align*}
	and note that even though $F$ is only locally defined, $f$ is well-defined everywhere in $\LL$. 
	Then
	\begin{align*}
		f_{\overline{z}}(z)&=F_{\overline{z}z}(z)+\overline{G_{z}(z)}\\
		&=(\mathcal{H}_\sigma(f(z)) +G(z))_z+\overline{G_{z}(z)}\\
		&=\mathcal{H}_\sigma'(f(z))f_z(z)+G_z(z)+\overline{G_{z}(z)}\\
		&=\mathcal{H}_\sigma'(f(z))f_z(z)+\text{div}\,G(z)\\
		&=\mathcal{H}_\sigma'(f(z))f_z(z)+w(z).
	\end{align*}
	Finally we verify that $f(z)=\overline{L_\sigma(\nabla u(z))}$.
	
	We have 
	\begin{align*}
		F_z=u_x+v_y+i(v_x-u_y)
	\end{align*}
	
	In addition 
	\begin{align*}
		\nabla v(z)&=\ast (\A(\nabla u(z))-G(z))\\&=\begin{bmatrix}
			0 & -1 \\
			1& 0 
		\end{bmatrix}
		\begin{bmatrix}
			\A_1(\nabla u(z))-G_1(z) \\
			\A_2(\nabla u(z))-G_2(z)
		\end{bmatrix}
		=
		\begin{bmatrix}
			-\A_2(\nabla u(z))+G_2(z)\\
			\A_1(\nabla u(z))-G_1(z)
		\end{bmatrix}
	\end{align*}

	Hence 
	\begin{align*}
		\mathfrak{R}e[f(z)]=u_x+v_y+G_1(z) =u_x+\A_1(\nabla u(z))-G_1(z)+G_1(z)=u_x+\A_1(\nabla u(z))
	\end{align*}
	and 
	\begin{align*}
		\mathfrak{I}m[f(z)]=v_x-u_y-G_2(z)=-\A_2(\nabla u(z))+G_2(z)-u_y-G_2(z)=-\A_2(\nabla u(z))-u_y
	\end{align*}
	Thus $f(z)=\overline{(I+\nabla \sigma)(\nabla u(z))}$.
\end{proof}

\subsubsection{Complex structure}

Next we discuss an intrinsic coordinate system, called \emph{complex structure}, that plays an important role. Consider a homeomorphic solution $G:\mathcal{U} \to \mathcal{D}$ to the $\C$-linear Beltrami equation
\begin{align}\label{eq:Conf}
	G_{\dbb}(z)=\mathcal{H}_\sigma'(f(z))G_z(z).
\end{align}
for some suitable uniformisation domain $\mathcal{D}$. Since equation \eqref{eq:Conf} is not uniformly elliptic its existence is not automatically guaranteed, however repeating the argument in \cite[Thm. 4.3]{ADPZ} proves the existence of such $G$, which in this case will be a real analytic map. The complex structure will denote interchangeably \emph{both} the diffeomorphism $G$ endowing the liquid domain with a complex atlas and the Beltrami coefficient $\mu(z)=\mathcal{H}_\sigma'(f(z))$ which generates the diffeomorphism (which is unique up to suitable normalization). 

In the case $w=0$, this complex structure that generates the diffeomorphism $G$ turns out to be a natural object and has been discussed intensively in \cite{ADPZ}. In particular, these are the coordinates in which the fluctuations of the height function are conjectured to be described by the Gaussian Free Field (on $\mathcal D$), something that has been rigorously proven in a number of important cases, see for instance \cite[Ch. 11]{Vadim} and \cite{BuK16,Boutillier-Li}. We believe that the generalization to non-zero $w$ is still given by the same equation (with a dependence on $w$ only through $f$), including the  Gaussian Free Field fluctuations for the height function in the new coordinates. Evidence for our conjecture is supported by explicit computations that we do using recurrence coefficients for orthogonal polynomials.  

To illustrate the fact that \eqref{eq:Conf} still gives the complex structure even for $w\neq0$  is somewhat remarkable, we mention that the approach to solving the Beltrami equation \eqref{eq:beltrami} in \cite{ADPZ} for the case $w=0$ does not straightforwardly generalize. Following \cite{ADPZ} one can try the ansatz $f(z)=\eta(G(z))$ and simultaneously solve for the unknown function $\eta$ and homeomorphism $G$. Performing a hodograph transformation and after some manipulations one can show that \eqref{eq:beltrami} leads to the system 
\begin{equation}\label{eq:HN}
	\begin{cases}
		g_{\dbb}(z)=-\mathcal{H}_\sigma'(\eta(z))\overline{g_z(z)}, \\ 
		\eta_{\dbb}(z)=\overline{g_z(z)}w(g(z)) 
	\end{cases},
\end{equation}
where  $g=G^{-1}$. Note that when $w\equiv 0$, we have that $\eta$ is a holomorphic function and, more importantly, $\mathcal{H}_\sigma'(f(z))$ is a proper holomorphic.  This is in fact the Stoilow factorization theorem, that precisely says that solution space to the homogeneous Beltrami equation is given by holomorphic maps composed with a homeomorphism. In this case under the assumption that the minimiser $h$ satisfies that $\nabla h: \LL\to N^\circ$ is \emph{proper map}, we can deduce, as in the work \cite{ADPZ}, that $u$ and therefore $\mathcal{H}_\sigma' \circ \eta$ is a \emph{proper holomorphic function}, which allows one to identify the structure of $\mathcal{H}_\sigma'\circ \eta$. In this particular case, we can after post-composition with a  conformal map identify $\mathcal{H}_\sigma'\circ \eta$ with a Blaschke product. In the general non-homogeneous case when $w\neq 0$ this is no longer the case and studying \eqref{eq:HN} is very challenging. 

In the case when $w=\lambda$, where $\lambda$ is a constant all is not lost however. In this case \eqref{eq:HN} becomes 
\begin{equation}\label{eq:HN1}
    \begin{cases}
      g_{\dbb}(z)=-\mathcal{H}'(\eta(z))\overline{g_z(z)} \\
      \eta_{\dbb}(z)=\lambda \overline{g_z(z)} ,
     \end{cases}
\end{equation}

This gives
\begin{align*}
g_{\dbb}(z)&=-\frac{1}{\lambda}\mathcal{H}'(\eta(z))\eta_{\dbb}(z)\\
&=-\frac{1}{\lambda}(\mathcal{H}(\eta(z)))_{\dbb}
\end{align*}

Thus \eqref{eq:HN1} is a equivalent to 

\begin{equation}\label{eq:HN2}
    \begin{cases}
      (g(z)+\frac{1}{\lambda}\mathcal{H}(\eta(z)))_{\dbb}=0, \\
     (\eta(z)-\lambda \overline{g(z)})_{\dbb}=0.
     \end{cases}
\end{equation}

Thus there exists two holomorphic functions $\psi$ and $\phi$ such that

\begin{equation}\label{eq:HN3}
    \begin{cases}
      \lambda g(z)+\mathcal{H}(\eta(z))=\lambda \psi(z), \\
     \eta(z)-\lambda \overline{g(z)}=\phi(z).
     \end{cases}
\end{equation}

Decoupling the equations gives

\begin{equation}\label{eq:HN4}
    \begin{cases}
      \lambda g(z)+\mathcal{H}(\phi(z)+\lambda  \overline{g(z)})=\lambda \psi(z), \\
    \overline{\eta(z)}+\mathcal{H}(\eta(z))=\overline{\phi(z)}+\lambda \psi(z).
     \end{cases}
\end{equation}

We now solve for $\eta$ and $g$ in terms of $\psi$ and $\phi$. Using the Lewy transform 
\begin{align*}
L_\sigma(z)=\overline{z+\nabla \sigma(z)}
\end{align*}
we have by \cite[eq. (3.14)]{ADPZ} 
\begin{align*}
L_\sigma^{-1}(w)=\frac{1}{2}\bigg(\overline{w}+\mathcal{H}(w)\bigg)\in N^\circ, \quad \overline{w}\in \text{dom}\, (\mathcal{H}). 
\end{align*}
and so the equation $L_\sigma^{-1}(w)=\zeta$ has solution $w=L_{\sigma}(\zeta)$, which gives 
\begin{align*}
\eta(z)&=L_\sigma\bigg(\frac{1}{2}(\overline{\phi(z)}+\lambda \psi(z))\bigg)\\
g(z)&=\frac{\overline{\eta(z)}-\overline{\phi(z)}}{\lambda}=\frac{1}{\lambda}(\overline{L_{\sigma}((\overline{\phi(z)}+\lambda \psi(z)))/2}-\overline{\phi(z)}).
\end{align*}

It is tempting to use the parametrisation of $\eta$ and $g$ in terms of the pair of holomorphic functions $(\phi,\psi)$ to deduce regularity of the height function $h$ and the boundary of the liquid domain $\dv \LL$. Unfortunately, it seems very difficult to determine which holomorphic functions $\psi$ and $\phi$ that gives rise to \emph{homeomorphic} $g$ and \emph{proper} $\eta$. Furthermore it seems difficult to connect to the case when $\lambda=0$ and $g$ is teleomorphic. What we want is to view $g$ in the case $\lambda\neq 0$ as a natural deformation of a teleomorphic $g$ as in \cite{ADPZ}. Luckily it seems that a certain combination of $\phi$ and $\psi$ can be determined. We now define
\begin{align*}
\omega(z):=\frac{1}{2}(\overline{\phi(z)}+\lambda \psi(z)).
\end{align*}
Then $\Delta \omega=0$ so $\omega$ is a harmonic map. Furthermore, since $L_\sigma^{-1}$ is a homeomorphism and $u$ is proper and $\omega=L_\sigma(u)$ it follows that $\omega: \Di \to N$ is a proper harmonic map. Hence $\omega$ is $k:1$ for some $k$ and an orientation preserving map. Furthermore, since $\mathcal{H}'$ is a proper holomorphic map it follows that $\mathcal{H}'(\eta)$ is proper as well. The case of \emph{harmonic homeomorphisms} $\omega:\Di \to N$, where $N$ is a convex compact polygon in the plane is well-understod, see \cite[Ch. 4]{Dur}, although the authors are unaware of a complete classification of $k:1$ harmonic mappings $\omega: \Di \to N$. Assuming one has a good understanding of this class of harmonic maps, instead of studying \eqref{eq:HN1} one could study the \emph{linear equation}
\begin{align*}
g_{\dbb}(z)=-\mathcal{H}'(L_\sigma(\omega))\overline{g_z(z)},
\end{align*} 
where $\omega$ is now \emph{any proper harmonic map} $\omega: \Di \to N$ and determine the boundary regularity for $g$. A key idea in \cite{ADPZ} for studying the boundary regularity in the case when $\lambda=0$ is to consider the map
\begin{align*}
\gamma(z)=g(z)+\mathcal{H}'(\eta(z))\overline{g(z)}, 
\end{align*}
which is holomorphic in this case. This is no longer the case when $\lambda\neq 0$. In this case one can show that $\gamma$ solves the equation
\begin{align*}
\gamma_{\dbb}(z)&=\alpha(z)\gamma(z)+\beta(z)\overline{\gamma(z)},
\end{align*}
where 
\begin{align*}
\alpha(z)&=\frac{\mathcal{H}''(\eta(z))\eta_{\dbb}(z)}{1-\vert \mathcal{H}'(\eta(z))\vert^2}\overline{\mathcal{H}'(\eta(z))},\\
\beta(z)&=\frac{\mathcal{H}''(\eta(z))\eta_{\dbb}(z)}{1-\vert \mathcal{H}'(\eta(z))\vert^2}.
\end{align*}
Thus $\gamma$ is a generalized analytic function in the sense of Vekua. Then Vekua's factorization theorem implies that 
\begin{align*}
\gamma(z)=\Phi(z)e^{\kappa(z)}. 
\end{align*}
for some holomorphic $\Phi$ and Hölder continuous $\kappa$. Solving for $g$ gives 
\begin{align*}
g(z)&=\frac{1}{1-\vert \mathcal{H}'\eta(z))\vert^2}(\gamma(z)-\mathcal{H}'(\eta(z))\overline{\gamma(z)}).
\end{align*}
To understand the geometry of the frozen boundary one must then study the singular limit
\begin{align*}
\lim_{z\in \Di\to z_0\in \dv \Di}\frac{1}{1-\vert \mathcal{H}'(\eta(z))\vert^2}(\gamma(z)-\mathcal{H}'(\eta(z))\overline{\gamma(z)}).
\end{align*}

If one can fully understand the case when the weight function $w$ is constant, it is conceivable that the case for general non-singular $w$ can be reduced to that for a constant $w$ by approximation and localization arguments. The case with singular weights $w$ however would require completely different methods.

\subsubsection{Complex structure and complex Burger's equation}

We next show that the complex structure  $\mathcal H'(f)$  is directly related to the \emph{complex slope} which solves an inhomogeneous complex Burgers equation (as expected). To this end, we first  define the complex valued functions $\Upsilon,\Om$ on $\LL$ through
\begin{align}\label{eq:UO}
	\log \Upsilon&=\pi \sigma_1(\nabla h)-i\pi h_y\\
	\log \Om&= \pi \sigma_2(\nabla h)+i\pi h_x.\label{eq:UOp}
\end{align}
Note that our $\sigma$ is normalized differently than in \cite{KeOk07}. As in \cite{KeOk07} one verifies that 
\begin{align} \label{eq:UO2}
	\Upsilon+\Om\equiv 1,
\end{align}
and that $(\Upsilon,\Om)$ solves the complex Burger equation
\begin{align}\label{eq:CB}
	\frac{\Upsilon_x(z)}{\Upsilon(z)}+\frac{\Om_y(z)}{\Om(z)}=w(z),\quad z\in \LL
\end{align}
whenever $h$ solves the Euler-Lagrange equation \eqref{eq:EL2} in $\LL$. 

From \eqref{eq:UO}, \eqref{eq:UOp} and \eqref{eq:UO2} we find that the limiting density for the lozenges can be read off from  the triangle  formed by $\Omega$, $0$ and $1$. We  relate $\Upsilon$ and $\Om$ to the complex structure $\mathcal{H}'(f)$. 

\begin{proposition}\label{prop:ComplexS}
	Let $(\Upsilon,\Om)$ be given by \eqref{eq:UO} and \eqref{eq:UO2}. Then 
	\begin{align}\label{eq:ComplexS}
		\mathcal{H}'(f)=-\frac{i+(1-i)\Upsilon}{i-(1+i)\Upsilon}=-\frac{1+(i-1)\Om}{-1+(1+i)\Om}. 
	\end{align}
	Furthermore, $\Upsilon$ and $\Om$ are related to $f$ in Theorem \ref{thm:B} through 
	\begin{align*}
		\Upsilon&=e^{\frac{\pi}{2}(f-\mathcal{H}(f))},\\
		\Om&=e^{i\frac{\pi}{2}(f+\mathcal{H}(f))}.
	\end{align*}
\end{proposition}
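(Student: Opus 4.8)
The plan is to derive \eqref{eq:ComplexS} and the exponential representations of $\Upsilon,\Om$ directly from the definitions \eqref{eq:UO}, \eqref{eq:UOp}, \eqref{eq:UO2} together with the relation \eqref{eq:relftoh} between $f$ and $\nabla h$ and the Cayley-type formula \eqref{eq:Cayley} for $\mathcal{H}_\sigma$. First I would unwind $f(z)=\overline{(I+\nabla\sigma)(\nabla h(z))}$ into real and imaginary parts: writing $\nabla\sigma=(\sigma_1,\sigma_2)$ and $\nabla h=(h_x,h_y)$, one has $f=(h_x+\sigma_1(\nabla h))-i(h_y+\sigma_2(\nabla h))$. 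Comparing with \eqref{eq:UO}, \eqref{eq:UOp}, note $\log\Upsilon=\pi\sigma_1(\nabla h)-i\pi h_y$ and $\log\Om=\pi\sigma_2(\nabla h)+i\pi h_x$, so $\tfrac{1}{\pi}(\log\Upsilon - i\log\Om) = \sigma_1(\nabla h) - i h_y - i\sigma_2(\nabla h) - h_x$. This is exactly $-\overline{f}$ up to the obvious rearrangement; more to the point, one gets $\tfrac{\pi}{2}(f-\mathcal{H}(f))$ and $i\tfrac{\pi}{2}(f+\mathcal{H}(f))$ by using the identity $\mathcal{H}_\sigma(\overline{(I+\nabla\sigma)(p)}) = \overline{(I-\nabla\sigma)(p)}$, which is immediate from \eqref{eq:Cayley} since $(I+\nabla\sigma)^{-1}\circ(I+\nabla\sigma)=\mathrm{id}$. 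Thus $f - \mathcal{H}(f)$ and $f+\mathcal{H}(f)$ are $\overline{2\nabla\sigma(\nabla h)}$-type and $\overline{2\nabla h}$-type expressions respectively (in complexified notation), and matching real/imaginary parts against \eqref{eq:UO}--\eqref{eq:UOp} yields $\Upsilon=e^{\frac{\pi}{2}(f-\mathcal{H}(f))}$ and $\Om=e^{i\frac{\pi}{2}(f+\mathcal{H}(f))}$. Here I would use the normalization of $\sigma$ in \eqref{sigmaL} — in particular $\pi\sigma_1 = -\log|2\sin(\pi h_x)| + \dots$ — to confirm the factors of $\pi$ and $\tfrac{1}{2}$ come out correctly; this is the one place where the concrete form of $\sigma$ (not just the abstract Monge--Ampère property) enters.

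Next, for \eqref{eq:ComplexS} itself, I would express $\mathcal{H}'_\sigma(f)$ in terms of $\Upsilon$. Differentiating the relation $\mathcal{H}_\sigma(\overline{(I+\nabla\sigma)(p)})=\overline{(I-\nabla\sigma)(p)}$, or more directly using the known fact (from \cite[Lem. 3.1]{ADPZ}, quoted in the excerpt) that $\mathcal{H}'_\sigma = \dv_z\mathcal{H}_\sigma$ and the Monge--Ampère structure of $\sigma$, one obtains $\mathcal{H}'_\sigma(f)$ as a Möbius function of $\nabla h$, hence of $\Upsilon$ via \eqref{eq:UO}. Concretely, since $\Upsilon$ and $\Om=1-\Upsilon$ encode the two independent slopes $h_x,h_y$, and $\mathcal{H}'_\sigma(f)$ is the Beltrami coefficient of the natural complex structure, it must be a fractional-linear expression in $\Upsilon$; determining the coefficients is a finite computation: evaluate at the three degenerate slopes corresponding to the three types of lozenge (the corners of $N$), where $\Upsilon\in\{0,1,\infty\}$ in a suitable sense and $\mathcal{H}'_\sigma(f)$ takes known boundary values on $\partial\Di$, and invoke that a Möbius map is determined by three values. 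The second equality in \eqref{eq:ComplexS} then follows by substituting $\Upsilon=1-\Om$ from \eqref{eq:UO2} and simplifying.

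The main obstacle I anticipate is bookkeeping with the three different normalizations in play: the paper's $\sigma$ differs from Kenyon--Okounkov's by the explicit factor noted after \eqref{eq:UOp}, the surface tension in \cite{ADPZ} has its own conventions, and the complex-slope parametrization introduces factors of $i$ and signs that must be tracked with care so that the specific Möbius coefficients $-(i+(1-i)\Upsilon)/(i-(1+i)\Upsilon)$ emerge rather than some rotated or reflected variant. I would pin these down once and for all by checking the formula against one explicit point — for instance, the center of the hexagon or a known frozen-boundary point from \cite{dimitrov2019log} — so that the constant ambiguities are fixed. A secondary technical point is that all these identities are a priori only pointwise-on-$\LL$ statements, so I would remark that $f$, $\Upsilon$, $\Om$ are all real-analytic on $\LL$ (which follows since $h\in C^1(\LL)$ by \cite[Thm. 1.3]{DeS10} and then bootstrapping via the Euler--Lagrange equation \eqref{eq:EL2}, exactly as in \cite{ADPZ}), so that the exponentials and the Möbius expression are all well-defined and the identities, once verified at a point, propagate by analytic continuation on each connected component of $\LL$.
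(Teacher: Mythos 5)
Your derivation of the exponential representations is correct and is essentially the paper's own computation: from \eqref{eq:relftoh} one has $(I+\nabla\sigma)^{-1}(\overline{f})=\nabla u$, hence in complexified notation $\mathcal{H}(f)=(I-\nabla\sigma)(\nabla u)$, and then $\tfrac{\pi}{2}\bigl(f-\mathcal{H}(f)\bigr)=\pi\sigma_1(\nabla u)-i\pi u_y=\log\Upsilon$ and $i\tfrac{\pi}{2}\bigl(f+\mathcal{H}(f)\bigr)=\pi\sigma_2(\nabla u)+i\pi u_x=\log\Om$. One small correction: this step is purely algebraic and does not use the explicit form \eqref{sigmaL} of $\sigma$ at all, contrary to your remark; the place where the concrete $\sigma$ genuinely enters is the identity $\Upsilon+\Om\equiv 1$, i.e.\ \eqref{eq:UO2}, which is the Kenyon--Okounkov identity for this surface tension.

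The genuine gap is in your argument for \eqref{eq:ComplexS}. You assert that $\mathcal{H}'(f)$ ``must be'' a fractional-linear expression in $\Upsilon$ and propose to fix its coefficients by evaluating at three degenerate slopes. But the Möbius dependence is precisely the content of \eqref{eq:ComplexS}: nothing in the properness of $\mathcal{H}'$ or in the fact that $\Upsilon,\Om$ encode the slopes forces $\mathcal{H}'(f)$, as a function on $\LL$, to factor through $\Upsilon$ fractional-linearly, so this cannot be taken as the starting point. Moreover, even granting it, the three-point matching needs the actual limiting values of $\mathcal{H}'$ at the relevant boundary points of its domain; knowing only that $\mathcal{H}'$ is proper onto $\Di$ (so $|\mathcal{H}'|\to 1$) does not pin down which points of $\partial\Di$ occur, and you do not carry out these corner asymptotics. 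The paper's proof avoids all of this with one further observation that your plan misses: $\Upsilon=\gamma(f)$ and $\Om=\omega(f)$, where $\gamma(w)=e^{\frac{\pi}{2}(w-\mathcal{H}(w))}$ and $\omega(w)=e^{i\frac{\pi}{2}(w+\mathcal{H}(w))}$ are functions of the free variable $w$ on the domain of $\mathcal{H}$, and the relation $\gamma(w)+\omega(w)\equiv 1$ holds identically in $w$ (this is \eqref{eq:UO2} read through the Lewy transform). Differentiating this functional identity in $w$ gives $(1-\mathcal{H}'(w))\gamma(w)+i(1+\mathcal{H}'(w))\omega(w)=0$, and solving this linear equation using $\omega=1-\gamma$ and then setting $w=f(z)$ yields \eqref{eq:ComplexS} with no undetermined constants. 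Your route could in principle be completed by proving the factorization through $\Upsilon$ and computing the degenerate-slope limits of $\mathcal{H}'$ explicitly, but as written both of these key steps are asserted rather than proved.
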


\begin{proof}
	Identifying $\nabla \sigma(z)$ with $\sigma_x+i\sigma_y$, we have 
	\begin{align*}
		\pi \sigma_x(z)-i\pi y&=\frac{\pi}{2}(\nabla \sigma(z)+\overline{\nabla \sigma(z)})-\frac{\pi}{2}(z-\overline{z})\\
		&=-\frac{\pi}{2}(z-\nabla \sigma(z))+\frac{\pi}{2}\overline{(z+\nabla \sigma(z))}\\
		\pi \sigma_y(z)+i\pi x&=i\frac{\pi}{2}(\overline{\nabla \sigma(z)}-\nabla \sigma(z))+i\frac{\pi}{2}(z+\overline{z})\\
		&=i\frac{\pi}{2}(z-\nabla \sigma(z))+i\frac{\pi}{2}\overline{(z+\nabla \sigma(z))}
	\end{align*}
	We recall that 
	\begin{align*}
		\mathcal{H}(z)=(I-\nabla \sigma)(I+\nabla \sigma)^{-1}(\overline{z}),
	\end{align*}
	and that by Theorem \ref{thm:B},
	\begin{align*}
		\nabla h(z)=U(\overline{f(z)}),
	\end{align*}
	where $U=((I+\nabla \sigma)^{-1})$ is the inverse of the Lewy transform $L(z)=z+\nabla \sigma(z)$. Set $z=U(\overline{w})$. Then 
	\begin{align*}
		\log \gamma(w)&:=-\frac{\pi}{2}((U(\overline{w}))-\nabla \sigma(U(\overline{w})))+\frac{\pi}{2}\overline{L(U(\overline{w}))}\\
		&=\frac{\pi}{2}(-\mathcal{H}(w)+w)\\
		\log \omega(w)&:=i\frac{\pi}{2}(U(\overline{w})-\nabla \sigma(U(\overline{w})))+i\frac{\pi}{2}\overline{L(U(\overline{w}))}\\
		&=i\frac{\pi}{2}(\mathcal{H}(w)+w),
	\end{align*}
	and $\Upsilon(z)=\gamma(f(z))$ and $\Om(z)=\omega(f(z))$. Since $\gamma(w)+\omega(w)\equiv 1$ differentiating this relation with respect to $\frac{\dv}{\dv w}$ gives 
	\begin{align*}
		\frac{\pi}{2}(1-\mathcal{H}'(w))e^{\frac{\pi}{2}(w-\mathcal{H}(w))}+i\frac{\pi}{2}(1+\mathcal{H}'(w))e^{i\frac{\pi}{2}(w+\mathcal{H}(w))}=0
	\end{align*}
	or equivalently 
	\begin{align*}
		(1-\mathcal{H}'(w))\zeta+i(1+\mathcal{H}'(w))\omega=0.
	\end{align*}
	Solving for $\mathcal{H}'(w)$ gives using that $\omega=1-\zeta$
	\begin{align*}
		\mathcal{H}'(w)=-\frac{\zeta(w)+i\omega(w)}{-\zeta(w)+i\omega(w)}=-\frac{i+(1-i)\zeta(w)}{i-(1+i)\zeta(w)}=-\frac{1+(i-1)\omega(w)}{-1+(1+i)\omega(w)}
	\end{align*}
	This completes the proof. 
\end{proof}


\section{Lozenge Tilings of a hexagon: Asymptotic Analysis} \label{sec:tiling}
In this section, we return to the lozenge tilings of the hexagon with weight \eqref{eq:ellipticloz}.  As discussed in the Introduction, the position of the white points in Figure~\ref{fig:sample_color} from the lozenge tiling with the $q$-Racah weight can be described as a dynamic $q$-Racah ensemble. This is shown in Theorem 7.5. in \cite{borodin2010q}, and we will briefly recall their definition. We then use Theorems~\ref{thm:LLNlpfunctions}, \ref{limit_shape_qracah} and \ref{thm:fluctuation_theorem} to study the asymptotic behavior of this point process.

\subsection{Extended $q$-Racah ensemble} \label{subsec:tiling model}

Let $\mathbf a, \mathbf b, \mathbf c\in \mathbb N$ and consider the probability measure on all possible tilings of the hexagon defined by the weight \eqref{eq:ellipticloz}. 
Let  $\{(\unscaledtime,y_j(\unscaledtime))\}_{\unscaledtime=1,j=1}^{\mathbf{b}+\mathbf{c}-1,\mathbf{a}}$ be the point process for the lozenge tiling model, which describes the positions of the white points in Figure~\ref{fig:sample_color} shifted by $\frac 12$.   As mentioned in the Introduction, for fixed $1\leq s \leq {\bf b}+{\bf c}-1$, the points $\{y_j(s)\}_{j=1}^{{\mathbf a}}$ form a $q$-Racah ensemble. Following \cite{borodin2010q}, the parameters for the ensemble, $\alpha$, $\beta$, $\gamma$ and $\delta$ can be expressed in terms of the parameters $\kappa, {\bf a}, {\bf b}$ and ${\bf c}$ as indicated in Table~\ref{table:q-racah-tiling}.  There are four cases in Table~\ref{table:q-racah-tiling} depending on $s$ and the dimensions of the hexagon ${\bf a},{\bf b}$ and ${\bf c}$.  If one assumes $\mathbf b<\mathbf c$, only cases i) iii) and iv) are involved, whereas if $\mathbf b>\mathbf c$, only cases iii) and iv) are involved. If $\mathbf b=\mathbf c$, then only cases i) and 4) occur.   Important to note is that we always count from the bottom of the hexagon. This means that for $s>{\bf b}$, we shift the $q$-Racah ensemble by $s-{\bf b}$, which happens for cases iii) and iv). 

In \cite{borodin2010q} the probability distribution for the extended process $\{(\unscaledtime,y_j(\unscaledtime))\}_{\unscaledtime=1,j=1}^{\mathbf{b}+\mathbf{c}-1,\mathbf{a}}$ was also computed and we will recall their definitions.
The starting point are the transition matrices, for $s=0,\ldots,{\mathbf b}+{\mathbf c}-1$,
     \begin{align*}
	T_{\unscaledtime}(y_i,y_j)  \coloneqq\sum_{k=0}^{{\bf a}-1}\frac{c_{k,\unscaledtime+1}}{c_{k,\unscaledtime}}r_{k,\unscaledtime}(y_i)r_{k,\unscaledtime+1}(y_j),
\end{align*}
where, for $s=0,\ldots, {\mathbf b}+{\mathbf c}$,  the function $r_{k,\unscaledtime}(y) $ is the $q$-Racah orthonormal polynomial with parameters specified in Table~\ref{table:q-racah-tiling}.  So  the $r_{k,s}$ are  polynomials in $\nu_s(y)$ orthogonal with respect to the $q$-Racah weight $w_{n,\unscaledtime}(y)$ as defined in \eqref{eq:qRac}. Important to note is that $s > {\bf b}$ we shifted the  $q$-Racah polynomial and its weight $w$ by $s-{\bf b}$. To avoid cumbersome notation, we do not indicate this shift explicitly, and we trust this will not lead to confusion. The coefficients $c_{k,s}$ are determined by 
\begin{align} \label{eq:expressionsck}
	c_{k,0}  \coloneqq \sqrt{\prod_{r=0}^k\frac{1-\mathbf{q}^{-(\mathbf{a}+\mathbf{b}+\mathbf{c}-r)}}{1-\mathbf{q}^{-(\mathbf{a}-r)}}},\qquad
	c_{k,\unscaledtime} \coloneqq c_{k,0}\sqrt{\prod_{r=0}^{\unscaledtime-1}(1-\mathbf{q}^{-\mathbf{a}-r+k})(\mathbf{q}^{(\mathbf{a}+\mathbf{b}+\mathbf{c}-r-k-1)}-1)} .
\end{align}

The probability measure on the points becomes the so-called dynamic $q$-Racah polynomial ensemble, which is proportional to 
\begin{equation} \label{eq: tiling dpp}
\sim \prod_{\unscaledtime=0}^{\mathbf{b}+\mathbf{c}-1}\det(T_{\unscaledtime}(y_{i}(\unscaledtime),y_{j}(\unscaledtime+1)))_{i,j=1}^{\mathbf{a}} \prod_{\unscaledtime=1}^{\textbf{b}+\mathbf{c}-1}\prod_{j=1}^{\mathbf{a}} w_{\unscaledtime}(y_j(\unscaledtime)),
	\end{equation}
 with $y_j(0)=j-1$ and $y_j({\mathbf b}+{\mathbf c})={\mathbf c}+j-1$ fixed for $j=1,\ldots,{\mathbf a}$.

\begin{table}[t]
\centering
\begin{tabular}{ c c c c c c c}
	\hline
		& & $\alpha$ & $\beta$ & $\gamma$ & $\delta$ & $y$\\ \hline
		i)& $\unscaledtime<\min\{\mathbf{b},\mathbf{c}\}$, \\& $0\leq y+\min\{0,\mathbf{b}-\unscaledtime\}\leq \unscaledtime+\mathbf{a}-1$ &  $\mathbf{q}^{-\mathbf{c}-\mathbf{a}}$ & $\mathbf{q}^{-\mathbf{b}-\mathbf{a}}$ & $\mathbf{q}^{-\unscaledtime-\mathbf{a}}$ &  $\kappa^2\mathbf{q}^{-\mathbf{c}+\mathbf{a}}$ & $y$ \\ 
	\hline 
		ii)& if $\mathbf{b}>\mathbf{c}$,   $\mathbf{c}-1<\unscaledtime<\mathbf{b}-1$, \\& $0\leq y+\min\{0,\mathbf{b}-\unscaledtime\}\leq \mathbf{c}+\mathbf{a}-1$ &  $\mathbf{q}^{-\unscaledtime-\mathbf{a}}$ & $\mathbf{q}^{\unscaledtime-\mathbf{a}-\mathbf{b}-\mathbf{c}}$ & $\mathbf{q}^{-\mathbf{a}-\mathbf{c}}$ &  $\kappa^2\mathbf{q}^{-\unscaledtime+\mathbf{a}}$ & $y$ \\ 
	\hline 
		iii)& if $\mathbf{b}\leq \mathbf{c}$, $\mathbf{b}-1<\unscaledtime<\mathbf{c}$, \\ & $0\leq y+\min\{0,\mathbf{b}-\unscaledtime\}\leq \mathbf{b}+\mathbf{a}-1$ &  $\mathbf{q}^{\unscaledtime-\mathbf{a}-\mathbf{b}-\mathbf{c}}$ & $\mathbf{q}^{-\unscaledtime-\mathbf{a}}$ & $\mathbf{q}^{-\mathbf{b}-\mathbf{a}}$ &  $\kappa^2\mathbf{q}^{\unscaledtime-\mathbf{b}-\mathbf{c}+\mathbf{a}}$ & $\mathbf{b}-\unscaledtime+y$\\ \hline 
		iv) & $\max\{\mathbf{b},\mathbf{c}\}<\unscaledtime+1$,\\ & $0\leq y+\min\{0,\mathbf{b}-\unscaledtime\}\leq \mathbf{b}+\mathbf{c}-\unscaledtime+\mathbf{a}-1$ &  $\mathbf{q}^{-\mathbf{b}-\mathbf{a}}$ & $\mathbf{q}^{-\mathbf{c}-\mathbf{a}}$ & $\mathbf{q}^{\unscaledtime-\mathbf{a}-\mathbf{b}-\mathbf{c}}$ &  $\kappa^2\mathbf{q}^{-\mathbf{b}+\mathbf{a}}$ & $\mathbf{b}-\unscaledtime+y$\\ \hline 
	\end{tabular}
	\caption{Parameters for Lozenge Tiling with $q$-Racah weight }
	\label{table:q-racah-tiling}
\end{table}	

To establish a probability measure, one also needs a configuration of $({\mathbf q}, \kappa)$. Here we consider 
	\begin{enumerate}
		\item Real case: ${\mathbf q}>0, \kappa\in \mathbb{R}$ and $\kappa$ can not lie in $[{\mathbf q}^{-{\mathbf a}+\frac{1}{2}},{\mathbf q}^{\frac{{\mathbf b}+{\mathbf c}}{2}-\frac{1}{2}}]$ if $\mathbf q>1$ or $[{\mathbf q}^{\frac{\mathbf b+{\mathbf c}}{2}-\frac{1}{2}}, {\mathbf q}^{-\mathbf a+\frac{1}{2}}]$ if ${\mathbf q}<1$. 
		\item Imaginary case : ${\mathbf q}>0, \kappa\in i \mathbb{R}$. 
		\item Trigonometric case : ${\mathbf q}=e^{i \lambda}, \kappa=e^{i \eta}$. To establish a probability measure,  it is enough to let $\lambda,\eta$ be such that $\eta-\frac{\lambda({\mathbf b}+ {\mathbf c }-1)}{2}, \eta+\lambda(\mathbf{a}-\frac{1}{2}) \in(k\pi,(k+1)\pi)$ for some $k$ to be an integer.
        \item Singular trigonometric case: similarly in the trigonometric case with $k\pi \leq \eta-\frac{\lambda({\mathbf b}+ {\mathbf c }-1)}{2}, \eta+\lambda(\mathbf{a}-\frac{1}{2})\leq(k+1)\pi$ and at least one of the inequality is an equality.
		\item Singular real case:   $\mathbf q>0$ and $\kappa = {\mathbf q}^{-\mathbf{a}+\frac{1}{2}}$ or ${\mathbf q}^{\frac{{\mathbf b}+{\mathbf c}-1}{2}}$. 
	\end{enumerate}


\subsection{Large hexagon}
To study the asymptotic behavior, we need to scale the parameters. The limit regime we are considering are as such 
\begin{align}\label{eq: scale parameters}
    \mathbf{a}=n, \quad \mathbf{b}=nb, \quad\mathbf{c}=nc, \quad\mathbf{q}=q^{\frac{1}{n}}
\end{align}
for some $b,c>0$ and $q>0$ or $q$ on the unite circle. We further scale the lattice by $\frac{1}{n}$ via 
\begin{align}
    y =nx,  &\qquad x\in\frac{1}{n}\mathbb{Z}\\
    \unscaledtime=\lfloor nt \rfloor, &\qquad t\in[0,b+c].
\end{align}
Instead of looking at the $(s,y)$ plane, we are looking at the $(t,x)$ plane. Set $$x_j(t)=\frac{1}{n}y_j(\unscaledtime).$$ We are going to study the process $\{(t,x_j(t))\}_{t,j}$.

Following the analysis of Section~\ref{sec:generalOPE}, the limiting behavior of a random tiling is determined by the asymptotic behavior of the recurrence coefficients for the $q$-Racah polynomials and the coefficients $c_{k,s}$. We will summarize the relevant results first and apply them to find the limiting behavior of the height function and its fluctuations in later sections.  We will only work out the case of real and imaginary parameters listed above, and we leave the trigonometric case to the readers.

The recurrence coefficients are already studied in Section~\ref{sec:q-Racah}. We will recall their limits and express them in the parameters of the tiling model. Define  
	\begin{align} \label{eq:defaballcases}
		a(\xi;t) = \sqrt{A(\xi)C(\xi)}, \qquad \qquad b(\xi;t) = -A(\xi;t)-C(\xi;t)+1+\gamma\delta,
	\end{align}
	where $A(\xi)$ and $C(\xi)$ are defined by
	\begin{align}
		A(\xi;t) & =\frac{\left(1-q^{-c+\xi-1}\right) \left(1-q^{\xi-t-1}\right) \left(1-q^{-b-c+\xi-2}\right) \left(1-\kappa ^2 q^{-b-c+\xi}\right)}{\left(1-q^{-b-c+2 \xi-2}\right)^2} \label{eq:recurrance A}\\
		C(\xi;t) & = \frac{\left(1-q^\xi\right) \left(1-q^{-b+\xi-1}\right) \left(\kappa ^2 q^{1-c}-q^{-c+\xi-1}\right) \left(q^{-t-1}-q^{-b-c+\xi-2}\right)}{\left(1-q^{-b-c+2 \xi-2}\right)^2}.\label{eq:recurrance CC}
	\end{align}
 These expressions are the limits of the coefficients $A_{k,n}$ and $C_{k,n}$, but only in case i) and ii). For cases iii) and iv), we find different limits, but somewhat remarkably, the expressions above still give the right limiting behavior for $a(\xi;t)$ and $b(\xi;t)$ (up to a constant) as stated in the following lemma.

\begin{lemma} \label{lem:casessame} Let $a(\xi;t)$ and $b(\xi;t)$ be as in \eqref{eq:defaballcases}. Then
    \begin{equation}
        \lim_{\frac{j}{n}\to\xi} a_{k,n}^{(s)} =\begin{cases}
        a(\xi;t), & 0< t\leq b,\\
        q^{t-b} a(\xi;t), &b<t<b+c,
        \end{cases}
    \end{equation}
   and    \begin{equation}
        \lim_{\frac{j}{n}\to\xi} b_{k,n}^{(s)} =\begin{cases}
        b(\xi;t), & 0< t\leq b,\\
        q^{t-b} b(\xi;t), &b<t<b+c.
        \end{cases}
        \end{equation}
        Moreover, with $\mu(t,x)=q^{-x}+\kappa^2q^{-t-c+x}$  we have
          \begin{equation}
    \lim_{n\to\infty}\mu_n(x) =\begin{cases}
        \mu(x;t), & 0< t\leq b,\\
           q^{t-b} \mu(x;t), &b<t<b+c.
        \end{cases}
    \end{equation}
\end{lemma}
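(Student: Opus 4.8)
The plan is to obtain the claimed limits by directly substituting the tiling parametrizations from Table~\ref{table:q-racah-tiling} into the explicit $q$-Racah recurrence coefficients \eqref{eq:recurrence A}, \eqref{eq:recurrence C} and into $\nu_n(y)$, and then taking $n\to\infty$ under the scaling \eqref{eq: scale parameters}. I would split the argument into the four cases of Table~\ref{table:q-racah-tiling}, noting first that cases i)--ii) are handled together and then cases iii)--iv), which require the extra bookkeeping of the shift $y\mapsto \mathbf b-\unscaledtime+y$ recorded in the last column of the table.

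First I would do cases i) and ii). Here $y=y$ (no shift), so $\mu_n(x)=\nu_n(nx)=\mathbf q^{-nx}+\delta\gamma_n\mathbf q^{nx+1}$. Substituting $\mathbf q=q^{1/n}$, and the table values (in case i), $\gamma=\mathbf q^{-\unscaledtime-\mathbf a}$, $\delta=\kappa^2\mathbf q^{-\mathbf c+\mathbf a}$, so $\gamma\delta=\kappa^2\mathbf q^{-\unscaledtime-\mathbf c}$, and with $\unscaledtime=\lfloor nt\rfloor$ this tends to $\kappa^2 q^{-t-c}$; similarly in case ii)), one gets $\mu_n(x)\to q^{-x}+\kappa^2 q^{-t-c-x}=\mu(x;t)$, which matches the $0<t\le b$ branch. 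For the recurrence coefficients I would feed the same substitutions into $A_j$, $C_j$; after simplification (cancelling the $\mathbf q^{j+1}\to q^\xi$ factors and using $j/n\to\xi$) the limits become exactly the rational expressions $A(\xi)$, $C(\xi)$ in \eqref{eq:recurrance A}--\eqref{eq:recurrance CC}, hence $\sqrt{A_jC_{j+1}}\to a(\xi;t)$ and $1+\delta\gamma_n\mathbf q-A_j-C_j\to b(\xi;t)$ by \eqref{eq:defaballcases}. This is the routine-calculation part and I would only indicate the key cancellations rather than write them out.

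Next, cases iii) and iv), where the table prescribes the shifted variable $y\mapsto \mathbf b-\unscaledtime+y$ and correspondingly shifted parameters. The point is that $\nu$ evaluated at the shifted argument differs from $\mu(x;t)$ by an overall factor. Concretely, with the case iv) parameters $\gamma=\mathbf q^{\unscaledtime-\mathbf a-\mathbf b-\mathbf c}$, $\delta=\kappa^2\mathbf q^{-\mathbf b+\mathbf a}$, and argument $\mathbf b-\unscaledtime+nx$, one computes $\nu=\mathbf q^{-(\mathbf b-\unscaledtime+nx)}+\gamma\delta\mathbf q^{\mathbf b-\unscaledtime+nx+1}=\mathbf q^{\unscaledtime-\mathbf b}\big(\mathbf q^{-nx}+\kappa^2\mathbf q^{-\mathbf c-\unscaledtime-nx+\dots}\big)$, and the prefactor $\mathbf q^{\unscaledtime-\mathbf b}=q^{(\lfloor nt\rfloor-nb)/n}\to q^{t-b}$, giving $\mu_n(x)\to q^{t-b}\mu(x;t)$ on $b<t<b+c$; the same $q^{t-b}$ factor is pulled out of the three-term recurrence \eqref{recurrence} (multiplying $\nu_n$ by a scalar multiplies all recurrence coefficients by that scalar), whence $a_{k,n}^{(s)}\to q^{t-b}a(\xi;t)$ and $b_{k,n}^{(s)}\to q^{t-b}b(\xi;t)$. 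I would verify that the shift in $j$ (the $q$-Racah polynomial of degree $k$ in the shifted variable) does not affect the limit because it only changes indices by $O(1)$, while $j/n\to\xi$ is unaffected.

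The main obstacle I anticipate is purely algebraic: checking that, despite the genuinely different explicit forms of $A_j,C_j$ in cases iii)/iv) (different $\alpha,\beta,\gamma,\delta$ \emph{and} a shifted degree/argument), their scaled limits coincide with $q^{t-b}$ times the \emph{same} functions $A(\xi),C(\xi)$ coming from cases i)/ii). This is the ``somewhat remarkable'' coincidence flagged in the text, and the clean way to see it is to track how the duality $0<\mathbf q<1\leftrightarrow \mathbf q>1$ together with the reflection $\xi\mapsto$ (something like $-\log_q\gamma-\xi$ absorbed into the shift) maps one parameter set to the other; I would present the verification as a direct substitution and simplification, organized so that the common factor $q^{t-b}$ is extracted first, reducing everything to an identity of rational functions of $q^\xi$ that can be checked termwise. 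Once that identity is in hand, the three displayed limits follow immediately from \eqref{eq:defaballcases} and the definition of $\mu(t,x)$.
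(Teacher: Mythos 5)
Your proposal is correct and takes essentially the same route as the paper, whose proof of this lemma is exactly the direct substitution of the Table~\ref{table:q-racah-tiling} parameters into \eqref{eq:recurrence A}--\eqref{eq:recurrence C} with the details left to the reader; your observation that the shift $y\mapsto \mathbf{b}-\unscaledtime+y$ makes $\nu_n$ factor as $\mathbf{q}^{\unscaledtime-\mathbf{b}}$ times a case-i-type expression, and that rescaling $\nu_n$ by a positive scalar rescales all recurrence coefficients by that scalar, is a clean way to organize the cases iii)/iv) bookkeeping. One small caveat: the computation actually yields $\mu(t,x)=q^{-x}+\kappa^2 q^{-t-c+x}$ (the form used later in the Gaussian Free Field section), so the exponent $-t-c-x$ you carried along is a typo inherited from the lemma statement rather than an error in your argument.
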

\begin{proof}
    This follows by a straightforward computation using \eqref{eq:recurrance A}, \eqref{eq:recurrence C} and Table \eqref{table:q-racah-tiling}. The details are left to the reader.
\end{proof}
The following lemma deals with the limiting behavior of the coefficients $c_{k,s}$.
\begin{lemma} \label{lem:cks}
Let $t \in (0,b+c)$ and $k,l \in \mathbb N$. Then, with $s=\lfloor tn\rfloor $ we have
\begin{equation}
    \lim_{n\to \infty}\frac{c_{n+k,t}}{c_{n+l,t}}=e^{\tau(t)(l-k)}
\end{equation}
where
   \begin{equation}
		\tau(t)  = \frac{1}{2}\log\left|\frac{q^{t}-1}{1-q^{t-b-c}}\right| \label{eq: tau}.
    \end{equation}
\end{lemma}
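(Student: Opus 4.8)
The plan is to compute the ratio $c_{n+k,s}/c_{n+l,s}$ directly from the explicit product formula \eqref{eq:expressionsck} and pass to the limit factor by factor. By interchanging $k$ and $l$ it suffices to treat $l<k$ (the case $l=k$ being trivial). Substituting $\mathbf a=n$, $\mathbf b=nb$, $\mathbf c=nc$, $\mathbf q=q^{1/n}$ and $s=\lfloor tn\rfloor$ into the definition of $c_{k,0}$, the first factor telescopes to
\begin{align*}
\frac{c_{n+k,0}^2}{c_{n+l,0}^2}=\prod_{j=l+1}^{k}\frac{1-q^{-b-c+j/n}}{1-q^{j/n}},
\end{align*}
while the $s$-dependent factor of $c_{k,s}$, after re-indexing the two products $\prod_{r=0}^{s-1}(1-\mathbf q^{k-r})$ and $\prod_{r=0}^{s-1}(\mathbf q^{nb+nc-r-k-1}-1)$ and cancelling the index ranges that they share with the corresponding products for $l$ (legitimate once $n$ is large enough that $s\ge k-l+1$), contributes
\begin{align*}
\frac{\prod_{i=l+1}^{k}(1-q^{i/n})}{\prod_{i=l-s+1}^{k-s}(1-q^{i/n})}\cdot\frac{\prod_{i=nb+nc-s-k}^{nb+nc-s-l-1}(q^{i/n}-1)}{\prod_{i=nb+nc-k}^{nb+nc-l-1}(q^{i/n}-1)}.
\end{align*}

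The key observation is that the product $\prod_{j=l+1}^{k}(1-q^{j/n})$ in the denominator of the first display cancels exactly against $\prod_{i=l+1}^{k}(1-q^{i/n})$ in the second display. This removes the only source of a spurious $0/0$, since $1-q^{i/n}\to 0$ for fixed $i$. After this cancellation every remaining product has exactly $k-l$ factors, and the relevant exponents $i/n$ converge to $0$, $-t$, $b+c-t$ and $b+c$ respectively (using $s/n\to t$, together with the fact that these products sit near indices $0$, $-s$, $nb+nc-s$ and $nb+nc$). Hence
\begin{align*}
\frac{c_{n+k,s}^2}{c_{n+l,s}^2}\longrightarrow\left(\frac{(1-q^{-b-c})(q^{b+c-t}-1)}{(1-q^{-t})(q^{b+c}-1)}\right)^{k-l}=\left(\frac{1-q^{t-b-c}}{q^{t}-1}\right)^{k-l},
\end{align*}
the last equality being the elementary simplification $1-q^{-b-c}=q^{-b-c}(q^{b+c}-1)$ and $1-q^{-t}=q^{-t}(q^{t}-1)$. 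Since $c_{k,s}>0$, taking positive square roots gives $c_{n+k,s}/c_{n+l,s}\to\bigl|(1-q^{t-b-c})/(q^{t}-1)\bigr|^{(k-l)/2}=e^{\tau(t)(l-k)}$ with $\tau(t)$ as in \eqref{eq: tau}.

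The points that require care rather than difficulty are the bookkeeping of the four finite products and the verification that, after the overlaps are cancelled, each index range contains precisely $k-l$ terms whose exponents converge to the claimed limits. This uses $t\in(0,b+c)$, so that $s=\lfloor tn\rfloor\to\infty$ while still $s<\mathbf b+\mathbf c$ for large $n$, and $q\neq 1$, so that none of the limiting factors $1-q^{-b-c}$, $1-q^{-t}$, $q^{b+c}-1$, $q^{b+c-t}-1$ vanishes; I expect this index-tracking to be the only delicate part of the argument.
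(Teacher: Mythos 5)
Your proof is correct and follows essentially the same route as the paper: both telescope the explicit product formula \eqref{eq:expressionsck} for the ratio $c_{n+k,s}/c_{n+l,s}$, cancel the overlapping index ranges (which removes the vanishing factors $1-q^{j/n}$), and then take termwise limits of the remaining $|k-l|$-fold products to obtain $e^{\tau(t)(l-k)}$. The paper merely packages the cancellation more compactly, writing the ratio as a single product $\sqrt{\mathbf{q}^{s(l-k)}\prod_{r}\frac{1-\mathbf{q}^{-(\mathbf{a}+s-r)}}{1-\mathbf{q}^{-(\mathbf{a}+\mathbf{b}+\mathbf{c}-s-r)}}}$ before scaling, so the difference from your four-product bookkeeping is purely presentational.
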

\begin{proof}
 For $k<l$ and $\unscaledtime>l-k+1$ we obtain, from \eqref{eq:expressionsck}
	\begin{align} 
		\frac{c_{k,\unscaledtime}}{c_{l,\unscaledtime}} & =\frac{c_{k,0}}{c_{l,0}} \sqrt{\prod_{r=k+1}^{l}\frac{(1-\mathbf{q}^{-(\mathbf{a}+\unscaledtime-r)})(\mathbf{q}^{\mathbf{a}+\mathbf{b}+\mathbf{c}-r}-1)}{(1-\mathbf{q}^{-(\mathbf{a}-r)})(\mathbf{q}^{\mathbf{a}+\mathbf{b}+\mathbf{c}-\unscaledtime-r}-1)}} = \sqrt{\mathbf{q}^{\unscaledtime(l-k)}\prod_{r=k+1}^{l}\frac{1-\mathbf{q}^{-(\mathbf{a}+\unscaledtime-r)}}{1-\mathbf{q}^{-(\mathbf{a}+\mathbf{b}+\mathbf{c}-s-r)}}} 
	\end{align}
 In the first equality, we used the second expression in \eqref{eq:expressionsck} and the fact that we have a telescoping product. In the second, we used the explicit expression for $c_{k,0}$ and $c_{l,0}$.
	Therefore, with $k<l$, $t\in[0,b+c]$ and $\unscaledtime \coloneqq \lfloor nt\rfloor$,  we have that
	\begin{align}
		\frac{c_{n+k,\unscaledtime}}{c_{n+l,\unscaledtime}} & = \sqrt{q^{\unscaledtime (l-k)/n}\prod_{r=k+1}^{l}\frac{1-q^{-(\unscaledtime/n-r/n)}}{1-q^{-(b+c-\unscaledtime/n-r/n)}}} \to \sqrt{\left|\frac{q^{t}-1}{1-q^{-(b+c-t)}}\right|^{l-k}},
	\end{align}
	as $n \to \infty$, and this proves the statement.	
\end{proof}

	\begin{figure}[t]
		\begin{subfigure}[b]{0.4\textwidth}
			\includegraphics[scale=0.2]{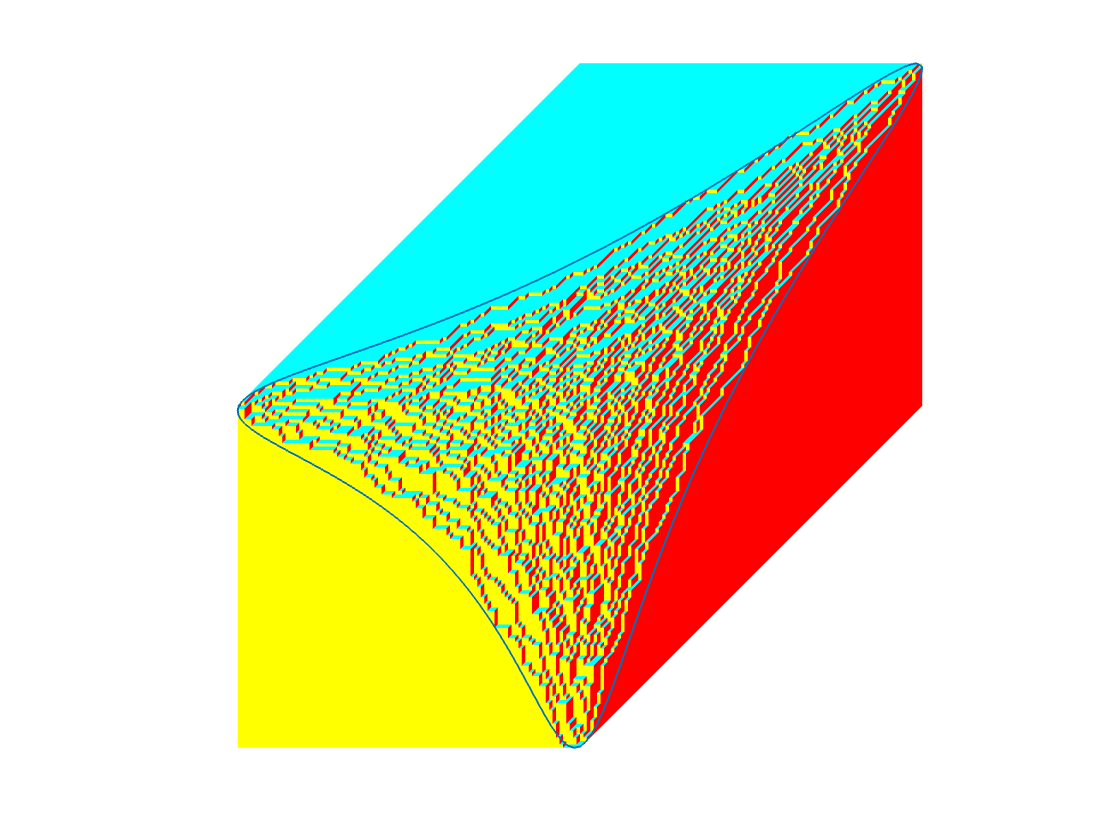}
			\caption{Real Case $q=0.05, \kappa=0.01$}
		\end{subfigure}
		\hfill
		\begin{subfigure}[b]{0.4\textwidth}
			\includegraphics[scale=0.2]{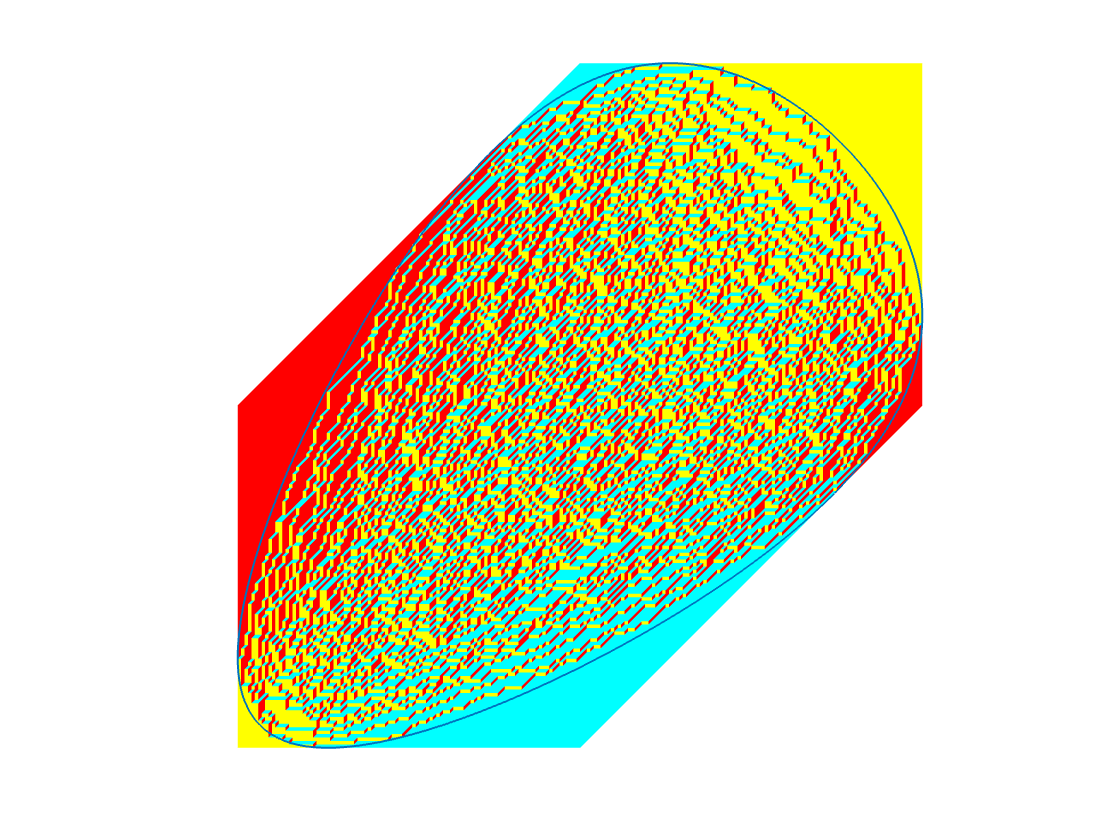}
			\caption{Imaginary Case $q=0.5, \kappa=29i$}
		\end{subfigure}
		\hfill
		\begin{subfigure}[b]{0.4\textwidth}
			\includegraphics[scale=0.2]{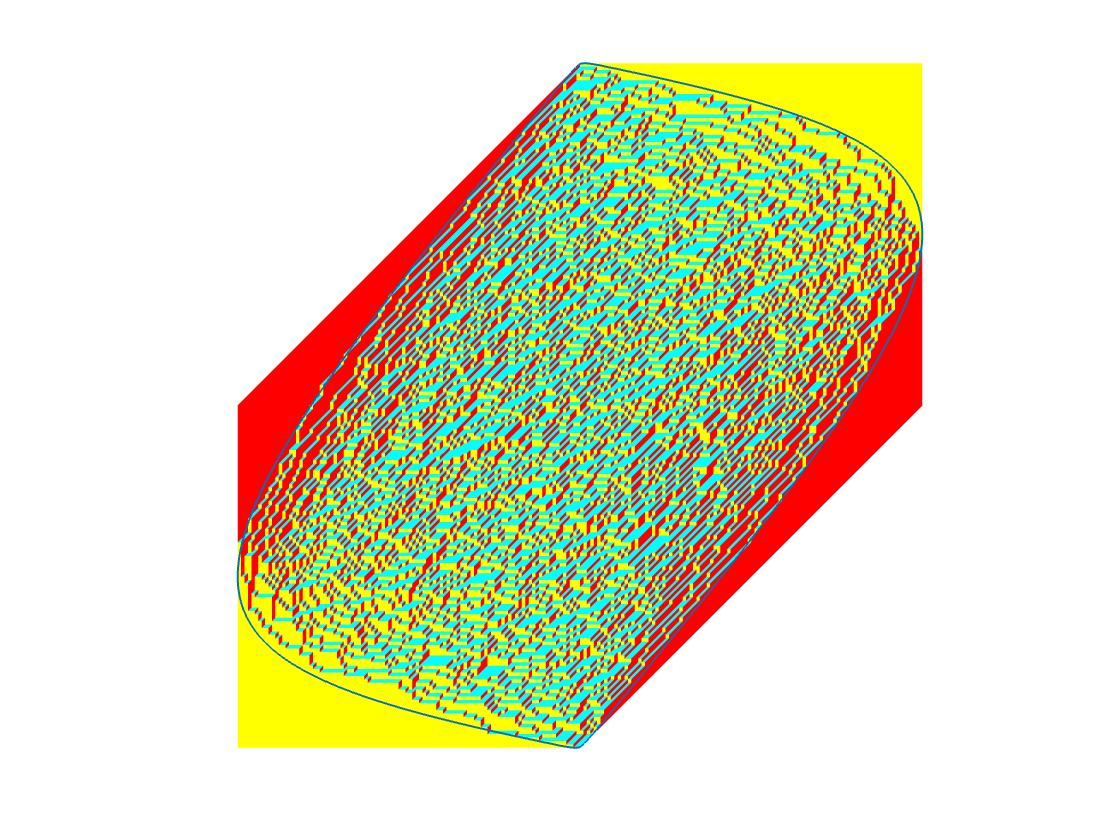}
			\caption{Singular Trigonometric Case \\$q=e^{i(\frac{\pi}{2}-\frac{1}{n})}, \kappa=e^{i\frac{\pi}{2}}$}
		\end{subfigure}
		\hfill
		\begin{subfigure}[b]{0.4\textwidth}
			\includegraphics[scale=0.2]{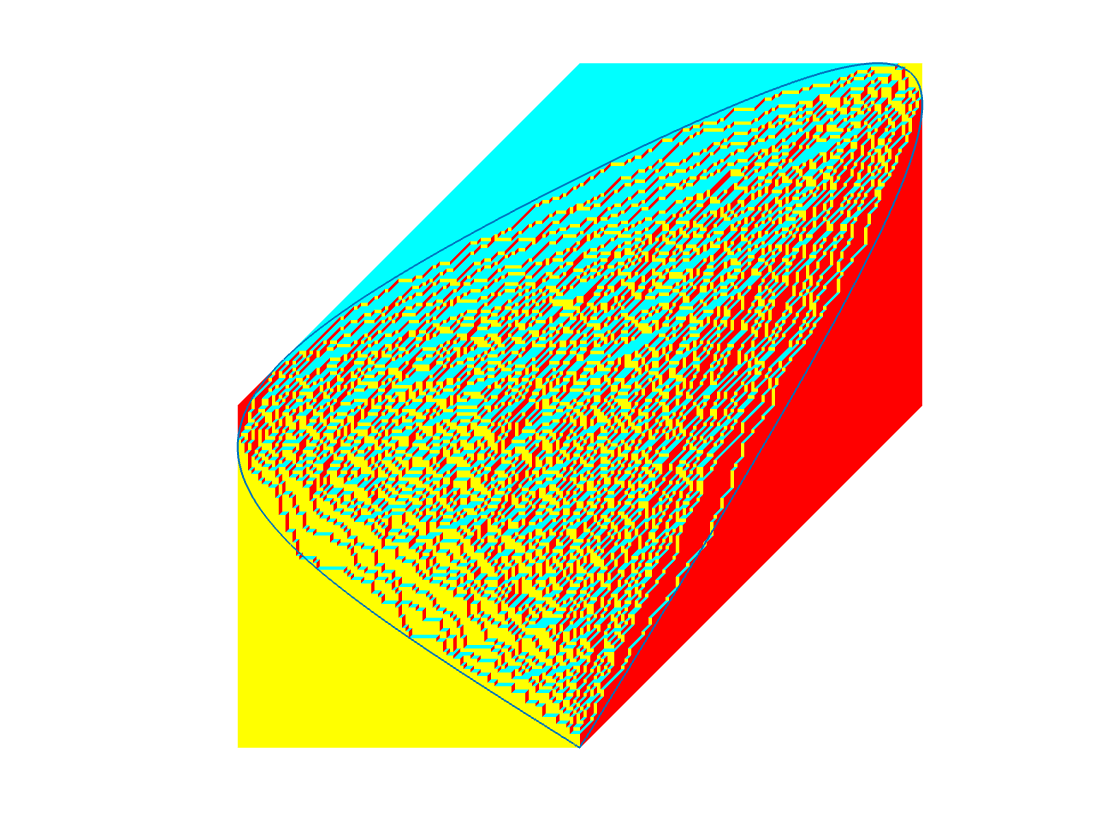}
			\caption{Singular Real Case $q=e, \kappa=e$}
		\end{subfigure}
		\caption{Different Cases of $q$-Racah Weights with size $n=100$} \label{fig:morexamples}
	\end{figure}


\subsection{Limit Shape}
We will now compute the limiting height function. We recall the definition of the height function $h_n$ in \eqref{eq:heightfunction}. Then for any fixed $(t,x)$, we can rewrite the height function as 
    \begin{align*}
    h_n(t,x)=\sum_{j=1}^{\mathbf{a}}\chi_{(-\infty, x]}(x_j(t)) =X_n(\chi_{(-\infty, x]}),
    \end{align*}
   where  $\chi_{A}$ stands for the characteristic function of the set $A$. Hence, $h_n$ is the linear statistic for the step function with the position of the step at $x$.

   We recall from Section~\ref{sec:q-Racah} that the limiting density of points can be computed by solving 
   \begin{equation}\label{eq:centraleqall}
      a(\xi;t)^2-(\mu(x;t)-b(\xi;t))^2=0,
   \end{equation}
   where $a(\xi;t)$ and $b(\xi;t)$ are defined as in \eqref{eq:defaballcases}. We recall that these expressions are only valid for $0<t\leq b$, but by Lemma~\ref{lem:casessame}, we see that for $t>b$ equation \eqref{eq:centraleqall} has an overall factor $q^{t-b}$ which we can divide out. Thus, \eqref{eq:centraleqall} is the correct equation for all $t$.
   
 \begin{theorem}[Limit Shape for the Lozenge Tiling]  \label{thm: lln tiling} Let $h_n$ be the height function for the rescaled $q$-Racah lozenge tiling in the hexagon defined in \eqref{eq:heightfunction}. For any given point $(t,x)$, we have the almost sure convergence of the height function 
 \begin{align}
     \frac{1}{n}h_n(t,x)\to \int_{\max\{0,t-b\}}^x\rho(t,u)du
 \end{align}
    as $n\to\infty$
    where $\rho(t,x)$ is the density defined by
    \begin{align}\label{eq:limitshapetiling}
			\rho(x)= \begin{cases} 1 & \quad \xi_-(x)<\xi_+(x)\leq 1\\ \frac{1}{\pi}\arccos\left(\frac{2y(1)-y(\xi_-(x))-y(\xi_+(x))}{\left|y(\xi_-(x))-y(\xi_+(x))\right|}\right)& \quad \xi_-(x)<1<\xi_+(x) \\ 0 &\quad 1\leq \xi_-(x) <\xi_+(x),
			\end{cases}
		\end{align}
  where $\xi_-(x)\leq \xi_+(x)$ are the solutions to \eqref{eq:centraleqall} and $y(\xi)=q^{-\xi}+q^{\xi-b-c-2}$.
 \end{theorem}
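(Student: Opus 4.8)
The plan is to recognize $h_n(t,x)$ as the linear statistic $X_n(\chi_{(-\infty,x]})$ for the extended $q$-Racah ensemble described in Section~\ref{subsec:tiling model}, and then feed the recurrence-coefficient asymptotics of Lemmas~\ref{lem:casessame} and~\ref{lem:cks} into Theorem~\ref{thm:LLNlpfunctions} (applied at each fixed time slice, which is legitimate because the one-time marginal of \eqref{eq: tiling dpp} is an ordinary $q$-Racah ensemble by Cauchy--Binet). First I would fix $(t,x)$ and set $s=\lfloor nt\rfloor$; the points $\{x_j(t)\}_{j=1}^{\mathbf a}$ then form a rescaled $q$-Racah ensemble whose recurrence coefficients converge, by Lemma~\ref{lem:casessame}, to $a(\xi;t)$ and $b(\xi;t)$ in cases i)--ii) and to $q^{t-b}a(\xi;t)$, $q^{t-b}b(\xi;t)$ in cases iii)--iv); correspondingly $\mu_n(x)\to\mu(x;t)$ or $q^{t-b}\mu(x;t)$. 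The crucial observation is that the overall factor $q^{t-b}$ cancels in the defining equation $4a^2-(\mu-b)^2=0$, so the pair $(\xi_-(x),\xi_+(x))$ solving \eqref{eq:centraleqall} and the resulting density $\rho(t,x)$ are the same expression in all four cases; this is exactly the reduction that makes Theorem~\ref{limit_shape_qracah} applicable verbatim (one needs to check the parameter hypothesis $\alpha\beta\le 0$ or $\log_q(\alpha\beta)<2\log_q\gamma$ from Table~\ref{table:q-racah-tiling}, together with the sign/imaginary cases, to guarantee $y(\xi)=q^{-\xi}+q^{\xi-b-c-2}$ is monotone and $\xi_\pm$ well-defined).

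Next I would verify the hypotheses of Theorem~\ref{thm:LLNlpfunctions} for the step function $f=\chi_{(-\infty,x]}$. Since the $q$-Racah weight has finite support contained in a bounded interval after rescaling (namely $y\in\{0,\dots,M\}$ with $M/n$ bounded), condition~\eqref{eq:mu_mean_small} is automatic (take $K$ to be that interval), $\mu$ is strictly monotone there, and uniform convergence $\mu_n\to\mu$ holds. The integrability condition \eqref{eq:conda} holds because, reading off \eqref{eq:recurrance A}--\eqref{eq:recurrance CC}, $a(\xi;t)$ vanishes only at the two endpoints $\xi=0$ and $\xi=-\log_q\gamma$ of the relevant $\xi$-interval and does so to first order, so $\int_0^1|a(\xi)|^{-1/\gamma}d\xi<\infty$ for any $\gamma>1$. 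Finally, $\chi_{(-\infty,x]}$ and $\chi_{(-\infty,x]}\circ\mu^{-1}$ are Riemann integrable on the relevant compact sets (a single jump discontinuity), so Theorem~\ref{thm:LLNlpfunctions} gives
\begin{align*}
\frac{1}{n}h_n(t,x)=\frac{1}{n}X_n(\chi_{(-\infty,x]})\to \frac{1}{\pi}\int_0^1\int_{\mu^{-1}(b(\xi)-2a(\xi))}^{\mu^{-1}(b(\xi)+2a(\xi))}\chi_{(-\infty,x]}(u)\,\frac{d\mu(u)\,d\xi}{\sqrt{4a(\xi)^2-(\mu(u)-b(\xi))^2}}
\end{align*}
almost surely. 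Then I would swap the order of integration via the set $I(u)$ of \eqref{eq:defIxi} to rewrite the right side as $\int \chi_{(-\infty,x]}(u)\rho(t,u)\,du=\int_{u_{\min}(t)}^x\rho(t,u)\,du$, where $u_{\min}(t)=\mu^{-1}(\min_\xi(b(\xi)-2a(\xi)))$; identifying $u_{\min}(t)$ with $\max\{0,t-b\}$ is a direct computation from the explicit $a,b,\mu$ (this is the left edge of the support, and the $t-b$ shift for $t>b$ is precisely the bottom-counting shift noted in Section~\ref{subsec:tiling model}). Combining with Theorem~\ref{limit_shape_qracah}, which evaluates the $\xi$-integral and yields the three-case $\arccos$ formula \eqref{eq:limitshapetiling} with $y(\xi)=q^{-\xi}+\alpha\beta q^\xi$ and $\alpha\beta=q^{-b-c-2}$ from the table, gives the claimed density.

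\textbf{Main obstacle.} The routine analytic inputs are all in place, so the real work is bookkeeping: carefully checking that the parameter choices in Table~\ref{table:q-racah-tiling} across all four cases and across the real and imaginary configurations satisfy the monotonicity hypothesis of Theorem~\ref{limit_shape_qracah} (so that $\xi_\pm(x)$ are genuinely well-defined and real), and verifying the cancellation of the $q^{t-b}$ factor and the identification of the lower limit of integration as $\max\{0,t-b\}$ in the shifted cases iii)--iv). One must also be slightly careful that Theorem~\ref{thm:LLNlpfunctions} is being applied with the correct $n$-dependent data $\mu_n$, $a_{j,n}$, $b_{j,n}$ at the \emph{fixed} time slice $s=\lfloor nt\rfloor$ rather than along the dynamics; but since only the one-time marginal enters the height function at a fixed $(t,x)$, no dynamic input (and in particular no use of Lemma~\ref{lem:cks}) is needed for the limit shape, and $\tau(t)$ will only be relevant later for the fluctuations.
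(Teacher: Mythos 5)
Your proposal is correct and follows essentially the same route as the paper: identify $h_n(t,x)$ with the linear statistic of a step function for the fixed-time $q$-Racah marginal, verify the hypotheses of Theorem \ref{thm:LLNlpfunctions} (compactness of the hexagon, condition \eqref{eq:conda} from the zeros of $a(\xi;t)$, Riemann integrability of the indicator), use Lemma \ref{lem:casessame} to cancel the $q^{t-b}$ factor so that \eqref{eq:centraleqall} is the right equation for all $t$, and then evaluate the density via Theorem \ref{limit_shape_qracah}. The only difference is that you spell out the case-by-case bookkeeping (parameter hypotheses from Table \ref{table:q-racah-tiling}, identification of the lower integration limit $\max\{0,t-b\}$) more explicitly than the paper's brief proof does, which is a harmless elaboration rather than a different argument.
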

 \begin{proof}
Since the hexagon is a compact domain, the moment condition \eqref{ass: moment} is satisfied.   
    Hence, the conditions in Theorem~\ref{thm:LLNcontinuousfunctions} are satisfied. Note that $a(\xi;t)$ is analytic in $\xi$ and \eqref{eq:conda} is satisfied. Note also that a step function is Riemann integrable. From here, we can apply our Theorem~\ref{thm:LLNlpfunctions}, and we use Proposition~\ref{limit_shape_qracah} to further compute the limiting density $\rho(t,x)$. 
 \end{proof}

 Examples of $\rho(t,x)$ are graphed in Figure~\ref{Fig: density}.  As expected, the corners are frozen in all cases, and their densities are either zero or one. The arctic curve is characterized by 
    \begin{align}
        \{(t,x): 4a(1;t)^2-(\mu(t,x)-b(1;t))^2=0\}.
    \end{align}
    Note that this is not an algebraic curve in the variables $t$ and $x$, but it is a degree two curve in the variables $\mu(t,x)$ and $q^{-t}$.  Furthermore, the arctic curve has 6 points tangent to the hexagon in both real and imaginary cases. In the singular trigonometric case, there are two tangential points and two non-differential points on the arctic curve. In contrast, in the singular real case, there are five tangential points and one non-differential point on the arctic curve. The tangential points $(t,x )$ can be classified in two ways. 
\begin{corollary} Intersection points of the arctic curve and the boundary of the hexagon are given by
 \begin{align}
    t=0 \text{ and }&  x= \mu^{-1,(0)}(b(1;0)) \label{eq: intersection 1} \\
     t=b+c \text{ and }&  x= \mu^{-1,(b+c)}(b(1;b+c)) \label{eq: intersection 2}
 \end{align}
and 
\begin{align}
    t= & \log_q \left(\frac{\kappa ^2 q^{b+c+1}-\kappa ^2 q^{b+c+2}-q^{b+c}+q^{b+2 c+1}-\kappa ^2 q^{c+1}+\kappa ^2 q}{-q^{b+c+1}+q^{b+2 c+1}-\kappa ^2 q^{c+2}+q^{c+1}-q^c+\kappa ^2 q}\right) \text{ and }  x= 0 \\
    t= & \log_q \left(-\frac{q^c \left(-q^{b+c}+q^{b+c+1}-\kappa ^2 q^{b+2}+q^b+\kappa ^2 q-1\right)}{\kappa ^2 q^{b+c+2}-q^{b+c+1}-\kappa ^2 q^{c+2}+q^c+\kappa ^2 q^2-\kappa ^2 q}\right) \text{ and }  x= 1+t \\
    t= & \log_q \left(\frac{q^{b+c} \left(-q^{b+c+1}+q^{2 b+c+1}-\kappa ^2 q^{b+2}+q^{b+1}-q^b+\kappa ^2 q\right)}{\kappa ^2 q^{b+c+1}-\kappa ^2 q^{b+c+2}-q^{b+c}+q^{2 b+c+1}-\kappa ^2 q^{b+1}+\kappa ^2 q}\right) \text{ and }  x= t-b \\
    t= & \log_q \left(-\frac{q^c \left(\kappa ^2 q^{b+c+2}-q^{b+c+1}-\kappa ^2 q^{b+2}+q^b+\kappa ^2 q^2-\kappa ^2 q\right)}{-q^{b+c}+q^{b+c+1}-\kappa ^2 q^{c+2}+q^c+\kappa ^2 q-1}\right) \text{ and }  x= 1+c.
\end{align}
\end{corollary}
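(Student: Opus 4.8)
The plan is to characterize the intersection of the arctic curve with $\partial P$ purely through the equation $4a(1;t)^2-(\mu(t,x)-b(1;t))^2=0$ together with the four boundary segments of the hexagon in the $(t,x)$-plane. Recall that the arctic curve is the set where the limiting density $\rho(t,x)$ in Theorem \ref{thm: lln tiling} transitions between the frozen values $0,1$ and the liquid phase, i.e. where $\xi_-(x)=\xi_+(x)=1$, equivalently where the discriminant of the quadratic \eqref{eq:def:y:ext} in $y(\xi)$ vanishes at $\xi=1$. The hexagon $P$ has six sides; by \eqref{eq:height_function_relation} and the scaling $\mathbf a=n,\mathbf b=nb,\mathbf c=nc$ these correspond in the $(t,x)$ coordinates to the lines $t=0$, $t=b+c$, $x=0$, $x=t-b$, $x=1+t$ (more precisely $x = t - \mathbf b /n$ and $x = 1+c$) , and $x=1+c$. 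So the strategy is: for each of these six lines, substitute into $4a(1;t)^2-(\mu(t,x)-b(1;t))^2=0$ and solve.

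First I would handle the vertical sides $t=0$ and $t=b+c$. On $t=0$ the recurrence coefficients $a(\xi;0),b(\xi;0)$ degenerate: from \eqref{eq:recurrance A}--\eqref{eq:recurrance CC} one checks that $A(\xi)$ or $C(\xi)$ acquires a factor that forces $a(1;0)=0$ when we are at the top endpoint $\xi=1$ of the spectral interval — wait, more carefully, at $t=0$ the liquid region pinches to a point, so the only solution of $4a(1;0)^2=(\mu(0,x)-b(1;0))^2$ with $x$ in the admissible range is $\mu(0,x)=b(1;0)$, giving $x=\mu^{-1,(0)}(b(1;0))$; this is \eqref{eq: intersection 1}. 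The same argument at $t=b+c$ gives \eqref{eq: intersection 2}, using that $x_j(b+c)=c+j-1$ is frozen. This is the easy part and is essentially forced by the boundary conditions $y_j(0)=j-1$, $y_j(\mathbf b+\mathbf c)=\mathbf c+j-1$.

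For the four remaining (slanted and horizontal) sides the computation is algebraic: on each line $x=x(t)$ one substitutes $\mu(t,x)=q^{-x}+\kappa^2 q^{-t-c-x}$ and the explicit $a(1;t)^2=A(1)C(1)$, $b(1;t)=1+\gamma\delta-A(1)-C(1)$ with $A(1),C(1)$ from \eqref{eq:recurrance A}--\eqref{eq:recurrance CC} evaluated at $\xi=1$, and one is left with a single equation in the single variable $q^{t}$. I expect this to reduce, after clearing denominators, to a \emph{linear} equation in $q^{t}$ (this is the ``degree two curve in $\mu(t,x)$ and $q^{-t}$'' remark specialized to a boundary line, where $\mu$ becomes an explicit function of $q^t$), which is why each side contributes exactly one intersection point and the solution can be written as $t=\log_q(\cdots)$ with the rational expressions displayed. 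The bookkeeping must respect the case division of Table \ref{table:q-racah-tiling}: for $t>b$ one uses Lemma \ref{lem:casessame} to see that $a(1;t),b(1;t),\mu(t,x)$ all carry the common factor $q^{t-b}$, which cancels in \eqref{eq:centraleqall}, so the same formulas \eqref{eq:recurrance A}--\eqref{eq:recurrance CC} apply throughout; on the side $x=t-b$ one also recalls the shift by $s-\mathbf b$ built into the definition of $y$ in cases iii), iv).

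The main obstacle is not conceptual but the risk of algebraic error in simplifying $4A(1)C(1)-(\mu(t,x)-1-\gamma\delta+A(1)+C(1))^2$ on each boundary line down to a linear polynomial in $q^t$ and extracting the stated closed forms; one should double-check that the spurious root $q^t=0$ (or roots outside $(0,b+c)$, or where $\mu'$ vanishes so the parametrization by $\xi$ breaks down) is discarded, and that the resulting $t$ indeed lies in $[0,b+c]$ and the corresponding $x$ on the correct segment. A clean way to organize this, and the one I would adopt, is to use the identity \eqref{eq:def:y:ext}: the arctic curve is exactly where the discriminant \eqref{eq: discriminant} of the quadratic in $y(1)$ vanishes at the value $\xi=1$, and intersecting \eqref{eq: discriminant} (with the tiling parameters substituted via Table \ref{table:q-racah-tiling}, so that it becomes a polynomial in $q^t$ once $x$ is fixed to a boundary line) with each of the six lines gives the claimed points; the factored form of \eqref{eq: discriminant} makes the root extraction transparent. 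I would then simply verify by substitution that each listed $(t,x)$ satisfies both the line equation and \eqref{eq:centraleqall}, which turns the proof into a finite check and sidesteps the need to re-derive the formulas from scratch.
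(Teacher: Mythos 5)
Your main route is sound and is essentially the paper's own argument: at $t=0$ the factor $(1-q^{\xi-t-1})$ in \eqref{eq:recurrance A} vanishes at $\xi=1$, so $A(1)=0$, and at $t=b+c$ the factor $(q^{-t-1}-q^{-b-c+\xi-2})$ in \eqref{eq:recurrance CC} vanishes, so $C(1)=0$; hence $a(1;t)=0$ on the two vertical sides and \eqref{eq:centraleqall} degenerates to $\mu(t,x)=b(1;t)$, giving \eqref{eq: intersection 1} and \eqref{eq: intersection 2}. For the four remaining sides, restricting $4a(1;t)^2-(\mu(t,x)-b(1;t))^2=0$ to the line and solving for $q^t$ is exactly what the paper does, phrased there as solving $y(\xi_+(x_0))=y(1)$ for $x_0=0,\,1+t,\,t-b,\,1+c$.

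However, the ``clean way \dots the one I would adopt'' is wrong as stated. Vanishing of the discriminant \eqref{eq: discriminant} of the quadratic \eqref{eq:def:y:ext} is equivalent to $\xi_-(x)=\xi_+(x)$ (a double root in $y$); it does \emph{not} characterize the arctic curve, which is the condition that $y(1)$ be a root, i.e.\ $\xi_-(x)=1$ or $\xi_+(x)=1$. Moreover, with the tiling parameters of Table \ref{table:q-racah-tiling}, four factors of \eqref{eq: discriminant}, namely $(q^x-1)$, $(\alpha q^x-1)$, $(\gamma q^x-1)$ and $(\gamma q^x-\beta)$, vanish identically along the lines $x=0$, $x=1+c$, $x=1+t$ and $x=t-b$ respectively, so ``intersecting \eqref{eq: discriminant} with each boundary line'' returns the whole line and determines no value of $t$. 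The correct use of the discriminant is the complementary one made in the paper: \emph{because} it vanishes on the boundary one has $\xi_-=\xi_+$ there, and the arctic-curve condition then collapses to the single equation $y(\xi_+(x_0))=y(1)$ in $q^t$, which one solves — equivalently, your direct substitution into \eqref{eq:centraleqall}. Finally, note that your fallback of merely verifying the listed points by substitution shows only that they lie in the intersection; to get the full list you need uniqueness of the admissible root in $q^t$ on each side, which your ``linear in $q^t$'' reduction (or the explicit solving in the paper) supplies.
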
 
\begin{proof}
 The desired points are the intersection of the arctic curve with the boundary of the hexagon. For $t=0$  we have $A(1)=0$. For  $t=b+c$ we have $C(1)=0$. Hence in both cases we have $a(1;0)=a(1;b+c)=0$. Thus two points as \eqref{eq: intersection 1} and \eqref{eq: intersection 2} are found and given by $b(1;0)$ and $b(1;b+c)$. For the other points, we note that on the boundary of the hexagon, the discriminant defined in \eqref{eq: discriminant} is always zero. Recall $\xi_-(x)$ and $\xi_+(x)$ defined in Proposition~\ref{proof:qracahLimitShape}. This means at these points $\xi_-(x)=\xi_+(x)$. Moreover, the point $(t,x)$ lying on the arctic curve also means either $y(\xi_-(x))=y(1)$ or $y(\xi_+(x))=y(1)$. Then solve the equations $y(\xi_+(x_0))=y(1)$ with respect to $t$ for $x_0=0,1+t,t-b, 1+c$ respectively, and we will find the rest of the four points. 
\end{proof}
	\begin{figure}[t]
		\begin{subfigure}[b]{0.4\textwidth}
			\includegraphics[scale=0.2]{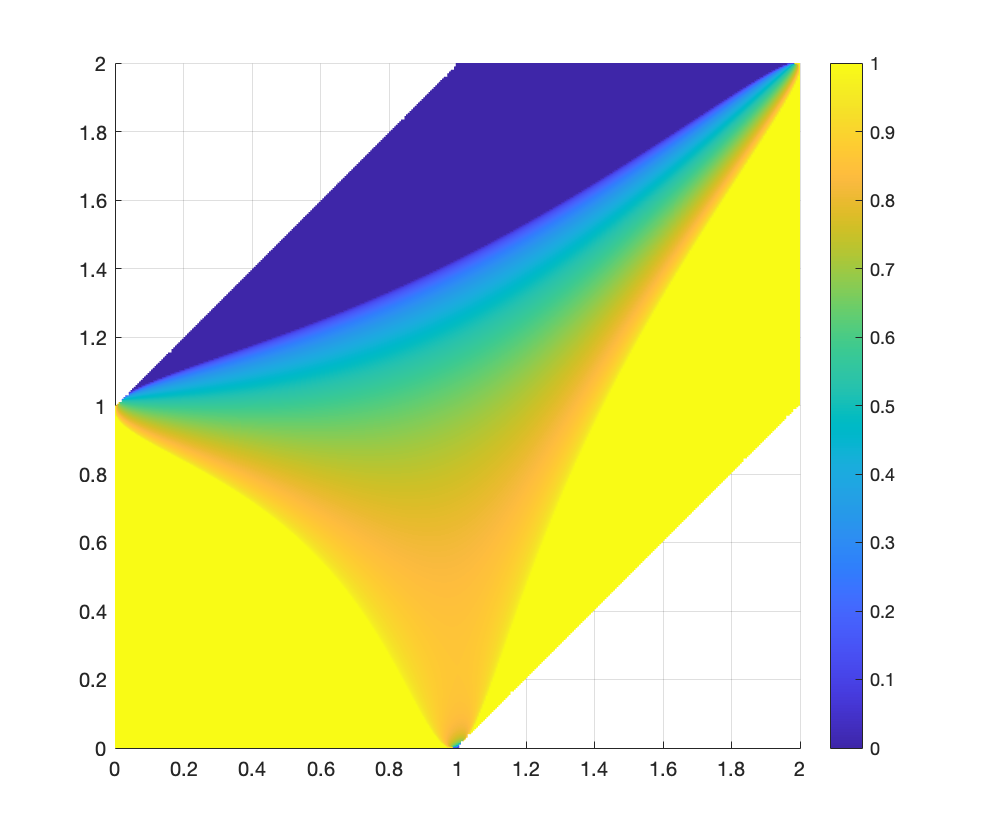}
			\caption{Real Case with size $q=0.05$, $\kappa=0.01$}
		\end{subfigure}
		\hfill
		\begin{subfigure}[b]{0.4\textwidth}
			\includegraphics[scale=0.2]{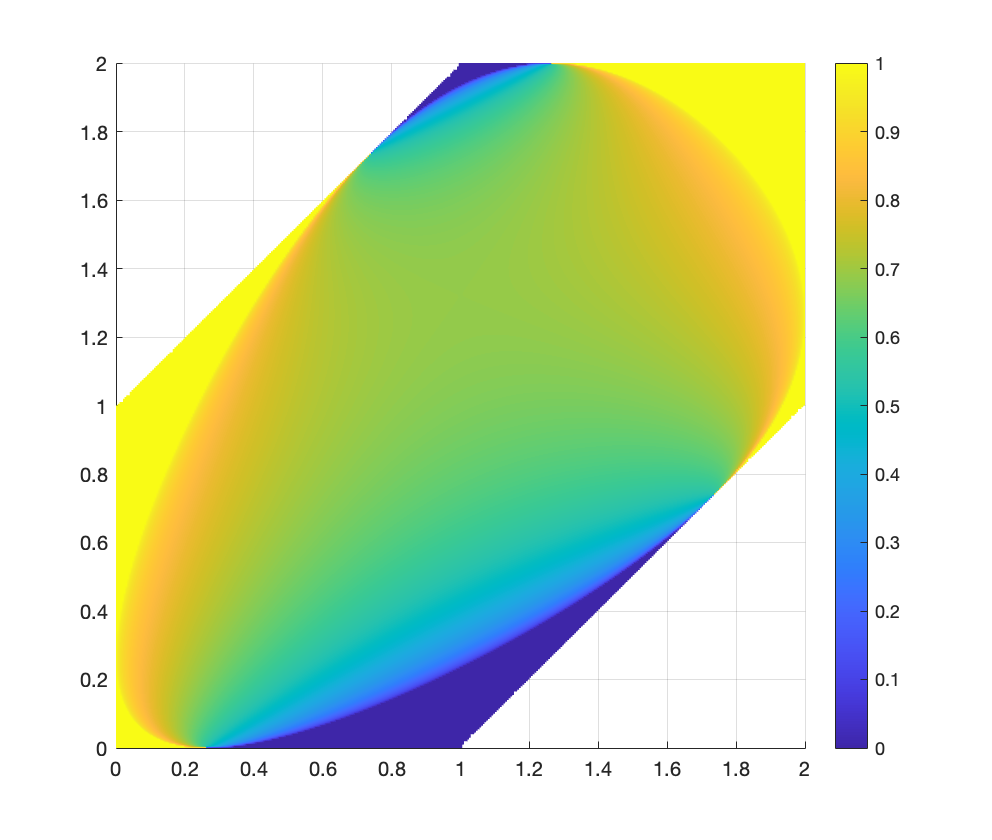}
			\caption{Imaginary Case with $q=0.5$, $\kappa=29i$}
		\end{subfigure}
		\hfill
		\begin{subfigure}[b]{0.4\textwidth}
			\includegraphics[scale=0.2]{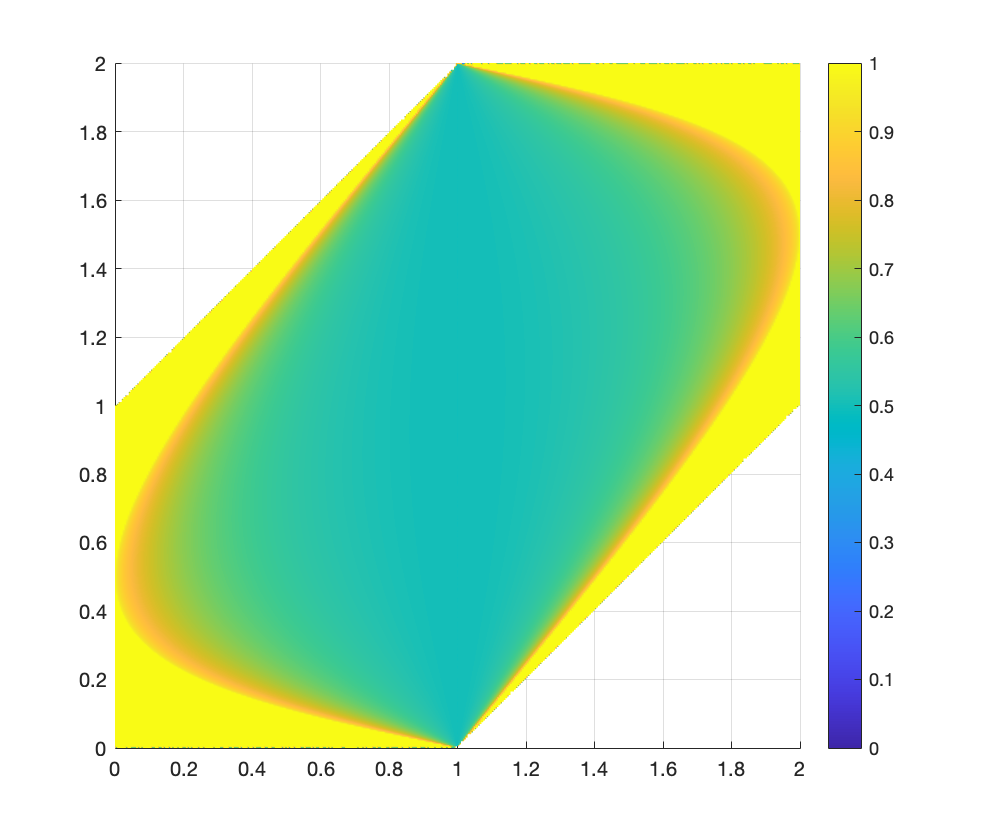}
			\caption{Singular Trigonometric Case with $q=e^{i\pi/2}$, $\kappa=e^{i\pi/2}$}
		\end{subfigure}
		\hfill
		\begin{subfigure}[b]{0.4\textwidth}
			\includegraphics[scale=0.2]{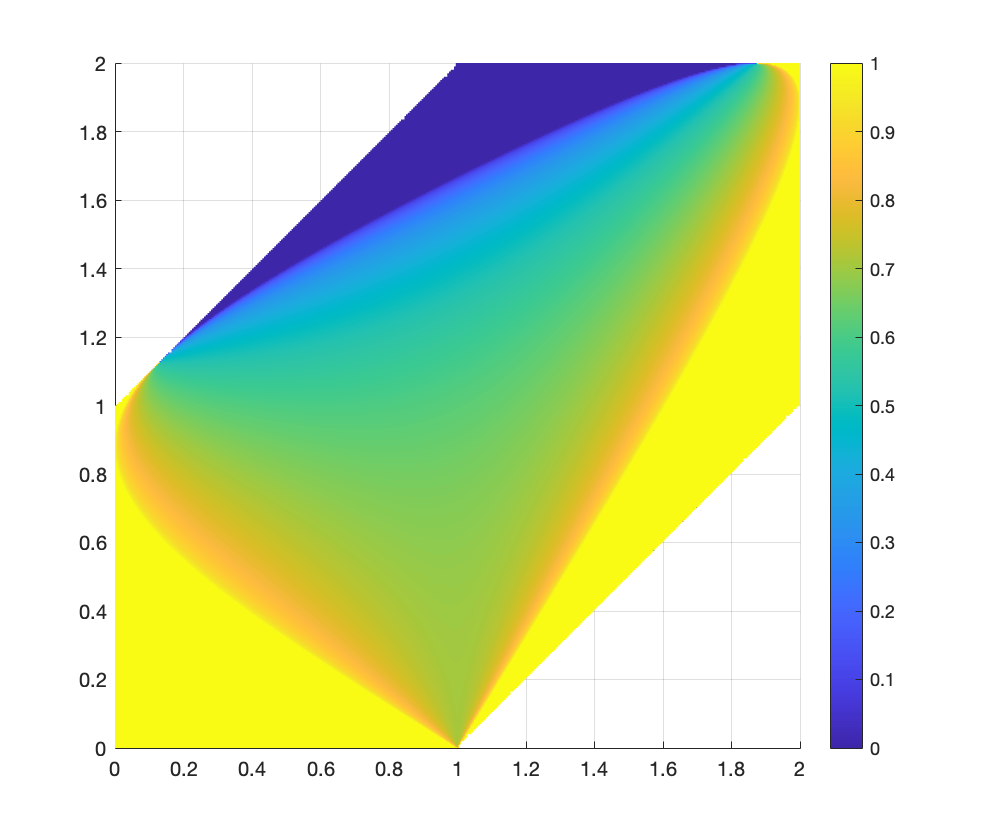}
			\caption{Singular Real Case $q=e$, $\kappa=e$}
		\end{subfigure}
		\caption{Limiting densities  \eqref{eq:limitshape} for $q$-Racah weight with $b=c=1$}
		\label{Fig: density}
	\end{figure}


	\subsection{Height Fluctuation and Gaussian Free Field} 
In this subsection, we will show how the fluctuation of the height function in the liquid region can be described by  Gaussian Free Field on the strip $S= \mathbb{R}\times (0,\pi)$, with Dirichlet boundary conditions.  
Recall that on the Sobolev space $W^{1,2}_0(S)$ we have a bilinear pairing 
\begin{align*}
\langle f,g\rangle_\nabla=\pi \int_S\langle \nabla f(x),\nabla g(x)\rangle dx. 
\end{align*}
This bilinear paring can be extended to a well-defined bilinear pairing between the space of distributions $W^{-1,2}(S)\subset \mathcal{D}'(S)$ (where $\mathcal{D}'(S)$ denotes the space of distributions on $S$ in the sense of L. Schwartz) and $W^{1,2}_0(S)$ which we still denote by $\langle \cdot,\cdot \rangle_\nabla$, (see \cite{Adams75} for definitions on negative order Sobolev spaces). 
The Gaussian Free Field (we refer to \cite{PowellWerner21} for more details) is a random variable $\Phi$ with values in $W^{-1,2}(S)$ such that for every $\phi\in W^{1,2}_0(S)$ the \emph{random variable}
\begin{equation} \label{eq:dirichlet}
    \langle \Phi,\phi\rangle_\nabla \sim N\bigg(0,\pi\int |\nabla \phi|^2\bigg).
\end{equation}

For our statement, we need to introduce new coordinates for the liquid region 
  \begin{align*}
      \mathcal{E} \coloneqq \left\{(t,x):4a(1; t)^2-\left(\mu(t,x)-b(1; t)\right)^2>0\right\}.
  \end{align*}
  To this end, with $S= \mathbb{R}\times (0,\pi)$, we define
\begin{equation}
		F  :\mathcal{E}\to
		S : (t,x) \mapsto(\tau,\theta) =\left(\tau(t),\arccos\left(\frac{\mu(t, x)-b(1; t)}{2a(1; t)}\right)\right), \label{pullback_qracah}
   \end{equation}
   where $\tau$ is as defined in \eqref{eq: tau}. One readily verifies that $F$ is a diffeomorphism  with 
   inverse
   \begin{equation}
		F^{-1}  :S\to\mathcal{E}: 
		 (\tau,\theta) \mapsto (t,x)=(t(\tau),\mu^{-1, (t(\tau))}\left(b\left(1; t(\tau)\right)+2a\left(1; t(\tau))\cos\theta\right)\right),
	\end{equation}
where $\mu^{-1,(t)}(x)$ is the inverse of $x\mapsto \mu(t,x)$. 

We will show that $h_n\circ F^{-1} -\mathbb E[ h_n \circ F^{-1}]$ converges to the Gaussian Free Field on $S$ (with Dirichlet boundary conditions). More precisely, we will prove that $$\langle h_n\circ F^{-1},\phi \rangle_\nabla -\mathbb E [\langle h_n \circ F^{-1}, \phi \rangle_\nabla] \to \langle \Phi, \phi\rangle_\nabla,$$
for all $C^2$ functions $\phi$ on $S$ such that $\Delta \phi$ has compact support and is piecewise constant in the vertical direction. That is, we assume there exists 
$$
0<t_0 <t_1 < \ldots < t_N<b+c,
$$
such that 
\begin{equation}\label{eq:piece}
    \Delta  \phi(\tau, \theta)=
\begin{cases}
\Delta \phi(\tau(t_m),\theta), & \tau \in (\tau(t_{m-1}),\tau(t_m)], \qquad m=1,\dots,N\\
0,& \text{ otherwise.}
\end{cases}
\end{equation}
For such $\phi$, the pairing \eqref{eq:dirichlet} can be written as
\begin{align*}
		\langle \Phi,\phi\rangle_\nabla  \coloneqq - \pi\sum_{m=1}^N(\tau(t_{m})-\tau(t_{m-1}))\int_0^{\pi}  \Phi(\tau(t_m),\theta) \Delta\phi(\tau(t_m),\theta)d\theta.
	\end{align*}
 Functions $\phi$ that satisfy \eqref{eq:piece} are dense in the space of test functions with the Dirichlet norm.

\begin{theorem}\label{thm:GFF}
	Consider a nonsingular $q$-Racah tiling model, i.e.,  $\mu_x(t,x)\neq 0$ for any $(t,x)$ inside the hexagon. Let $\phi$ be any twice differentiable function with compact support in $\mathbb{R}\times (0,\pi)$ such that $\Delta \phi$ is piecewise constant (as in \eqref{eq:piece}). Then
	\begin{align*}
		\langle h_n\circ F^{-1}, \phi \rangle_\nabla-\mathbb{E}[\langle h_n\circ F^{-1}, \phi\rangle] \to \mathcal{N}\left(0,\pi\int |\nabla \phi|^2 \right)
	\end{align*}
	 as $n\to\infty$ in distribution.
\end{theorem}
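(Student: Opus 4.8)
The plan is to deduce Theorem~\ref{thm:GFF} from the dynamic Central Limit Theorem, Theorem~\ref{thm:fluctuation_theorem}, applied to the extended $q$-Racah ensemble described in Section~\ref{subsec:tiling model}. First I would reduce the pairing $\langle h_n\circ F^{-1},\phi\rangle_\nabla$ to a linear statistic of the form $X_n(g)$ for a suitable test function $g$ on $\{t_1,\dots,t_N\}\times\mathbb R$. Using integration by parts in the $\theta$ variable and the piecewise-constant structure \eqref{eq:piece} of $\Delta\phi$, one writes
\begin{align*}
\langle h_n\circ F^{-1},\phi\rangle_\nabla = -\pi\sum_{m=1}^N(\tau(t_m)-\tau(t_{m-1}))\int_0^\pi (h_n\circ F^{-1})(\tau(t_m),\theta)\,\Delta\phi(\tau(t_m),\theta)\,d\theta,
\end{align*}
and since $h_n(t,x)=X_n(\chi_{(-\infty,x]})$ is itself a sum over the particles $x_j(t)$, interchanging the sum over $j$ with the $\theta$-integral turns the inner integral into a function evaluated at $x_j(t_m)$. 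Concretely, $(h_n\circ F^{-1})(\tau(t_m),\theta)$ counts particles $x_j(t_m)$ with $\mu(t_m,x_j(t_m))\le b(1;t_m)+2a(1;t_m)\cos\theta$, i.e. with angular coordinate $\ge\theta$, so after the $\theta$-integration against $\Delta\phi$ one obtains $X_n(f)$ with
\begin{align*}
f(t_m,x) = -\pi(\tau(t_m)-\tau(t_{m-1}))\int_0^{\Theta_m(x)}\Delta\phi(\tau(t_m),\theta)\,d\theta,
\end{align*}
where $\Theta_m(x)=\arccos\!\big((\mu(t_m,x)-b(1;t_m))/(2a(1;t_m))\big)$ is the angular coordinate; the key point is that $x\mapsto f(t_m,\mu^{-1,(t_m)}(x))$ is, up to the affine reparametrization $x=b(1;t_m)+2a(1;t_m)\cos\theta$, a primitive of a continuous function of $\theta$, hence $C^1$ on $\mu(t_m,K^{(t_m)})$, and it has compact support, so the polynomial-growth hypothesis is trivially met.

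Next I would verify the hypotheses of Theorem~\ref{thm:fluctuation_theorem} for the extended $q$-Racah ensemble with $N$ time slices $t_1<\dots<t_N$. The moment condition \eqref{ass: moment} is automatic because the hexagon is a bounded domain, so the particles live on a fixed compact set and $\chi_{K^c}\equiv 0$ for an appropriate choice of $K^{(t_m)}$. The uniform convergence \eqref{ass:mu_sup} and the Lipschitz conditions \eqref{ass:mu_lip}, \eqref{ass:mu} follow from Lemma~\ref{lem:casessame}: $\mu_n(t,x)$ converges uniformly to $\mu(t,x)=q^{-x}+\kappa^2q^{-t-c-x}$ (or its constant multiple for $t>b$), which is smooth with non-vanishing $x$-derivative by the non-singularity assumption, so $\mu(t,\cdot)$ is bi-Lipschitz on the compact $x$-range and $L_{K^{(t_m)}}(\mu(t_m,\mu_n^{-1}(t_m,\cdot))-\mathrm{id})\to 0$. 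The recurrence-coefficient limits \eqref{ass:cltjaccobi} come from \eqref{eq:recurrance A}--\eqref{eq:recurrance CC} together with Lemma~\ref{lem:casessame}, giving $a(1;t_m)>0$ (strictly positive in the liquid region by definition of $\mathcal E$) and $b(1;t_m)\in\mathbb R$; and the condition \eqref{ass:cltc} on the $c_{k,s}$ is exactly Lemma~\ref{lem:cks}, with $\tau_m=\tau(t_m)$ given by \eqref{eq: tau}, which is strictly increasing in $t$ on $(0,b+c)$ (a short monotonicity check of $t\mapsto\log|(q^t-1)/(1-q^{t-b-c})|$), so $\tau_1<\dots<\tau_N$ as required.

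Having checked the hypotheses, Theorem~\ref{thm:fluctuation_theorem} gives that $X_n(f)-\mathbb E[X_n(f)]$ converges to a centered Gaussian with variance
\begin{align*}
\sigma^2 = \sum_{r_1,r_2=1}^N\sum_{k=1}^\infty k\,e^{-|\tau_{r_1}-\tau_{r_2}|k}\,\widehat{f\circ\mu^{-1}}^{(r_1)}_k\,\widehat{f\circ\mu^{-1}}^{(r_2)}_k,
\end{align*}
with $\widehat{f\circ\mu^{-1}}^{(r)}_k=\frac1\pi\int_0^\pi f(t_r,\mu^{-1,(t_r)}(b(1;t_r)+2a(1;t_r)\cos\theta))\cos(k\theta)\,d\theta$. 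The final and main computational step is to identify this $\sigma^2$ with $\pi\int_S|\nabla\phi|^2$. For this I would substitute the explicit $f$ above: in the $(\tau,\theta)$ coordinates the Fourier coefficient becomes, after an integration by parts moving the $\cos(k\theta)$ onto the primitive, proportional to $(\tau(t_r)-\tau(t_{r-1}))$ times the $k$-th sine-Fourier coefficient of $\theta\mapsto\Delta\phi(\tau(t_r),\theta)$; then the double sum over $r_1,r_2$ telescopes into a Riemann sum in the $\tau$-variable, and the kernel $\sum_k k e^{-|\tau-\tau'|k}\sin(k\theta)\sin(k\theta')$ is recognized as (a derivative of) the Green's function of $-\Delta$ on the strip $S=\mathbb R\times(0,\pi)$ with Dirichlet boundary conditions. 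Plugging the Green's-function representation of $\Phi$ back in reproduces $\langle\Phi,\phi\rangle_\nabla\sim N(0,\pi\int_S|\nabla\phi|^2)$. The main obstacle is precisely this last identification: one must carefully track the change of variables $F$, the Jacobian factors $\tau'(t)$ and $\mu_x$, and the passage from the discrete double sum to the continuous Dirichlet integral, checking that all reparametrization factors cancel exactly so that the limiting covariance is genuinely conformally invariant and equals the strip Dirichlet energy of $\phi$ — this is the computation that justifies calling the limit the Gaussian Free Field in the coordinates given by $F$, and it is where the special form of $\tau(t)$ and of the angular coordinate $\theta$ is essential.
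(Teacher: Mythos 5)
Your proposal is correct and follows essentially the same route as the paper: rewrite $\langle h_n\circ F^{-1},\phi\rangle_\nabla$ as a linear statistic $X_n(f)$, verify the hypotheses of Theorem \ref{thm:fluctuation_theorem} using Lemmas \ref{lem:casessame} and \ref{lem:cks} together with the non-singularity assumption $\mu_x\neq 0$, and identify the limiting variance with the Dirichlet energy by integrating by parts in $\theta$ and doing Fourier analysis on the strip. The only remark is that the last step you flag as the main obstacle is easier than you suggest: because $\Delta\phi$ is piecewise constant in $\tau$, the weighted sum over $m$ equals the $\tau$-integral exactly (no Riemann-sum approximation and no surviving Jacobian factors), and a Plancherel argument on the strip then gives $\sigma^2=-\pi\int\phi\,\Delta\phi=\pi\int|\nabla\phi|^2$, exactly as in the paper.
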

\begin{proof}
The proof is based on Theorem~\ref{thm:fluctuation_theorem}.

	To apply Theorem~\ref{thm:fluctuation_theorem} we first show that $\langle h_n \circ F^{-1}, \Delta \phi\rangle$ is a linear statistic. To this end, note that $h_n(t,x)=\sum_{j=0}^{\mathbf{a}-1}\chi_{\{x_j(t)\leq x\}}$ where we used the notation $\chi_A$ for the characteristic function of the set $A$. Then,
 \begin{align*}
		\langle h_n\circ F^{-1}, \phi \rangle =-\pi\sum_{m=1}^N\sum_{j=1}^{n}(\tau(t_m)-\tau(t_{m-1}))\int_{\arccos \left(\frac{\mu(t_m,x_j(t_m))-b(1;t)}{2 a(1;t)}\right)}^\infty  \Delta  \phi(\tau(t_m),\theta) d \theta.
	\end{align*}
	
	If we thus define $$f(t_m,y)=-\pi(\tau(t_m)-\tau(t_{m-1}))\int_{\arccos \left(\frac{\mu(t_m,x_j(t_m))-b(1;t)}{2 a(1;t)}\right)}^\infty  \Delta  \phi(\tau(t_m),\theta) d \theta,$$ then the above can be rewritten as a linear statistic for the point process $\{(t_m,x_j(t_m))\}_{m=1,j=0}^{N,\mathbf{a}-1}$. Note that this is a marginal of \eqref{eq: tiling dpp}, and the marginal density can be computed using repeated application of the Cauchy-Binet identity and using orthogonality of the polynomials. The result is exactly of the form \eqref{ensemble}
    and \eqref{transition} with the $q$-Racah polynomials $r_{k,t_m}$ and coefficients $c_{k,t_m}$ as in \eqref{eq:expressionsck}.  
    
	Now, we are to check the conditions of Theorem~\ref{thm:fluctuation_theorem}. First, the hexagon forms a compact domain; hence \eqref{ass: moment} is satisfied. Recall that $\mu_n(t,x)=q^{-x}+ \kappa^2 q^{-c-t+x+\frac{1}{n}}$, which converges uniformly to $\mu(t,x)=q^{-x}+ \kappa^2 q^{-c-t+x}$. Hence \eqref{ass:mu_sup} and \eqref{ass:mu} is satisfied. Moreover 
	\begin{align}
	\mu(t,\mu_n^{-1,(t)}(y))-y = \frac{1}{2} q^{-1/n} \left(q^{1/n}-1\right) \left(\sqrt{y^2-4 \kappa^2q^{-c-t+1/n}}-y\right),
	\end{align}
	and for any $y_1,y_2\in\{\mu_n(t,x): (t,x)\in\left(\frac{1}{n}\mathbb{Z}\times(\frac{1}{n}\mathbb{Z}+\frac{1}{2n})\right) \cap Hexagon\}$, we have $|y_1+y_2|$ is uniformly bounded. Moreover, for non singular $q$-Racah, we have $\mu_x(t,x)\neq 0$ for any $(t,x)$ inside the hexagon. Since $\mu(t,x)^2-4\kappa^2q^{-c-t}=(\log q)^{-2} \mu_x (t,x)^2$ and compactness, this implies that $$\inf_{(t,x)}\left(\mu(t,x)^2-4\kappa^2q^{-c-t}\right)>0.$$
 Hence, 
	\begin{align} \label{eq:lip bound y1 y2}
	    \left|\frac{\sqrt{y_1^2-4\kappa^2q^{-c-t-1/n}}-\sqrt{y_2^2-4\kappa^2q^{-c-t-1/n}}}{y_1-y_2} \right| &=  \left|\frac{y_1+y_2}{\sqrt{y_1^2-4\kappa^2q^{-c-t-1/n}}+\sqrt{y_2^2-4\kappa^2q^{-c-t-1/n}}} \right|\\
     &\leq \frac{\sup_{(t,x)}|\mu(t,x)|}{\inf_{(t,x)}\sqrt{\mu(t,x)^2-4\kappa^2q^{-t-c}}}+O(n^{-1}), \nonumber
	\end{align}	
 as $n\to \infty$,  which is uniformly bounded. In the last step, we used that $y_j=\mu(t,x_j) $ for some $x_j$ and $j=1,2$. Thus $L(\mu(t,\mu_n^{-1,(t)}))-id)=O(n^{-1})$, which shows the condition \eqref{ass:mu_lip}. Moreover, \eqref{ass:cltjaccobi} and \eqref{ass:cltc} hold by Lemmas~\ref{lem:casessame} and \ref{lem:cks}. 
	
	We have now verified all conditions of Theorem~\ref{thm:fluctuation_theorem}, and we get the following convergence in distribution,
	\begin{align*}
		X_n(f)- \mathbb{E}[X_n(f)] \to \mathcal{N}\left(0,\sigma^2(f\circ\mu^{-1})\right),
	\end{align*}
 as $n\to \infty,$
	where $\sigma^2(f\circ\mu^{-1})$ is the variance in the limit as defined as \eqref{eq: CLT Gaussian} and \eqref{eq: CLT Gaussian Variance}.  It remains to show that $\sigma^2(f\circ\mu^{-1})$ matches the desired variance. 

First recall the identity
\begin{align*}
	e^{-k|\tau|}=\frac{1}{\pi}\int_{-\infty}^\infty \frac{k}{k^2+\omega^2}e^{-i\omega \tau} d\omega.
\end{align*}
This allows us to rewrite the variance  as
\begin{align}\label{eq: variance Fourier}
	\sigma^2(f\circ\mu^{-1}) = \frac{1}{\pi}\sum_{k=1}^\infty\int_{\mathbb{R}}\frac{k^2}{\omega^2+k^2}\left|\sum_{m=1}^Ne^{-i\tau(t_m)\omega}\widehat{f\circ \mu^{-1}}_k^{(m)}\right|^2d\omega .
\end{align}
Next, note that by construction, we have
\begin{align*}
	\theta\left(t,\mu^{-1,(t)}(b(1; t)+2a(1; t)\cos(\theta))\right)=\theta,
\end{align*}
which implies
\begin{align}
	f(t,\mu^{-1,(t_m)}(b(1; t_m)+2a(1; t_m)\cos\theta)) & =- \pi \left(\tau(t_m)-\tau(t_{m-1})\right)\int_\theta^{\pi}\Delta \phi(\tau(t_m),\theta')d\theta',\nonumber
\end{align}
and thus 
\begin{align*}
	\widehat{f\circ \mu^{-1}}_k^{(m)} & = -\left(\tau(t_m)-\tau(t_{m-1})\right)\int_0^{\pi}\int_{\theta}^\pi \Delta \phi(\tau(t_m),\theta_q)d\theta_q\cos k\theta \ d\theta. 
\end{align*}
Using integration by parts, we get 
\begin{align} \label{eq: fourier 1}
	\widehat{f\circ \mu^{-1}}_k^{(m)}& = -\frac{\left(\tau(t_m)-\tau(t_{m-1})\right)}{ k}\int_0^\pi \Delta \phi(\tau(t_m),\theta)\sin k\theta \ d\theta.
\end{align}
By inserting \eqref{eq: fourier 1} into \eqref{eq: variance Fourier}, we find
\begin{multline} \label{eq:variance fourier last}
\sigma^2(f\circ\mu^{-1}) = \frac{1}{\pi}\sum_{k=1}^\infty\int_{\mathbb{R}}\frac{1}{\omega^2+k^2}\left|\sum_{m=1}^N \left(\tau(t_m)-\tau(t_{m-1})\right) e^{-i\tau(t_m)\omega}\int_0^\pi \Delta \phi(\tau(t_m),\theta)\sin k\theta \ d\theta\right|^2d\omega\\
= \frac{1}{\pi}\sum_{k=1}^\infty\int_{\mathbb{R}}\frac{1}{\omega^2+k^2}\left| \int_{\mathbb R} \int_0^\pi  e^{-i\tau\omega} \Delta \phi(\tau ,\theta)\sin k\theta \ d\theta d\tau\right|^2d\omega,
\end{multline}
where in the last step we used that $\Delta \phi$ is piecewise constant in $\tau$, and thus the sum over $m$ can be replaced by an integral. Finally, the innermost double integral at the right-hand side of \eqref{eq:variance fourier last} is the Fourier transform on the strip S for functions that vanish on the boundary. Moreover, multiplication by  $(\omega^2+k^2)^{-1}$ 
on the Fourier transform is equivalent to applying $-\Delta ^{-1}$, and thus one of the Laplace operator cancels. By Plancherel, we thus find that 
$$\sigma^2(f\circ\mu^{-1})=-\pi \int_{\mathbb R} \int_0^\pi  \phi(\tau,\theta) \Delta \phi(\tau,\theta)d\theta d\tau$$ and this proves the statement.
\end{proof}

 For the singular case, the CLT theorem fails since \eqref{eq:lip bound y1 y2} is no longer bounded. The singular case allows a situation where there exist two different points $y_1, y_2$ such that $y_i = 2\sqrt{4\kappa^2q^{-c-t-1/n}}+O(n^{-1})$ for $i=1,2$. Hence 
 \begin{align*}
     \frac{\sqrt{y_1^2-4\kappa^2q^{-c-t-1/n}}-\sqrt{y_2^2-4\kappa^2q^{-c-t-1/n}}}{y_1-y_2} =\frac{y_1+y_2} {\sqrt{y_1^2-4\kappa^2q^{-c-t-1/n}}+\sqrt{y_2^2-4\kappa^2q^{-c-t-1/n}}}
 \end{align*}
 diverges linearly with $n$.  Hence $L(\mu(t,\mu_n^{-1,(t)}))-id)$ is bounded below away from zero.


	\subsection{Complex Structure}\label{subsection:ComplexS}
	We now return to the complex structure of the liquid region. With the limiting height function at hand from Theorem~\ref{thm: lln tiling}, we mention  that, for $(t,x)$ in the liquid region, the densities of the lozenges can be expressed as: 
 \begin{align}\label{eq:def_rhoi} 
		\rho_{I}(t,x) =  -\partial_t h(t,x)\pi, \qquad
		\rho_{II}(t,x) = \partial_t h(t,x)\pi +\partial_x h(t,x)\pi, \qquad
		\rho_{III}(t,x) = \pi-\partial_x h(t,x)\pi, 
	\end{align}
 where $\rho_{I,II,III}$ stand for the limiting densities (up to factor $\pi$) of the lozenges of type $I$, $II$ and $III$ respectively (this is based on a general principle and, for example, explained in  \cite{charlier2020periodic}). We included the factor $\pi$ for convenience in the upcoming analysis. Now let $\Omega \in \mathbb C$ such that the angles of triangle $0,1$ and $\Omega$ has angle $\rho_I$ at $0$, $\rho_{II}$ at $\Omega$ and $\rho_{III}$ at $1$.  Note that we can write $\Omega$ explicitly as 
	\begin{align} \label{complexslope}
		\Omega \coloneqq  \frac{\tan(\rho_{III})}{\tan(\rho_{III})+\tan(\rho_{I})}+i\frac{\tan(\rho_{III})\tan(\rho_{I})}{\tan(\rho_{III})+\tan(\rho_{I})}
	\end{align}
We recall that $\Omega$ already appears in \eqref{eq:UOp} (in which $h=h_{tiles}$ and not the $h$ above, but they are related \eqref{eq:height_function_relation}). 	
In the following, we will show that this complex slope can be derived directly from the diffeomorphism $F$ in \eqref{pullback_qracah}. More precisely, we will show that the relation between $F$ and $\Omega$ for this particular model matches the prediction from the combination of \eqref{eq:Conf} and \eqref{eq:ComplexS} by explicit computations on the recurrence coefficients.
\begin{theorem}\label{thm: triangle picture}
  Let $F$ be the diffeomorphism defined in  \eqref{eq: working F} and $\Omega$ as in \eqref{complexslope}. Then 
 \begin{align}\label{eq:complex structure}
\frac{\partial_{\bar{z}}F}{\partial_{z}F}=-\frac{1+(i-1)\Om}{-1+(1+i)\Om} \quad \text{ for } q>1, \qquad \frac{\partial_{\bar{z}}F}{\partial_{z}F}=-\frac{1+(i-1)\overline{\Om}}{-1+(1+i)\overline{\Om}}, \quad \text{ for } 0<q<1, 
\end{align}
where $\partial_z=(\partial_t-i\partial_x)/2$ and $\partial_{\bar{z}}=(\partial_t+i\partial_x)/2$ are the Wirtinger derivatives. 
\end{theorem}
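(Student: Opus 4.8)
The plan is to verify \eqref{eq:complex structure} by direct computation, exploiting the explicit formula \eqref{pullback_qracah} for $F$ together with the explicit expression \eqref{complexslope} for $\Omega$ in terms of the lozenge densities $\rho_{I},\rho_{II},\rho_{III}$, which are in turn linear combinations of $\partial_t h$ and $\partial_x h$ by \eqref{eq:def_rhoi}--\eqref{eq:def_rhoiii}. First I would compute the Wirtinger derivatives $\partial_z F$ and $\partial_{\bar z} F$ from \eqref{pullback_qracah}. Writing $F=(\tau,\theta)$ with $\tau=\tau(t)$ depending on $t$ only and $\theta=\theta(t,x)=\arccos\!\left(\frac{\mu(t,x)-b(1;t)}{2a(1;t)}\right)$, one has $F_t=\tau'(t)+i\,\theta_t$ and $F_x=i\,\theta_x$ (identifying $F$ with the complex number $\tau+i\theta$), so that
\begin{align*}
\partial_z F=\tfrac12\big(\tau'(t)+i\theta_t+\theta_x\big),\qquad \partial_{\bar z}F=\tfrac12\big(\tau'(t)+i\theta_t-\theta_x\big).
\end{align*}
Thus the left-hand side of \eqref{eq:complex structure} equals $\dfrac{\tau'(t)+i\theta_t-\theta_x}{\tau'(t)+i\theta_t+\theta_x}$, and the task reduces to computing the three real quantities $\tau'(t)$, $\theta_t$, $\theta_x$ explicitly from the formulas for $\tau$ in \eqref{eq: tau}, for $\mu(t,x)=q^{-x}+\kappa^2 q^{-t-c-x}$, and for $a(1;t),b(1;t)$ obtained from \eqref{eq:recurrance A}--\eqref{eq:recurrance CC} evaluated at $\xi=1$.

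Second, I would compute the right-hand side. Using Theorem \ref{thm: lln tiling}, the limiting height function satisfies $h(t,x)=\int_{\max\{0,t-b\}}^x\rho(t,u)\,du$ where $\rho(t,x)=\tfrac1\pi\theta(t,x)$ on the liquid region (since in the regime $\xi_-(x)<1<\xi_+(x)$ the density formula \eqref{eq:limitshapetiling} reduces, after the change of variables in the proof of Theorem \ref{limit_shape_qracah}, to $\tfrac1\pi\arccos\!\big(\frac{\mu(t,x)-b(1;t)}{2a(1;t)}\big)$ — this identification is the one place where I must be careful and should be stated as a lemma or recalled from Section \ref{sec:q-Racah}). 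Hence $\partial_x h=\theta/\pi$ has a clean form, while $\partial_t h$ picks up both the explicit $t$-dependence of the integrand and, when $t>b$, the moving lower endpoint $t-b$; at that endpoint $\xi_-=\xi_+$ forces $\rho=0$ or $1$ so the boundary term is controlled. From \eqref{eq:def_rhoi}--\eqref{eq:def_rhoiii} one then gets $\rho_{III}=\pi-\theta$ and $\rho_{I}=-\pi\partial_t h$, and plugging these into \eqref{complexslope} gives $\Omega$ as an explicit function of $\theta$, $\theta_t$, and $\theta_x$ (via $\partial_t\partial_x h$ and $\partial_t h$). Substituting this $\Omega$ into $-\frac{1+(i-1)\Omega}{-1+(1+i)\Omega}$ and simplifying should produce exactly $\dfrac{\tau'(t)+i\theta_t-\theta_x}{\tau'(t)+i\theta_t+\theta_x}$.

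The matching of the two sides is the main obstacle: it is a genuine algebraic identity that requires the specific forms of $a(1;t)$, $b(1;t)$ and $\tau(t)$ — it is \emph{not} a formal consequence of Proposition \ref{prop:ComplexS} alone, because Proposition \ref{prop:ComplexS} expresses $\mathcal H'(f)$ in terms of $\Upsilon,\Omega$ but says nothing a priori about the diffeomorphism $F$ coming from the recurrence coefficients. What makes the computation tractable is the observation, already exploited in Section \ref{sec:q-Racah}, that differentiating the defining relation $\mu(t,x)=b(1;t)+2a(1;t)\cos\theta$ with respect to $t$ and $x$ yields
\begin{align*}
\theta_x=-\frac{\mu_x(t,x)}{2a(1;t)\sin\theta},\qquad \theta_t=\frac{b'(1;t)+2a'(1;t)\cos\theta-\mu_t(t,x)}{2a(1;t)\sin\theta},
\end{align*}
so that everything is expressed through $\mu,\mu_t,\mu_x,a(1;t),a'(1;t),b'(1;t),\tau'(t)$ and elementary functions of $\theta$. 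I would therefore carry out the calculation in the auxiliary variable $w=y(1)=q^{-1}+\alpha\beta q$ (equivalently $q^{-t}$), as in the proof of Theorem \ref{limit_shape_qracah}, where $a(1;t)^2$ and $b(1;t)$ become rational functions; the key cancellations that made $\rho$ explicit there are exactly the ones that will make \eqref{eq:complex structure} hold. Finally I would double-check the result against the constant-weight reduction \eqref{eq:HN2}--\eqref{eq:HN4} and against Proposition \ref{prop:ComplexS} as a consistency test, and remark that this verifies the conjecture from Section \ref{sec:variational} that the complex structure from the variational problem coincides with the one producing the Gaussian Free Field.
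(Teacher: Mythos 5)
Your reduction of the left-hand side is fine (with $F=\tau+i\theta$ one indeed gets $\partial_{\bar z}F/\partial_z F=(\tau'+i\theta_t-\theta_x)/(\tau'+i\theta_t+\theta_x)$, which is how the paper also starts, via the M\"obius map $L$). The genuine gap is exactly at the step you flagged: the identification $\rho(t,x)=\tfrac1\pi\theta(t,x)=\tfrac1\pi\arccos\bigl(\tfrac{\mu(t,x)-b(1;t)}{2a(1;t)}\bigr)$ is false. By Theorem \ref{thm: lln tiling} the density in the liquid region is $\tfrac1\pi\arccos\bigl(\tfrac{2y(1)-y(\xi_-)-y(\xi_+)}{|y(\xi_-)-y(\xi_+)|}\bigr)$, and this is a different function of $(t,x)$ than $\theta/\pi$: only the \emph{sines} of the two angles are proportional (both are proportional to $\sqrt{4a(1;t)^2-(\mu(t,x)-b(1;t))^2}$, cf. the identity $\alpha_2^2-\alpha_1^2=4(\mu_x/\log q)^2(q^{-1}-\alpha\beta q)^2(4AC-Y^2)$ used in the paper's proof), while the cosines differ. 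A concrete check: in the uniform limit $q\to1$ for the regular hexagon $a=b=c$, at the center the three lozenge densities are each $1/3$, so the white-point density is $2/3$, whereas $\mu-b(1;t)$ vanishes there by symmetry and $\theta/\pi=1/2$. Consequently your formulas $\rho_{III}=\pi-\theta$, $\partial_xh=\theta/\pi$, and the resulting expression of $\Omega$ through $\theta,\theta_t,\theta_x$ are incorrect, and the final algebraic identity you propose to verify would not close. This is not a cosmetic issue: the whole content of the theorem is that the triangle built from the densities (the $\arccos$ of the $y(\xi_\pm)$ expression) matches the triangle built from $F$ (whose angular part is the $\arccos$ of $(\mu-b(1;t))/2a(1;t)$), and that matching is the nontrivial computation the paper carries out in its Step 1.

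A second, smaller gap: even with the correct $\rho$, your plan to obtain $\rho_I=-\pi\partial_th$ by differentiating $h(t,x)=\int\rho(t,u)\,du$ in $t$ does not produce a closed form (you get an integral of $\partial_t\rho$ plus endpoint terms), so the ``plug into \eqref{complexslope} and simplify'' step cannot be executed as a direct verification. The paper avoids this by a different device: having matched $\rho_{III}$ explicitly, it shows that the angle $\hat\rho_I$ read off from $L\bigl(\partial_{\bar z}F/\partial_zF\bigr)$ satisfies the same first-order equation $\partial_x\rho_I=\partial_t\rho_{III}$ (a consequence of the relations in Lemma \ref{lemma:differential A C Y}) with the same boundary value on the arctic curve, and concludes by uniqueness of that initial value problem. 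Your proposal needs either this idea or some substitute for it; as written, both the identification of $\rho$ with $\theta/\pi$ and the treatment of $\rho_I$ would have to be repaired before the computation could go through.
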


We will also show that the $\Omega$ satisfies the complex Burgers equation. 
 \begin{theorem} \label{thm: complex structure} The function $\Omega$ satisfies the inhomogeneous complex Burger's equation:
	\begin{align}\label{eq:Burgers}
			\frac{\Omega_x}{\Omega}-\frac{\Omega_t}{1-\Omega}=-\log (q)\frac{ q^{c+t}+\kappa ^2 q^{2 x}}{q^{c+t}-\kappa ^2 q^{2 x}}.
		\end{align}
	\end{theorem}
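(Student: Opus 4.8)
\textbf{Proof plan for Theorem \ref{thm: complex structure} (the complex Burgers equation for $\Omega$).}

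The plan is to work directly from the explicit formulas. First I would express $\Omega$ in terms of the diffeomorphism $F=(\tau,\theta)$ of \eqref{pullback_qracah}. Recall from \eqref{eq:def_rhoi}--\eqref{eq:def_rhoiii} that $\rho_I=-\pi\partial_t h$, $\rho_{II}=\pi(\partial_t h+\partial_x h)$, $\rho_{III}=\pi-\pi\partial_x h$, and from Theorem \ref{thm: lln tiling} that $\partial_x h(t,x)=\rho(t,x)=\tfrac1\pi\theta(t,x)$ (since $\rho$ is exactly $\tfrac1\pi\arccos\big((\mu(t,x)-b(1;t))/(2a(1;t))\big)$ in the liquid region), while $\partial_t h$ is obtained by integrating $\partial_t\rho$ over $x$ using the limit shape formula. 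A cleaner route, which I would take, is to avoid $h$ altogether: by Proposition \ref{prop:ComplexS} (combined with \eqref{eq:Conf} and the identification established in Theorem \ref{thm: triangle picture}) the ratio $F_{\bar z}/F_z$ equals $-\frac{1+(i-1)\Omega}{-1+(1+i)\Omega}$, which inverts to give $\Omega$ as an explicit Möbius function of $F_{\bar z}/F_z$. So the first step is: using \eqref{pullback_qracah} and the chain rule, compute $F_z=\tfrac12(\partial_t-i\partial_x)(\tau+i\theta)$ and $F_{\bar z}=\tfrac12(\partial_t+i\partial_x)(\tau+i\theta)$ in closed form in terms of $\tau'(t)$, $\theta_t$, $\theta_x$, and then read off $\Omega$ as an explicit function of $(t,x)$.

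The second step is to differentiate. Once $\Omega=\Omega(t,x)$ is written explicitly (it will be a rational function of $q^{-x}$, $q^{-t}$, $\kappa^2$ built from $a(1;t),b(1;t),\mu(t,x)$ and their $t$-derivatives via \eqref{eq:defaballcases}, \eqref{eq:recurrance A}, \eqref{eq:recurrance CC}), I compute $\Omega_x$ and $\Omega_t$ directly. Then I form the combination $\Omega_x/\Omega-\Omega_t/(1-\Omega)$ and simplify. The key structural fact that makes this work is that $1-\Omega=\Upsilon$ and that $\log\Upsilon$, $\log\Omega$ are, up to the $\sigma$-terms, $\mp i\pi h_y$ and $\pm i\pi h_x$ respectively (see \eqref{eq:UO}, \eqref{eq:UOp}); since mixed partials of $h$ commute, the $h$-contributions to $\Omega_x/\Omega-\Omega_t/(1-\Omega)$ collapse, and what survives is exactly $\pi(\sigma_{1,x}(\nabla h)+\sigma_{2,t}(\nabla h))$ — but by the Euler--Lagrange equation \eqref{eq:EL2}, $\mathrm{div}\,\nabla\sigma(\nabla h)=w$, this is $w(t,x)$. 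So conceptually the identity \emph{is} \eqref{eq:CB} with the weight $w$ of \eqref{eq:Weight}, and the content of the theorem is that the $q$-Racah tiling weight produces precisely $w(t,x)=-\log(q)\,\frac{q^{c+t}+\kappa^2q^{2x}}{q^{c+t}-\kappa^2q^{2x}}$. I would verify this by matching: from \eqref{eq:Weight} with the relabelling $t\mapsto x$, $x\mapsto y$ and the parameter identifications $S$, $\kappa$ of the tiling model (Table \ref{table:q-racah-tiling}), $w$ becomes $\ln(q)\frac{\kappa^2q^{-c-t+2x}+1}{\kappa^2q^{-c-t+2x}-1}$ after clearing a common power of $q$, which equals $-\log(q)\frac{q^{c+t}+\kappa^2q^{2x}}{q^{c+t}-\kappa^2q^{2x}}$; so the right-hand sides agree and the theorem follows.

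Alternatively — and this is the route I would actually write out to be safe — one can prove it by brute force without invoking the variational results: substitute the explicit $\Omega(t,x)$ from Step 1 into the left-hand side of \eqref{eq:Burgers}, clear denominators, and check the resulting polynomial identity in the variables $q^{-x},q^{-t},\kappa^2$ (this is a finite symbolic computation, analogous in spirit to the one carried out in the proof of Theorem \ref{limit_shape_qracah}). The two approaches reinforce each other: the first explains \emph{why} the answer has this form and ties it to Section \ref{sec:variational}, the second is a self-contained verification.

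\textbf{Main obstacle.} The hard part is the bookkeeping in Step 1: getting $\Omega$ as a genuinely explicit function of $(t,x)$ requires differentiating $a(1;t)$ and $b(1;t)$ in $t$ through the formulas \eqref{eq:recurrance A}--\eqref{eq:recurrance CC} (evaluated at $\xi=1$) and $\mu(t,x)=q^{-x}+\kappa^2q^{-c-t+x}$ in both $t$ and $x$, and then feeding these into the Möbius expression coming from Theorem \ref{thm: triangle picture}; the intermediate expressions are large and one must simplify $\theta_t$ and $\theta_x$ using $\cos\theta=(\mu-b(1;t))/(2a(1;t))$ and $\sin\theta=\sqrt{1-\cos^2\theta}>0$. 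Once $\Omega$ is in hand, the differentiation and simplification in Step 2 are mechanical (though lengthy), and the matching with \eqref{eq:Weight} in Step 3 is immediate. I do not anticipate any conceptual difficulty beyond organizing this computation cleanly; the commuting-mixed-partials cancellation described above is the right way to keep it under control.
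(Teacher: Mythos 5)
Your ``safe'' route is sound and is, in substance, what the paper does: both start from Theorem \ref{thm: triangle picture}, i.e.\ from $L\left(\tfrac{\partial_{\bar z}F}{\partial_z F}\right)=1/\Omega$ with $F$ explicit, and then verify \eqref{eq:Burgers} by computation. The difference is purely in how the computation is organized. The paper never expands the algebraic expression: by the chain rule, \eqref{eq:Burgers} is equivalent to $-\mathcal{D}_1\mathcal{L}/\mathcal{L}+\mathcal{D}_2\mathcal{L}/(\mathcal{L}-1)=\mu_{xx}/\mu_x$ for $\mathcal{L}=L(\partial_{\bar z}F/\partial_z F)$, with $\mathcal{D}_1=\partial_t+\partial_x$, $\mathcal{D}_2=\partial_t$; this is split into an argument part and a modulus part. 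The argument identity is exactly \eqref{eq:Burgers0 imaginary}, already established inside the proof of Theorem \ref{thm: triangle picture}, and the modulus part is handled via the decomposition $\mathcal{L}=\tfrac{R_1}{2ACY_x}+i\tfrac{I}{2ACY_x}$, $\mathcal{L}-1=\tfrac{R_2}{2ACY_x}+i\tfrac{I}{2ACY_x}$ together with the second-order relations of Lemma \ref{lemma:differential A C Y} ($A''=-\log(q)A'$, etc.) and the identities \eqref{eq: handy1}--\eqref{eq: handy3}, which yield $\tfrac{I\mathcal{D}_j I+R_j\mathcal{D}_j R_j}{I^2+R_j^2}=-\log(q)+\tfrac{\mathcal{D}_j(AC)}{2AC}$; the right-hand side then appears as $Y_{xx}/Y_x=\mu_{xx}/\mu_x=-\log(q)\tfrac{q^{c+t}+\kappa^2q^{2x}}{q^{c+t}-\kappa^2q^{2x}}$, with no large polynomial identity to check. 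If you do carry out your brute-force version, note that $\Omega$ is algebraic rather than rational (it contains $\sqrt{4AC-Y^2}$), so after clearing denominators you must separate the rational part and the part proportional to the square root and verify that both vanish; with that caveat the verification is legitimate, just heavier than the paper's.

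Your first, ``conceptual'' route cannot serve as the proof within this paper's framework. Deriving \eqref{eq:Burgers} from \eqref{eq:UO}--\eqref{eq:UOp} and the Euler--Lagrange equation \eqref{eq:EL2} presupposes that the limiting height function of the $q$-Racah tiling is a sufficiently regular solution of \eqref{eq:EL2} in the liquid region. That is precisely what is not available: the $C^1$ regularity result of De Silva--Savin covers only $w=0$, the paper states explicitly that its extension to $w\neq 0$ is open, and the whole point of Section \ref{sec:tiling} is to verify the complex-structure/Burgers relations \emph{independently}, from the recurrence coefficients, as evidence for the conjectural variational picture rather than as a consequence of it. So that argument is fine as motivation (and you do frame it that way), but the self-contained verification is the only one of your two routes that actually proves the theorem.
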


 We will prove Theorems~\ref{thm: triangle picture} and \ref{thm: complex structure} in Appendix~\ref{app:complex}.  The proofs are based on computations involving the explicit expressions for the recurrence coefficients for $F$ and $\rho_{III}$  in \eqref{pullback_qracah} and \eqref{eq:limitshapetiling}. Indeed, parts of the proof are straightforward but cumbersome computations. However, the main obstacle in the proof is that  $\rho_{III}$ itself does not define $\Omega$ and we will need either $\rho_{I}$ and $\rho_{II}$ (we will work with $\rho_I$). Note that we have not derived simple expressions for the height function other than the integral over $\rho_{III}$ and from \eqref{eq:def_rhoi}. It is an implicit formula by first integrating $\rho_{III}$ over $x$ and then differentiating with respect to $t$ that gives $\rho_I$. We solve this issue by using the consistency relation for $\rho_I$ and $\rho_{III}$. That is, given $\rho_{III}$, the angle $\rho_I$ is the unique solution to the initial value problem 
$$
\begin{cases}
    \partial_x {\rho}_{I}(t,x)=\partial_t \rho_{III}(t,x)\\
    \tan {\rho}_{I}(t,x^*(t))=0
\end{cases},$$
where $x^*(t)=\mu^{(-1),t}(b(1;t)-2a(1;t))$ is the boundary of the liquid region.  Once we proved Theorem~\ref{thm: triangle picture}, the proof of Theorem~\ref{thm: complex structure} follows by expressing $\Omega$ in terms of $F$, computing the derivatives with respect to $t$ and $x$, and substituting the resulting expressions in the left-hand side of \eqref{eq:Burgers}. As mentioned, the details are given in Appendix~\ref{app:complex}.

\subsection*{Acknowledgements}
We thank Kari Astala for allowing us to present Theorem~\ref{thm:B} in this work. We also thank Christophe Charlier for sharing his code for generating pictures for random tilings. We thank István Prause for pointing out reference \cite{MPT22Var}.

\appendix

\section{Limiting densities for the $q$-Racah ensemble}

In this section, we prove Theorem~\ref{limit_shape_qracah} and \ref{limit_shape_qracah tri}.

\subsection{Proof of Theorem~\ref{limit_shape_qracah}}\label{proof:qracahLimitShape}

 Without loss of generality, we discuss the case when $q>1$ only. For $q<1$, the proof follows from similar arguments. First recall that Theorem~\ref{thm:LLNlpfunctions} and \eqref{eq:limit_shape_density} give the following
		\begin{align*}
			\rho(x) & = \frac{\mu'(x)}{\pi }\int_{I(x)}\frac{1}{\sqrt{4a(\xi)^2-(\mu(x)-b(\xi))^2}}d\xi.
		\end{align*} 
		We compute this integral by a change of variable. Let  $A(\xi)\coloneqq\lim_{\frac{j}{n}\to\xi}A_j$ and $C(\xi)\coloneqq\lim_{\frac{j}{n}\to\xi}C_j$, where $A_j, C_j$ are from \eqref{eq:recurrence A} and \eqref{eq:recurrence C}. Then, one can compute that
		\begin{align}
			A(\xi)-C(\xi)= & \frac{(1+\alpha  \beta  q^{2 \xi }) \left(\gamma  \delta  -1\right)+q^{\xi } \left(\alpha  \left(-\beta  \delta  +\beta -\gamma +1\right)-(\beta +1) \gamma  \delta  +\beta  \delta  +\gamma \right)}{\alpha  \beta  q^{2 \xi }-1}\label{eq:A-C real}\\
			-A(\xi)-C(\xi)= &-1 -\gamma  \delta  -\frac{2 q^{2 \xi } \left( \alpha\left(\alpha \beta + \beta  \gamma + \beta +\gamma \right)+ \beta  \delta  \left(\alpha  \beta +\alpha  \gamma +\alpha +\gamma \right) \right)}{\left(\alpha  \beta  q^{2 \xi }-1\right)^2}\label{eq:-A-C real}\\
			& +\frac{\left(\alpha  \beta  q^{3 \xi }+q^{\xi }\right) \left( \alpha  (\beta +\gamma +1)+\gamma + \delta  (\beta  (\alpha +\gamma +1)+\gamma ) \right) }{\left(\alpha  \beta  q^{2 \xi }-1\right)^2}\nonumber
		\end{align}
		 Then with $y(\xi)$ as defined in \eqref{eq:def:y} we have   $y^2-4\alpha\beta=(q^{-\xi}-\alpha\beta q^{\xi})^2> 0$. By our assumptions on $\alpha$ and $\beta$, we have that $y$ is decreasing in $\xi$, for all $\xi\in (0,-\log_q \gamma)$ (it $q<1$ then $y$ is increasing in $\xi$). We make the following change of variable 
		
		\begin{align*}
			\xi (y)  =\frac{1}{\log (q)}\log \left(\frac{y-\sqrt{y^2-4 \alpha  \beta }}{2 \alpha  \beta }\right), \quad  d\xi =-\frac{1}{\ln q\sqrt{y^2-4\alpha\beta}}dy .
		\end{align*}
		
		In the new variable 
		\begin{align*}
			A(\xi(y))-C(\xi(y))= & \frac{y\left(1- \gamma  \delta  \right)+\alpha  (-\beta +\gamma -1)-\gamma + \delta  (\beta  (\alpha +\gamma -1)+\gamma )}{\sqrt{y^2-4\alpha\beta}}\\
			-A(\xi(y))-C(\xi(y))= & -1 -\gamma  \delta  - \frac{2 \left( \alpha\left(\alpha \beta + \beta  \gamma + \beta +\gamma \right)+ \beta  \delta  \left(\alpha  \beta +\alpha  \gamma +\alpha +\gamma \right) \right)}{y^2-4 \alpha  \beta } \\
			& + \frac{y \left( \alpha  (\beta +\gamma +1)+\gamma + \delta  (\beta  (\alpha +\gamma +1)+\gamma )  \right)}{y^2-4 \alpha  \beta } .
		\end{align*}
		With $b(\xi(y))=-A(\xi(y))-C(\xi(y))+1+\gamma \delta$ and $a(\xi(y))^2=A(\xi(y))C(\xi(y))$  we have
		\begin{align*}
			& 4a(\xi(y))^2-(\mu(x)-b(\xi(y)))^2 \\
			= & -(\mu(x)-1-\gamma\delta)^2-2(\mu(x)-1-\gamma\delta)(A(\xi(y))+C(\xi(y)))  -(A(\xi(y))-C(\xi(y)))^2,
		\end{align*}
  which can be written as
  \begin{align*}
      & 4a(\xi(y))^2-(\mu(x)-b(\xi(y)))^2 = \frac{\left( 4 \gamma  \delta -\mu(x) ^2\right)y^2+h(x)y+p(x)}{y^2-4\alpha\beta},
  \end{align*}
  where $h(x)$ and $p(x)$ are given by 
\begin{align}
h(x) =  2 \mu(x) (\alpha  (\beta +\gamma +1) & +\gamma )-4 \alpha  \gamma -4 \beta  \gamma \delta^2 \label{eq:h}\\
&  +\delta \left( 2 \mu(x) (\beta  (\alpha +\gamma +1)+\gamma ) -4 (\alpha  (\beta  \gamma +\beta +\gamma )+\gamma  (\beta +\gamma +1))\right),\nonumber \\
p(x) =  2 \alpha  (-2 \mu(x) (\beta  \gamma +\beta & +\gamma )+\gamma  (\beta +\gamma +1)+2 \beta  \mu(x)^2) \label{eq:p}  
-\alpha ^2 \left(\beta ^2-2 \beta  (\gamma -2 \mu(x)+1)\right) \\
-\alpha ^2 (\gamma -1)^2  -\gamma ^2 & + \delta\left( 2 (\alpha  (\beta +\gamma +1)+\gamma ) (\beta  (\alpha +\gamma -2 \mu(x)+1)+\gamma ) \right) \nonumber\\
& \qquad\qquad + \delta^2\left( -(\alpha -1)^2 \beta ^2+2 (\alpha +1) (\beta +1) \beta  \gamma -(\beta -1)^2 \gamma ^2 \right).\nonumber
\end{align}
Now, since
		\begin{align*}
			\mu(x) = q^{-x}+ \gamma\delta q^{x}, \quad \frac{\mu'(x)}{\log(q)}  =  -(q^{-x}-q^{x}\gamma\delta),
		\end{align*}
		the coefficient for $y^2$ can be written as \begin{align*}
			4\gamma\delta -  \mu(x)^2 & =-\left( \frac{\mu'(x)}{\log(q)}\right)^2,
		\end{align*}
		 and thus
		\begin{align}\label{calculation_density_qracah}
			(4a(\xi(y))^2-(\mu(x)-b(\xi(y)))^2)(y^2-4\alpha\beta) & = -\left(\frac{\mu'(x)}{\log(q)}\right)^2y^2+h(x)y+p(x),
		\end{align}
  which is \eqref{eq:def:y:ext}. Then, since our assumption on $\alpha$ and $\beta$  make $y$ injective, we indeed have  two real solutions $\xi_-(x)$ and $\xi_+(x)$ with $\xi_-(x) \leq \xi_+(x)$ of \eqref{eq: 4a2-mub2}.  Furthermore, $I(x)$ can be rewritten as 
  \begin{align*}
      I(x) = [0,1]\cap [\xi_-(x),\xi_+(x))].
  \end{align*}
		This finally allows us to compute the limiting density for $\xi_-(x)\neq \xi_+(x)$ to be
		\begin{multline*}
			\rho(x)  =  -\frac{1}{\pi}\int_{q^{-\min(\xi_-(x),1)}+\alpha\beta q^{\min(\xi_-(x),1)}}^{q^{-\min(\xi_+(x),1)}+\alpha\beta q^{\min(\xi_+(x),1)}}\frac{\mu'(x)}{\log(q)\sqrt{-\left(\frac{\mu'(x)}{\log(q)}\right)^2 y^2+h(x)y+p(x)}}dy \\
			=  \frac{1}{\pi}\arccos\left(\frac{ 2\left(\frac{\mu'(x)}{\log(q)}\right)^2(q^{-\min(\xi_+(x),1)}+\alpha\beta q^{\min(\xi_+(x),1)})-h(x)}{\sqrt{h^2(x)+4\left( \frac{\mu'(x)}{\log(q)}\right)^2p(x)}}\right)\\
			 -\frac{1}{\pi}\arccos\left(\frac{2\left(\frac{\mu'(x)}{\log(q)}\right)^2(q^{-\min(\xi_-(x),1)}+\alpha\beta q^{\min(\xi_-(x),1)})-h(x)}{\sqrt{h^2(x)+4\left( \frac{\mu'(x)}{\log(q)}\right)^2p(x)}}\right),
		\end{multline*}
which can be rewritten in the following compact form:	
	\begin{align}
	\rho(x)= \begin{cases} 1 & \quad \xi_-(x)<\xi_+(x)\leq 1,\\ \frac{1}{\pi}\arccos\left(\frac{2\left( \frac{\mu'(x)}{\log(q)}\right)^2(q^{-1}+\alpha\beta q)-h(x)}{\sqrt{h^2(x)+4\left( \frac{\mu'(x)}{\log(q)}\right)^2p(x)}}\right)& \quad \xi_+(x)>1>\xi_-(x), \\ 0 &\quad \xi_+(x)>\xi_-(x)\geq 1.\end{cases} \label{eq:q-racah density hp2}
	\end{align}
  
  To further simplify this formula, we recall that $\xi_-(x)$ and $\xi_+(x)$ are solutions to  $-\left(\frac{\mu'(x)}{\log(q)}\right)^2y^2+h(x)y+p(x)=0$, and therefore
		\begin{equation} \label{eq:hiny}
		h(x) = \left(\frac{\mu'(x)}{\log(q)}\right)^2\left(y(\xi_-(x))+y(\xi_+(x))\right), \qquad p(x) = -\left(\frac{\mu'(x)}  {\log(q)}\right)^2  y(\xi_-(x)) y(\xi_+(x)).
  \end{equation}
    
  Finally, after inserting \eqref{eq:hiny} into \eqref{eq:q-racah density hp2}, we find the statement.

\subsection{Proof of Theorem~\ref{limit_shape_qracah tri}}
	Note that in the trigonometric setting, $\nu_n(y)=\mathbf{q}^{-y}+\gamma\delta \mathbf{q}^{y}$ is not a real-valued function. However,  $	\tilde{\nu}_n(y) \coloneqq (\gamma\delta\mathbf{q})^{-\frac{1}{2}}\nu_n(y)$ is real-valued and strictly monotone under our assumptions. 
	To use Theorem~\ref{thm:LLNlpfunctions} we need to write the recurrence relations of $q$-Racah polynomial \eqref{eq:nuRacah}  in terms of $\tilde{\nu}_n$. That is before scaling 
	\begin{align}
		\tilde{\nu}_n(y)r_j(y) & =\sqrt{\tilde{A}_j\tilde{C}_{j+1}}r_{j+1}(y)+((\delta\gamma \mathbf{q})^{-\frac{1}{2}}+(\delta\gamma \mathbf{q})^{\frac{1}{2}}-\tilde{A}_j-\tilde{C}_j)r_j(y)+\sqrt{\tilde{A}_{j-1}\tilde{C}_j}r_{j-1}(y).
	\end{align}
	where
	\begin{align}
		\tilde{\nu}_n(y)=  (\gamma\delta\mathbf{q})^{-\frac{1}{2}}\nu_n(y), \qquad 	\tilde{A}_j = (\gamma\delta\mathbf{q})^{-\frac{1}{2}}A_j, \qquad \tilde{C}_j = (\gamma\delta\mathbf{q})^{-\frac{1}{2}}C_j, 
	\end{align}
	Then after scaling $\mathbf{q}=q^{\frac{1}{n}}$, $x=\frac{y}{n}$ and $\tilde{\mu}_n(x)=\tilde{\nu}_n(nx)$ we have the following limit 
	\begin{align}
		\lim_{n\to \infty}\mu_n(x) = 2\cos\left( g_q\left(x+\frac{g_\gamma+g_\delta}{2}\right) \right) \eqqcolon \tilde{\mu},
	\end{align}
which is strictly monotone under our assumptions. Additionally,
\begin{align}
	\tilde{A}(\xi)\coloneqq &  \lim_{\frac{j}{n}\to\xi}\tilde{A}_j=\frac{-4\sin\left( g_q\left(\frac{g_\gamma+\xi}{2}\right) \right)\sin\left( g_q\left(\frac{g_\alpha+\xi}{2}\right) \right)\sin\left( g_q\left(\frac{g_\alpha+g_\beta+\xi}{2}\right) \right)\sin\left( g_q\left(\frac{g_\delta+g_\beta+\xi}{2}\right) \right)}{\left(\sin\left( g_q\left(\frac{g_\alpha+g_\beta}{2}+\xi\right) \right)\right)^2}, \\
		\tilde{C}(\xi)\coloneqq &  \lim_{\frac{j}{n}\to\xi}\tilde{A}_j=\frac{-4\sin\left( \frac{g_q\xi}{2} \right)\sin\left( g_q\left(\frac{g_\beta+\xi}{2}\right) \right)\sin\left( g_q\left(\frac{g_\alpha-g_\delta+\xi}{2}\right) \right)\sin\left( g_q\left(\frac{g_\alpha+g_\beta-g_\gamma+\xi}{2}\right) \right)}{\left(\sin\left( g_q\left(\frac{g_\alpha+g_\beta}{2}+\xi\right) \right)\right)^2} . 
\end{align}
Note that $a(\xi)=\sqrt{\gamma\delta \tilde{A}(\xi)\tilde{C}(\xi)}$ and $b(\xi)=1+\delta\gamma-(\delta\gamma)^{\frac{1}{2}}\tilde{A}(\xi)-(\delta\gamma)^{\frac{1}{2}}\tilde{C}(\xi)$. Now we are ready to apply Theorem~\ref{thm:LLNlpfunctions} and obtain the limit shape density defined in \eqref{eq:limit_shape_density} to be 
\begin{align}
	\rho(x) & =  \frac{\tilde{\mu}'(x)}{\pi }\int_{I(x)}\frac{d\xi  }{\sqrt{4\tilde{A}(\xi)\tilde{C}(\xi)-\left(\tilde{\mu}(x)-2\cos\left( g_q\left(\frac{g_\gamma+g_\delta}{2}\right)\right)+\tilde{A}(\xi)+\tilde{C}(\xi)\right)^2}}. \label{eq:tri-density 1}
\end{align}
To find its primitive function, we will follow the same strategy as in the proof of Theorem~\ref{limit_shape_qracah}. Note that formulas \eqref{eq:A-C real} and \eqref{eq:-A-C real} are still valid for the trigonometric case.  Since 

\begin{equation*}
	\tilde{A}(\xi)-\tilde{C}(\xi) = \left( \gamma\delta \right)^{-\frac{1}{2}}\left( A(\xi)-C(\xi) \right), \qquad 	-\tilde{A}(\xi)-\tilde{C}(\xi) = \left( \gamma\delta \right)^{-\frac{1}{2}}\left( -A(\xi)-C(\xi) \right),
\end{equation*}
use Euler's equation, and we can write 

\begin{align}
	\tilde{A}(\xi)-\tilde{C}(\xi) = & \frac{ 2\cos\left( g_q\left( \frac{g_\alpha+g_\beta}{2}+\xi \right) \right) \sin\left(g_q\left(  \frac{g_\gamma +g_\delta}{2} \right) \right)}{\sin\left(g_q\left(  \frac{g_\alpha+g_\beta}{2} +\xi\right) \right) } 	+ \frac{c_1}{\sin\left(g_q\left(  \frac{g_\alpha+g_\beta}{2} +\xi\right) \right) } \\
		-\tilde{A}(\xi)-\tilde{C}(\xi)  = & -2\cos\left( g_q\left( \frac{g_\gamma+g_\delta}{2}\right) \right) - \frac{c_2}{\left( \sin\left( g_q\left( \frac{g_\alpha+g_\beta}{2}+\xi \right) \right) \right)^2} 	+ \frac{\cos\left( g_q\left( \frac{g_\alpha+g_\beta}{2}+\xi \right) \right)c_3}{\left( \sin\left( g_q\left( \frac{g_\alpha+g_\beta}{2}+\xi \right) \right) \right)^2},
\end{align}
where 
\begin{multline*}
	c_1 =  \sin\left(g_q\left(  \frac{g_\gamma -g_\alpha-g_\beta-g_\delta}{2} \right) \right)+ \sin\left(g_q\left(  \frac{g_\alpha+g_\beta-g_\gamma-g_\delta}{2} \right) \right) \\
	 +\sin\left(g_q\left(  \frac{g_\beta+g_\delta-g_\alpha-g_\gamma}{2} \right) \right) \sin\left(g_q\left(  \frac{g_\alpha -g_\beta-g_\gamma-g_\delta}{2} \right) \right)  \\
	c_2 =  \cos\left( g_q\left(g_\alpha- \frac{g_\gamma+g_\delta}{2}\right) \right) +\cos\left( g_q\left( \frac{g_\gamma-g_\delta}{2}\right) \right)+\cos\left(- g_q\left( \frac{g_\gamma+g_\delta}{2}\right) \right)  \cos\left( g_q\left(g_\beta+ \frac{g_\gamma-g_\delta}{2}\right) \right) ,
\end{multline*}
\begin{multline*}
		c_3 =  \cos\left(g_q\left(  \frac{g_\gamma -g_\alpha-g_\beta-g_\delta}{2} \right) \right) + \cos\left(g_q\left(  \frac{g_\alpha+g_\beta-g_\gamma-g_\delta}{2} \right) \right) \\
	+\cos\left(g_q\left(  \frac{g_\beta+g_\delta-g_\alpha-g_\gamma}{2} \right) \right) \cos\left(g_q\left(  \frac{g_\alpha -g_\beta-g_\gamma-g_\delta}{2} \right) \right)  .
\end{multline*}

 Let us define $y(\xi)=q^{-\xi}+\alpha\beta q^{\xi}$ and 
\begin{align}
	\tilde{y}(\xi) \coloneqq \left( \alpha\beta \right)^{-\frac{1}{2}} y(\xi)= 2\cos\left( g_q\left(\frac{g_\alpha+g_\beta}{2}+\xi\right) \right)
\end{align}
which is monotone in $\xi$ under our assumptions. We make the following change of variable
\begin{align*}
	\xi(\tilde{y}) = \arccos\left( \frac{\tilde{y}}{2} \right)\frac{1}{g_q}-\frac{g_\alpha+g_\beta}{2}, \qquad d\xi = -\frac{1}{2g_q\sqrt{4-\tilde{y}^2}}dy.
\end{align*} 
In the new variable $\tilde{y}$, we have 
\begin{align}
	\tilde{A}(\xi(\tilde{y}))-\tilde{C}(\xi(\tilde{y})) = & \frac{ 2\sin\left(g_q\left(  \frac{g_\gamma +g_\delta}{2} \right) \right)\tilde{y}+c_1}{\sqrt{4-\tilde{y}^2}}  \label{eq: change A-C tri}\\
	-\tilde{A}(\xi(\tilde{y}))-\tilde{C}(\xi(\tilde{y}))  = & -2\cos\left( g_q\left( \frac{g_\gamma+g_\delta}{2}\right) \right) + \frac{2c_3\tilde{y}-4c_2}{4-\tilde{y}^2} \label{eq: change -A-C tri}. 
\end{align}
Note that we can rewrite 
\begin{multline*}
	4\tilde{A}(\xi)\tilde{C}(\xi)-\left(\tilde{\mu}(x)-2\cos\left( g_q\left(\frac{g_\gamma+g_\delta}{2}\right)\right)+\tilde{A}(\xi)+\tilde{C}(\xi)\right)^2 \\
	= -\left( \tilde{\mu}(x)-2\cos\left( g_q\left(\frac{g_\gamma+g_\delta}{2}\right)\right) \right)^2 -2\left( \tilde{\mu}(x)-2\cos\left( g_q\left(\frac{g_\gamma+g_\delta}{2}\right)\right) \right)\left( \tilde{A}(\xi)+\tilde{C}(\xi) \right)-\left( \tilde{A}(\xi)-\tilde{C}(\xi) \right)^2
\end{multline*}
After inserting \eqref{eq: change A-C tri} and \eqref{eq: change -A-C tri} we find
\begin{align*}
		4\tilde{A}(\xi)\tilde{C}(\xi)-\left(\tilde{\mu}(x)-2\cos\left( g_q\left(\frac{g_\gamma+g_\delta}{2}\right)\right)+\tilde{A}(\xi)+\tilde{C}(\xi)\right)^2 = \frac{\left( \tilde{\mu}(x)^2-4 \right)\tilde{y}^2+\tilde{h}(x)\tilde{y}+\tilde{p}(x)}{4-\tilde{y}^2}, 
\end{align*}
where $\tilde{h}(x), \tilde{p}(x)$ are given by 
\begin{equation*}
		\tilde{h}(x)=  4c_3\left( \tilde{\mu}(x) -2\cos\left( g_q\left(\frac{g_\gamma+g_\delta}{2}\right)\right)\right)+4c_1\sin\left( g_q\left(\frac{g_\gamma+g_\delta}{2}\right)\right), 
\end{equation*}
\begin{multline*}
	 \tilde{p}(x) = -4 \left(\tilde{\mu}(x)-2\cos\left( g_q\left(\frac{g_\gamma+g_\delta}{2}\right)\right)\right)^2-16 \left(\tilde{\mu}(x)-2\cos\left( g_q\left(\frac{g_\gamma+g_\delta}{2}\right)\right)\right)\cos\left( g_q\left(\frac{g_\gamma+g_\delta}{2}\right)\right)\\
	 -8c_2 \left(\tilde{\mu}(x)-2\cos\left( g_q\left(\frac{g_\gamma+g_\delta}{2}\right)\right)\right)-4c_1\sin\left( g_q\left(\frac{g_\gamma+g_\delta}{2}\right)\right)-c_1^2.
\end{multline*}
Also, note that 
\begin{align*}
	4-\tilde{\mu}(x)^2=\left( \frac{\tilde{\mu}'(x)}{g_q} \right)^2.
\end{align*}
Then we have 
	\begin{align*}
		\left( 4\tilde{A}(\xi)\tilde{C}(\xi)-\left(\tilde{\mu}(x)-2\cos\left( g_q\left(\frac{g_\gamma+g_\delta}{2}\right)\right)+\tilde{A}(\xi)+\tilde{C}(\xi)\right)^2 \right) \left( 4-\tilde{y}^2 \right)=-\left( \frac{\tilde{\mu}'(x)}{g_q} \right)^2\tilde{y}^2+\tilde{h}(x)\tilde{y}+\tilde{p}(x).
	\end{align*}

Since our assumption on $\alpha$ and $\beta$ makes $y$ injective, we have two solutions $\xi_-(x)$ and $\xi_+(x)$ with $\xi_-(x) \leq \xi_+(x)$ such that 
 \begin{align*}
 	-\left( \frac{\tilde{\mu}'(x)}{g_q} \right)^2\tilde{y}\left(\xi_\pm(x)\right)^2+\tilde{h}(x)\tilde{y}(\xi_\pm(x))+\tilde{p}(x) =0.
 \end{align*} 
 That is to say $4a(\xi_\pm(x))^2-(\mu(x)-b(\xi_\pm(x)))^2=0$. Furthermore, $I(x)$ can be rewritten as 
\begin{align*}
	I(x) = [0,1]\cap [\xi_-(x),\xi_+(x))].
\end{align*}
This finally allows us to compute the limiting density for $\xi_-(x)\neq \xi_+(x)$ to be
\begin{multline*}
	\rho(x)  = -\frac{1}{\pi}\int_{2\cos\left( g_q\left(\frac{g_\alpha+g_\beta}{2}+\min(\xi_-(x),1)\right) \right)}^{2\cos\left( g_q\left(\frac{g_\alpha+g_\beta}{2}+\min(\xi_+(x),1)\right) \right)}\frac{\tilde{\mu}'(x)}{g_q\sqrt{-\left(\frac{\tilde{\mu}'(x)}{g_q}\right)^2 \tilde{y}^2+\tilde{h}(x)\tilde{y}+\tilde{p}(x)}}d\tilde{y} \\
	= \frac{1}{\pi}\arccos\left(\frac{ 4\left(\frac{\tilde{\mu}'(x)}{g_q}\right)^2\cos\left( g_q\left(\frac{g_\alpha+g_\beta}{2}+\min(\xi_+(x),1)\right) \right)-\tilde{h}(x)}{\sqrt{h^2(x)+4\left(\frac{\tilde{\mu}'(x)}{g_q}\right)^2\tilde{p}(x)}}\right)\\
	-\frac{1}{\pi}\arccos\left(\frac{4\left(\frac{\tilde{\mu}'(x)}{g_q}\right)^2\cos\left( g_q\left(\frac{g_\alpha+g_\beta}{2}+\min(\xi_-(x),1)\right) \right)-\tilde{h}(x)}{\sqrt{\tilde{h}^2(x)+4\left(\frac{\tilde{\mu}'(x)}{g_q}\right)^2 \tilde{p}(x)}}\right),
\end{multline*}
which can be rewritten in the following compact form:

\begin{align}
	\rho(x)= \begin{cases} 1 & \quad \xi_-(x)<\xi_+(x)\leq 1,\\ \frac{1}{\pi}\arccos\left(\frac{2\left(\frac{\tilde{\mu}'(x)}{g_q}\right)^2\cos\left( g_q\left(\frac{g_\alpha+g_\beta}{2}+1\right) \right)-\tilde{h}(x)}{\sqrt{\tilde{h}^2(x)+\left(\frac{\tilde{\mu}'(x)}{g_q}\right)^2\tilde{p}(x)}}\right)& \quad \xi_+(x)>1>\xi_-(x), \\ 0 &\quad \xi_+(x)>\xi_-(x)\geq 1.
	\end{cases} \label{eq:q-racah density hp}
\end{align}

To further simplify this formula, we recall that $\xi_-(x)$ and $\xi_+(x)$ give solutions to  $-\left(\frac{\tilde{\mu}'(x)}{g_q}\right)^2\tilde{y}^2+\tilde{h}(x)\tilde{y}+\tilde{p}(x)=0$, and therefore
\begin{equation} \label{eq:hiny2}
	\tilde{h}(x) = \left(\frac{\tilde{\mu}'(x)}{g_q}\right)^2\left(\tilde{y}(\xi_-(x))+\tilde{y}(\xi_+(x))\right) ,\qquad \tilde{p}(x) = -\left(\frac{\tilde{\mu}'(x)}{g_q}\right)^2\tilde{y}(\xi_-(x)) \tilde{y}(\xi_+(x)).
\end{equation}
Finally, after inserting \eqref{eq:hiny2} into \eqref{eq:q-racah density hp}  and we find the statement.


\section{Complex structure} \label{app:complex}

We prove Theorems~\ref{thm: triangle picture} and \ref{thm: complex structure}.
Before we come to the proofs, we start with some preliminaries.

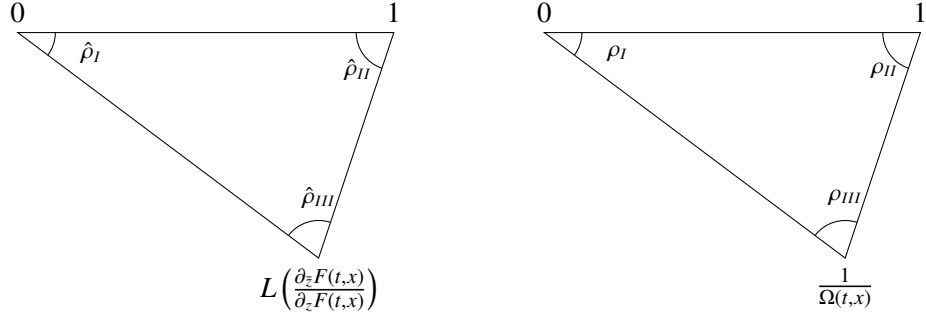
\begin{figure}
\centering
\begin{tikzpicture}[
	my angle/.style = {draw,
	angle radius=5mm, 
	angle eccentricity=1.7,
	right, inner sep=1pt,
					font=\footnotesize} 
				]
				\draw   (0,0) coordinate[label=above:$0$] (a) --
				(5,0) coordinate[label=above:$1$] (c) --
				(4,-3) coordinate[label=below:$	L\left(\frac{\partial_{\bar{z}}F(t,x)}{\partial_{z}F(t,x)}\right)$] (b) -- cycle;
				\pic[my angle, "$\hat\rho_{I}$"] {angle = b--a--c};
				\pic[my angle, "$\hat\rho_{II}$"] {angle = a--c--b};
				\pic[my angle, "$\hat\rho_{III}$"] {angle = c--b--a};
                \draw   (7,0) coordinate[label=above:$0$] (a) --
				(12,0) coordinate[label=above:$1$] (c) --
				(11,-3) coordinate[label=below:$ \frac{1}{\Omega(t,x)}$] (b) -- cycle;
				\pic[my angle, "$\rho_{I}$"] {angle = b--a--c};
				\pic[my angle, "$\rho_{II}$"] {angle = a--c--b};
				\pic[my angle, "$\rho_{III}$"] {angle = c--b--a};
	\end{tikzpicture} 
   \caption{The triangles in the proof of Theorem~\ref{thm: triangle picture} for $q>1$. In the proof we show $\hat \rho_j=\rho_j$ and thus $L\left(\frac{\partial_{\bar{z}}F}{\partial_{z}F}\right)=1/\Omega$.}
   \label{fig:triangle updown}
  \end{figure}
   
\subsection{Preliminaries}
 First, consider the M\"obius transformation from the complement of the unit disc to the upper half plane given by 
	\begin{align}\label{mobius}
			L(z)=\frac{(1-i) z+(1+i)}{i-i z}.
	\end{align}
Then, to show \eqref{eq:complex structure} is equivalent to show 
\begin{align}\label{eq: complex structure L}
L\left(\frac{\partial_{\bar{z}}F}{\partial_{z}F}\right)=\Omega^{-1}, \quad \text{ for $q>1$}, \qquad  L\left(\frac{\partial_{\bar{z}}F}{\partial_{z}F}\right)=\overline{\Omega}^{-1}, \quad \text{for $0<q<1$}.
\end{align}
By  definition of $\partial_z, \partial_{\bar{z}}$ and $L$  we find
	\begin{align}
L\left(\frac{\partial_{\bar{z}}F(t,x)}{\partial_{z}F(t,x)}\right)= \frac{F_t(t,x)}{F_x(t,x)}+1,
	\end{align}
 and this will helps us finding an expression for $L\left(\frac{\partial_{\bar{z}}F(t,x)}{\partial_{z}F(t,x)}\right)$.  To this end, we start by recalling that the limiting recurrence coefficients of the $q$-Racah polynomials at $\xi=1$ are (see also \eqref{eq:recurrance A} and \eqref{eq:recurrance CC})
	\begin{align} \label{eq:At}
		 A(t) =A(\xi;t)|_{\xi=1} =\frac{\left(1-q^{-c}\right) \left(1-q^{-t}\right) \left(1-q^{-b-c-1}\right) \left(1-\kappa ^2 q^{-b-c+1}\right)}{\left(1-q^{-b-c}\right)^2}, \\
		C(t)   =C(\xi;t)|_{\xi=1}  = \frac{\left(1-\frac{1}{q}\right) \left(1-q^{-b}\right) \left(1-\kappa ^2 q\right) q^{-c-t} \left(1-q^{-b-c+t}\right)}{\left(1-q^{-b-c}\right)^2}.  \label{eq:Ct}
	\end{align}
Since from now on, we will always set $\xi=1$ in the expression for $A(\xi,t)$, we drop the $\xi$-dependence from the notation and view $A$ as a function of $t$ only.
We will also use the notation
\begin{align}
    Y(t,x)\coloneqq\mu(t,x)+1+\kappa^2 q^{-t-c}-A(t)-C(t).
\end{align}
 We will use the subscript to denote the partial derivatives with respect to $x$ or $t$ and $'$ to denote the derivatives of single variable functions. For example,
\begin{multline}
    Y_x \coloneqq \partial_x Y, \qquad Y_{xx} \coloneqq \partial_x^2Y, \qquad Y_{tx} \coloneqq \partial_x\partial_tY, \\
    A'\coloneqq\frac{d}{dt}, \qquad  A''\coloneqq\frac{d^2}{dt^2}A, \qquad C'\coloneqq\frac{d}{dt}, \qquad  C'' \coloneqq\frac{d^2}{dt^2}C.
\end{multline} 
The following equations are straightforward to verify but will be important in our proofs. 
\begin{multline}\label{diff:mutx}
    Y_{xx}+2Y_{tx}+	Y_{tt} =-\log(q) (Y_x+Y_t),\\
			Y_{tt}  =-\log(q) Y_t, \qquad
			A'' =-\log(q)	\ A' , \qquad
			C'' =-\log(q)\ C'.
\end{multline}
			
 \begin{lemma}\label{lemma: F lower upper plane}
	Let $F$ be the diffeomorphism as defined in \eqref{pullback_qracah}. Then, its partial derivatives are
  \begin{align}
        F_t = & \frac{1}{2} \left(\frac{A'}{A}-\frac{C'}{C(1)}\right)+i\frac{ \left(A' C+AC'\right)Y-2 AC Y_t}{2 (A C\sqrt{4A(1) C-Y^2}}, \\
        F_x = & -\frac{i Y_x}{ \sqrt{4A C-Y^2}}.
    \end{align}
    Hence, $\frac{\partial_tF(t,x)}{\partial_xF(t,x)}$ lies in the lower (or upper) half plane for $q>1$ (or $0<q<1$) whenever $(t,x)$ is inside the liquid region.
	\end{lemma}
 \begin{proof}
 By \eqref{eq: tau}, \eqref{eq:At} and \eqref{eq:Ct} we find
\begin{align*}
   \tau(t) -\frac{1}{2}\log\left(\frac{A}{C}\right) = -\frac{1}{2}\log\left|\frac{(q^c-1)(1-q^{-b-c-1})(1-\kappa^2q^{-b-c+1})}{(1-q^{-1})(1-q^{-b})(1-\kappa^2q)}\right|\eqqcolon constant. 
\end{align*}
Then $F$ can be rewritten as 
		\begin{align}
			F=\frac{1}{2}\log\left(\frac{A}{C}\right)+i\arccos{\frac{Y}{2\sqrt{AC}}} + \text{constant} \label{eq: working F}.
		\end{align}
Strictly speaking, this only holds for $0<t\leq b$, but by Lemma~\ref{lem:casessame}, we see that the extra factors in the argument of the arccosine cancel out. The first statement comes from a direct computation. To show the second statement, we note that
\begin{align*}
    \Im\left(\frac{\partial_tF(t,x)}{\partial_xF(t,x)}\right) = -\frac{\log(q)} {\partial_x\mu(t,x)}\frac{q^t\left(q^{b+c}-1\right)}{2 \left(q^t-1\right) \left(q^{b+c}-q^t\right)}\frac{\sqrt{4A(t)C(t)-Y(t,x)^2} }{4\sqrt{A(t)C(t)}}.
\end{align*} 
Recall that $0<t<b+c$ and hence  $\left(q^t-1\right) \left(q^{b+c}-q^t\right)>0$. Moreover, $\mu(t,x)=q^{x}+\kappa^2 q^{-t-c} q^{-x}$. By our choice of parameters, we have that $\frac{\partial_x\mu(t,x)}{\log(q)}$ is positive. Hence, $\frac{\partial_tF}{\partial_xF}$ is in the upper half plane for $0<q<1$ and in the lower half plane for $q>1$. 
 \end{proof}

We will also need the following.
\begin{lemma}\label{lemma:inverse omega}
    \begin{align}
        \frac{1}{\Omega(t,x) }=	\frac{\tan(\rho_{II})}{\tan(\rho_{II})+\tan(\rho_{I})}-i\frac{\tan(\rho_{II})\tan(\rho_{I})}{\tan(\rho_{II})+\tan(\rho_{I})}.
    \end{align}
\end{lemma}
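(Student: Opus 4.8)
## Proof Plan for Lemma \ref{lemma:inverse omega}

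The plan is to verify this identity by direct computation starting from the explicit formula \eqref{complexslope} for $\Omega$. Writing $\Omega = u + iv$ where
\begin{align*}
u = \frac{\tan(\rho_{III})}{\tan(\rho_{III})+\tan(\rho_{I})}, \qquad v = \frac{\tan(\rho_{III})\tan(\rho_{I})}{\tan(\rho_{III})+\tan(\rho_{I})},
\end{align*}
the reciprocal is $\frac{1}{\Omega} = \frac{\bar\Omega}{|\Omega|^2} = \frac{u - iv}{u^2 + v^2}$. So the first step is to compute $u^2 + v^2$. Factoring out $\tan(\rho_{III})^2/(\tan(\rho_{III})+\tan(\rho_{I}))^2$, we get $u^2 + v^2 = \frac{\tan(\rho_{III})^2(1 + \tan(\rho_{I})^2)}{(\tan(\rho_{III})+\tan(\rho_{I}))^2}$.

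The second step is to simplify $\frac{u}{u^2+v^2}$ and $\frac{v}{u^2+v^2}$. For the real part,
\begin{align*}
\frac{u}{u^2+v^2} = \frac{\tan(\rho_{III})}{\tan(\rho_{III})+\tan(\rho_{I})} \cdot \frac{(\tan(\rho_{III})+\tan(\rho_{I}))^2}{\tan(\rho_{III})^2(1+\tan(\rho_{I})^2)} = \frac{\tan(\rho_{III})+\tan(\rho_{I})}{\tan(\rho_{III})(1+\tan(\rho_{I})^2)}.
\end{align*}
Here is the key point where the constraint $\rho_I + \rho_{II} + \rho_{III} = \pi$ must enter: since the angles of the triangle sum to $\pi$, one has $\rho_{III} = \pi - \rho_I - \rho_{II}$, hence $\tan(\rho_{III}) = -\tan(\rho_I + \rho_{II}) = -\frac{\tan\rho_I + \tan\rho_{II}}{1 - \tan\rho_I\tan\rho_{II}} = \frac{\tan\rho_I + \tan\rho_{II}}{\tan\rho_I\tan\rho_{II} - 1}$. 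Substituting this expression for $\tan(\rho_{III})$ into the numerator $\tan(\rho_{III}) + \tan(\rho_I)$ and simplifying should collapse the real part to $\frac{\tan(\rho_{II})}{\tan(\rho_{II})+\tan(\rho_{I})}$, and similarly the imaginary part computation $-\frac{v}{u^2+v^2} = -\frac{\tan\rho_I(\tan\rho_{III}+\tan\rho_I)}{\tan\rho_{III}(1+\tan\rho_I^2)} \cdot \tan\rho_{III}$ — wait, more carefully $\frac{v}{u^2+v^2} = \frac{\tan(\rho_{III})\tan(\rho_I)}{\tan(\rho_{III})+\tan(\rho_I)}\cdot\frac{(\tan\rho_{III}+\tan\rho_I)^2}{\tan\rho_{III}^2(1+\tan\rho_I^2)} = \frac{\tan(\rho_I)(\tan\rho_{III}+\tan\rho_I)}{\tan\rho_{III}(1+\tan\rho_I^2)}$ — should reduce to $\frac{\tan(\rho_{II})\tan(\rho_{I})}{\tan(\rho_{II})+\tan(\rho_{I})}$.

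The main obstacle is purely the trigonometric bookkeeping: one must carefully substitute $\tan(\rho_{III}) = \frac{\tan\rho_I+\tan\rho_{II}}{\tan\rho_I\tan\rho_{II}-1}$ everywhere and verify that the factor $(1+\tan^2\rho_I)$ in the denominator cancels against the combination that appears in the numerator. A cleaner alternative, which I would actually prefer, is to bypass the formula \eqref{complexslope} entirely and argue geometrically: $\Omega$ is by definition the apex of a triangle with vertices $0, 1, \Omega$ having angle $\rho_I$ at $0$ and $\rho_{III}$ at $1$; then $1/\Omega$ is obtained from $\Omega$ by the inversion $z \mapsto 1/z$, which is the composition of complex conjugation with $z\mapsto 1/\bar z$. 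Applying the map $z \mapsto 1 - 1/\bar z$ — no, rather: the Möbius map $z \mapsto 1/z$ sends the triangle $(0,1,\Omega)$ to a configuration, and one checks that $z\mapsto 1/z$ followed by $z \mapsto 1 - z$ (or an appropriate affine normalization fixing $0$ and $1$) sends $\Omega$ to the apex of the triangle with angles $\rho_I$ at $0$ and $\rho_{II}$ at $1$; reading off the formula \eqref{complexslope} with $\rho_{III}$ replaced by $\rho_{II}$ then gives exactly the claimed expression for $1/\Omega$. Either way the statement is elementary and the proof is a short computation; I would write out the direct algebraic verification as the two displays above and note that \eqref{diff:mutx}–\eqref{diff:recurrence2} are not needed here, only the angle-sum relation $\rho_I+\rho_{II}+\rho_{III}=\pi$.
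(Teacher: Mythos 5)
Your proposal is correct and is essentially the paper's own argument: the paper also proves the lemma by direct computation from \eqref{complexslope}, using the tangent addition formula together with $\tan(\rho_I+\rho_{II})=-\tan(\rho_{III})$, which is exactly the substitution you carry out. Your algebra checks out (the identity reduces to $\tan\rho_I+\tan\rho_{II}+\tan\rho_{III}=\tan\rho_I\tan\rho_{II}\tan\rho_{III}$, valid since the angles sum to $\pi$), so the written-out verification you propose is just a more detailed version of the paper's proof.
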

\begin{proof}
    Recall the definition of $\Omega$ in \eqref{complexslope}. Then the statement holds by applying the following sum formula for tangent 
\begin{align}\label{eq: sum tangent}
\tan(\rho_{I}+\rho_{II})=\frac{\tan (\rho_{I})+\tan( \rho_{II})}{1-\tan (\rho_{I})\tan (\rho_{II})}, 
\end{align}
and $\tan(\rho_{I}+\rho_{II})=-\tan(\rho_{III})$.
\end{proof}
 Now we are ready for the proofs of Theorems~\ref{thm: triangle picture} and \ref{thm: complex structure}.
 
\subsection{Proof of Theorem~\ref{thm: triangle picture}}
By \eqref{eq: complex structure L} and Lemma~\ref{lemma:inverse omega} it is sufficient to show that 
  \begin{align}\label{mobiustoslope}
L\left(\frac{\partial_{\bar{z}}F}{\partial_{z}F}\right)=\frac{\tan(\rho_{II})}{\tan(\rho_{II})+\tan(\rho_{I})}-i\frac{\tan(\rho_{II})\tan(\rho_{I})}{\tan(\rho_{II})+\tan(\rho_{I})}, \quad \text{ for } q>1, \\
L\left(\frac{\partial_{\bar{z}}F}{\partial_{z}F}\right)=\frac{\tan(\rho_{II})}{\tan(\rho_{II})+\tan(\rho_{I})}+i\frac{\tan(\rho_{II})\tan(\rho_{I})}{\tan(\rho_{II})+\tan(\rho_{I})}, \quad \text{ for } 0<q<1. \nonumber
\end{align}
We denote the angles of the triangle formed by $0$,  $L\left(\frac{\partial_{\bar{z}}F}{\partial_{z}F}\right)$ and $1$, by $\hat \rho_I$ at 0, $\hat \rho_{II}$ at $L\left(\frac{\partial_{\bar{z}}F}{\partial_{z}F}\right)$ and $\hat \rho_{III}$ at $1$. See also Figure~\ref{fig:triangle updown}. To prove the theorem, it is sufficient to show that $\hat \rho_I=\rho_I$ and $\hat \rho_{III}=\rho_{III}$ for either of the cases. 
\bigskip

\textbf{STEP 1 : $\hat \rho_{III}=\rho_{III}$ }

Recall the density of the type III lozenge is given by \eqref{eq:def_rhoi}. It can be further computed via Theorem~\ref{limit_shape_qracah} to be
	\begin{align*}
		\rho_{III}(t,x)= & \pi-\arccos\left(\frac{2y(1)-y(\xi_-(x))-y(\xi_+(x))}{\left|y(\xi_-(x))-y(\xi_+(x))\right|}\right).
    \end{align*}
       More explicitly, by \eqref{eq:q-racah density hp}, we can rewrite the above to be 
       \begin{align*}
			\rho_{III}(t,x)= & \arccos\left(\frac{\alpha_1}{\alpha_2}\right),
   \end{align*}
   where 
   \begin{align*}
   \alpha_1 \coloneqq  -\left(2\left( \frac{\mu'(x)}{\log(q)}\right)^2(q^{-1}+\alpha\beta q)-h(x)\right), \quad 
   \alpha_2 \coloneqq \sqrt{h^2(x)+4\left( \frac{\mu'(x)}{\log(q)}\right)^2p(x)},
       \end{align*}
       and $h(x)$ and $p(x)$ are defined by \eqref{eq:h} and \eqref{eq:p}. By the trigonometric identity, we have
       \begin{align*}
           \tan \rho_{III}(t,x)=\frac{\sqrt{\alpha_2^2-\alpha_1^2}}{\alpha_1} .
       \end{align*}
       By the relation \eqref{calculation_density_qracah} we have $\alpha_2^2-\alpha_1^2=4\left(\frac{\partial_x\mu(t,x)}{\log(q)}\right)^2(q^{-1}-\alpha\beta q)^2(4 A(1) C(1)-Y(t,x)^2)$, where $(q^{-1}-\alpha\beta q)=(q^{-1}-q^{-b-c-1})$ and
       \begin{align*}
           \alpha_1= & 2 q^{-b-c-t-2 x-2} \left(-q^{b+c+t+1}+q^{b+c+x+1}+q^{b+t+x+1}+q^{b+x}-2 q^{b+2 x}+q^{t+x}-q^{t+1}\right) \\
           & + \kappa^2 2 q^{-b-2 c-2 t-1} \left(\left(q^{b+1}+q^{t+1}+1+q^{t-c}\right) q^{c+t-x}+2 q^t \left(q^{b+c}+1\right)+q^{x-1} \left(q^{b+t+1}+q^b+q^t+q^{b+c+1}\right)\right) \\
           & + \kappa^2 2 q^{-b-2 c-2 t-1} \left( -2 q^{t-1} \left(q^{b+1}+q^{t+1}+1\right)-2 q^c \left(q^{b+t+1}+q^b+q^t\right)\right) \\
           & + \kappa^4 2 q^{-b-3 c-2 t-1} \left(q^{b+c+x+1}-q^{b+c+2 x}+q^{c+t+x+1}-2 q^{c+t+1}+q^{c+x}+q^{t+x}-q^{2 x}\right).
       \end{align*}

Next, we turn to $\hat \rho_{III}$. A simple computation shows that 
		\begin{align} \label{eq:rhoiii tri upsidedown}
			\tan \hat \rho_{III}   = & \frac{\left|\Im\left(L\left(\frac{\partial_{\bar{z}}F(t,x)}{\partial_{z}F(t,x)}\right)\right)\right|}{\Im\left(L\left(\frac{\partial_{\bar{z}}F(t,x)}{\partial_{z}F(t,x)}\right)\right)^2+\Re\left(L\left(\frac{\partial_{\bar{z}}F(t,x)}{\partial_{z}F(t,x)}\right)\right)(\Re\left(L\left(\frac{\partial_{\bar{z}}F(t,x)}{\partial_{z}F(t,x)}\right)\right)-1)}.
		\end{align}
  Now note that $L\left(\frac{\partial_{\bar{z}}F}{\partial_{z}F}\right)=\frac{\partial_{t}F}{\partial_{x}F}+1$, and thus  $\tan \hat \rho_{III}$, is explicitly given in Lemma~\ref{lemma: F lower upper plane}. Then, by a cumbersome but straightforward computation, we find that $\tan \rho_{III}= \tan \hat \rho_{III}$. 
\bigskip

\textbf{STEP 2 : $\hat \rho_I=\rho_I$.}

We define two differential operators 
	\begin{align}\label{eq:def: d1 d2}
\mathcal{D}_1\coloneqq\partial_t+\partial_x \quad \mathcal{D}_2\coloneqq\partial_t.
	\end{align}
It is straightforward from \eqref{eq:def_rhoi} that $\rho_I$ satisfies the following initial value problem 
$$
\begin{cases}
    \partial_x \rho_{I}(t,x)=\mathcal D_2\rho_{III}\\
    \tan \rho_{I}(t,x^*(t))=0
\end{cases},$$
where $x^*(t)=\mu^{(-1),t}(b(1;t)-2a(1;t))$ is the boundary of the liquid region. In fact, this initial value problem defines $\rho_{I}$ uniquely. We will complete the proof by showing that $\hat\rho_I$ also solves this equation. 
 Since $A$ and $C$ do not depend on $x$, we have $\mathcal{D}_1(AC=\mathcal{D}_2(AC)$.  
 
Define 
\begin{align} 
	R_j\coloneqq & 2AC\mathcal{D}_j(Y)-\mathcal{D}_j(AC) Y,  \quad \text{for }j=1,2 ,\label{eq:def R} \\
	I\coloneqq & (A'C-C'A)\sqrt{4AC-Y^2}. \label{eq:def I}
\end{align}
Then $L$ can be rewritten as
\begin{align}\label{eq: L working}
L\left(\frac{\partial_{\bar{z}}F}{\partial_{z}F}\right) =\frac{R_1}{2ACY_x}+i\frac{I}{2ACY_x} ,\quad
	L\left(\frac{\partial_{\bar{z}}F}{\partial_{z}F}\right)-1 =\frac{R_2}{2ACY_x}+i\frac{I}{2AC Y_x}.
\end{align}
Then $ \mathcal{D}_1\arg\left(L\left(\frac{\partial_{\bar{z}}F}{\partial_{z}F}\right) \right) =  \mathcal{D}_1\arctan \frac{I}{R_1}$ and  $
\mathcal{D}_2\arg\left(L\left(\frac{\partial_{\bar{z}}F}{\partial_{z}F}\right) -1\right) =  \mathcal{D}_2\arctan \frac{I}{R_2}$. Using the for differential equations \eqref{diff:mutx} (somewhat surprisingly, these differential equations are the only ingredients of the rest of the proof), we have the following handy relations  
\begin{align}
	\mathcal{D}_j(R_j)  = &-\log(q) R_j -\mathcal{D}_j(AC)\mathcal{D}_j(Y)+2A'C'Y, \label{eq: handy1}\\
	\mathcal{D}_j(I) = & I\left(-\log(q)  +\frac{2\mathcal{D}_j(AC)-Y\mathcal{D}_j(Y)}{4AC-Y^2}\right).\label{eq: handy2}
\end{align}

Also, note that
\begin{equation}
\frac{I^2+R_j^2}{4AC}=
A'C'Y^2-\mathcal{D}_j(AC)\mathcal{D}_j(Y)Y+AC\mathcal{D}_j(Y)^2+\mathcal{D}_j(AC)^2-4AA'CC'. \label{eq: handy3}
\end{equation} 
Then combining \eqref{eq: handy1}, \eqref{eq: handy2}  and \eqref{eq: handy3},  one can directly compute, for $j=1,2$
\begin{align}\label{eq:angle equation}
	\mathcal{D}_j\arctan \frac{I}{R_j} = \frac{YI}{2AC(4AC-Y^2)},
\end{align}
which shows
\begin{align}\label{eq:Burgers0 imaginary}
	\mathcal{D}_2\arg\left(L\left(\frac{\partial_{\bar{z}}F}{\partial_{z}F}\right) -1\right) =  \mathcal{D}_1\arg\left(L\left(\frac{\partial_{\bar{z}}F}{\partial_{z}F}\right) \right).
\end{align}
Since $\arg\left(L\left(\frac{\partial_{\bar{z}}F}{\partial_{z}F}\right) -1\right)=\pi-\hat \rho_{III}+\arg\left(L\left(\frac{\partial_{\bar{z}}F}{\partial_{z}F}\right) \right)$. Hence $\hat \rho_1=-\arg\left(L\left(\frac{\partial_{\bar{z}}F}{\partial_{z}F}\right) \right)$ we have 
\begin{align}		\mathcal{D}_1\hat{\rho_I}+\mathcal{D}_2(\pi-\hat {\rho_I}-\hat \rho_{III}) = 0,
\end{align}
and since we already proved that $\hat \rho_{III}= \rho_{III}$  and $\mathcal{D}_1-\mathcal{D}_2=\partial_x$, we find that 
 $\partial_x \hat \rho_{I}(t,x)=\mathcal D_1\rho_{III}.$
 Moreover, $I$ vanishes so long as $x=x^*(t)$ and thus the boundary condition is satisfied. We conclude that $\hat \rho_I=\rho_I$ and this finishes the proof.

\subsection{Proof of Theorem~\ref{thm: complex structure}}
We continue to work with $L$ in the form \eqref{eq: L working} with $R_j$ and $I$ defined as in \eqref{eq:def R} and \eqref{eq:def I}. Now by \eqref{eq: handy1}, \eqref{eq: handy2}, we get
\begin{multline*}
	\frac{I\mathcal{D}_j(I)+R_j\mathcal{D}_j(R_j)}{I^2+R_j^2} = -\log(q) \\+ \frac{I^2\left(2\mathcal{D}_j(AC)-Y\mathcal{D}_j(Y)\right)+R_j(4AC-Y^2)\left(-\mathcal{D}_j(AC)\mathcal{D}_j(Y)+2A'C'Y\right)}{(I^2+R_j^2)(4AC-Y^2)}.
\end{multline*}
By further simplifying the numerator using the definition of $I, R_j$ using   \eqref{eq: handy3} for the denominator, we find
\begin{align*}
	\frac{I\mathcal{D}_j(I)+R_j\mathcal{D}_j(R_j)}{I^2+R_j^2} = -\log(q) + \frac{\mathcal{D}_j(AC)}{2A C}.
\end{align*}
Hence 
\begin{align*}
	\frac{1}{2}\mathcal{D}_j\log \left|\frac{I^2+R_j^2}{(2ACY_x)^2}\right| =  & \frac{I\mathcal{D}_j(I)+R_j\mathcal{D}_j(R_j)}{I^2+R_j^2}-\mathcal{D}_j\log(2AC)-\frac{\mathcal{D}_j(Y_x)}{Y_x}\\
	= & -\log(q) - \frac{\mathcal{D}_j(AC)}{2AC}-\frac{\mathcal{D}_j(Y_x)}{Y_x}. 
\end{align*}
Since $A $ and $C$ are functions in $t$ only, we have $\mathcal{D}_1\log(AC)=\mathcal{D}_2\log(AC)$. Additionally, $\mathcal{D}_1(Y_x)-\mathcal{D}_2(Y_x)=Y_{xx}$ and hence 
\begin{align}
	-\mathcal{D}_1\log \left|L\left(\frac{\partial_{\bar{z}}F}{\partial_{z}F}\right)\right|+\mathcal{D}_2\log \left|L\left(\frac{\partial_{\bar{z}}F}{\partial_{z}F}\right)-1\right| = & \frac{\mu_{xx}}{\mu_x}. \label{eq:Burgers0 real} 
\end{align}
Summing up \eqref{eq:Burgers0 real} and \eqref{eq:Burgers0 imaginary} from the proof of Theorem~\ref{thm: triangle picture}, we obtain the following,
\begin{align}\label{Burgers0}
	-\frac{\mathcal{D}_1L\left(\frac{\partial_{\bar{z}}F}{\partial_{z}F}\right)}{L\left(\frac{\partial_{\bar{z}}F}{\partial_{z}F}\right)}+\frac{\mathcal{D}_2L\left(\frac{\partial_{\bar{z}}F}{\partial_{z}F}\right)}{L\left(\frac{\partial_{\bar{z}}F}{\partial_{z}F}\right)-1}=  \frac{\mu_{xx}}{\mu_x}. 
\end{align}
For $q>1$,
Theorem~\ref{thm: triangle picture} shows that $L\left(\frac{\partial_{\bar{z}}F}{\partial_{z}F}\right)=\frac{1}{\Omega}$. Then, by the chain rule, \eqref{Burgers0} is equivalent to \eqref{eq:Burgers}. 

For $0<q<1$, Theorem~\ref{thm: triangle picture} shows that $L\left(\frac{\partial_{\bar{z}}F}{\partial_{z}F}\right)=\frac{1}{\overline{\Omega}}$. Note that \eqref{Burgers0} is a real equation. Taking the conjugate of \eqref{Burgers0} and using the chain rule, we also obtain \eqref{eq:Burgers}. This finishes the proof.

\bibliographystyle{amsplain}

\end{document}